\newcommand{\CCC}{\mathsf{C}}
\newcommand{\Ffun}{\mathscr{F}}
\newcommand{\Kfun}{\mathscr{K}}
\newcommand{\Ab}{\mathbf{A\hspace{-0.5mm}b}}
\newcommand{\lbra}{[\![}
\newcommand{\rbra}{]\!]}
\newcommand{\PL}{\mathcal{P}\hspace{-0.15mm}\mathcal{L}}
\newcommand{\IL}{\mathcal{I}\hspace{-0.15mm}\mathcal{L}}
\newcommand{\ext}{\mathrm{ext}}
\newcommand{\III}{\mathsf{I}}
\newcommand{\JJJ}{\mathsf{J}}
\newcommand{\SSS}{\mathsf{S}}
\newcommand{\cK}{\mathcal{K}}
\newcommand{\htt}{\mathsf{ht}}
\newcommand{\FF}{\mathscr{F}}
\newcommand{\GG}{\mathscr{G}}
\newcommand{\RR}{\mathscr{R}}
\newcommand{\DD}{\mathscr{D}}
\newcommand{\LLL}{\mathsf{L}}
\newcommand{\KKK}{\mathsf{K}}
\newcommand{\cF}{{\mathcal{F}}}
\newcommand{\supp}{\mathrm{supp}}
\newcommand {\supplus}{\mathop{{\subset}\llap{\raise 0.5pt\hbox{\normalfont\small+}\hskip 0.5pt}}}
\newcommand{\rR}{\Phi}
\newcommand{\fHom}{{\mathsf{Hom}}}
\newcommand{\fEnd}{{\mathsf{End}}}
\newcommand{\fnul}{{0}}
\newcommand{\tto}{\twoheadrightarrow}
\newcommand{\cB}{\mathcal B}
\newcommand{\cO}{\mathcal O}
\newcommand{\bO}{\mathbf{O}}
\newcommand{\bC}{\mathbf{C}}
\newcommand{\tr}{{\rm{tr}}}
\newcommand{\im}{{\rm{im}\,}}
\newcommand{\id}{\mathrm{id}}
\newcommand{\ad}{\mathrm{ad}}
\newcommand{\Hom}{\mathrm{Hom}} 
\newcommand{\Ext}{\mathrm{Ext}}
\newcommand{\End}{\mathrm{End}}
\newcommand{\dfs}{{/\kern-2pt/}}
\newcommand{\op}{{\rm op}}
\newcommand{\mk}{\Bbbk}
\newcommand{\cC}{\mathcal{C}}
\newcommand{\cG}{\mathcal{G}}
\newcommand{\cA}{\mathcal{A}}
\newcommand{\Ob}{{\rm{Ob}\,}}
\newcommand{\fn}{\mathfrak{n}}
\newcommand{\fg}{\mathfrak{g}}
\newcommand{\fk}{\mathfrak{k}}
\newcommand{\fp}{\mathfrak{p}}
\newcommand{\fl}{\mathfrak{l}}
\newcommand{\fu}{\mathfrak{u}}
\newcommand{\fh}{\mathfrak{h}}
\newcommand{\fb}{\mathfrak{b}}
\newcommand{\mN}{\mathbb{N}}
\newcommand{\mC}{\mathbb{C}}
\newcommand{\mZ}{\mathbb{Z}}
\numberwithin{equation}{section}
\newtheoremstyle{notes} {} {} {} {} {\bfseries} {.} {.5em} {}
\theoremstyle{plain}
\newtheorem{prop}[subsubsection]{Proposition}
\newtheorem{lemma}[subsubsection]{Lemma}
\newtheorem{cor}[subsubsection]{Corollary}
\newtheorem{thm}[subsubsection]{Theorem}
\newtheorem{que}[subsubsection]{Question}
\theoremstyle{remark}
\newtheorem{rem}[subsubsection]{Remark} 
\newtheorem{ddef}[subsubsection]{Definition} 
\pretocmd{\appendix}{\addtocontents{toc}{\protect\addvspace{10\p@}}}{}{}
\theoremstyle{remark}
\newtheorem{ex}[subsubsection]{Example}
\newtheoremstyle{construction} {} {} {} {} {\bfseries} { } {0pt} {}
\theoremstyle{construction}
\title[Infinite category O]{On an infinite limit of BGG categories O}
\author{Kevin~Coulembier and Ivan Penkov}
\newcommand{\ind}{{\rm Ind}}
\newcommand{\res}{{\rm Res}}
\keywords{BGG Category $\cO$, root-reductive Lie algebra, Dynkin Borel subalgebra, Koszul duality, Ringel duality, Verma module, Serre subquotient category, quasi-hereditary algebra}
\subjclass[2010]{17B65, 16S37, 17B55 }
\begin{document} 
\date{} 
\begin{abstract} We study a version of the BGG category $\cO$ for Dynkin Borel subalgebras of root-reductive Lie algebras {$\mathfrak{g}$}, such as $\mathfrak{gl}(\infty)$. We prove results about extension fullness and compute the higher extensions of simple modules by Verma modules. {In addition, we} show that  {our} category  {$\textbf{O}$} is Ringel self-dual and initiate the study of Koszul duality. An important tool in obtaining these results is an equivalence we establish between appropriate Serre subquotients of  category {$\textbf{O}$} for $\fg$ and category $\cO$ for finite dimensional reductive subalgebras of $\fg$.
	\end{abstract}

\maketitle 

%\tableofcontents

% \pagebreak

\section*{Introduction}

After about 20 years of study of the representation theory of the three infinite dimensional finitary Lie algebras~$\mathfrak{sl}(\infty),\mathfrak{so}(\infty),\mathfrak{sp}(\infty)$, there still is no standard analogue of the Bernstein-Gelfand-Gelfand category $\mathcal{O}$ for {these} Lie algebras. One reason is that each of these Lie algebras has uncountably many conjugacy classes of Borel subalgebras, so potentially there are many ``categories $\mathcal{O}$". Therefore, one is faced with a selection process trying to sort through various options in constructing interesting and relevant analogues of the BGG category $\mathcal{O}$. Existing results concerning integrable $\fg$-modules, as well as primitive ideals in 
$U(\fg)$, for Lie algebras~$\fg$ as above, motivate the study of interesting analogues of category $\mathcal{O}$ for two types of Borel subalgebras. These are the {\em Dynkin Borel subalgebras}, or Borel subalgebras having ``enough simple roots", and, on the other hand, the {\em ideal Borel subalgebras} defined in \cite{PP1} and \cite{PP2}. These nonintersecting classes of Borel subalgebras are ``responsible" for different classes of representations, and naturally lead to different ``categories $\mathcal{O}$".

The case of Dynkin Borel subalgebras is considered in the recent paper \cite{Nam}(see also \cite{NamT}) where a category $\overline{\mathcal{O}}$ is defined. This category consists of all weight modules with finite dimensional weight spaces which carry a locally finite action of the entire locally nilpotent radical of a fixed Dynkin Borel subalgebra. A direct consequence of the definition of  Dynkin Borel subalgebras is that Verma modules are objects of $\overline{\mathcal{O}}$. Nevertheless $\overline{\mathcal{O}}$ is not a highest weight category due to lack of projective or injective {objects}. Another result is that the subcategory of $\overline{\mathcal{O}}$ consisting of integrable modules (integrable modules are direct limits of finite dimensional modules over finite dimensional subalgebras) is a semisimple category. This makes $\overline{\mathcal{O}}$ somewhat similar to the original BGG category ${\mathcal{O}}$. A concrete motivation to study versions of category $\cO$ for this type of Borel subalgebras comes from the representation theory of finite dimensional Lie superalgebras. Through the concept of ``super-duality'', the category of finite dimensional modules over the general linear superalgebra $\mathfrak{gl}(m|n)$ is related to modules in (parabolic subcategories) of category ${\bar\cO}$ for $\mathfrak{gl}(\infty)$, see e.g~\cite{CLW, CWZ}. Such super-duality {exists} also {in} category $\cO$ for $\mathfrak{gl}(m|n)$ and for Lie superalgebras of other types.

On the other hand, in \cite{PS} an analogue of category ${\mathcal{O}}$ is defined, for an ideal Borel subalgebra. Verma modules are not objects of this category, but its subcategory of integrable modules coincides with the nonsemisimple category of tensor modules studied in [DPS]. This latter category is itself an interesting highest weight category. 

The current paper arose from an attempt to understand better the homological structure of the  category $\overline{\mathcal{O}}$ introduced in \cite{Nam}. It turns out that it is convenient to extend Nampaisarn's category to a category $\bO$ whose objects are weight modules which are locally finite with respect to the locally nilpotent radical of a Dynkin Borel subalgebra, but do not necessarily have finite dimensional weight spaces.

Let's give a brief description of the content of the paper. Sections 1 and 2 are of preliminary nature. Here we recall some general facts about abelian categories and about root-reductive Lie algebras. In particular, we go over the notions of Cartan subalgebras, Borel subalgebras and Weyl groups for root-reductive Lie algebras. In Section 3 we collect some basic facts about Verma modules and dual Verma modules. This section reproves some results of \cite{NamT} and \cite{Nam} and explores some of the peculiarities of Verma modules for Borel algebras which are not of Dynkin type.  

From Section 4 on, we only consider Dynkin Borel subalgebras and introduce the category $\bO$. We demonstrate that  $\bO$ {is a Grothendieck category, and that it} decomposes as the product of indecomposable blocks described by the Weyl group orbits in the dual Cartan subalgebra $\mathfrak{h}^*$. This also reproves Nampaisarn's result about linkage in $\overline{\mathcal{O}}$. Next, we study blocks after truncation to upper finite ideals in $\fh^\ast$. We prove equivalence of the categories of the truncated blocks with categories of modules over certain locally finite dimensional associative algebras. We then show that any truncated category $\bO$ is extension full in $\bO$, and also in the category of weight modules. These results allow to transfer certain homological questions in $\bO$ to categories which have enough projective objects. It is an open question whether the entire category $\bO$ is extension full in the category of weight modules, and whether the category $\overline{\mathcal{O}}$ is extension full in $\bO$.

In Section 5, we prove that the Serre quotient category of two appropriately chosen truncations of $\bO$ is equivalent to  
$\mathcal{O}(\mathfrak{g}_n,\mathfrak{b}_n)$ for arbitrarily large $n$, where $\fg=\varinjlim \fg_n$ for finite dimensional reductive Lie algebras~$\fg_n$. For dominant blocks, it suffices to consider a quotient category of $\bO$, and for antidominant blocks it suffices to consider a subcategory of $\bO$ to establish such equivalences. Using the homological results in Section 4, this shows that the higher extensions of simple modules by Verma modules in $\bO$ are governed by Kazhdan-Lusztig-Vogan polynomials. This was conjectured for $\overline{\cO}$ in \cite{NamT}.
As another application, we show that all blocks of $\bO$ corresponding to integral dominant regular weights are equivalent. In this section we also address the Koszulity of blocks of the category $\bO$. We prove that truncations of $\bO$ admit graded covers, in the sense of \cite{BGS}. In the graded setting, we show that extensions of simple modules by Verma modules (and extensions of dual Verma modules by simple modules) satisfy the Koszulity pattern. For BGG category $\cO$, this property actually implies ordinary Koszulity, see \cite{ADL}. Here, we leave open the question of whether extensions of simple modules by simple modules in the graded cover of $\bO$ also satisfy the required pattern. This is a nice question for further research.

Section 6 and 7 are devoted to another natural structural question: Ringel duality in the category $\bO$. In Section 6 we construct and study the semi-regular $U(\mathfrak{g})$-bimodule. For Kac-Moody algebras corresponding to a finite dimensional Cartan matrix, this bimodule was introduced in \cite{Arkhipov, Soergel}, and we extend the procedure to Kac-Moody algebras for infinite dimensional Cartan matrices, such as $\mathfrak{sl}(\infty),\mathfrak{so}(\infty)$ and $\mathfrak{sp}(\infty)$.
In Section 7 we show that the category $\bO$, as a whole, is Ringel self-dual, by establishing a (covariant) equivalence between the category of modules with a Verma flag and the category of modules with a dual Verma flag. Since this equivalence sends the Verma module $\Delta(\lambda)$ to the dual Verma module $\nabla(-\lambda-2\rho)$, the blocks of $\bO$ are not Ringel self-dual. In particular, dominant blocks are dual to anti-dominant blocks. The Ringel duality functor also implies existence of tilting modules in appropriate Serre quotients and determines their decomposition multiplicities.

The paper is concluded by brief appendices on certain theories to which we refer throughout the text: Serre quotient categories, Ringel duality, graded covers, and quasi-hereditary Koszul algebras.

 {Finally, we would like to mention} some interesting related results, obtained independently at the same time by Chen and Lam in \cite{CL}. There, specific dominant blocks in {\em parabolic subcategories}, with respect to specific Levi subalgebras of finite corank, of $\bar\cO$ for $\mathfrak{gl}(\infty), \mathfrak{so}(\infty), \mathfrak{sp}(\infty)$ are studied. For $\mathfrak{gl}(\infty)$, this leads to categories where the modules have finite length. In that setting, also in \cite{CL} equivalences with the finite rank case are shown and used to obtain results on Koszulity.

\subsection*{Acknowledgement}
KC was supported by ARC grant DE170100623. IP was supported in part by DFG grant PE980/7-1.

\section{Preliminaries}
We fix an algebraically closed field $\mk$ of characteristic zero. For any Lie algebra~$\fk$, the universal enveloping algebra will be denoted by~$U(\fk)$. The restriction functor from the category of $\fk$-modules to the category of $\fl$-modules, for a subalgebra $\fl\subset\fk$, is denoted by $\res^{\fk}_{\fl}$.  We {put} $\mN=\{0,1,2,\ldots\}$. If $A$ is a set{, then} $|A|$ denotes its cardinality. For a category $\cC$ we will usually abbreviate $X\in\Ob\cC$ to $X\in\cC$.

\subsection{Abelian categories}
Let $\cC$ be an arbitrary abelian category.

\subsubsection{Multiplicities}
We follow \cite[Definition~4.1]{Soergel} regarding multiplicities.
For~$M\in \cC$ and simple $L\in\cC$, the multiplicity $[M:L]\in\mN\cup\{\infty\}$ of~$L$ in~$M$ is 
$$[M:L]\;=\;\sup_{F_\bullet}|\{i\,|\,F_iM/F_{i+1}M\cong L\}|,$$
where~$F_\bullet$ ranges over all {\em finite} filtrations $0=F_pM\subset\cdots \subset F_{i+1}M\subset F_iM\subset\cdots\subset F_0M=M$.
%By definition, for any short exact sequence in~$\cC$
%\begin{equation}\label{sesMult}0\to M'\to M\to M''\to 0,\qquad\mbox{we have}\qquad [M:L]=[M':L]+[M'':L].\end{equation}

\subsubsection{Extensions}
 For each $i\in\mN$, we define the extension functor
$$\Ext_{\cC}^i(-,-)\;:\;\cC^{\op}\times\cC\,\to\, \Ab$$
 as in \cite[III.3]{Ve}, see also \cite[Section~2.1]{CM}. For an abelian subcategory $\iota:\cB\hookrightarrow \cC$, the exact inclusion functor~$\iota$ induces group homomorphisms
\begin{equation}\label{XYi}\iota_{XY}^{i}:\;\Ext^i_{\cB}(X,Y)\;\to\;\Ext^i_{\cC}(X,Y),\quad\mbox{ for all $i\in\mN$ and~$X,Y\in\cB$.}\end{equation}
In general, these are neither epimorphisms nor monomorphisms. When all $\iota_{XY}^i$ are isomorphisms, we say that~$\cB$ is {\bf extension full} in~$\cC$. 

\subsubsection{Coproducts}We denote the coproduct of a family $\{X_\alpha\}$ of objects in~$\cC$, if it exists, by $\bigoplus_\alpha X_\alpha$. By definition, we have an isomorphism
\begin{equation}\label{eqCo}\Hom_{\cC}\left(\bigoplus_\alpha X_\alpha,Y\right)\;\stackrel{\sim}{\to}\; \prod_\alpha\Hom_{\cC}(X_\alpha,Y),\quad f\mapsto (f\circ\iota_\alpha)_\alpha.\end{equation}
%The product $\prod_\alpha$ is the direct product $\times_\alpha$ of (abelian) groups.

The following lemma can be generalised substantially, but it will suffice for our purposes.
\begin{lemma}\label{LemCopr}
If for a family $\{X_\alpha\}_\alpha$ of objects in~$\cC$ and~$Y\in\cC$ we have $\Ext^1_{\cC}(X_\alpha,Y)=0$ for all $\alpha$, then $\Ext^1_{\cC}(\bigoplus_\alpha X_\alpha,Y)=0$.
\end{lemma}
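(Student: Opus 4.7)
The plan is to represent an arbitrary element of $\Ext^1_{\cC}(\bigoplus_\alpha X_\alpha, Y)$ by a short exact sequence
\[
0 \longrightarrow Y \longrightarrow E \stackrel{p}{\longrightarrow} \bigoplus_\alpha X_\alpha \longrightarrow 0,
\]
and to show it splits by constructing a section from the coproduct's universal property.

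First, for each index $\alpha$ I would pull back the above sequence along the canonical injection $\iota_\alpha \colon X_\alpha \to \bigoplus_\alpha X_\alpha$. This yields a short exact sequence
\[
0 \longrightarrow Y \longrightarrow E_\alpha \longrightarrow X_\alpha \longrightarrow 0
\]
together with a morphism $E_\alpha \to E$ lifting $\iota_\alpha$; here I use the standard fact that pullbacks of short exact sequences along arbitrary morphisms are short exact in any abelian category. By hypothesis $\Ext^1_{\cC}(X_\alpha, Y)=0$, so each such sequence splits, furnishing a section $s_\alpha \colon X_\alpha \to E_\alpha$. Composing with the canonical map $E_\alpha \to E$, I obtain, for each $\alpha$, a morphism $t_\alpha \colon X_\alpha \to E$ satisfying $p \circ t_\alpha = \iota_\alpha$.

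Next I invoke the universal property of the coproduct, as recorded in equation \eqref{eqCo}: the family $(t_\alpha)_\alpha$ assembles to a unique morphism $t \colon \bigoplus_\alpha X_\alpha \to E$ with $t \circ \iota_\alpha = t_\alpha$ for every $\alpha$. Then $p \circ t$ is a morphism $\bigoplus_\alpha X_\alpha \to \bigoplus_\alpha X_\alpha$ satisfying $(p \circ t) \circ \iota_\alpha = \iota_\alpha$ for all $\alpha$, so by the uniqueness clause of \eqref{eqCo} (applied with $Y = \bigoplus_\alpha X_\alpha$) it must equal the identity. Hence $t$ is a section of $p$ and the original extension splits, proving $\Ext^1_{\cC}(\bigoplus_\alpha X_\alpha, Y) = 0$.

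There is really no main obstacle: both key ingredients (stability of short exact sequences under pullback, and the universal property of coproducts) are formal. The only point to watch is that the construction uses a choice of splitting $s_\alpha$ for each $\alpha$ and thus relies implicitly on a form of choice, but this is harmless and standard.
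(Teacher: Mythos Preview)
Your proof is correct and follows essentially the same approach as the paper: represent an extension by a short exact sequence, pull it back along each $\iota_\alpha$, split each pulled-back sequence using the hypothesis, and assemble the resulting maps into a global section via the universal property~\eqref{eqCo}. The only differences are notational.
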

\begin{proof}
Represent an element of $\Ext^1_{\cC}(\bigoplus_\alpha X_\alpha,Y)$ as the upper {row} of the following diagram
$$\xymatrix{0\ar[r]& Y\ar@{=}[d]\ar[r]& M\ar[r]^f&\bigoplus_\alpha X_\alpha\ar[r]&0\\
0\ar[r]& Y\ar[r]& M_\beta\ar[r]^{f_\beta}\ar[u]^{\phi_\beta}& X_\beta\ar[r]\ar[u]^{\iota_\beta}&0.
}$$
With $M_\beta$ the pullback of $X_\beta$ in $M$, we obtain the above commuting diagram with exact rows, for every $\beta$. By assumption, there exist $g_\beta: X_\beta\to M_\beta$ with $f_\beta\circ g_\beta=1_{X_\beta}$. Equation~\eqref{eqCo} yields a morphism $g:\bigoplus_\alpha X_\alpha\to M$ such that~$g\circ\iota_\beta =\phi_\beta\circ g_\beta$. Commutativity of the diagram implies ~$f\circ g\circ\iota_\beta=\iota_\beta$. Since $\beta$ is arbitrary, {the} isomorphism~\eqref{eqCo} implies that $f\circ g$ is the identity of $\bigoplus_\alpha X_\alpha${,} and the extension defined by the upper row of the diagram splits.
\end{proof}

\subsubsection{Serre subcategories}
A non-empty full subcategory~$\cB$ of~$\cC$ is a {\bf Serre subcategory} (``thick subcategory'' in \cite{Gabriel}) if for every short exact sequence in $\cC$
$$0\to Y_1\to X\to Y_2\to 0,$$
we have $X\in \cB$ if and only if~$Y_1,Y_2\in \cB$. The exact inclusion functor~$\imath:\cB\to \cC$ is fully faithful (meaning that all $\iota^0_{XY}$ are isomorphisms) and such that also all $\iota^1_{XY}$ are isomorphisms. In addition, it follows immediately that~$\cB$ is a strictly full (full and replete) subcategory.

\subsection{Locally finite algebras} 

\subsubsection{}A $\mk$-algebra $A$ is {\bf locally unital} if there exists a family of mutually orthogonal idempotents~$\{e_\alpha\,|\, \alpha\in\Lambda\}$ for which we have
$$A\;=\;\bigoplus_{\alpha}e_\alpha A\;=\; \bigoplus_\alpha Ae_\alpha.$$
We denote by $A$-Mod the category of left $A$-modules $M$ which satisfy $M=\bigoplus_\alpha e_\alpha M$, or equivalently $AM=M$.

\subsubsection{} A locally unital algebra $A$ is {\bf locally finite} if {$\dim_{\mk}e_\alpha Ae_\beta<\infty$ for all $\alpha,\beta$}. Fur such an algebra we have the full subcategory $A$-mod of $A$-Mod, of modules $M$ which satisfy $\dim_{\mk}e_\alpha M<\infty$ for all $\alpha$. Clearly the projective modules $Ae_\alpha$ are {objects of} $A$-mod, although $A$-mod will generally not contain {\em enough} projective objects.

\subsection{Partial orders} Fix a partially ordered set $(S,\preceq)$. We will denote the induced partial order on any subset of~$S$ by the same notation $\preceq$.

\subsubsection{}\label{secPaOr}
Any two elements~$\lambda,\mu\in S$ determine an {\bf interval} $\{\nu\in S\,|\,\mu\preceq\nu\preceq \lambda\}$.
 An {\bf ideal} $\KKK$ is a subset of~$S$ with the property that~$\lambda\in \KKK$ and~$\mu\preceq \lambda$ implies~$\mu\in\KKK$. An ideal $\KKK$ is {\bf finitely generated} if there exists a finite subset $E\subset \KKK$ such that each $\mu\in\KKK$ satisfies $\mu\preceq\lambda$ for some $\lambda\in E$. A subset $\JJJ { \subset S}$ is {\bf upper finite}, resp. {\bf lower finite}, if for any $\mu\in\JJJ$ there are only finitely many $\lambda\in \JJJ$ for which $\lambda\succeq\mu$, resp. $\lambda\preceq\mu$. A subset $\III{\subset S}$ is  {\bf complete} if it is a union of intervals, {\it i.e.} if $\lambda,\mu\in \III$ and~$\mu\preceq\nu\preceq \lambda$ implies~$\nu\in\III$. A subset $\CCC$ of~$S$ is a {\bf coideal} if $\lambda\in \CCC$ and~$\mu\succeq \lambda$ imply~$\mu\in\CCC$. Clearly, the intersection of an ideal and a coideal is a complete subset. Furthermore, the coideals in~$S$ are precisely the sets $S\backslash\III$ for ideals $\III$ in~$S$.

\subsubsection{}\label{Complete2}To any complete subset $\III\subset S$, we associate two ideals
$$\overline{\III}=\{\mu\in S\,|\, \mu\preceq \lambda,\;\;\mbox{for some $\lambda\in\III $}\}\quad\mbox{and}\quad \mathring{\III}\;=\;\overline{\III}\;\backslash\;\III.$$
%The fact that~$\mathring{\III}$ is an ideal, follows from the assumption that~$\III$ is complete.
%\subsubsection{} An element $\lambda\in S$ of a partially ordered set $(S,\preceq)$ is called {\bf surfacing} if $ \{\mu\in S\,|\, \mu\succeq \lambda\}$ is a finite set.
A pair of elements {$\mu, \lambda\in S$} is called {\bf remote} if the interval $\{\nu\in S\,|\,\mu\preceq\nu\preceq \lambda\}$ has infinite cardinality. With this convention, incomparable elements are never remote.
A partial order is {\bf interval finite} if every interval is a finite set, or equivalently if no two elements are remote. For a partial order which is interval finite, all finitely generated ideals are upper finite ideals.

%%%%%%%%%%%%%%%%%%%%%%%%%%%%%%%%%%%%%%%%%%%%%%%%%%%%%%%%%%%%%%%%%%%%%%%%%%%%%%

\section{Root-reductive Lie algebras and triangular decompositions}

\subsection{Root-reductive Lie algebras}

\subsubsection{}{Recall that a finite dimensional subalgebra $\fk\subset\fl$ of a finite dimnsional algebra $\fl$ is {\bf reductive in $\fl$} if the adjoint action of $\fk$ on $\fl$ is semisimple.}
\label{LocRed} A Lie algebra~$\fg$ over $\mk$ is {\bf locally reductive} if ${\fg}$ has a collection of {nested} subalgebras~$\{\widetilde{\fg}_n{\subset\widetilde{\fg}_{n+1}}\,|\,n\in\mN\}$
such that
$$\fg\;=\;\varinjlim \widetilde{\fg}_n,$$
where, for each $n\in\mN$, $\widetilde{\fg}_n$ is a {finite dimensional} reductive Lie algebra which is reductive in~$\widetilde{\fg}_{n+1}$.
%By construction, the adjoint representation of a locally reductive Lie algebra is integrable. Here, we say that a $\fg$-module is {\bf  integrable} if the corresponding $U(\fg)$-module is integrable. A module over an associative algebra is integrable if and only if every finitely generated subalgebra acting on an arbitrary vector generates a finite dimensional space.
%Note that this algebraic integrability condition is weaker than the geometric condition, unless $\fg$ is (locally) semisimple. 

\subsubsection{}\label{DefrrL}
Consider a locally reductive Lie algebra~$\fg$ as above. If, for each $n\in\mN$, there exists a Cartan subalgebra~$\fh_n\subset\widetilde{\fg}_n$ such that $\fh_n\subset\fh_{n+1}$ and such that each root vector in~$\widetilde{\fg}_n$ is also a root vector in~$\widetilde{\fg}_{n+1}$, the Lie algebra~$\fg$ is called
{\bf root-reductive}.

{If $\fg$ is root reducible}, we {are given an} abelian subalgebra~$\fh=\varinjlim\fh_n$. {Such} subalgebras~$\fh\subset\fg$ are known as {\bf splitting maximal toral subalgebras} of~$\fg$. These subalgebras are also {\bf Cartan subalgebras} of $\fg$, according to the definition and results in \cite[Section~3]{DPS}. We will simply use the term ``Cartan subalgebra'' when referring to splitting maximal toral subalgebras. %We refer to them as {\bf Cartan subalgebras}.
%The monomorphism $\fh_n\hookrightarrow \fh$ leads to~$\fh^\ast\tto \fh^\ast_n$, which we denote by~$\lambda\mapsto\lambda^n$.
 We also introduce the subalgebras~$$\fg_n:=\widetilde{\fg}_n+\fh\;\subset\;\fg.$$

\begin{lemma} \cite[Theorem~4.1]{DPS},~\cite[Theorem~1]{DP}
For any root-reductive Lie algebra~$\fg$, the derived algebra~$[\fg,\fg]$ is a root-reductive Lie algebra which is a countable direct sum of Lie algebras isomorphic to~$\mathfrak{sl}(\infty)$, $\mathfrak{so}(\infty)$, $\mathfrak{sp}(\infty)$ or finite dimensional simple Lie algebras.
\end{lemma}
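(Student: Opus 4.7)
My plan is to proceed in two phases: first inherit the root-reductive structure from $\fg$ to $[\fg,\fg]$, then decompose $[\fg,\fg]$ into simple ideals via its root system. For the first phase, I would set $\widetilde{\fg}_n^{\mathrm{ss}}:=[\widetilde{\fg}_n,\widetilde{\fg}_n]$, which is semisimple since $\widetilde{\fg}_n$ is finite dimensional reductive. The equality $[\fg,\fg]=\varinjlim \widetilde{\fg}_n^{\mathrm{ss}}$ follows because every bracket in $\fg$ is computed at some finite level. Restricting the reductive adjoint action of $\widetilde{\fg}_n$ on $\widetilde{\fg}_{n+1}$ down to $\widetilde{\fg}_n^{\mathrm{ss}}$ shows it is reductive in $\widetilde{\fg}_{n+1}^{\mathrm{ss}}$, and the subalgebras $\fh_n^{\mathrm{ss}}:=\fh_n\cap\widetilde{\fg}_n^{\mathrm{ss}}$ form a chain of Cartan subalgebras compatible with root vectors (since these are just the nonzero weight spaces of $\fh_n$ acting on $\widetilde{\fg}_n$). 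Hence $[\fg,\fg]$ is root-reductive.

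For the second phase, I would work with the root decomposition $\fg=\fh\oplus\bigoplus_{\alpha\in\Delta}\fg_\alpha$, where $\Delta=\varinjlim \Delta_n$ and the embeddings $\Delta_n\hookrightarrow\Delta_{n+1}$ are forced by the root-vector compatibility. Declare $\alpha,\beta\in\Delta$ equivalent if they can be joined by a finite chain of pairwise non-orthogonal roots, partitioning $\Delta$ into indecomposable components $\{\Delta^{(i)}\}_{i\in I}$; $I$ is countable because each $\Delta_n$ has finitely many irreducible components. For each $i$ set
\[ \fg^{(i)}\;:=\;\sum_{\alpha\in\Delta^{(i)}}[\fg_\alpha,\fg_{-\alpha}]\;\oplus\;\bigoplus_{\alpha\in\Delta^{(i)}}\fg_\alpha. \]
Root vectors along roots in distinct classes commute (no such sum can be a root, by indecomposability), so the $\fg^{(i)}$ mutually commute, giving $[\fg,\fg]=\bigoplus_{i\in I}\fg^{(i)}$ as a direct sum of ideals. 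Simplicity of each $\fg^{(i)}$ follows from a colimit argument: any proper ideal would intersect $\fg^{(i)}\cap\widetilde{\fg}_n^{\mathrm{ss}}$ in a proper ideal at cofinally many levels, contradicting connectedness of $\Delta^{(i)}$.

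The main obstacle, and the final step, is identifying each $\fg^{(i)}$ with a Lie algebra on the list. At each level $n$, $\Delta^{(i)}\cap\Delta_n$ is an irreducible (or empty) finite root system of type $A$, $B$, $C$, $D$, or exceptional, and the colimit $\Delta^{(i)}$ is an indecomposable locally finite root system. I would argue case by case that embeddings of irreducible finite root systems compatible with a fixed Cartan subalgebra and sending root vectors to root vectors cannot place an exceptional irreducible component into a strictly larger irreducible one beyond the bounded chain $E_6\subset E_7\subset E_8$ (and the trivial stabilisations for $F_4$, $G_2$). Consequently, an exceptional component must stabilise, giving a finite dimensional simple $\fg^{(i)}$, whereas the only non-stabilising connected colimits are of type $A_\infty$, $B_\infty$, $C_\infty$, or $D_\infty$, yielding $\mathfrak{sl}(\infty)$, $\mathfrak{sp}(\infty)$, or $\mathfrak{so}(\infty)$ (with $B_\infty$ and $D_\infty$ giving the same $\mathfrak{so}(\infty)$). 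This rigidity analysis of permissible root-subsystem inclusions is the technical heart of the proof.
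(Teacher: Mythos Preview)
The paper does not prove this lemma; it simply records it as a citation of \cite[Theorem~4.1]{DPS} and \cite[Theorem~1]{DP}. So there is no in-paper argument to compare against.

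Your outline is in the spirit of the arguments in those references: pass to the derived subalgebras levelwise, then decompose via connected components of the root system, and finally invoke the classification of indecomposable locally finite root systems. Two points deserve more care. First, the assertion that $\Delta^{(i)}\cap\Delta_n$ is irreducible (or empty) for each $n$ is not correct in general: two roots lying in the same limit component $\Delta^{(i)}$ may only become linked by a non-orthogonal chain at some level $m>n$, so $\Delta^{(i)}\cap\Delta_n$ can split into several irreducible pieces of $\Delta_n$. This does not break the strategy, but both your simplicity argument and the type-identification step need to be phrased cofinally (or after re-choosing the exhaustion) rather than level by level. Second, the classification of the unbounded connected colimits as $A_\infty$, $B_\infty$, $C_\infty$, $D_\infty$ is exactly the substantive content of \cite[Theorem~1]{DP}; your sketch of the exceptional-stabilisation argument is the right idea but is really a pointer to that result rather than an independent proof.
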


\subsubsection{}If $\fg$ is a root-reductive Lie algebra with Cartan subalgebra~$\fh$, {there is} a corresponding decomposition into $\fh$-weight spaces
\begin{equation}\label{rootdec}\fg\;=\;\fh\oplus\bigoplus_{\alpha\in\rR}\fg^{\alpha},\end{equation}
for the set of roots $\rR\subset\fh^\ast$. By construction, we have $\dim_{\mk}\fg^\alpha=1$ for each $\alpha\in \rR$, and~$0\not\in\rR$. We denote the subset of roots belonging to~$\fg_n$ as $\rR_n$, for each $n\in\mN$.

\subsubsection{}\label{SecWeightM}We introduce the category~$\bC(\fg,\fh)$ of~$\fg$-modules which are 
semisimple as~$\fh$-modules. For any $\mu\in\fh^\ast$ and~$M\in\bC(\fg,\fh)$, we denote by~$M_\mu$ the $\mu$-weight space in~$M$. By assumption, we have $M=\bigoplus_\mu M_\mu$ for $M\in \bC(\fg,\fh)$. 
 For any module $M\in\bC(\fg,\fh)$, we consider its support $\supp M\subset \fh^\ast$, which is the set of all weights $\mu$ such that~$M_\mu\not=0$. 
 
The full subcategory of~$\bC(\fg,\fh)$ of modules $M$ which satisfy $\dim_{\mk}M_\mu<\infty$ for all $\mu\in\fh^\ast$, is denoted by~$\cC(\fg,\fh)$. This is clearly a Serre subcategory.
  {There is} the duality $M\mapsto M^{\circledast}$ on $\cC(\fg,\fh)$ which takes $M$ to its $\fh$-finite dual, {\it i.e.} to the maximal $\fh$-semisimple submodule of~$M^\ast=\Hom_{\mk}(M,\mk)$. {There is} also  the duality $M\mapsto M^{\vee}$ which twists the action on $M^{\circledast}$ with the {anti-involution $\tau:\fg\to\fg$} which maps~$\fg^{\alpha}$ to~$\fg^{-\alpha}$ for all $\alpha\in\Phi$, and acts as $-1$ on $\fh^\ast$. In particular, we have
  \begin{equation}
  \label{suppDual}
  \supp M\;=\;\supp M^\vee\quad\mbox{for all $M\in\cC(\fg,\fh).$}
  \end{equation}

\begin{rem}
If we apply the definition of $\bC(\fg,\fh)$ to $\fg_n$, and then only consider modules with support belonging to a fixed coset of $\fh^\ast/\mZ\rR_n$, we automatically get an equivalence with a correspondingly defined category for $\widetilde{\fg}_n$. We will therefore freely use results for the finite dimensional reductive Lie algebra $\widetilde{\fg}_n$, for example related to category $\cO$, when working over $\fg_n$.
\end{rem}

\subsection{Triangular decompositions} Fix a root-reductive Lie algebra~$\fg$ with Cartan subalgebra~$\fh$.

\subsubsection{}\label{DefBorel}Choose a subset $\rR^+\subset\rR$ such that $\rR=\rR^+\amalg\rR^-$, with $\rR^-:=-\rR^+$, and $\alpha+\beta\in\Phi^+$ whenever $\alpha,\beta\in\Phi^+$.
Then {let}
$$ \fn^+:=\bigoplus_{\alpha\in\rR^+}\fg^\alpha,\;\; \fn^-:=\bigoplus_{\alpha\in\rR^-}\fg^{\alpha}.$$
The elements of~$\rR^+$, resp. $\rR^-$, which cannot be written as a sum of two other elements of $\rR^+$, resp. $\Phi^-$, are known as {\bf simple roots}. The positive simple roots constitute the subset $\Sigma \subset\rR^+$. 

The {\bf splitting Borel subalgebras} of~$\fg$ are by definition precisely the subalgebras~$\fb:=\fh\oplus\fn^+$ obtained in the above way. (The decomposition $\fb=\fh\oplus\fn^+$ is a direct sum of vector spaces, not of Lie algebras.)
Note that~$\fb^-=\fh\oplus\fn^-,$ the  Borel subalgebra {opposite} to $\fb$ {and containing $\fh$}, is also a splitting Borel subalgebra. We will simply use the term ``Borel subalgebra'' when referring to splitting Borel subalgebras. A Borel subalgebra for $\fg$ leads to Borel subalgebras for $\fg_n$ and~$\widetilde{\fg}_n$:
$$\fb_n:=\fg_n\cap\fb,\qquad \widetilde{\fb}_n:=\widetilde{\fg}_n\cap\fb.$$

\subsubsection{}
For each $\lambda\in\fh^\ast$, we have the corresponding {\bf Verma module} 
\begin{equation}\label{Verma}\Delta^{\fb}_{\fg}(\lambda):=U(\fg)\otimes_{U(\fb)}\mk_\lambda,\end{equation}
where $\mk_\lambda$ is the one dimensional $\fh$-module of weight $\lambda$ with trivial $\fn^+$-action. We will {omit} the indices $\fg$ and~$\fb$ when it is clear which algebras are considered.

We denote by~$\Gamma^+$ the subset of~$\fh^\ast$, consisting of~$0$ and finite sums of elements in~$\rR^+$. The partial order $\le$ on $\fh^\ast$ is defined as
$$\mu\le\lambda\;\;\Leftrightarrow\;\; \lambda-\mu\in\Gamma^+\;\;\Leftrightarrow\;\; \Delta(\lambda)_\mu\not=0.$$
We use the notation $\le_n$ for the partial order on $\fh^\ast$ obtained from the above procedure applied to $\rR_n^+=\rR_n\cap\rR^+$.

%\begin{rem}
%The same Lie algebra~$\fg$ can be isomorphic to a direct limit as in~\ref{DefrrL}, leading to many non-equivalent splitting Borel subalgebras.
%\end{rem}

\subsection{Parabolic subalgebras}

\subsubsection{}
For a fixed Borel subalgebra~$\fb$, any subalgebra~$\fp\subset\fg$ which contains $\fb$ is called a {\bf parabolic} subalgebra. The reductive part~$\fl\subset\fp$ is spanned by~$\fh$ and all root spaces $\fg^\alpha$ such that both $\fg^\alpha$ and~$\fg^{-\alpha}$ are in~$\fp$. We denote by $\Phi(\fl)\subset\Phi$ the set of roots occurring in $\fl$. We have the corresponding parabolic decomposition (of vector spaces)
$$\fg\;=\;\fu^-\oplus\fl\oplus\fu^+,\qquad\fp=\fl\oplus\fu^+.$$

The following lemma is an easy consequence of the definitions.
\begin{lemma}\label{LemComplete}
For any $\lambda\in\fh^\ast$ and reductive part~$\fl\subset\fg$ of a parabolic subalgebra, the subset $\lambda+\mZ\rR(\fl)\subset\fh^\ast$ is complete for {the partial order} $\le$.
\end{lemma}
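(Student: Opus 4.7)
The plan is to unwind the definition of ``complete'' into a combinatorial statement about $\rR^+$, reduce that statement to the single claim $\mZ\rR(\fl)\cap\mN\rR(\fu^+)=\{0\}$, and then establish this claim by descending to the finite-dimensional reductive subalgebra $\widetilde{\fg}_n$, where the standard separating-cocharacter argument for parabolics applies.

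More precisely, given $\mu,\lambda'\in\lambda+\mZ\rR(\fl)$ and $\nu\in\fh^\ast$ with $\mu\le\nu\le\lambda'$, I would set $\sigma_1:=\lambda'-\nu$ and $\sigma_2:=\nu-\mu$, so that $\sigma_1,\sigma_2\in\Gamma^+$ and their sum $\lambda'-\mu$ lies in $\mZ\rR(\fl)$. To conclude that $\nu\in\lambda+\mZ\rR(\fl)$ it suffices to show $\sigma_1\in\mZ\rR(\fl)$ (whence automatically also $\sigma_2\in\mZ\rR(\fl)$). Using the disjoint decomposition $\rR^+=(\rR^+\cap\rR(\fl))\sqcup\rR(\fu^+)$, each $\sigma_i$ splits uniquely as $\sigma_i^{\fl}+\sigma_i^{\fu^+}$ with $\sigma_i^{\fl}\in\mN(\rR^+\cap\rR(\fl))$ and $\sigma_i^{\fu^+}\in\mN\rR(\fu^+)$. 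Then
\[
\sigma_1^{\fu^+}+\sigma_2^{\fu^+}\;=\;(\lambda'-\mu)-(\sigma_1^{\fl}+\sigma_2^{\fl})\;\in\;\mZ\rR(\fl)\cap\mN\rR(\fu^+),
\]
so the key claim $\mZ\rR(\fl)\cap\mN\rR(\fu^+)=\{0\}$ forces both $\sigma_i^{\fu^+}=0$, whence $\sigma_i=\sigma_i^{\fl}\in\mZ\rR(\fl)$, as required.

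For the key claim, any $\tau$ in the intersection involves only finitely many roots, so $\tau\in\mZ\rR_n$ for some $n\in\mN$. I would then work inside the finite-dimensional reductive Lie algebra $\widetilde{\fg}_n$: the intersection $\widetilde{\fp}_n:=\fp\cap\widetilde{\fg}_n$ is a parabolic subalgebra of $\widetilde{\fg}_n$ containing $\widetilde{\fb}_n$, whose reductive part is $\fl\cap\widetilde{\fg}_n$ with root set $\rR(\fl)\cap\rR_n$ and whose nilpotent radical contributes the roots $\rR(\fu^+)\cap\rR_n$. The standard theory for parabolics of finite-dimensional reductive Lie algebras then produces $h\in\fh_n$ lying in the centre of $\fl\cap\widetilde{\fg}_n$ with $\alpha(h)=0$ for $\alpha\in\rR(\fl)\cap\rR_n$ and $\alpha(h)>0$ for $\alpha\in\rR(\fu^+)\cap\rR_n$. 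Evaluating $\tau$ at $h$ gives $0$ from its $\mZ\rR(\fl)$-expression and a non-negative combination of strictly positive numbers from its $\mN\rR(\fu^+)$-expression, forcing $\tau=0$.

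The main (and rather mild) obstacle is simply bookkeeping: one must check that the parabolic decomposition $\fg=\fu^-\oplus\fl\oplus\fu^+$ restricts compatibly to $\widetilde{\fg}_n$ for all sufficiently large $n$. This is essentially automatic, since the root-space decomposition \eqref{rootdec} is simultaneously an $\fh$- and an $\fh_n$-weight decomposition, and $\widetilde{\fb}_n\subset\widetilde{\fp}_n$ inherits its Borel and parabolic properties directly from $\fb\subset\fp$.
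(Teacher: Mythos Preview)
Your argument is correct. The paper itself gives no proof beyond the sentence ``an easy consequence of the definitions,'' so there is nothing to compare approaches against; your write-up is a fully fleshed-out version of what the authors evidently regard as routine.

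One small expository point: when you write that each $\sigma_i$ ``splits uniquely'' as $\sigma_i^{\fl}+\sigma_i^{\fu^+}$, uniqueness is not yet available at that stage (it is in fact equivalent to the key claim $\mZ\rR(\fl)\cap\mN\rR(\fu^+)=\{0\}$ you establish afterwards). Fortunately your argument only uses \emph{existence} of such a splitting, which is immediate from writing $\sigma_i$ as a sum of positive roots and grouping the summands. So the slip is harmless; just drop the word ``uniquely.''
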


\subsection{Induction and restriction}\label{IndRes}
Fix a Borel subalgebra $\fb$ and a parabolic subalgebra $\fp\subset\fg$ with reductive part $\fl$. We have the exact functor
$$\ind^{\fg}_{\fl,+}\,:\; \fl\mbox{-Mod}\;\to\;\fg\mbox{-Mod,}$$
which is given by interpreting $\fl$-modules as $\fp=\fl\oplus \fu^+$-modules with trivial $\fu^{+}$-action, followed by ordinary induction $\ind^{\fg}_{\fp}=U(\fg)\otimes_{U(\fp)}-$.
For any $\lambda\in\fh^\ast$, we also consider {the functor}
$$\res^{\fg}_{\fl,\lambda}\,:\; \bC(\fg,\fh)\;\to\; \bC(\fl,\fh)$$
{which we define as} the ordinary restriction functor followed by taking the maximal direct summand with support in~$\lambda+\mZ\rR(\fl)$.

%%%%%%%%%%%%%%%%%%%%%%%%%%%%%%%%%%%%%%%%%%%%%%%%%%%%%%%%%%%%%%%%%%%%%%%%

\subsection{Dynkin Borel subalgebras}
Consider a root-reductive Lie algebra~$\fg$ with Cartan subalgebra $\fh$.

\begin{prop}\label{equivCond}
For a Borel subalgebra $\fh\subset\fb\subset\fg$, the following conditions are equivalent:
\begin{enumerate}[(i)]
\item The elements of~$\Gamma^+$ are the finite sums of elements in~$\Sigma $.
\item We can write $\fg=\varinjlim\widetilde{\fg}_n$ as in \ref{LocRed}, with the additional condition that~$\widetilde{\fg}_n+\fb$ is a (parabolic) subalgebra of~$\fg$ for each $n\in\mN$.
\item The partial order $\le$ is interval finite.
\item The Lie algebra~$\fg$ is generated by~$\fh$ and the simple (positive and negative) root spaces.
\item For each $\lambda\in\fh^\ast$, the Verma module~$\Delta(\lambda)$ is locally $U(\fb)$-finite.
\item For each $\lambda\in\fh^\ast$, the Verma module~$\Delta(\lambda)$ has finite dimensional weight spaces.
\end{enumerate}
If one of the conditions is satisfied, $\fb$ is called a {\bf Dynkin Borel} subalgebra.
\end{prop}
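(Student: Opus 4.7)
The plan is to establish a cycle of implications: $(i) \Leftrightarrow (iv)$, $(i) \Leftrightarrow (ii)$, $(i) \Leftrightarrow (iii) \Leftrightarrow (vi)$, $(iii) \Rightarrow (v)$, and $(v) \Rightarrow (i)$. The equivalence $(i) \Leftrightarrow (iv)$ is immediate from the root space decomposition~\eqref{rootdec}: the Lie subalgebra of $\fg$ generated by $\fh$ and the simple root spaces $\{\fg^{\pm\alpha}\}_{\alpha \in \Sigma}$ contains exactly those $\fg^\beta$ with $\beta \in \mZ\Sigma$, so it equals $\fg$ if and only if every positive root is a sum of simple roots, which is $(i)$. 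For $(i) \Leftrightarrow (ii)$, the direction $(i) \Rightarrow (ii)$ is obtained by refining the exhaustion of $\fg$: under $(i)$, one can choose, for each $n$, a finite subset $\Sigma_n \subset \Sigma$ such that $\widetilde{\fg}_n$ contains the finitely many root vectors labelled by $\mN\Sigma_n \cap \rR$, making $\widetilde{\fg}_n + \fb$ closed under brackets and hence parabolic. Conversely, $(ii) \Rightarrow (i)$ follows since within a parabolic $\widetilde{\fg}_n + \fb$, every positive root is a sum of simple roots of this parabolic, which are themselves simple roots of $\fb$; taking the union over $n$ recovers $(i)$.

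The combinatorial core $(i) \Leftrightarrow (iii) \Leftrightarrow (vi)$ rests on two bijective encodings: the interval $[\mu,\lambda]$ is in natural bijection with $\{(\gamma_1,\gamma_2)\in\Gamma^+\times\Gamma^+\mid \gamma_1+\gamma_2 = \lambda-\mu\}$, and by the PBW theorem $\dim\Delta(\lambda)_\mu$ equals the cardinality of the set of multisets of positive roots summing to $\lambda-\mu$. Under $(i)$, $\Gamma^+ = \mN\Sigma$ and the simple roots are linearly independent in $\fh^\ast$ (this independence descends from the corresponding property for each finite-dimensional $\widetilde{\fg}_n$), so $\lambda-\mu$ has a unique expression $\sum_\alpha n_\alpha \alpha$ with $n_\alpha \in \mN$, and both the interval and the weight-space dimension are bounded by $\prod_\alpha (n_\alpha + 1)$. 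Conversely, if $\beta \in \rR^+ \setminus \mN\Sigma$, then a sub-summing argument on any multiset decomposition of $\beta$ produces either infinitely many elements of $[0,\beta]$ or infinitely many PBW multisets of weight $-\beta$ in $\Delta(0)$, violating $(iii)$ and $(vi)$ respectively.

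To incorporate $(v)$, the implication $(iii) \Rightarrow (v)$ is straightforward: for a weight vector $v \in \Delta(\lambda)_\mu$, we have $U(\fb) v = U(\fn^+) v$, supported on the finite interval $[\mu,\lambda]$ by $(iii)$, with each contributing weight space finite-dimensional by the already-established equivalence $(iii) \Leftrightarrow (vi)$. For the closing implication $(v) \Rightarrow (i)$, I would argue the contrapositive: given $\beta \in \rR^+ \setminus \mN\Sigma$, the element $f_\beta v_\lambda \in \Delta(\lambda)$ generates an infinite-dimensional $U(\fb)$-submodule, since iterating the simple root vectors $e_{\alpha_i}$ and using $e_{\alpha_i} v_\lambda = 0$ produces, up to nonzero scalars, the vectors $f_{\beta - \alpha_1 - \cdots - \alpha_k} v_\lambda$ for any chain with $\beta - \alpha_1 - \cdots - \alpha_k \in \rR^+$. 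The main obstacle I anticipate is exhibiting an infinite such chain whenever $\beta \notin \mN\Sigma$: this requires a careful combinatorial argument on the non-Dynkin root structure of $\fg$, exploiting the fact that an ``infinite-coefficient" positive root (such as $\epsilon_0 - \epsilon_1$ in a non-standard ordering of the basis of $\mathfrak{sl}(\infty)$) must admit an indefinite simple-root descent within $\rR^+$.
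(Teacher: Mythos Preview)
Your proposal follows essentially the same architecture as the paper's proof, and most steps are correct. There is one genuine gap, which you yourself flag: the contrapositive $(\neg i)\Rightarrow(\neg v)$.

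You want to show that $U(\fb)\,f_\beta v_\lambda$ is infinite dimensional by applying \emph{simple} root vectors $e_{\alpha_i}$, $\alpha_i\in\Sigma$, and producing an infinite chain $\beta,\beta-\alpha_1,\beta-\alpha_1-\alpha_2,\ldots$ inside $\rR^+$. Your worry is justified: such a chain need not exist. For instance, take $\fg=\mathfrak{sl}(\infty)$ with the Borel corresponding to a dense total order on the index set; then $\Sigma=\varnothing$ and no simple-root subtraction is available at all, yet (i) certainly fails. More generally there is no reason a given $\beta\in\rR^+\setminus\mN\Sigma$ should admit any $\alpha\in\Sigma$ with $\beta-\alpha\in\rR^+$.

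The fix is immediate and already implicit in your own argument for $(\neg i)\Rightarrow(\neg iii)$: do not restrict to simple root vectors. For any $\gamma\in\rR^+$ with $\beta-\gamma\in\rR^+$ one has $X_\gamma\in\fn^+\subset\fb$ and
\[
X_\gamma\,(f_\beta v_\lambda)\;=\;[X_\gamma,f_\beta]\,v_\lambda\;=\;c\,f_{\beta-\gamma}\,v_\lambda,\qquad c\neq 0,
\]
since $X_\gamma v_\lambda=0$ and $[\fg^\gamma,\fg^{-\beta}]=\fg^{-(\beta-\gamma)}$ (this holds in the ambient finite-dimensional $\widetilde{\fg}_n$). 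Now feed in the infinite chain $\beta=\beta_0>\beta_1>\beta_2>\cdots$ in $\rR^+$ with $\beta_i-\beta_{i+1}\in\rR^+$ that you already built by iterated splitting of $\beta\notin\mN\Sigma$; this exhibits infinitely many nonzero vectors $f_{\beta_i}v_\lambda$ of pairwise distinct weights in $U(\fb)\,f_\beta v_\lambda$. This is what the paper's ``standard $\mathfrak{sl}_2$-arguments'' amount to. (Incidentally, the paper's own phrasing for $(\neg i)\Rightarrow(\neg iii)$, ``consecutively subtract elements of $\Sigma$'', has the same imprecision; your decomposition-into-two-positive-roots argument there is actually cleaner.)

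A minor point: your argument for $(ii)\Rightarrow(i)$ is phrased ambiguously. The parabolic $\widetilde{\fg}_n+\fb$ contains all of $\fn^+$, so ``simple roots of this parabolic'' coincide with $\Sigma$, and the sentence becomes circular as written. What you presumably mean, and what the paper does, is: any $\beta\in\rR^+$ lies in $\rR_n$ for some $n$; inside the finite root system of $\widetilde{\fg}_n$ it is a sum of simple roots of $\widetilde{\fg}_n$; and the parabolic condition forces each simple root of $\widetilde{\fg}_n$ to be simple in $\fg$ (if $\alpha=\gamma_1+\gamma_2$ with $\gamma_i\in\rR^+$, then $[\fg^{-\alpha},\fg^{\gamma_1}]\subset\widetilde{\fg}_n+\fb$ forces $\fg^{-\gamma_2}\subset\widetilde{\fg}_n$, and similarly $\gamma_1\in\rR_n$, contradicting simplicity in $\widetilde{\fg}_n$).
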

\begin{proof}
First we show that (ii) and (iv) are equivalent. Choose finite subsets $\Sigma_n\subset\Sigma $ for $n\in\mN$, such that~$\Sigma =\cup_n\Sigma_n$ and~$\Sigma_n\subset\Sigma_{n+1}$. Choose also nested subalgebras $\{\fh_n\subset\fh_{n+1}\}$ of $\fh$ with $\varinjlim\fh_n=\fh$. Then we let $\widetilde{\fg}_n$ be the subalgebra of $\fg$ generated by $\fh_n$ and the root vectors corresponding to $\Sigma_n\sqcup -\Sigma_n$. If (iv) is satisfied, it is easy to see that~$\{\widetilde{\fg}_n\}$ satisfies all properties in (ii). Now assume that (ii) is satisfied. Since any $X\in \fg$ is contained in $\widetilde{\fg}_n$ for some $n$, and $\widetilde{\fg}_n$ is generated by $\widetilde{\fg}_n\cap\fh$ and the simple root spaces of $\fg$ which belong to $\widetilde{\fg}_n$, it follows that (iv) is satisfied.

% and take an arbitrary $X\in \fg$. We must have $X\in \fg_n$, for some $n\in\mN$, and in that finite dimensional algebra, $X$ is generated by simple root vectors and the Cartan subalgebra. Since the simple roots in~$\fg_n$ remain simple in~$\fg$
That {conditions} (i) and (iv) are equivalent is clear.

Now we show that {conditions} (i) and (iii) are equivalent. If (i) is satisfied, then $\lambda\ge \mu$ implies that~$\lambda-\mu$ is a finite sum of simple roots. It follows that the interval between $\lambda$ and~$\mu$ is finite. On the other hand, if (i) is not satisfied, we have $\beta\in\rR^+$ such that we can consecutively subtract elements of $\Sigma $ and always obtain an element of $\rR^+$. It follows that the interval between $\beta$ and~$0$ has infinite cardinality.

Consider $\gamma\in\Gamma^+$. By the PBW theorem, there are finitely many ways to write $\gamma$ as a sum of elements in~$\rR^+$ with non-negative coefficients if and only if $\dim_{\mk} \Delta(\lambda)_{\lambda-\gamma}<\infty$, and it is clear that the latter condition is independent of $\lambda\in\fh^\ast$. It follows that {conditions} (i) and (vi) are equivalent.

Now assume that {condition} (i) is satisfied. By the above, also {conditions} (iii) and (vi) are satisfied. We thus have
$$\sum_{\mu\ge \lambda-\gamma}\dim_{\mk}\Delta(\lambda)_{\mu}<\infty,$$
for an arbitrary $\lambda\in\fh^\ast$ and~$\gamma\in\Gamma^+$.
It follows that {condition} (v) is also satisfied, so (i) implies (v).

Now assume that (i) is not satisfied. Then there exists $\beta\in\rR^+$ which is not a finite sum of elements of $\Sigma $. It follows from standard $\mathfrak{sl}_2$-arguments that~$X_{-\beta}v$, with $X_{-\beta}\in \fg^{-\beta}$ and~$v$ the highest weight vector of an arbitrary Verma module, generates an infinite dimensional $U(\fb)$-module. Hence (v) implies (i).
\end{proof}

%We will show in Section~\ref{SecExam}, that (v) can be further specified. Verma modules for non-Dynkin Borel subalgebras can even contain simple constituents which do not admit a highest weight vector.

\subsubsection{}Consider again an arbitrary Borel subalgebra~$\fb$. 
Following \cite[Section~6]{NamT}, we define the {\bf $\fb$-finite root-reductive subalgebra} as the subalgebra $\fl_{\fb}$ of~$\fg$ generated by~$\fh$ and all root spaces for simple roots, with respect to $\fb$. Then $\mZ\rR(\fl_{\fb})=\mZ\Sigma $ and $\fl_{\fb}$ is the reductive part of the parabolic subalgebra $\fl_{\fb}+\fb$.

We have~$\fl_{\fb}=\fg$ if and only if $\fb$ is a Dynkin Borel subalgebra. In general, $\fb\cap{\fl_{\fb}}$ is a Dynkin Borel subalgebra of~$\fl_{\fb}$.

%\subsubsection{} If $\fb$ is a Dynkin Borel subalgebra, we can characterise $\fb$-thick root-reductive subalgebras as those root-reductive subalgebras~$\fl$ such that all simple positive roots $\alpha$
%for $(\fl,\fb_{\fl})$ are still simple when regarded as roots for $(\fg,\fb)$. Another immediate observation is that for any thick root-reductive subalgebra $\fl$, the Borel subalgebra $\fb_{\fl}$ is again Dynkin.

\subsection{The Weyl group} In this section, we consider a Dynkin Borel subalgebra $\fb\supset\fh$ of~$\fg$. By~\ref{equivCond}(ii), we can assume that $\fg=\varinjlim\widetilde{\fg}_n$ where each $\widetilde{\fg}_n+\fb$ is a (parabolic) subalgebra. 

\subsubsection{}The Weyl group $W_n:=W(\widetilde{\fg}_n:\fh_n)$ is naturally a subgroup of~$W_{n+1}$. Moreover, by assumption, the simple reflections of~$W_n$ as a Coxeter group are mapped to simple reflections in~$W_{n+1}$. The infinite Coxeter group 
$$W(\fg:\fh)\,=\,W\,:=\,\varinjlim W_n$$ has a natural action on $\fh^\ast$. For any $\alpha\in \Phi^+$, we denote the corresponding reflection by~$r_\alpha\in W$.

\subsubsection{}\label{rhoshift}It can easily be checked, see e.g. \cite[Corollary~1.8]{Nam}, that there exists $\rho\in\fh^\ast$ such that the restriction $\rho|_{\fh_n^\ast}$ is the half sum of $\widetilde{\fb}_n$-positive roots for $\widetilde{\fg}_n$, for every $n\in\mN$. The dot action of~$W$ on $\fh^\ast$ is given by
$$w\cdot\lambda\;=\;w(\lambda+\rho)-\rho.$$

\subsubsection{} For a weight $\lambda\in\fh^\ast$, the {\bf integral Weyl group} $W[\lambda]$ is the subgroup of~$W$ of elements~$w\in W$ for which $w\cdot\lambda-\lambda\in\mZ\rR$. A weight $\lambda$ is {\bf integral} if $W=W[\lambda]$.
A weight $\lambda$ is {\bf dominant} if $w\cdot\lambda\le \lambda$, for all $w\in W[\lambda]$, and {\bf antidominant} if $w\cdot\lambda\ge \lambda$, for all $w\in W[\lambda]$. A weight is {\bf regular} if $w\cdot\lambda\not=\lambda$, for all $w\in W[\lambda]$.
%We denote by~$\Lambda^{++}(\fg,\fb)\subset\fh^\ast$, the set of integral weights which are dominant, regular and integral. Since any thick root-reductive subalgebra $\fl\subset\fg$ has the same Cartan subalgebra $\fh$, we can also use the notation $\Lambda^{++}(\fl,\fb_{\fl})$. Clearly, we have $\Lambda^{++}(\fg,\fb)\subset \Lambda^{++}(\fl,\fb_{\fl})$.
The {\bf orbit} of a weight $\lambda$ is denoted by~$[\![\lambda]\!]=W[\lambda]\cdot\lambda$.

%%%%%%%%%%%%%%%%%%%%%%%%%%%%%%%%%%%%%%%%%%%%%%%%%%%%%%%%%%%%%%%%%%%%%%%%%%%%%%%%%%%

\section{Verma modules}
Consider a root-reductive Lie algebra~$\fg$ with Cartan subalgebra $\fh$ and Borel subalgebra~$\fb\supset\fh$.

\subsection{Simple and (dual) Verma modules}

Recall the {Verma module}
$$\Delta(\lambda)\;=\;U(\fg)\otimes_{U(\fb)}\mk_\lambda$$ 
from equation~\eqref{Verma}. It is easy to see that {$\Delta(\lambda)$} has a unique maximal submodule. The corresponding simple quotient of~$\Delta(\lambda)$ is denoted by~$L(\lambda)$. 
We will typically use the notation $v_\lambda$ for a non-zero element in $1\otimes \mk_\lambda\subset\Delta(\lambda)$.

The following lemma states the universality property of Verma modules.
\begin{lemma}\label{LemDeltaProj}
For $M\in \bC(\fg,\fh)$ with $M_\nu=0$ for all $\nu> \mu$, {there is} an isomorphism
$$\Hom_{\fg}(\Delta(\mu),M)\;\stackrel{\sim}{\to} \; M_\mu,\quad \alpha\mapsto \alpha(v_\mu).$$
Consequently, we have
$\dim \Hom_{\fg}(\Delta(\mu),M)\;=\; [M:L(\mu)].$
\end{lemma}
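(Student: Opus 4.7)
The plan is to prove the two statements separately: the Hom-identification is essentially formal, while the consequence about multiplicity requires a small filtration argument on top of it.

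For the first isomorphism, my approach is Frobenius reciprocity. By the tensor-hom adjunction applied to the induced module $\Delta(\mu)=U(\fg)\otimes_{U(\fb)}\mk_\mu$, one has $\Hom_{\fg}(\Delta(\mu),M)\cong \Hom_{\fb}(\mk_\mu,\res^{\fg}_{\fb}M)$, and a $\fb$-linear map out of $\mk_\mu$ is determined by the image of $1$. That image must lie in $M_\mu$ (to respect the $\fh$-action) and be annihilated by $\fn^+$. The second requirement is automatic under the hypothesis: $\fn^+\cdot M_\mu\subseteq \bigoplus_{\alpha\in\Phi^+}M_{\mu+\alpha}=0$, since each $\mu+\alpha>\mu$. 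Tracing the adjunction identifies the resulting isomorphism with $\alpha\mapsto \alpha(v_\mu)$, as claimed.

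For the multiplicity equality $[M:L(\mu)]=\dim M_\mu$, I would prove two inequalities. For the upper bound, given a finite filtration $0=F_pM\subset\cdots\subset F_0M=M$, the functor of taking the $\mu$-weight space is exact on $\bC(\fg,\fh)$ (it is just extracting an $\fh$-isotypic component), so $\dim M_\mu=\sum_{i}\dim(F_iM/F_{i+1}M)_\mu$. Each subquotient isomorphic to $L(\mu)$ contributes exactly $\dim L(\mu)_\mu=1$, so the number of such indices is at most $\dim M_\mu$. Taking the supremum over filtrations yields $[M:L(\mu)]\le \dim M_\mu$.

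For the lower bound, set $d=\dim M_\mu$ and choose a basis $v_1,\ldots,v_d$ of $M_\mu$; by the $\fn^+$-vanishing argument above each $v_k$ is a highest weight vector of weight $\mu$. Define $M^{(k)}:=U(\fg)\langle v_1,\ldots,v_k\rangle$ for $k=0,\ldots,d$, so that $M^{(k)}=U(\fn^-)\langle v_1,\ldots,v_k\rangle$. By PBW, the $\mu$-weight space of each $M^{(k-1)}$ is exactly $\langle v_1,\ldots,v_{k-1}\rangle$, hence $v_k\notin M^{(k-1)}$, so $M^{(k)}/M^{(k-1)}$ is a nonzero highest weight module of weight $\mu$, i.e.\ a quotient of $\Delta(\mu)$, and therefore surjects onto $L(\mu)$. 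Refining the chain by inserting, between $M^{(k-1)}$ and $M^{(k)}$, the preimage in $M^{(k)}$ of the unique maximal submodule of $M^{(k)}/M^{(k-1)}$, and finally including $M$ itself on top, produces a finite filtration of $M$ with exactly $d$ subquotients isomorphic to $L(\mu)$, so $[M:L(\mu)]\ge d$. Combined with the first isomorphism, this yields $\dim\Hom_{\fg}(\Delta(\mu),M)=\dim M_\mu=[M:L(\mu)]$. I do not expect a real obstacle here; the only step that needs some care is the verification that $v_k\notin M^{(k-1)}$, and the PBW computation of the $\mu$-weight space of $U(\fn^-)\langle v_1,\ldots,v_{k-1}\rangle$ handles it directly.
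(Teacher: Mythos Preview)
Your argument for the first isomorphism is exactly the paper's: adjunction reduces $\Hom_{\fg}(\Delta(\mu),M)$ to $\Hom_{\fb}(\mk_\mu,M)$, and the hypothesis $M_\nu=0$ for $\nu>\mu$ makes the $\fn^+$-invariance condition vacuous. The paper records this as a one-line chain of isomorphisms and does not spell out the multiplicity equality at all, simply labelling it ``Consequently''; your filtration argument for $[M:L(\mu)]=\dim M_\mu$ is therefore more detailed than what the paper provides, and it is correct.

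One small point to tidy: you write ``set $d=\dim M_\mu$ and choose a basis $v_1,\ldots,v_d$'', but in $\bC(\fg,\fh)$ weight spaces may be infinite dimensional. The fix is cosmetic: for each finite $n\le \dim M_\mu$, pick $n$ linearly independent vectors in $M_\mu$ and run your construction to get $[M:L(\mu)]\ge n$; the infinite case then follows by taking the supremum. Everything else, in particular the PBW computation showing $(M^{(k-1)})_\mu=\langle v_1,\ldots,v_{k-1}\rangle$ and hence $v_k\notin M^{(k-1)}$, is fine.
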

\begin{proof}
By adjunction, we have
$$\Hom_{\fg}(\Delta(\mu),M)\cong \Hom_{\fh}(\mk_\mu, M^{\fn^+})\cong \Hom_{\fh}(\mk_\mu, M)\cong M_\mu,$$
where the second isomorphism follows from the assumptions on $\supp M $.
\end{proof} %Now $\res^{\fg}_{\fl}M$ is semisimple, by Lemma~\ref{LemResSS}, so the dimension of the latter space is $[\res^{\fg}_{\fl}M:L_{\fl}(\mu)]$. This multiplicity is clearly equal to $[M:L(\mu)]$.

\begin{cor}\label{CorExt1}
For $M\in \bC(\fg,\fh)$ with $M_\nu=0$ for all $\nu> \mu$, we have
$$\Ext^1_{\bC(\fg,\fh)}(\Delta(\mu),M)=0.$$
\end{cor}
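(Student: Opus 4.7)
The strategy is to show directly that any short exact sequence
\[ 0 \to M \to E \to \Delta(\mu) \to 0 \]
in $\bC(\fg,\fh)$ splits, by reducing to Lemma~\ref{LemDeltaProj}.

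First I would verify that $E$ inherits the support condition. Since the category $\bC(\fg,\fh)$ is closed under taking weight spaces ($\mk$ is a field, so the sequence of weight spaces is exact for each weight), we have $E_\nu = 0$ whenever $\nu > \mu$, because both $M_\nu = 0$ and $\Delta(\mu)_\nu = 0$ for such $\nu$. In particular the hypotheses of Lemma~\ref{LemDeltaProj} apply to $E$, so
\[ \Hom_{\fg}(\Delta(\mu), E) \;\stackrel{\sim}{\to}\; E_\mu, \qquad \alpha \mapsto \alpha(v_\mu). \]

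Next I would use the surjection $E \tto \Delta(\mu)$ in weight $\mu$: the induced map $E_\mu \to \Delta(\mu)_\mu = \mk v_\mu$ is surjective, so we may pick $\tilde v \in E_\mu$ mapping to $v_\mu$. By the displayed isomorphism above, $\tilde v$ corresponds to a homomorphism $s \colon \Delta(\mu) \to E$ with $s(v_\mu) = \tilde v$. Applying Lemma~\ref{LemDeltaProj} to $\Delta(\mu)$ itself, any endomorphism of $\Delta(\mu)$ is determined by its value on $v_\mu$; since the composite $\Delta(\mu) \xrightarrow{s} E \tto \Delta(\mu)$ sends $v_\mu$ to $v_\mu$, it equals the identity. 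Hence $s$ splits the sequence.

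There is essentially no obstacle: the only point requiring any care is the observation that $E$ satisfies the same upper bound on its support as $M$ and $\Delta(\mu)$, so that Lemma~\ref{LemDeltaProj} may be invoked for $E$. Everything else is formal from the universal property already established.
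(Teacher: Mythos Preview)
Your proof is correct and is essentially an unpacking of the paper's one-line argument: the paper simply observes that, by Lemma~\ref{LemDeltaProj}, $\Delta(\mu)$ is projective in the Serre subcategory of $\bC(\fg,\fh)$ consisting of modules with $M_\nu=0$ for all $\nu>\mu$, which amounts to exactly your verification that $E$ lies in this subcategory together with the splitting you construct.
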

\begin{proof}
It follows from Lemma~\ref{LemDeltaProj} that $\Delta(\mu)$ is projective in the Serre subcategory of $\bC(\fg,\fh)$ of modules $M$ with $M_\nu=0$ for all $\nu> \mu$.
\end{proof}

\subsubsection{} If $\fb$ is a Dynkin Borel subalgebra, then $\Delta(\lambda)\in \cC(\fg,\fh)$ by Proposition~\ref{equivCond}. In that case we introduce the {\bf dual Verma module}
$$\nabla(\lambda)\;:=\;\Delta(\lambda)^{\vee},$$ where $\vee$ is the duality on $\cC(\fg,\fh)$ of~\ref{SecWeightM}.
It follows from equation~\eqref{suppDual} that~$L(\lambda)\cong L(\lambda)^\vee$.

\subsection{Reduction to root-reductive subalgebras}
Consider a parabolic subalgebra $\fp\supset\fb$ with reductive part $\fl$.
\begin{lemma}\label{LemMult}
For $\lambda,\mu\in\fh^\ast$ with $\lambda-\mu\in\mZ\rR(\fl)$, {the following holds}
\begin{enumerate}[(i)]
\item $[\Delta(\lambda):L(\mu)]=[\Delta_{\fl}(\lambda):L_{\fl}(\mu)];$
\item $\Hom_{\fg}(\Delta(\mu),\Delta(\lambda))\;\cong\;\Hom_{\fl}(\Delta_{\fl}(\mu),\Delta_{\fl}(\lambda)).$
\end{enumerate}
\end{lemma}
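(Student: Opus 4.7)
The plan is to show $\res^{\fg}_{\fl,\lambda}\Delta(\lambda)=\Delta_{\fl}(\lambda)$ and to deduce both claims from this identification. By PBW applied to the decomposition $\fg=\fu^-\oplus\fp$, we have
$$\Delta(\lambda)\;\cong\;\ind^{\fg}_{\fp}\Delta_{\fl}(\lambda)\;\cong\; U(\fu^-)\otimes_{\mk}\Delta_{\fl}(\lambda)$$
as $\fl$-modules, with the canonical copy $1\otimes\Delta_{\fl}(\lambda)$ annihilated by $\fu^+$ (since $\fu^+$ acts trivially on $\Delta_{\fl}(\lambda)$ in the $\fp$-module structure used for induction). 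The weights of $U(\fu^-)$ lie in $-\mN\rR(\fu^+)$ and those of the second factor in $\lambda-\mN\rR^+(\fl)$. Under the root-system fact $\mN\rR(\fu^+)\cap\mZ\rR(\fl)=\{0\}$ --- which I would reduce inside each $\widetilde{\fg}_n$ to the standard statement that the roots of the nilpotent radical of a parabolic of a finite-dimensional reductive Lie algebra are independent modulo the root lattice of its Levi --- the weight vectors of $\Delta(\lambda)$ with weight in $\lambda+\mZ\rR(\fl)$ are exactly those of $1\otimes\Delta_{\fl}(\lambda)$. Hence $\res^{\fg}_{\fl,\lambda}\Delta(\lambda)=\Delta_{\fl}(\lambda)$.

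Now (ii) is immediate from Lemma~\ref{LemDeltaProj}: the hom space equals $\Delta(\lambda)_\mu^{\fn^+}$, and since $\Delta(\lambda)_\mu\subset\Delta_{\fl}(\lambda)$ (on which $\fu^+$ acts trivially), the condition $\fn^+v=0$ reduces to $(\fn^+\cap\fl)v=0$; the $\fl$-analogue of Lemma~\ref{LemDeltaProj} then identifies the result with $\Hom_{\fl}(\Delta_{\fl}(\mu),\Delta_{\fl}(\lambda))$.

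For (i), I use exactness of $\res^{\fg}_{\fl,\lambda}$ (a direct-summand projection). For $\nu\in\lambda+\mZ\rR(\fl)$, the same weight-coset argument identifies the $(\lambda+\mZ\rR(\fl))$-part of $L(\nu)$ with $U(\fl)v_\nu\subset L(\nu)$; this is a highest-weight $\fl$-module quotient of $\Delta_{\fl}(\nu)$, and its simplicity can be seen as follows. Any proper nonzero $\fl$-submodule $M$ of $U(\fl)v_\nu$ would, via $\fu^+\cdot U(\fl)v_\nu=0$ (which follows from $\fu^+$ being a $\fp$-ideal annihilating $v_\nu$) and the $\fl$-invariance of $M$, satisfy $U(\fg)M=U(\fu^-)M$; all weights of $U(\fu^-)M$ are then strictly below $\nu$, so $v_\nu\notin U(\fu^-)M$, contradicting the simplicity of $L(\nu)$. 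Thus $U(\fl)v_\nu\cong L_{\fl}(\nu)$ and $\res^{\fg}_{\fl,\lambda}L(\nu)=L_{\fl}(\nu)$. For any composition factor $L(\nu)$ of $\Delta(\lambda)$ with $\nu\notin\lambda+\mZ\rR(\fl)$, the same coset bookkeeping gives $\res^{\fg}_{\fl,\lambda}L(\nu)=0$. Projecting the formal character identity $\mathrm{ch}\,\Delta(\lambda)=\sum_\nu[\Delta(\lambda):L(\nu)]\,\mathrm{ch}\,L(\nu)$ onto $\lambda+\mZ\rR(\fl)$ yields
$$\mathrm{ch}\,\Delta_{\fl}(\lambda)\;=\;\sum_{\nu\in\lambda+\mZ\rR(\fl)}[\Delta(\lambda):L(\nu)]\,\mathrm{ch}\,L_{\fl}(\nu),$$
and linear independence of the $\mathrm{ch}\,L_{\fl}(\nu)$ (distinct highest weights) delivers (i).

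The main obstacle is really the root-system input $\mN\rR(\fu^+)\cap\mZ\rR(\fl)=\{0\}$, together with its companion that any element of $\Gamma^+\cap\mZ\rR(\fl)$ lies in $\mN\rR^+(\fl)$; the rest of the argument is essentially PBW and character bookkeeping. Both facts I would establish by restricting any given finite sum to some $\widetilde{\fg}_n$ large enough to contain the relevant positive roots, where they are standard properties of Levi decompositions in the finite-dimensional reductive setting.
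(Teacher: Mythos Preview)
Your identification $\res^{\fg}_{\fl,\lambda}\Delta(\lambda)\cong\Delta_{\fl}(\lambda)$ and your proof that $\res^{\fg}_{\fl,\lambda}L(\nu)\cong L_{\fl}(\nu)$ for $\nu\in\lambda+\mZ\rR(\fl)$ are correct and match the paper exactly; likewise your argument for (ii) is essentially the paper's adjunction argument rephrased via Lemma~\ref{LemDeltaProj}.

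The gap is in the concluding step for (i). The lemma is stated for an arbitrary Borel subalgebra (the Dynkin hypothesis only enters in Section~3.3), so $\Delta(\lambda)$ and $\Delta_{\fl}(\lambda)$ need not have finite-dimensional weight spaces, and in any case neither has finite length. The ``formal character identity'' $\mathrm{ch}\,\Delta(\lambda)=\sum_\nu[\Delta(\lambda):L(\nu)]\,\mathrm{ch}\,L(\nu)$ is therefore not available: characters may not be defined, and even when they are, this decomposition requires a local composition series argument that you have not supplied (and which is essentially what you are trying to prove). Your claim that $\res^{\fg}_{\fl,\lambda}L(\nu)=0$ whenever $\nu\notin\lambda+\mZ\rR(\fl)$ is also not justified by ``the same coset bookkeeping'': such $L(\nu)$ can certainly have weights in $\lambda+\mZ\rR(\fl)$.

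The paper closes (i) without characters by using \emph{both} exact functors. Exactness of $\res^{\fg}_{\fl,\lambda}$ together with your two identifications gives $[\Delta(\lambda):L(\mu)]\le[\Delta_{\fl}(\lambda):L_{\fl}(\mu)]$ directly from finite filtrations. For the reverse inequality one uses $\ind^{\fg}_{\fl,+}\Delta_{\fl}(\lambda)\cong\Delta(\lambda)$ and $[\ind^{\fg}_{\fl,+}L_{\fl}(\mu):L(\mu)]=1$ (the latter follows from $\res^{\fg}_{\fl,\lambda}\circ\ind^{\fg}_{\fl,+}\cong\id$ combined with $\res^{\fg}_{\fl,\lambda}L(\mu)\cong L_{\fl}(\mu)$): inducing any finite filtration of $\Delta_{\fl}(\lambda)$ and refining yields at least as many occurrences of $L(\mu)$. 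You have all the ingredients for this; only the induction half is missing.
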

\begin{proof}
Part (i) follows from the observations
$$\ind^{\fg}_{\fl,+}\Delta_{\fl}(\lambda)\cong\Delta(\lambda),\qquad \res^{\fg}_{\fl,\lambda}\Delta(\lambda)\cong\Delta_{\fl}(\lambda),$$
$$[\ind^{\fg}_{\fl,+}L_{\fl}(\mu):L(\mu)]=1\qquad\mbox{and}\qquad \res^{\fg}_{\fl,\lambda}L(\mu)\cong L_{\fl}(\mu).$$

By adjunction, we have
$$\Hom_{\fg}(\Delta(\mu),\Delta(\lambda))\;\cong\; \Hom_{\fl}(\Delta_{\fl}(\mu),\Delta(\lambda)^{\fu^+})\;\cong\; \Hom_{\fl}(\Delta_{\fl}(\mu),\Delta_{\fl}(\lambda)),$$
where the last isomorphism follows from weight considerations. This proves part (ii).
\end{proof}

\subsection{Verma modules for Dynkin Borel subalgebras}
Assume that~$\fb$ is a Dynkin Borel subalgebra. By~\ref{equivCond}(ii), without loss of generality we may {\em assume that each $\widetilde{\fg}_n+\fb$ is a parabolic subalgebra of $\fg$.}

\begin{thm}\label{ThmMult}
Consider arbitrary $\lambda,\mu\in\fh^\ast$. For any $n\in\mN$ such that~$\lambda-\mu \in\mZ\rR_n$, we have
\begin{enumerate}[(i)]
\item $[\Delta(\lambda):L(\mu)]\;=\;[\Delta_{n}(\lambda):L_{n}(\mu)];$
\item  $\Hom_{\fg}(\Delta(\mu),\Delta(\lambda))\;\cong\;\Hom_{\fg_n}(\Delta_n(\mu),\Delta_n(\lambda)).$
\end{enumerate}
%In particular, the above are zero unless $\lambda$ and~$\mu$ are in the same Weyl group orbit.
\end{thm}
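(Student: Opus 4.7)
The plan is to recognize Theorem~\ref{ThmMult} as a direct specialization of Lemma~\ref{LemMult} applied to a concrete chain of parabolic subalgebras. Under the standing assumption of this subsection (justified by Proposition~\ref{equivCond}(ii)), the subspace $\fp_n := \widetilde{\fg}_n + \fb$ is a parabolic subalgebra of $\fg$, and it visibly contains $\fb$.

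The first step is to identify the reductive part of $\fp_n$. It contains $\fh$ (since $\fh\subset\fb$), and it contains $\fg^\alpha$ for every $\alpha\in \rR_n$: for such $\alpha$, both $\fg^{\alpha}$ and $\fg^{-\alpha}$ lie in $\widetilde{\fg}_n\subset \fp_n$. Conversely, if $\alpha\in\rR\setminus\rR_n$, then the root space $\fg^{\pm\alpha}$ is not contained in $\widetilde{\fg}_n$, so membership of both $\fg^{\alpha}$ and $\fg^{-\alpha}$ in $\fp_n$ would force both to lie in $\fb$, which is impossible. Hence the reductive part of $\fp_n$ is exactly $\widetilde{\fg}_n+\fh=\fg_n$, and $\rR(\fg_n)=\rR_n$.

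The second step is to apply Lemma~\ref{LemMult} with $\fl=\fg_n$. The hypothesis $\lambda-\mu\in\mZ\rR_n$ translates to $\lambda-\mu\in\mZ\rR(\fl)$, so parts (i) and (ii) of that lemma yield, respectively,
\[
[\Delta(\lambda):L(\mu)]=[\Delta_{\fg_n}(\lambda):L_{\fg_n}(\mu)]\quad\text{and}\quad \Hom_{\fg}(\Delta(\mu),\Delta(\lambda))\cong \Hom_{\fg_n}(\Delta_{\fg_n}(\mu),\Delta_{\fg_n}(\lambda)),
\]
which are precisely (i) and (ii) of the theorem in the notation $\Delta_n=\Delta_{\fg_n}$, $L_n=L_{\fg_n}$.

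There is essentially no obstacle: all the genuine work (comparing multiplicities under parabolic induction/restriction and using adjunction plus weight considerations to reduce Hom spaces) has already been performed in Lemma~\ref{LemMult}. The only thing worth checking carefully is the parabolic-reductive-part identification above, which relies entirely on the defining property of the filtration $\{\widetilde{\fg}_n\}$ adapted to a Dynkin Borel subalgebra.
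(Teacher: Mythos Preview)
Your proposal is correct and follows exactly the approach of the paper, which simply states that this is a special case of Lemma~\ref{LemMult}. You have merely made explicit the identification of the reductive part of the parabolic subalgebra~$\widetilde{\fg}_n+\fb$ with~$\fg_n$, which the paper leaves implicit.
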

\begin{proof}
This is a special case of Lemma~\ref{LemMult}.
\end{proof}

\begin{rem}For integral regular weights, Theorem~\ref{ThmMult}(i) was obtained in \cite[Proposition~3.6]{Nam} through different methods. Our results completely determine the decomposition multiplicities of Verma modules for Dynkin Borel subalgebras in terms of the Kazhdan-Lusztig multiplicities for finite dimensional reductive Lie algebras.

Analogues of Theorem~\ref{ThmMult} for parabolic Verma modules, where the reductive subalgebra of the parabolic subalgebra has finite rank, can be proved using the same method. Analogues for specific parabolic subalgebras with reductive subalgebra of finite {\em co}rank have been proved in e.g.~\cite{CLW, CWZ}.
\end{rem}

\subsubsection{} The {\bf Bruhat order} on $\fh^\ast$ is the partial order $\uparrow$ generated by the relation
$$\mu\uparrow \lambda\qquad\mbox{if}\qquad \mu=r_\alpha\cdot\lambda \;\mbox{ for some }\;\alpha\in \Phi^+\quad\mbox{and}\quad \mu\le \lambda.$$

\begin{cor}\label{CorBGGThm}
Consider arbitrary $\lambda,\mu\in\fh^\ast$. For any $n\in\mN$ such that~$\lambda-\mu \in\mZ\rR_n$, we have
\begin{enumerate}[(i)]
\item $[\Delta(\lambda):L(\mu)]\not=0$ if and only if $\mu\uparrow \lambda$;
\item  $\dim\Hom_{\fg}(\Delta(\mu),\Delta(\lambda))=\begin{cases}1&\mbox{if $\mu\uparrow\lambda$}\\
0&\mbox{otherwise.}\end{cases}$
\end{enumerate}
\end{cor}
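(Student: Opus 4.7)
The plan is to combine Theorem~\ref{ThmMult} with the classical BGG theorem and Verma's theorem on Hom-spaces between Verma modules. For $n$ satisfying $\lambda-\mu \in \mZ\rR_n$, Theorem~\ref{ThmMult}(i)--(ii) yields $[\Delta(\lambda):L(\mu)] = [\Delta_n(\lambda):L_n(\mu)]$ and $\Hom_\fg(\Delta(\mu),\Delta(\lambda)) \cong \Hom_{\fg_n}(\Delta_n(\mu),\Delta_n(\lambda))$. Via the remark closing Section~\ref{SecWeightM}, these quantities in turn coincide with the analogous ones computed inside category $\cO$ for the finite-dimensional reductive Lie algebra~$\widetilde{\fg}_n$. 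For the latter, the classical BGG theorem and Verma's theorem supply exactly the conclusions of the corollary, phrased in terms of the Bruhat order $\uparrow_n$ associated with $W_n$ and $\rR_n^+$.

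It therefore remains to identify the Bruhat order $\uparrow$ on $\fh^*$ with $\uparrow_n$ on the pair $(\mu,\lambda)$, under the hypothesis $\lambda-\mu \in \mZ\rR_n$. The implication $\mu \uparrow_n \lambda \Rightarrow \mu \uparrow \lambda$ is immediate from the inclusions $W_n \subset W$ and $\rR_n^+ \subset \rR^+$. Conversely, if $\mu \uparrow \lambda$, choose $n' \geq n$ large enough that $\rR_{n'}^+$ contains every root appearing in a chosen chain of reflections witnessing $\mu \uparrow \lambda$; this is possible since the chain is finite and $\rR^+ = \bigcup_m \rR_m^+$. Then $\mu \uparrow_{n'} \lambda$, so the BGG theorem for $\widetilde{\fg}_{n'}$ gives $[\Delta_{n'}(\lambda):L_{n'}(\mu)] \neq 0$; applying Theorem~\ref{ThmMult}(i) for both $n'$ and $n$ forces $[\Delta_n(\lambda):L_n(\mu)] \neq 0$, whence $\mu \uparrow_n \lambda$ by BGG for $\widetilde{\fg}_n$.

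The main (and essentially only) obstacle is this compatibility of Bruhat orders between $\fg$ and $\widetilde{\fg}_n$; everything else is a direct application of Theorem~\ref{ThmMult} together with classical finite-dimensional facts. A purely combinatorial verification is also available, using that $\rR_n$ is a $\mZ$-closed sub-root-system of $\rR$ (as it is the root system of the reductive part of the parabolic subalgebra $\widetilde{\fg}_n+\fb$), so that any single-step relation $\mu_{i-1} = r_\alpha \cdot \mu_i$ with $\mu_i-\mu_{i-1} \in \mZ\rR_n$ forces $\alpha \in \rR_n^+$; the difficulty there is that intermediate weights in a chain for $\mu \uparrow \lambda$ need not lie in $\lambda + \mZ\rR_n$, which is precisely why the multiplicity-theoretic route above is cleaner and re-uses work already done.
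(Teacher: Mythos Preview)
Your proof is correct and follows essentially the same route as the paper: reduce to the finite-dimensional case via Theorem~\ref{ThmMult}, then invoke the classical BGG theorem and Verma's theorem (the paper cites \cite[Theorem~5.1]{Humphreys} and \cite[Theorem~4.2(b)]{Humphreys}). The paper's proof is a single sentence and treats the identification of the Bruhat orders $\uparrow$ and $\uparrow_n$ on the pair $(\mu,\lambda)$ as immediate; you spell this out explicitly, and your multiplicity-theoretic argument (passing through a large enough $n'$ and using Theorem~\ref{ThmMult} twice) is a clean way to handle it without a direct combinatorial check.
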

\begin{proof}
This follows immediately from Theorem~\ref{ThmMult} and the BGG theorem, see~\cite[Theorem~5.1]{Humphreys} and~\cite[Theorem~4.2(b)]{Humphreys}.
\end{proof}

% Theorem~\ref{ThmMult}(ii) determines all morphisms between Verma modules in terms of finite dimensional reductive Lie algebras. In particular, by \cite[Theorem~4.2(b)]{Humphreys}, we have
 %$$\dim\Hom_{\fg}(\Delta(\mu),\Delta(\lambda))\le 1.$$

\begin{prop}\label{PropDN}
{Let} $\lambda,\mu\in\fh^\ast$. {Then}
\begin{enumerate}[(i)]
\item $\Ext^1_{\bC(\fg,\fh)}(\Delta(\lambda),\nabla(\mu))=0$;
\item $\dim_{\mk}\Hom_{\fg}(\Delta(\lambda),\nabla(\mu))=\begin{cases}1&\mbox{if $\lambda=\mu$}\\
0&\mbox{if $\lambda\not=\mu$;}
\end{cases}$
\item $\Ext^1_{\bC(\fg,\fh)}(\Delta(\lambda),\Delta(\mu))=0\quad\mbox{unless $\lambda< \mu$.}$
\end{enumerate}
\end{prop}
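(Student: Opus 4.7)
The plan is to prove the three assertions in the order (iii), (ii), (i), with the first two being essentially direct and the third reducing to the first via $\vee$-duality.

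For (iii), I would invoke Corollary~\ref{CorExt1} directly. The hypothesis $\lambda\not<\mu$ forbids any $\nu\in\fh^\ast$ with $\lambda<\nu\le\mu$; since $\supp\Delta(\mu)=\{\nu\mid\nu\le\mu\}$ by the very definition of the partial order, this is equivalent to $\Delta(\mu)_\nu=0$ for all $\nu>\lambda$, and Corollary~\ref{CorExt1} delivers the vanishing at once.

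For (ii), I would exploit that the duality $(-)^\vee$ on $\cC(\fg,\fh)$ is a contravariant, exact, involutive equivalence with $\nabla(\lambda)=\Delta(\lambda)^\vee$, which supplies a natural isomorphism
$$\Hom_\fg(\Delta(\lambda),\nabla(\mu))\;\cong\;\Hom_\fg(\Delta(\mu),\nabla(\lambda)).$$
If either side is nonzero, the left-hand description forces $\lambda\in\supp\nabla(\mu)$ and hence $\lambda\le\mu$ by \eqref{suppDual}, while the right-hand description forces $\mu\le\lambda$; hence the Hom vanishes unless $\lambda=\mu$. When $\lambda=\mu$, the universal property of Verma modules identifies $\Hom_\fg(\Delta(\mu),\nabla(\mu))$ with $\nabla(\mu)_\mu^{\fn^+}$, and since $\mu$ is maximal in $\supp\nabla(\mu)$ this coincides with $\nabla(\mu)_\mu\cong\mk$.

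For (i), I would split into two cases. If $\lambda\not<\mu$, then by the same support argument as in (iii), together with $\supp\nabla(\mu)=\supp\Delta(\mu)$ from \eqref{suppDual}, we get $\nabla(\mu)_\nu=0$ for every $\nu>\lambda$, and Corollary~\ref{CorExt1} applies. If instead $\lambda<\mu$, both modules lie in the Serre subcategory $\cC(\fg,\fh)\subset\bC(\fg,\fh)$, so by the Serre-subcategory property \eqref{XYi} the relevant $\Ext^1$ may be computed inside $\cC(\fg,\fh)$. The exact contravariant involutive functor $(-)^\vee$ then induces
$$\Ext^1_{\cC(\fg,\fh)}(\Delta(\lambda),\nabla(\mu))\;\cong\;\Ext^1_{\cC(\fg,\fh)}(\Delta(\mu),\nabla(\lambda)),$$
and since $\mu\not<\lambda$, the right-hand side vanishes by the previously settled case.

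The only subtle point will be to justify the $\vee$-reduction in the hard case of (i): this needs the transit from $\bC(\fg,\fh)$ to $\cC(\fg,\fh)$, which is automatic from the Serre property of $\cC$ in $\bC$. Everything else reduces to the elementary weight-space observation packaged in Corollary~\ref{CorExt1}.
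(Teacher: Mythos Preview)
Your proof is correct and follows essentially the same route as the paper: part (iii) is immediate from Corollary~\ref{CorExt1}, and parts (i) and (ii) are handled by combining Corollary~\ref{CorExt1} (respectively Lemma~\ref{LemDeltaProj}) with the duality $\vee$ to swap the roles of $\lambda$ and $\mu$. Your treatment is in fact slightly more careful than the paper's, since you make explicit the passage from $\bC(\fg,\fh)$ to $\cC(\fg,\fh)$ needed to invoke $\vee$ on $\Ext^1$; the paper simply writes ``by applying $\vee$''.
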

\begin{proof}
Part (iii) is a special case of Corollary~\ref{CorExt1}. If $\lambda\not< \mu$, part (i) follows also from Corollary~\ref{CorExt1}. If $\lambda<\mu$, part (i) follows from the previous case by applying $\vee$. Similarly, {for}  $\lambda\not>\mu$ part (ii) follows from Lemma~\ref{LemDeltaProj}, and in the remaining cases {one applies}~$\vee$.
\end{proof}

\begin{rem}
Proposition~\ref{PropDN} was first obtained in \cite[Propositions~3.8 and~3.9]{Nam}.
\end{rem}

\begin{lemma}\label{LemRad}
If $[\fg,\fg]$ is infinite dimensional, the radical of $\Delta(0)$ is not finitely generated.
\end{lemma}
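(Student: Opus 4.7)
The plan is to derive a contradiction from finite generation by exhibiting an infinite-rank semisimple quotient of $\Rad\Delta(0)$. Since $L(0)\cong\mk$ is the trivial $\fg$-module, $\Rad\Delta(0)$ coincides with $\fn^-\Delta(0)$, and a short PBW computation (using that $\fn^+ f_\alpha v_0=0$ for simple $\alpha$) identifies
$$\Rad\Delta(0)\;=\;\sum_{\alpha\in\Sigma}U(\fg)f_\alpha v_0\;=\;\sum_{\alpha\in\Sigma}\Delta(-\alpha),$$
where $f_\alpha$ is a nonzero element of $\fg^{-\alpha}$ and each summand $\Delta(-\alpha)\hookrightarrow\Delta(0)$ via $v_{-\alpha}\mapsto f_\alpha v_0$, as provided by Corollary~\ref{CorBGGThm}.

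For each $\alpha\in\Sigma$ I will construct a surjection $\phi_\alpha\colon\Rad\Delta(0)\twoheadrightarrow L(-\alpha)$. Set
$$\mathcal{N}_\alpha\;:=\;\sum_{\beta\in\Sigma\setminus\{\alpha\}}\Delta(-\beta)\;+\;\Rad\Delta(-\alpha),$$
viewed as a $\fg$-submodule of $\Rad\Delta(0)$. The decisive step is to show $\Delta(-\alpha)\cap\mathcal{N}_\alpha=\Rad\Delta(-\alpha)$, which reduces to verifying that $\Delta(-\alpha)\cap\sum_{\beta\neq\alpha}\Delta(-\beta)$ is a \emph{proper} $\fg$-submodule of $\Delta(-\alpha)$. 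This is a weight-space observation: by Proposition~\ref{equivCond}(i) we have $\supp\Delta(-\beta)\subset-\beta-\mN\Sigma$, and for distinct simple roots $\alpha,\beta$ the coefficient of $\beta$ in $-\alpha+\beta$ is $+1$, so $-\alpha\notin-\beta-\mN\Sigma$; hence $v_{-\alpha}\notin\sum_{\beta\neq\alpha}\Delta(-\beta)$. Consequently $\Delta(0)/\mathcal{N}_\alpha$ is a length-two module with head $L(0)$ and socle $L(-\alpha)$, and $\phi_\alpha$ is defined as the composition $\Rad\Delta(0)\hookrightarrow\Delta(0)\twoheadrightarrow\Delta(0)/\mathcal{N}_\alpha$ identified with its socle $L(-\alpha)$.

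Finally I assemble these maps. For $x\in\Rad\Delta(0)$ of weight $\mu\in-\mN\Sigma$, the component $\phi_\alpha(x)\in L(-\alpha)_\mu$ vanishes unless $\alpha$ lies in the finite set $\supp(-\mu)\cap\Sigma$, so $(\phi_\alpha(x))_\alpha$ has finite support and yields a $\fg$-linear surjection
$$\phi\colon\Rad\Delta(0)\;\twoheadrightarrow\;\bigoplus_{\alpha\in\Sigma}L(-\alpha).$$
The assumption $\dim[\fg,\fg]=\infty$ ensures $|\Sigma|=\infty$ (in view of the classification in~\ref{DefrrL}), so the target has infinitely many pairwise non-isomorphic simple summands. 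If $\Rad\Delta(0)$ were generated over $U(\fg)$ by finitely many weight vectors $w_1,\ldots,w_k$, then $\phi(w_1),\ldots,\phi(w_k)$ would generate $\bigoplus_{\alpha\in\Sigma}L(-\alpha)$; but each $\phi(w_j)$ has finite support and the $\fg$-action preserves every summand $L(-\alpha)$, so the submodule they generate lies inside $\bigoplus_{\alpha\in F}L(-\alpha)$ for some finite $F\subset\Sigma$, contradiction. The main obstacle is the intersection identity $\Delta(-\alpha)\cap\mathcal{N}_\alpha=\Rad\Delta(-\alpha)$ which underwrites the construction of $\phi_\alpha$; once this is settled the weight-support argument is essentially formal.
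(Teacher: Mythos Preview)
Your proof is correct, but it takes a more elaborate route than the paper's. The paper argues directly on supports: if $M=\Rad\Delta(0)$ were finitely generated, then (because $M$ is locally $U(\fb)$-finite, by Proposition~\ref{equivCond}(v)) there would exist finitely many weights $\lambda_1,\ldots,\lambda_l\in\supp M$ such that every $\mu\in\supp M$ satisfies $\mu\le\lambda_i$ for some $i$; but for each simple root $\alpha$ the weight $-\alpha$ lies in $\supp M$ and is bounded above within $\supp M$ only by itself, forcing $\{\lambda_i\}$ to contain all $-\alpha$, a contradiction when $|\Sigma|=\infty$. Your approach instead upgrades this weight observation to a module-theoretic statement: you build a surjection $\Rad\Delta(0)\twoheadrightarrow\bigoplus_{\alpha\in\Sigma}L(-\alpha)$ and then argue that no finitely generated module can cover such an infinite semisimple sum. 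The key weight computation (that $-\alpha\notin-\beta-\mN\Sigma$ for $\beta\neq\alpha$) is essentially the same ingredient the paper uses. What you gain is a structural byproduct---you have essentially identified the head of $\Rad\Delta(0)$---at the cost of a longer argument; the paper's proof is a two-line support count.
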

\begin{proof}
The radical $M$ of $\Delta(0)$ is the  {kernel} of  {the surjective homomorphism} $\Delta(0)\tto\mC$.  {Assume that  }$M$ is finitely generated. Then there also exist finitely many {\em weight} vectors in $M$ which generate $M$. Since  $M$ is locally $U(\fb)$-finite,   {we conclude via} the PBW theorem  that there are finitely many weights $\{\lambda_1,\lambda_2,\ldots,\lambda_l\}$ in $\supp  {M}$ such that each $\mu\in \supp {M}$ satisfies $\mu\le\lambda_i$ for some $1\le i\le l$. However, for each simple negative root $\alpha$, the only $\lambda\in\supp M$ for which $\alpha\le\lambda$ is $\lambda=\alpha$. If $[\fg,\fg]$ is infinite dimensional,  {there are} infinitely many such $\alpha${,} and we  {have} a contradiction.
\end{proof}

\subsection{Modules with $\Delta$-flag or $\nabla$-flag} 
Assume that~$\fb$ is a Dynkin Borel subalgebra.
\subsubsection{}

Denote by~$\cF^{\Delta}(\fg,\fb)$, resp. $\cF^{\nabla}(\fg,\fb)$, the full subcategory of modules in~$\bC(\fg,\fh)$ which admit a finite ${\Delta}$-flag, resp. ${\nabla}$-flag. By a ${\Delta}$-flag of~$M$ we mean a filtration
\begin{equation}\label{eqDflag}0=F_kM\subset F_{k-1}M\subset\cdots F_1M\subset F_0M=M,\end{equation}
with $F_iM/F_{i+1}M\cong {\Delta}(\mu_i)$ for some $\mu_i\in\fh^\ast$, for each $0\le i<k$. By Proposition~\ref{equivCond}, the categories $\cF^\Delta$ and $\cF^\nabla$ are actually subcategories of $\cC(\fg,\fh)$. The categories $\cF^\Delta$ and $\cF^\nabla$  are not abelian, but we consider them as exact categories, where the short exact sequences are precisely the short exact sequences in $\cC(\fg,\fh)$ for which every term is in $\cF^\Delta$, resp. $\cF^\nabla$.

 For $M\in \cF^{\Delta}$, we denote  by~$(M:{\Delta}(\lambda))$ the number of indices~$i$ for which~$F_iM/F_{i+1}M$ in \eqref{eqDflag} is isomorphic to~${\Delta}(\lambda)$. It is easy to see that $(M:{\Delta}(\lambda))$ is independent of the chosen filtration, for instance by looking at the character of the modules, or from the following lemma.

\begin{lemma}\label{LemDN}
For $M\in \cF^{\Delta}$ and~$\lambda\in\fh^\ast$, we have
$$(M:{\Delta}(\lambda))\;=\;\dim_{\mk}\Hom_{\fg}(M,{\nabla}(\lambda)).$$
\end{lemma}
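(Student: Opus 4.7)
The plan is to induct on the length $k$ of a $\Delta$-flag of $M$, using the vanishing of $\Ext^1_{\bC(\fg,\fh)}(\Delta(\mu),\nabla(\lambda))$ from Proposition~\ref{PropDN}(i) to turn the long exact sequence of $\Hom(-,\nabla(\lambda))$ into a short exact sequence, so that multiplicities add.

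The base case $k=1$ is immediate: if $M\cong \Delta(\mu_0)$, then $(M:\Delta(\lambda))=\delta_{\mu_0,\lambda}$, and by Proposition~\ref{PropDN}(ii) we have $\dim_\mk\Hom_\fg(\Delta(\mu_0),\nabla(\lambda))=\delta_{\mu_0,\lambda}$. For the inductive step, take a $\Delta$-flag $0=F_kM\subset\cdots\subset F_0M=M$ of length $k\ge 2$ and consider the short exact sequence
$$0\;\to\; F_1M\;\to\; M\;\to\; \Delta(\mu_0)\;\to\; 0$$
in $\cC(\fg,\fh)$. Here $F_1M$ inherits a $\Delta$-flag of length $k-1$ with $(F_1M:\Delta(\lambda))=(M:\Delta(\lambda))-\delta_{\mu_0,\lambda}$. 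Applying the left exact functor $\Hom_\fg(-,\nabla(\lambda))=\Hom_{\bC(\fg,\fh)}(-,\nabla(\lambda))$ (the equality follows from the fact that $\bC(\fg,\fh)\hookrightarrow \fg\text{-Mod}$ is a fully faithful Serre-type inclusion) yields
$$0\to \Hom_\fg(\Delta(\mu_0),\nabla(\lambda))\to \Hom_\fg(M,\nabla(\lambda))\to \Hom_\fg(F_1M,\nabla(\lambda))\to \Ext^1_{\bC(\fg,\fh)}(\Delta(\mu_0),\nabla(\lambda)).$$
By Proposition~\ref{PropDN}(i) the last term vanishes, so the sequence is short exact. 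Combining the resulting additivity of dimensions with Proposition~\ref{PropDN}(ii) and the inductive hypothesis applied to $F_1M$ gives
$$\dim_\mk\Hom_\fg(M,\nabla(\lambda))\;=\;\delta_{\mu_0,\lambda}+(F_1M:\Delta(\lambda))\;=\;(M:\Delta(\lambda)),$$
completing the induction.

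There is no real obstacle here: the argument is the standard Brauer--Humphreys reciprocity style computation, and the two statements of Proposition~\ref{PropDN} have been arranged to make the inductive step work. The only point requiring minor care is ensuring that the Ext vanishing proved in the ambient category $\bC(\fg,\fh)$ suffices, which it does because $\Hom_\fg(-,\nabla(\lambda))$ restricted to $\bC(\fg,\fh)$ agrees with $\Hom_{\bC(\fg,\fh)}(-,\nabla(\lambda))$ and the connecting map lands in $\Ext^1_{\bC(\fg,\fh)}$.
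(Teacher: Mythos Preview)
Your proof is correct and follows exactly the approach indicated in the paper: induction on the length of the $\Delta$-flag, using Proposition~\ref{PropDN}(ii) for the base case and Proposition~\ref{PropDN}(i) to kill the $\Ext^1$ term in the long exact sequence for the inductive step. The paper's proof is a one-line reference to this argument, and you have simply written out the details.
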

\begin{proof}
This follows by induction on the length of the filtration, by applying the properties in Proposition~\ref{PropDN}(i) and (ii).
\end{proof}
{Here is an} alternative characterisation of the categories $\cF^{\Delta}$ and~$\cF^{\nabla}$.

\begin{lemma}\label{stupidlemma}
The category~$\cF^{\Delta}$, resp. $\cF^{\nabla}$, is the full subcategory of~$\bC(\fg,\fh)$ consisting of finite direct sums of modules isomorphic to~$U(\fn^-)$ when considered as~$U(\fn^-)$-modules, resp. finite direct sums of modules isomorphic to~$U(\fn^+)^{\circledast}$ when considered as~$U(\fn^+)$-modules.
\end{lemma}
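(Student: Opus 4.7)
The plan is to prove the $\cF^{\Delta}$ statement by induction in both directions and derive the $\cF^{\nabla}$ statement from it via the duality $\vee$ of Section~\ref{SecWeightM}; for the latter step I will record that $\nabla(\lambda)$ regarded as a $U(\fn^+)$-module is isomorphic to $U(\fn^+)^{\circledast}$, which follows from $\Delta(\lambda)|_{U(\fn^-)}\cong U(\fn^-)$ together with the defining formula $(Xf)(m)=-f(\tau(X)m)$ of $\vee$ and the fact that $\tau$ swaps $\fn^+$ with $\fn^-$ as an anti-involution.

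For the direction ``$\cF^{\Delta}\Rightarrow$ finite direct sum of copies of $U(\fn^-)$'', I will induct on the length $k$ of the $\Delta$-flag. The base case $\Delta(\lambda)\cong U(\fn^-)$ as a $U(\fn^-)$-module is immediate from the PBW decomposition $U(\fg)\cong U(\fn^-)\otimes U(\fb)$. For the step, the short exact sequence $0\to F_1M\to M\to\Delta(\mu_0)\to 0$ restricts to an extension of the free (hence projective) $U(\fn^-)$-module $\Delta(\mu_0)\cong U(\fn^-)$ by $F_1M\cong U(\fn^-)^{\oplus(k-1)}$, and therefore splits as $U(\fn^-)$-modules.

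For the converse, I will assume $M\in\bC(\fg,\fh)$ with $M\cong U(\fn^-)^{\oplus k}$ as a $U(\fn^-)$-module, and induct on $k$. Since $\fn^-M$ is $\fh$-invariant (using $[\fh,\fn^-]\subset\fn^-$), the quotient $M/\fn^-M$ is $k$-dimensional and $\fh$-semisimple with finitely many weights $\mu_1,\ldots,\mu_r$, and a lift to weight vectors $e_1,\ldots,e_k\in M$ yields a $U(\fn^-)$-basis of $M$. (The lift is surjective by a maximal-weight argument on the cokernel using that $\fb$ is Dynkin; it is injective by an $\fh$-character comparison, since the $\fn^-$-adic filtration gives $\mathrm{gr}\, M\cong U(\fn^-)\otimes(M/\fn^-M)$ by PBW, forcing $\dim M_\mu=\sum_i\dim U(\fn^-)_{\mu-\mu_{j(i)}}$.) Consequently $\supp M\subseteq\bigcup_i(\mu_i-\Gamma^+)$ has a maximal element $\lambda$, and any nonzero $v\in M_\lambda$ is $\fn^+$-annihilated by weight reasons. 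Since $U(\fn^-)$ is a domain acting faithfully on a nonzero element of a free $U(\fn^-)$-module, $U(\fn^-)v\cong U(\fn^-)$, and $v$ being highest-weight yields $U(\fg)v=U(\fn^-)v\cong\Delta(\lambda)$.

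The main obstacle will be to show $M/U(\fg)v\cong U(\fn^-)^{\oplus(k-1)}$. Expanding $v=\sum_i u_ie_i$, the weight relation $\mathrm{wt}(u_i)+\mu_{j(i)}=\lambda$ combined with $\mathrm{wt}(u_i)\in-\Gamma^+$ and the maximality of $\lambda$ forces $\mu_{j(i)}=\lambda$ and $u_i\in\mk$ whenever $u_i\neq 0$, so $v$ lies in the $\mk$-span of the weight-$\lambda$ members of $\{e_i\}$. A scalar change of $U(\fn^-)$-basis then lets me assume $v=e_1$, making $M/U(\fn^-)v=M/U(\fg)v$ freely generated over $U(\fn^-)$ by $e_2,\ldots,e_k$; the induction hypothesis supplies a $\Delta$-flag there, and prepending $\Delta(\lambda)$ produces one on $M$. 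The subtle point is that $U(\fn^-)$ is neither local nor Noetherian when $\fn^-$ is infinite dimensional, so naive Nakayama-type lifting would fail: the weight grading forced by $M\in\bC(\fg,\fh)$ together with the maximality of $\lambda$ is precisely what rigidifies the basis-completion step.
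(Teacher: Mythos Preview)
Your proof is correct and follows essentially the same inductive strategy as the paper: locate a maximal weight in $\supp M$, obtain a Verma submodule, and pass to the quotient. You are considerably more explicit than the paper about why the quotient $M/\Delta(\lambda)$ is again free of rank $k-1$ over $U(\fn^-)$ (the paper simply writes ``we can then proceed by considering $M/\Delta(\lambda_1)$''), and your derivation of the $\cF^\nabla$ case via $\vee$ is a clean alternative to redoing the argument directly.

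One small point of presentation: in your surjectivity step for the lift $U(\fn^-)^{\oplus k}\to M$, the maximal-weight argument on the cokernel needs $\supp M$ to be upper finite, but you only record $\supp M\subset\bigcup_i(\mu_{j(i)}-\Gamma^+)$ \emph{after} having established that the $e_i$ form a basis. This is not a genuine gap, since upper finiteness of $\supp M$ follows directly, before any lifting, from the fact that $M$ is generated over $U(\fn^-)$ by $k$ elements (the images of the standard basis under the given isomorphism $U(\fn^-)^{\oplus k}\cong M$), each of which is a finite sum of weight vectors; combined with interval finiteness of $\le$ (Proposition~\ref{equivCond}(iii)) this gives what you need. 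It would read more cleanly to state this first.
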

\begin{proof}
It is clear that objects {of}~$\cF^{\Delta}$, resp. $\cF^{\nabla}$, restrict to finite direct sums of modules isomorphic to~$U(\fn^-)$ when considered as an~$U(\fn^-)$-module, resp. finite direct sums of modules isomorphic to~$U(\fn^+)^{\circledast}$ when considered as $U(\fn^+)$-module{s}.

Now consider $M\in \bC(\fg,\fh)$ such that~$\res^{\fg}_{\fn^-}M\cong U(\fn^-)$. Since $M$ must be a weight module, the element $1\in U(\fn^-)$ corresponds to a one dimensional space of weight~$\lambda$ in~$M$ which must be annihilated by~$\fn^+$ and generates~$M$ as an~$\fn$-module. It follows that~$M\cong {\Delta}(\lambda)$.

Now consider $M\in \bC(\fg,\fh)$ such that~$\res^{\fg}_{\fn^-}M\cong U(\fn^-)^{\oplus k}$ for some $k>1$. Since $M$ is a weight module, as an $\fh$-module $M$ is isomorphic to $\oplus_i {\Delta}(\lambda_i)$ for some weights $\lambda_1,\ldots,\lambda_k$. Without loss of generality we assume that $\lambda_1$ is maximal among these weights. This shows that there is an injective $\fg$-module morphism ${\Delta}(\lambda_1)\hookrightarrow M$. We can then proceed by considering $M/{\Delta}(\lambda_1)$.
\end{proof}

The above lemma has the following three immediate consequences.
\begin{cor}\label{CorAdd}
For~$M,M'\in \bC(\fg,\fh)$, we have that~$M\oplus M'$ belongs to~$\cF^{\Delta}$, resp. $\cF^{\nabla}$, if and only if both $M$ and~$M'$ belong to~$\cF^{\Delta}$, resp. $\cF^{\nabla}$. 
\end{cor}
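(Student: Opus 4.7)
The $\Leftarrow$ direction is immediate: concatenating the $\Delta$-flags of $M$ and $M'$ produces one for $M \oplus M'$. The statement for $\cF^\nabla$ reduces to the one for $\cF^\Delta$ via the duality $\vee$ of \ref{SecWeightM}, which commutes with finite direct sums and interchanges $\Delta$ and $\nabla$. Thus it suffices to show: if $N := M \oplus M' \in \cF^\Delta$, then $M, M' \in \cF^\Delta$.

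The plan is to induct on the length $k$ of a $\Delta$-flag of $N$; the base case $k=0$ is trivial. For the inductive step, fix $\lambda$ maximal in $\supp N$ and set $d := \dim N_\lambda$. First, rearrange the flag of $N$ so that all $d$ subquotients equal to $\Delta(\lambda)$ sit at the bottom: any swap that moves a $\Delta(\lambda)$-factor down past a $\Delta(\mu)$-factor is classified by $\Ext^1(\Delta(\lambda),\Delta(\mu))$, which vanishes by Proposition~\ref{PropDN}(iii) since $\lambda \not< \mu$ by maximality of $\lambda$. The resulting bottom submodule of $N$ is a successive self-extension of $\Delta(\lambda)$, and $\Ext^1(\Delta(\lambda),\Delta(\lambda))=0$ forces it to be isomorphic to $\Delta(\lambda)^{\oplus d}$. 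This bottom submodule coincides with $S_N := U(\fg) \cdot N_\lambda$, because $\Delta(\lambda)^{\oplus d}$ is generated by its weight-$\lambda$ subspace.

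Setting $S_M := U(\fg) \cdot M_\lambda \subseteq M$ and $S_{M'} := U(\fg) \cdot M'_\lambda \subseteq M'$, we have $S_N = S_M \oplus S_{M'}$, and therefore $N/S_N = M/S_M \oplus M'/S_{M'}$. Since $\End_\fg(\Delta(\lambda)) = \mk$ is local (by Lemma~\ref{LemDeltaProj}), the Krull--Schmidt theorem applied to $\Delta(\lambda)^{\oplus d} = S_N$ forces the summands $S_M$ and $S_{M'}$ to be isomorphic to direct sums of copies of $\Delta(\lambda)$; in particular, both lie in $\cF^\Delta$. Applying the induction hypothesis to $N/S_N$, which has $\Delta$-flag of length $k - d < k$, yields $\Delta$-flags on $M/S_M$ and $M'/S_{M'}$; concatenating these with the flags on $S_M$ and $S_{M'}$ gives the desired $\Delta$-flags on $M$ and $M'$.

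The main obstacle is the appeal to Krull--Schmidt: the short exact sequence $0 \to S_N \to N \to N/S_N \to 0$ does not split in general, so one cannot simply decompose $N$ as $\Delta(\lambda)^{\oplus d} \oplus (N/S_N)$ and read off the decompositions of $M$ and $M'$. Instead, the argument must work with the subobjects $S_M, S_{M'}$ and the quotients $M/S_M, M'/S_{M'}$ separately, relying on the local-endomorphism-ring property $\End_\fg(\Delta(\lambda)) = \mk$ to conclude that summands of $\Delta(\lambda)^{\oplus d}$ are again direct sums of copies of $\Delta(\lambda)$. All other ingredients (weight-space bookkeeping, compatibility of the $S_?$ with the decomposition $M \oplus M'$, and piecing the flags back together) are routine.
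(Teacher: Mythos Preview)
Your proof is correct, but it takes a different route from the paper's.  In the paper this corollary is deduced immediately from Lemma~\ref{stupidlemma}, which characterises $\cF^\Delta$ as those weight modules whose restriction to $\fn^-$ is free of finite rank over $U(\fn^-)$.  With that characterisation in hand, the claim reduces to the observation that a direct summand of a finite-rank free $U(\fn^-)$-module, still carrying the $\fh$-weight grading, is again free; this follows from graded Nakayama since $U(\fn^-)$ is connected graded with degree-zero part~$\mk$.

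Your argument bypasses Lemma~\ref{stupidlemma} entirely: you work directly inside $\cF^\Delta$, using Proposition~\ref{PropDN}(iii) to push all $\Delta(\lambda)$-factors to the bottom of the flag and then Krull--Schmidt (via $\End_\fg(\Delta(\lambda))=\mk$) to see that the summands $S_M$, $S_{M'}$ of $\Delta(\lambda)^{\oplus d}$ are themselves sums of $\Delta(\lambda)$'s.  The identification $S_N=U(\fg)\cdot N_\lambda$ is the key device that makes the decomposition compatible with $M\oplus M'$, and you handle it cleanly.  The upshot: the paper's route is shorter once the freeness lemma is available (and that lemma is reused for Corollaries~\ref{resolv} and~\ref{CorDua}), while your route is self-contained and avoids the implicit appeal to graded projectives being free.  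Both are perfectly valid.
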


\begin{cor}\label{resolv}
Consider a short exact sequence in~$\bC(\fg,\fh)$
$$0\to A\to B\to C\to 0$$
with $C\in \cF^{\Delta}$. Then we have $A\in \cF^{\Delta}$ if and only if $B\in\cF^{\Delta}$.
\end{cor}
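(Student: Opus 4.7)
The plan is to prove the two implications separately, relying on Lemma~\ref{stupidlemma} for the harder direction. For the forward direction ($A\in\cF^\Delta\Rightarrow B\in\cF^\Delta$), I would pull back a $\Delta$-flag $0=G_kC\subset\cdots\subset G_0C=C$ along $\pi:B\tto C$ to obtain a filtration $A=\pi^{-1}(G_kC)\subset\pi^{-1}(G_{k-1}C)\subset\cdots\subset\pi^{-1}(G_0C)=B$ whose successive quotients are the Verma quotients of $C$'s flag. Concatenating a given $\Delta$-flag of $A$ below this filtration produces a $\Delta$-flag of $B$.

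For the reverse direction ($B\in\cF^\Delta\Rightarrow A\in\cF^\Delta$) I would induct on the length of a $\Delta$-flag of $C$. The base case $C=0$ gives $A\cong B\in\cF^\Delta$. For the inductive step, extract a top Verma quotient $\Delta(\mu)=C/C'$, where $C'\in\cF^\Delta$ has strictly shorter flag; setting $B':=\pi^{-1}(C')$ produces two short exact sequences $0\to B'\to B\to\Delta(\mu)\to 0$ and $0\to A\to B'\to C'\to 0$. Granted the length-one case, the first forces $B'\in\cF^\Delta$, and the inductive hypothesis applied to the second then yields $A\in\cF^\Delta$.

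The heart of the argument is the length-one case: given $0\to A\to B\to\Delta(\mu)\to 0$ with $B\in\cF^\Delta$, show $A\in\cF^\Delta$. Using Lemma~\ref{stupidlemma}, a $\Delta$-flag of $B$ with Verma subquotients $\Delta(\mu_1),\ldots,\Delta(\mu_m)$ supplies weight vectors $b_1,\ldots,b_m\in B$ (lifts of the highest weight vectors, produced by a small induction along the flag using PBW) that freely generate $B$ over $U(\fn^-)$. Each $\pi(b_i)$ equals $p_iv_\mu$ for a unique $p_i\in U(\fn^-)$ of weight $\mu_i-\mu$, and $p_i=0$ unless $\mu_i\le\mu$. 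Since every nonzero weight of $U(\fn^-)$ is strictly negative, the surjectivity of $\pi$ forces some $\mu_i=\mu$ with $p_i\in\mk^\times$; after relabeling and rescaling we may assume $\mu_1=\mu$ and $p_1=1$. The weight-preserving, triangular unipotent $U(\fn^-)$-basis change $b_i\mapsto b_i-p_ib_1$ (for $i\ge 2$) then makes $\pi(b_1)=v_\mu$ and $\pi(b_i)=0$ for $i\ge 2$, so $A=\ker\pi$ is freely generated over $U(\fn^-)$ by $b_2,\ldots,b_m$, and a second application of Lemma~\ref{stupidlemma} gives $A\in\cF^\Delta$.

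The main obstacle is the length-one case, where one must (i) verify that the lifts $b_i$ do furnish a free $U(\fn^-)$-basis of $B$ (this follows from an easy induction along the $\Delta$-flag, observing that at each stage $U(\fn^-)b_j\cap F_{j+1}B=0$ by the rank-one freeness of each Verma) and (ii) carry out the Gaussian elimination within the weight grading, which is forced to work because $p_ib_1$ has the same weight $\mu_i$ as $b_i$.
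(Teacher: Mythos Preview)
Your proof is correct. The paper deduces the corollary in one stroke from Lemma~\ref{stupidlemma}: since $C\in\cF^\Delta$ is free of finite rank over $U(\fn^-)$, the sequence splits over $U(\fn^-)$, giving $\res^{\fg}_{\fn^-}B\cong\res^{\fg}_{\fn^-}A\oplus\res^{\fg}_{\fn^-}C$; then $B$ is finite free over $U(\fn^-)$ if and only if $A$ is (the weight grading makes direct summands of finite free modules free), and Lemma~\ref{stupidlemma} translates this back into membership in $\cF^\Delta$. Your forward direction via concatenation of flags bypasses the lemma entirely, which is a perfectly good alternative for the easy implication. Your reverse direction is an explicit unpacking of the same idea: the induction on the flag length of $C$ together with the Gaussian elimination in the length-one case is exactly a hands-on proof that a weight-graded complement of $U(\fn^-)$ inside $U(\fn^-)^{\oplus m}$ is itself free. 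So the substance agrees with the paper; the paper is shorter only because it invokes projectivity of $C$ over $U(\fn^-)$ to split once, rather than peeling off one Verma at a time.
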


%Using the description of the relevant categories in the lemma we obtain the following corollaries.
\begin{cor}\label{CorDua}
The duality functor~$\circledast$, resp. $\vee$, on $\cC(\fg,\fh)$ restricts to a contravariant equivalence of exact categories
$$\circledast: \cF^\Delta({\fg,\fb})\;\tilde\to\; \cF^\nabla({\fg,\fb^-}),\quad\mbox{resp.}\quad\; \vee : \cF^{\Delta}(\fg,\fb)\;\tilde\to\; \cF^{\nabla}(\fg,\fb).$$
\end{cor}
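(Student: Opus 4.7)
The plan is to derive the corollary from Lemma~\ref{stupidlemma}, which recognises $\cF^\Delta(\fg,\fb)$ (resp.\ $\cF^\nabla(\fg,\fb)$) as the objects of $\bC(\fg,\fh)$ whose restriction to the $\fb$-negative nilpotent radical $\fn^-$ (resp.\ $\fb$-positive nilpotent radical $\fn^+$) is a finite direct sum of copies of $U(\fn^-)$ (resp.\ $U(\fn^+)^\circledast$). Both $\vee$ and $\circledast$ are exact contravariant self-equivalences of $\cC(\fg,\fh)$, and the exact structures on $\cF^\Delta,\cF^\nabla$ are by definition inherited from $\cC(\fg,\fh)$; it therefore suffices to check that each duality transforms the defining restriction condition for $\cF^\Delta$ into that for the appropriate $\cF^\nabla$, after which involutivity promotes the resulting bijection to an equivalence of exact categories.

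For $\circledast$: take $M\in\cF^{\Delta}(\fg,\fb)$, so $\res^\fg_{\fn^-}M\cong U(\fn^-)^{\oplus k}$. Because $M\in\cC(\fg,\fh)$ has finite-dimensional weight spaces, the $\fh$-finite dual commutes with restriction to $\fn^-$, whence $\res^\fg_{\fn^-}(M^{\circledast})\cong (U(\fn^-)^{\circledast})^{\oplus k}$. Now $\fn^-$ is the positive nilpotent radical of the opposite Borel subalgebra $\fb^-$, so reading Lemma~\ref{stupidlemma} with $\fb$ replaced by $\fb^-$ places $M^\circledast$ in $\cF^{\nabla}(\fg,\fb^-)$. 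Running the same argument with $\fb$ and $\fb^-$ interchanged, together with $(M^\circledast)^\circledast\cong M$, gives the reverse inclusion.

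For $\vee$: by definition $M^\vee$ is $M^\circledast$ with the $\fg$-action twisted by $\tau$, and $\tau$ restricts to a Lie algebra isomorphism $\fn^+\xrightarrow{\sim}\fn^-$. Hence, as an $\fn^+$-module, $M^\vee$ is the pull-back along $\tau$ of the $\fn^-$-module $\res^\fg_{\fn^-}(M^\circledast)$. For $M\in\cF^\Delta(\fg,\fb)$, the computation of the previous paragraph shows this pull-back is $(U(\fn^+)^{\circledast})^{\oplus k}$, and Lemma~\ref{stupidlemma}, applied this time with the original Borel $\fb$, places $M^\vee$ in $\cF^{\nabla}(\fg,\fb)$; involutivity delivers the reverse inclusion.

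The single delicate point in both cases is the commutation of the $\fh$-finite dual with restriction to $\fn^\pm$, which is exactly where the hypothesis $M\in\cC(\fg,\fh)$ enters: the $\fh$-graded dual of $M=\bigoplus_\mu M_\mu$ is computed one weight space at a time and equals $\bigoplus_\mu(M_\mu)^*$ only because each $M_\mu$ is finite-dimensional. Once this is granted, the rest is straightforward bookkeeping via Lemma~\ref{stupidlemma} applied to the two Borel subalgebras $\fb$ and $\fb^-$.
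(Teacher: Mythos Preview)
Your proof is correct and follows exactly the approach the paper intends: the corollary is listed as an immediate consequence of Lemma~\ref{stupidlemma}, and you have spelled out precisely how that lemma, applied with the appropriate Borel, converts the restriction condition for $\cF^\Delta$ into that for $\cF^\nabla$. One small wording fix: $\tau$ is an \emph{anti}-involution, so strictly speaking the Lie algebra isomorphism $\fn^+\xrightarrow{\sim}\fn^-$ you invoke is $X\mapsto -\tau(X)$; this does not affect the argument.
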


%\subsection{Tilting modules}

%\begin{ddef}\label{DefTilting}
%A {\bf tilting module} is a module $M\in\cF^{\Delta}$ such that
%$$\Ext^1_{\cC(\fg,\fh)}(\Delta(\lambda),M)=0,\qquad\mbox{for all $\lambda\in\fh^\ast$.}$$
%\end{ddef}

%\begin{rem}
%The traditional definition of tilting modules in the BGG category $\cO$ is as the objects in~$\cF^{\Delta}\cap\cF^{\nabla}$. It follows from e.g.~\cite[Theorem~4]{Ringel}, that Definition~\ref{DefTilting} coincides with that definition for $\fg$ finite dimensional.
%\end{rem}

%The following is an immediate application of Proposition~\ref{PropHomVerma}(iii).
%\begin{lemma}
%If $M\in \cF^{\Delta}$ and, for all $\mu\in\fh^\ast$ for which there exists $\nu$ such that~$\mu\le \nu$ and~$(M:\Delta(\nu))\not=0$, we have $\Ext^1_{\cC(\fg,\fh)}(M,\Delta(\mu))=0$, then $M$ is a tilting module.
%\end{lemma}

\subsection{Verma modules for non-Dynkin Borel subalgebras}
First we determine all morphism spaces between Verma modules in terms of those for Dynkin Borel subalgebras (in Theorem~\ref{ThmMult}).  %We also give an explicit example of the occurrence of simple constituents which are not of highest weight type, in Verma modules.
\begin{prop}\label{PropHomVerma}
Consider $\lambda,\mu\in\fh^\ast$.
Let $\fl_{\fb}$ be the $\fb$-finite root-reductive subalgebra of~$\fg$.
\begin{enumerate}[(i)]
\item If $\lambda$ and~$\mu$ are not remote, then 
$$\Hom_{\fg}(\Delta(\mu),\Delta(\lambda))\;\cong\;\Hom_{\fl_{\fb}}(\Delta_{\fl_{\fb}}(\mu),\Delta_{\fl_{\fb}}(\lambda)).$$
\item If $\lambda$ and~$\mu$ are remote, then $\Hom_{\fg}(\Delta(\mu),\Delta(\lambda))\;=\;0.$
\end{enumerate}
\end{prop}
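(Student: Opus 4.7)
The strategy is to exploit the parabolic subalgebra $\fp=\fl_{\fb}+\fb$ with reductive part $\fl_{\fb}$ and nilradical $\fu^+$. Recall $\mZ\rR(\fl_{\fb})=\mZ\Sigma$ and that $\fb\cap\fl_{\fb}$ is a Dynkin Borel of $\fl_{\fb}$. Whenever $\lambda-\mu\in\mZ\Sigma$, Lemma~\ref{LemMult}(ii) applied to $\fl=\fl_{\fb}$ already identifies $\Hom_{\fg}(\Delta(\mu),\Delta(\lambda))$ with $\Hom_{\fl_{\fb}}(\Delta_{\fl_{\fb}}(\mu),\Delta_{\fl_{\fb}}(\lambda))$; the plan is to reduce to this situation whenever possible and otherwise give a direct $\fu^+$-invariant argument.

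The auxiliary claim I would establish first is: \emph{if $\mu\le\lambda$ and the interval $[\mu,\lambda]$ is finite, then $\lambda-\mu\in\mN\Sigma$.} Writing $\lambda-\mu=\sum_\alpha k_\alpha\alpha$ with $k_\alpha\in\mN$ and $\alpha\in\rR^+$, if some $\alpha_0$ with $k_{\alpha_0}>0$ is not in $\mN\Sigma$ then $\alpha_0$ is not simple, so $\alpha_0=\beta+\beta'$ in $\rR^+$ and at least one of $\beta,\beta'$ is again outside $\mN\Sigma$; iteration produces an infinite strictly decreasing chain in $\Gamma^+$ below $\alpha_0$, hence an infinite chain inside $[\mu,\lambda]$, contradicting non-remoteness. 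Part (i) follows immediately: when $\mu\not\le\lambda$ both Hom spaces vanish for weight reasons, and when $\mu\le\lambda$ with the pair not remote the auxiliary claim places $\lambda-\mu$ in $\mN\Sigma\subset\mZ\Sigma$ and Lemma~\ref{LemMult}(ii) supplies the isomorphism.

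For part (ii), assume $\mu\le\lambda$ with $[\mu,\lambda]$ infinite. If $\lambda-\mu\in\mZ\Sigma$, the contrapositive of the auxiliary claim forces $\lambda-\mu\notin\mN\Sigma$, whence $\mu\notin\supp\Delta_{\fl_{\fb}}(\lambda)=\lambda-\mN\Sigma$ and Lemma~\ref{LemMult}(ii) gives $\Hom_{\fg}(\Delta(\mu),\Delta(\lambda))=0$. If $\lambda-\mu\notin\mZ\Sigma$, Lemma~\ref{LemMult}(ii) is inapplicable, but using $\Delta(\mu)=\ind^{\fg}_{\fl_{\fb},+}\Delta_{\fl_{\fb}}(\mu)$ and the adjunction between induction and restriction one obtains
\[\Hom_{\fg}(\Delta(\mu),\Delta(\lambda))\;\cong\;\Hom_{\fl_{\fb}}(\Delta_{\fl_{\fb}}(\mu),\Delta(\lambda)^{\fu^+}),\]
and since $\supp\Delta_{\fl_{\fb}}(\mu)\subset\mu+\mZ\Sigma$ is disjoint from $\lambda+\mZ\Sigma$ it suffices to show $\supp\Delta(\lambda)^{\fu^+}\subset\lambda+\mZ\Sigma$.

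This last inclusion is the main obstacle. I would attack it through the pointedness of the cone $\mN\{\bar\alpha\mid\alpha\in\rR^+\setminus\mZ\Sigma\}$ inside $\mZ\rR/\mZ\Sigma$; pointedness combined with the PBW decomposition $U(\fn^-)=U(\fu^-)U(\fn^-\cap\fl_{\fb})$ already shows that the $(\lambda+\mZ\Sigma)$-weight part of $\Delta(\lambda)$ coincides with $\Delta_{\fl_{\fb}}(\lambda)$. To promote this to all of $\Delta(\lambda)^{\fu^+}$ I would filter $\Delta(\lambda)$ by $\fu^-$-degree so that $\mathrm{gr}\,\Delta(\lambda)\cong S(\fu^-)\otimes\Delta_{\fl_{\fb}}(\lambda)$ and control the induced $\fu^+$-action via the $\fl_{\fb}\oplus\fu^-\oplus\fu^+$-decomposition of $[X,U(\fu^-)]$ for $X\in\fu^+$; a hypothetical $\fu^+$-invariant vector of weight $\nu\notin\lambda+\mZ\Sigma$ would have a nonzero highest-$\fu^-$-degree component in $S^d(\fu^-)\otimes\Delta_{\fl_{\fb}}(\lambda)$ for some $d\ge 1$ whose $\fu^+$-invariance is ruled out by pointedness, closing the argument.
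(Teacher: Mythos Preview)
Your treatment of part~(i) is correct and in fact more explicit than the paper's one-line reference to Lemma~\ref{LemMult}(ii): you spell out the auxiliary claim that a finite interval forces $\lambda-\mu\in\mN\Sigma$, which is exactly what is needed to place oneself in the hypotheses of that lemma.

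Part~(ii) has two real problems. First, in the case $\lambda-\mu\in\mZ\Sigma$ you invoke ``the contrapositive of the auxiliary claim'' to deduce $\lambda-\mu\notin\mN\Sigma$ from remoteness, but this is the \emph{converse}, not the contrapositive. The converse does follow from the pointedness statement you formulate later (if no nontrivial $\mN$-combination of roots in $\rR^+\setminus\mZ\Sigma$ lies in $\mZ\Sigma$, then $\Gamma^+\cap\mZ\Sigma=\mN\Sigma$ and the $\fg$-interval below an element of $\mN\Sigma$ sits inside the $\fl_{\fb}$-interval, which is finite), but you have not connected the two, nor proved pointedness.

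Second, and more seriously, the filtration argument in your last paragraph does not go through as written. The action of $\fu^+$ does not respect the $\fu^-$-degree filtration in the required direction: for $X\in\fu^+$ and $v=Z_1\cdots Z_k v_\lambda\in\Delta_{\fl_{\fb}}(\lambda)=F_0$ with $Z_j\in\fn^-\cap\fl_{\fb}$, the commutators $[X,Z_j]$ can land in $\fu^-$ (their root lies outside $\mZ\Sigma$ and may be negative), so $Xv$ can have positive $\fu^-$-degree. Thus there is no well-defined induced $\fu^+$-action on $\mathrm{gr}$ of the kind you need, and the ``highest-$\fu^-$-degree component of an invariant is invariant'' step fails. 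Pointedness alone does not rescue this.

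The paper's proof of (ii) bypasses these difficulties by a direct PBW computation: one totally orders $\rR^+$ so that finite-length roots precede infinite-length ones, takes the corresponding PBW basis of $U(\fn^-)$, and for any weight-$\mu$ vector $w=\sum u_i v_\lambda$ picks the minimal infinite-length root $\alpha$ occurring in some $u_i$; since $\alpha$ is not simple there is $\beta\in\Sigma$ with $\alpha-\beta\in\rR^+$, and one checks that $X_\beta w\neq 0$ because the PBW term arising from $[X_\beta,X_{-\alpha}]$ cannot be cancelled. This concrete argument is what actually establishes $\Delta(\lambda)^{\fn^+}_\mu=0$, and it is not captured by your filtration sketch.
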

\begin{proof}Part (i) is a special case of Lemma~\ref{LemMult}(ii).

Now we prove part (ii). We take a basis of~$\fn^-$ consisting of root vectors. We extend the partial order $\le$ on $\rR^+$ to a total order $\preceq$ such that all roots of finite length are smaller than all roots of infinite length. Then we take a PBW basis of~$U(\fn^-)$, where each basis element is a product of root vectors, and elements of~$\fg^{-\alpha}$ appear to the left of elements of~$\fg^{-\beta}$ if $\alpha\succ \beta$. An arbitrary weight vector of~$\Delta(\lambda)$ is then of the form
$$w=\sum_{i=1}^k u_i\otimes v=\sum_{i=1}^k u_iv_\lambda,$$
{where} $v\in \mk_\lambda$ and each $u_i$ {is} a PBW basis element of~$U(\fn^-)$.

Let $\mu\le \lambda$ be remote from~$\lambda$ and assume that~$w$ is of weight $\mu$.
{Fix} a minimal positive root $\alpha$ of infinite length such that~$\fg^{-\alpha}$ appears in one of the $u_i$. Now, take $\beta\in\Sigma $ such that~$\alpha-\beta\in \rR^+$. We thus have $[\fg^\beta,\fg^{-\alpha}]\not=0$, and for a non-zero $X\in \fg^\beta$ we consider 
$$Xw=\sum_{i=1}^k [X,u_i]\otimes v.$$
Amongst other possible terms, any $[X,u_i]$ such that $\fg^{-\alpha}$ appears in~$u_i$, has a term (in the natural expansion of~$[X,u_i]$ based on the action of~$X$ on each factor in the product $u_i$) with a factor in~$\fg^{-\alpha+\beta}$ which is by construction a PBW basis element. Moreover, this basis element does not appear in other terms of~$Xw$, by minimality of $\alpha$.
It thus follows that~$X\in \fg^\beta\subset\fn^+$ acts non-trivially on $w$. Consequently, there exists no non-zero morphism from~$\Delta(\mu)$ to~$\Delta(\lambda)$
\end{proof}

\begin{rem}
Proposition \ref{PropHomVerma}(i) was first obtained in \cite[Section~6.4]{NamT}.
\end{rem}

%%%%%%%%%%%%%%%%%%%%%%%%%%%%%%%%%%%%%%%%%%%%%%%%%%%%%%%%%%%%%%%%%%%%%%%%
\section{The category~$\bO$}\label{Sec4}
For the rest of the paper, fix a root-reductive Lie algebra~$\fg=\varinjlim\widetilde{\fg}_n$ with Dynkin Borel subalgebra~$\fb=\fh\oplus\fn^+$. In particular, $\fg$ can be a finite dimensional reductive Lie algebra. By~\ref{equivCond}(ii), without loss of generality we may {\em assume that each $\widetilde{\fg}_n+\fb$ is a (parabolic) subalgebra.}

\subsection{Definitions}
{Our} main object of study will be the following abelian category.
\begin{ddef}\label{DefO}
The category~$\bO=\bO(\fg,\fb)$ is the full subcategory of~$\bC=\bC(\fg,\fh)$ of modules~$M$ on which $\fb$ acts locally finitely.
\end{ddef}
It is straightforward to see that {the category} $\bO$ is abelian and closed under direct limits. The reason we restrict to Dynkin Borel subalgebras  {is} that we want  {to study a} category   {containing all} the Verma modules, see Proposition~\ref{equivCond}(vi). Our motivation to  {choose precisely the category} $\bO$  {is} that $\bO$ is a Grothendieck category, see Section~\ref{GroCat}.

The simple objects in~$\bO$ are, up to isomorphism,  the simple highest weight modules~$L(\lambda)$ for $\lambda\in\fh^\ast$.
The categories $\cF^\Delta$ and $\cF^\nabla$ are exact subcategories of $\bO$.

%By the Krull-Schmidt-Remak-Azuyama theorem for the unital algebra $U(\fg)$, we have the following lemma.
%\begin{lemma}\label{LemKS}
%Every module in~$\bO$ admits a unique, up to isomorphism, decomposition into a direct sum of indecomposable objects.
%\end{lemma}

%\begin{rem}
%The direct sums in Lemma~\ref{LemKS} can be infinite and~$\bO$ is thus {\em not} a Krull-Schmidt category.\end{rem}

\begin{rem}
Let $\cO(\fg,\fb)$ denote the full subcategory of $\bO(\fg,\fb)$ of finitely generated modules.
In case $\fg$ is finite dimensional (so a reductive Lie algebra){,} the universal enveloping algebra $U(\fg)$ is noetherian and the abelian category $\cO(\fg,\fb)$ is the ordinary BGG category  of \cite{BGG, Humphreys}. In this case, the relation between the categories $\cO$ and~$\bO$ is summarised in Proposition~\ref{FDprop} below. In our generality, the category $\cO(\fg,\fb)$ need not be abelian, see Lemma~\ref{LemRad}, and it is natural to study $\bO(\fg,\fb)$.
\end{rem}

\begin{rem}
In \cite{Nam}, the abelian category~$\bar{\cO}(\fg,\fb)$ is studied, which is the full subcategory of~$\bO(\fg,\fb)$ of modules with finite dimensional weight spaces. We thus have Serre subcategories
$$\xymatrix{
\cC(\fg,\fh) \ar@{^{(}->}[r]&\bC(\fg,\fh)\\
\bar{\cO}(\fg,\fb)\ar@{^{(}->}[r]\ar@{^{(}->}[u]& \bO(\fg,\fb).\ar@{^{(}->}[u]
} 
$$
It follows easily from e.g. Proposition~\ref{equivCond}(vi) that $\cO(\fg,\fh)\subset \bar{\cO}(\fg,\fh)$. Hence  the category $\bar{\cO}(\fg,\fh)$ is another natural abelian enlargement of $\cO(\fg,\fh)$.

%$$\bC(\fg,\fh)\;\supset\; \cC(\fg,\fh) \;\supset\;\bar{\cO}(\fg,\fb)\;\subset\; \bO(\fg,\fb)\;\subset\;\bC(\fg,\fh).$$
%and it makes no difference in which category we work to determine $\Ext^1$. Since simple highest weight modules have finite dimensional weight spaces, it follows that~$\bO$ and~$\bar{\cO}$ have the same simple objects.
%We can describe $\bar{\cO}$ equivalently as the full subcategory of~$\bO$ of modules $M$ with finite multiplicities $[M:L(\lambda)]$.
\end{rem}

From now on we will leave out the references to $\fg,\fb$ and $\fh$ in $\bO(\fg,\fb)$, $\cC(\fg,\fh)$ etc.

\subsubsection{Serre subcategories by truncation}
Let $\KKK$ be any ideal in~$(\fh^\ast,\le)$. The Serre subcategory~${}^{\KKK}\bO$ of~$\bO$ is defined as the full subcategory of modules~$M$ with $\supp M \subset\KKK$. Clearly, we have 
\begin{equation}\label{eqDtrunc}\Delta(\lambda)\in {}^{\KKK}\bO\quad\Leftrightarrow\quad \lambda\in\KKK\quad\Leftrightarrow\quad L(\lambda)\in{}^{\KKK}\bO.\end{equation}
Similarly, ${}^{\KKK}\bar{\cO}$, respectively $\cF^\Delta[\KKK]$, is the subcategory of $\bar{\cO}$, respectively {of} $\cF^\Delta$, of modules with support in~$\KKK$. The condition for $M\in\cF^\Delta$ to be in $\cF^\Delta[\KKK]$ is equivalently characterised as $(M:\Delta(\lambda))=0$ if $\lambda\not\in\KKK$.

\subsubsection{Upper finite ideals}A special role will be played by ideals $\KKK\subset \fh^\ast$ which are upper finite. We denote by $\cK$ the set of upper finite ideals in $(\fh^\ast,\le)$. Then $\cK$ is a directed set for the partial order given by inclusion.

The following lemma is obvious from the fact that simple highest weight modules have finite dimensional weight spaces.
\begin{lemma}\label{LemKmult}
For an upper finite ideal $\KKK\in\cK$ and~$M\in{}^{\KKK}\bO$, we have 
$$M\in{}^{\KKK}\bar{\cO}\quad\;\Leftrightarrow\; \quad [M:L(\mu)]<\infty\mbox{ for all $\mu\in\KKK$.}$$
\end{lemma}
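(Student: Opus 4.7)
The plan is to deduce the forward direction directly from Soergel's definition of multiplicities and to establish the backward direction by induction on the cardinality of the finite coideal $S_\mu := \{\nu\in\KKK\,|\,\nu\ge\mu\}$ (finite by upper finiteness of $\KKK$). For the forward direction, I would observe that in any finite filtration of $M$, each subquotient isomorphic to $L(\mu)$ contributes exactly $\dim L(\mu)_\mu = 1$ to $\dim M_\mu$ by additivity of weight space dimensions in short exact sequences. Hence every finite filtration has at most $\dim M_\mu$ subquotients of type $L(\mu)$, so $[M:L(\mu)]\le\dim M_\mu<\infty$.

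For the backward direction, I would first treat the base case $|S_\mu|=1$, where $\mu$ is maximal in $\KKK$ and $M_\nu=0$ for all $\nu>\mu$. The goal is to prove $\dim M_\mu\le [M:L(\mu)]$. Given linearly independent $v_1,\ldots,v_n\in M_\mu$, the subspace $\Span(v_1,\ldots,v_n)$ is automatically $\fb$-stable (since $M$ has no higher weights), so the submodule $N:=U(\fg)\Span(v_i)$ reduces to $U(\fn^-)\Span(v_i)$ and satisfies $N_\mu=\Span(v_i)$. Lemma~\ref{LemDeltaProj} then yields a surjection $\Delta(\mu)^{\oplus n}\twoheadrightarrow N$, and since $\rad\Delta(\mu)$ has zero $\mu$-weight space, passing to the quotient by the image of $\rad\Delta(\mu)^{\oplus n}$ produces a surjection $N\twoheadrightarrow L(\mu)^{\oplus n}$. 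Refining this into a filtration of $N$ with $n$ subquotients isomorphic to $L(\mu)$ gives $[N:L(\mu)]\ge n$, and hence $[M:L(\mu)]\ge n$.

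For the inductive step $|S_\mu|\ge 2$, I would form $V:=\bigoplus_{\nu\in S_\mu\setminus\{\mu\}}M_\nu$; this subspace is $\fb$-stable because $\fn^+$ raises weights within $\supp M\subset\KKK$, so the submodule $M^{>\mu}:=U(\fg)V$ equals $U(\fn^-)V$. The quotient $Q:=M/M^{>\mu}$ satisfies $Q_\nu=0$ for all $\nu>\mu$, and the base case applied to $Q$, combined with additivity of Soergel multiplicities along the exact sequence $0\to M^{>\mu}\to M\to Q\to 0$, yields $\dim Q_\mu\le [Q:L(\mu)]\le [M:L(\mu)]<\infty$. On the other hand,
$$\dim(M^{>\mu})_\mu\;\le\;\sum_{\nu\in S_\mu\setminus\{\mu\}}\dim_{\mk}U(\fn^-)_{\mu-\nu}\cdot\dim M_\nu,$$
and every factor on the right is finite: $\dim U(\fn^-)_{\mu-\nu}$ is finite by Proposition~\ref{equivCond}(vi) applied to $\Delta(0)\cong U(\fn^-)$, and $\dim M_\nu$ is finite by the inductive hypothesis (since $S_\nu\subsetneq S_\mu$ for $\nu>\mu$). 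Summing via $\dim M_\mu=\dim Q_\mu+\dim(M^{>\mu})_\mu$ completes the induction. The main technical point will be the base case, where constructing enough $L(\mu)$-composition factors from a direct-sum map out of $\Delta(\mu)^{\oplus n}$ is crucial; once this is in place, the inductive step is largely mechanical, driven by the Dynkin assumption which forces $U(\fn^-)$ to have finite-dimensional weight spaces.
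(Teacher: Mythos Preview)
Your proof is correct. The paper declares the lemma ``obvious from the fact that simple highest weight modules have finite dimensional weight spaces'' and offers no further argument; your induction on $|S_\mu|$ is a careful way to make that one-line remark rigorous, exploiting upper finiteness of $\KKK$ exactly as intended to ensure that only finitely many simples $L(\nu)$ (each with $\dim L(\nu)_\mu<\infty$) can contribute to a given weight space.
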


\begin{rem}
When $\fg$ is not finite dimensional, there exist indecomposable modules in $\bO$ for which $\supp M $ is not upper finite. For instance, when $\lambda$ is integral, regular and antidominant we can consider an infinite chain
$$\lambda=\lambda^0\uparrow\lambda^1\uparrow \lambda^2\uparrow\cdots.$$
By Corollary~\ref{CorBGGThm}(ii), we have morphisms $\Delta(\lambda^i)\to\Delta(\lambda^{i+1})$, for all $i\in\mN$. The $\fg$-module $M:=\varinjlim \Delta(\lambda^i)$ belongs to $\bO$. However, since $M$ is not {an object of} $\overline{\cO}$, this does not yet answer the question, raised in \cite[Section~5.3]{NamT}, of whether there exist indecomposable modules in~$\bar{\cO}$ whose support is not upper finite.
\end{rem}

%By Definition~\ref{DefO}, we also find that any $M\in\bO$ belongs to ${}^{\KKK}\bO$ for some ``finitely generated'' ideal $\KKK$.

%\subsubsection{}\label{cosets}A special class of ideals in~$\fh^\ast$ is formed by the {\bf$\mZ\rR$-cosets} in~$\fh^\ast$, {i.e.} by  the subsets $\lambda+\mZ\rR$ for arbitrary $\lambda\in \fh^\ast$. 

\subsection{Locally projective modules} \label{SecConstProj}
\begin{thm}\label{ThmProj} Take $\KKK\in\cK$.
\begin{enumerate}[(i)]
\item For each $\mu\in\KKK$, there exists a module $P_{\KKK}(\mu)\in{}^{\KKK}\bar{\cO}\subset {}^{\KKK}\bO$ such that:
\begin{enumerate}[(a)]
\item $\dim_{\mk}\Hom_{{}^{\KKK}\bO}(P_{\KKK}(\mu),-)\;=\;[-:L(\mu)].$
\item $P_{\KKK}(\mu)\in \cF^{\Delta}[\KKK]$ {and} 
$$(P_{\KKK}(\mu):\Delta(\nu))=
[\Delta(\nu):L(\mu)]\qquad\mbox{for all $\nu\in \KKK$}.$$
\item $[P_{\KKK}(\mu):L(\nu)]=0$ unless $\nu\in \lbra\mu\rbra$.
\end{enumerate}
\item The category ${}^{\KKK}\bO$ has enough projective objects. Each projective object is a direct sum of modules isomorphic to $P_{\KKK}(\mu)$ with $\mu\in\KKK$.
\end{enumerate}
\end{thm}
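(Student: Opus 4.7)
The plan is to reduce the construction of $P_\KKK(\mu)$ to that of the ordinary BGG projective cover of $L_n(\mu)$ in $\cO(\fg_n, \fb_n)$ for a sufficiently large $n$, followed by truncation to land in ${}^\KKK\bar{\cO}$. Since $\KKK$ is upper finite, the set $\KKK^\mu := \{\nu \in \KKK \mid \nu \ge \mu\}$ is finite. By Proposition~\ref{equivCond}(ii), each $\fp_n := \fg_n + \fb$ is a parabolic subalgebra with Levi $\fg_n$ and nilradicals $\fu_n^\pm$; choose $n$ large enough that $\KKK^\mu \subset \mu + \mZ\Phi_n$ and every $\nu \in \KKK^\mu$ already satisfies $\nu \ge_n \mu$. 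Denote by $P_n(\mu) \in \cO(\fg_n, \fb_n)$ the projective cover of $L_n(\mu)$, viewed as a $\fp_n$-module with trivial $\fu_n^+$-action, and set
$$\tilde P \;:=\; \ind^{\fg}_{\fg_n,+} P_n(\mu) \;=\; U(\fg) \otimes_{U(\fp_n)} P_n(\mu).$$
Since induction is exact and $\ind^{\fg}_{\fg_n,+}\Delta_n(\nu) = \Delta(\nu)$ (Lemma~\ref{LemMult}), the module $\tilde P$ has a $\Delta$-flag with $(\tilde P : \Delta(\nu)) = (P_n(\mu) : \Delta_n(\nu))$. Both $P_n(\mu)$ and $U(\fu_n^-)$ have finite-dimensional weight spaces (the latter because $\fb$ is Dynkin, by Proposition~\ref{equivCond}(vi)), so $\tilde P \in \bar{\cO}$. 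Define $P_\KKK(\mu)$ to be the maximal quotient of $\tilde P$ lying in ${}^\KKK\bar{\cO}$.

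Property~(c) is immediate: composition factors of $\tilde P$ are indexed by $W_n \cdot \mu \subset W[\mu] \cdot \mu = \lbra\mu\rbra$, and this is inherited by quotients. For property~(b), one arranges the $\Delta$-flag of $\tilde P$ so that Verma factors $\Delta(\nu)$ with $\nu \notin \KKK$ form an initial sub-flag at the top: if $\nu \notin \KKK$ and $\nu' \in \KKK$, then $\nu \not\le \nu'$ by the ideal property of $\KKK$, so the only possibilities are $\nu > \nu'$ (the desired top-heavy order) or incomparability of $\nu$ and $\nu'$, in which case Proposition~\ref{PropDN}(iii) forces the extension to split and the factors may be freely reordered. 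The remaining quotient $P_\KKK(\mu)$ then inherits a $\Delta$-flag with $(P_\KKK(\mu):\Delta(\nu)) = (P_n(\mu):\Delta_n(\nu)) = [\Delta_n(\nu):L_n(\mu)] = [\Delta(\nu):L(\mu)]$ for $\nu \in \KKK$, by BGG reciprocity and Lemma~\ref{LemMult}(i). For property~(a), the kernel of $\tilde P \twoheadrightarrow P_\KKK(\mu)$ admits no nonzero $\fg$-morphism into $M \in {}^\KKK\bO$ (it is generated by weight vectors of weights outside $\KKK$), so $\Hom_\fg(P_\KKK(\mu), M) \cong \Hom_\fg(\tilde P, M)$, and adjunction combined with the projectivity of $P_n(\mu)$ in $\bO(\fg_n, \fb_n)$ yields $\dim_\mk \Hom_\fg(P_\KKK(\mu), M) = [M^{\fu_n^+} : L_n(\mu)]$. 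The key identity $L(\nu)^{\fu_n^+} \cong L_n(\nu)$, valid because $v_\nu$ is the unique $\fn^+$-primitive vector of $L(\nu)$ and hence any nonzero $\fg_n$-submodule of $L(\nu)^{\fu_n^+}$ must contain it, combined with exactness of $(-)^{\fu_n^+}$ on the truncated category ${}^\KKK\bO$ (ensured by the choice of $n$ and upper finiteness of $\KKK$), identifies this quantity with $[M:L(\mu)]$.

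Part~(ii) then follows by covering any $M \in {}^\KKK\bO$ by a coproduct of $P_\KKK(\mu)$'s---one copy for each weight generator---which remains projective by Lemma~\ref{LemCopr}, and applying Krull--Schmidt (using that each $\End(P_\KKK(\mu))$ is local, inherited from $\End(P_n(\mu))$). The main obstacle is establishing the exactness of the $\fu_n^+$-invariants functor on ${}^\KKK\bO$ needed for~(a): since $(-)^{\fu_n^+}$ is only left exact in general, the required surjectivity must be argued by exploiting both the truncation structure (upper finiteness of $\KKK$) and the choice of $n$ large enough that the relevant Lie algebra cohomology $H^1(\fu_n^+, -)$ vanishes on modules in ${}^\KKK\bO$. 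A secondary technical point is verifying that nonsplit extensions between Verma factors do not obstruct the reordering argument in~(b); this again relies on the specific structure of $\tilde P$ coming from a parabolic induction of a BGG projective.
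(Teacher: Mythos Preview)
Your approach is genuinely different from the paper's and is essentially sound, but the gap you flag in part~(a) is real and your proposed fix is not quite correct. The paper does \emph{not} induce from a finite-rank BGG projective; instead it builds a projective generator $Q_{\KKK}(\mu)=U(\fg)\otimes_{U(\fb)}V_\mu^{\KKK}$ for a suitable finite-dimensional $\fb$-module $V_\mu^{\KKK}$, so that $\Hom_\fg(Q_{\KKK}(\mu),M)\cong M_\mu$ by a one-line adjunction. Projectivity is then immediate, and $P_{\KKK}(\mu)$ is obtained by inductively stripping off the summands $P_{\KKK}(\nu)$ for the finitely many $\nu\neq\mu$ with $L(\nu)_\mu\neq 0$. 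Your route has the pleasant feature of producing $P_{\KKK}(\mu)$ directly (no splitting), and the reordering argument for~(b) is fine once you observe that if $\nu\notin\KKK$, $\nu'\in\KKK$ sit in adjacent layers in the wrong order, then $\nu<\nu'$ would force $\nu\in\KKK$, so $\Ext^1(\Delta(\nu),\Delta(\nu'))=0$ and the swap is allowed.

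The obstacle in~(a) is \emph{not} resolved by vanishing of $H^1(\fu_n^+,-)$ on ${}^{\KKK}\bO$; that cohomology is generally nonzero regardless of $n$. What you actually need is much weaker: that $[H^1(\fu_n^+,M'):L_n(\mu)]=0$ for every $M'\in{}^{\KKK}\bO$, which follows from a pure weight argument. Indeed, the weights of $H^1(\fu_n^+,M')$ lie in $\{\kappa-\alpha:\kappa\in\KKK,\ \alpha\in\Phi^+\setminus\Phi_n^+\}$; if $\mu=\kappa-\alpha$ occurred then $\kappa=\mu+\alpha\in\KKK^\mu\subset\mu+\mZ\Phi_n$, contradicting $\alpha\notin\mZ\Phi_n$. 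Thus $H^1(\fu_n^+,M')_\mu=0$, and since $P_n(\mu)$ is projective in $\bO(\fg_n,\fb_n)$ the long exact sequence gives exactness of $M\mapsto\Hom_{\fg_n}(P_n(\mu),M^{\fu_n^+})$ on ${}^{\KKK}\bO$, hence projectivity of $P_{\KKK}(\mu)$. With this correction your argument goes through; the paper's construction simply sidesteps the issue by never invoking $(-)^{\fu_n^+}$.
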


We precede the proof with some discussions and a lemma.

\begin{rem}\label{RemBruhat}${}$
\begin{enumerate}[(i)]
\item The existence of projective objects $P_{\KKK}(\mu)$ in ${}^{\KKK}\bar{\cO}$ was first established through different methods in \cite[Section~4]{Nam}.
\item Even though $P_{\KKK}(\mu)\in{}^{\KKK}\bar{\cO}$, {the category ${}^K\bar{\mathcal{O}}$} does generally not have {enough} projective objects. An example is given by considering a regular integral dominant $\lambda\in\fh^\ast$ and $M:=\bigoplus_{\mu\in\lbra\lambda\rbra}L(\mu)$. By Corollary~\ref{CorBGGThm}(i), we have $\dim M_\nu\le \dim\Delta(\lambda)_\nu$ for all $\nu\in\fh^\ast$, so $M\in \bar{\cO}$. On the other hand, by Theorem~\ref{ThmProj}(i)(b), a projective cover of $M$ has infinite dimensional weight spaces. This answers \cite[Open Question~4.15]{Nam} negatively.
\item It will follow {a posteori} that the condition on the ideal $\KKK\subset\fh^\ast$ in Theorem~\ref{ThmProj} can be weakened to demand that it be upper finite with respect to the Bruhat order $\uparrow$.
\item Given an ideal $\KKK\subset\fh^\ast$ and  $\mu\in\KKK$, the existence of a module $P_{\KKK}(\mu)$ as in Theorem~\ref{ThmProj}(i) can be proved under the weaker assumption that $\{\nu\in\KKK\,|\,\nu\ge \mu\}$ is finite (or even {if just}~$\{\nu\in\KKK\,|\, \mu\uparrow \nu\}$ is finite). %In order to have enough projective objects in~${}^{\KKK}\bO$ as in Theorem~\ref{ThmProj}(ii) one of course needs a condition on $\KKK$ to be upper finite.
\end{enumerate}
\end{rem}

We follow the approach of \cite[Section~4]{BGG}, see also \cite{CM}. We fix $\KKK\in\cK$ and $\mu\in \KKK$.

\subsubsection{}\label{DefQ} 
We define a $U(\fb)$-module~$V_\mu^{\KKK}$ with $\supp V_\mu^{\KKK}\subset \KKK${, and} with presentation
\begin{equation}\label{presentation}\bigoplus_{\kappa\in S}U(\fb)\otimes_{U(\fh)}\mk_\kappa\to U(\fb)\otimes_{U(\fh)}\mk_\mu\to V_\mu^{\KKK}\to 0 ,\end{equation}
where~$S$ is a multiset of weights in~$\fh^\ast\backslash \KKK$ such that each $\kappa\in\fh^\ast\backslash \KKK$ appears $\dim U(\fb)_{\kappa-\mu}$ times.
Since the set
$\{\nu\in\KKK\,|\,\nu\ge \mu\}$
is finite, $V_\mu^{\KKK}$ is finite dimensional.
We then define
$$Q_{\KKK}(\mu)\;:=\;U(\fg)\otimes_{U(\fb)} V_\mu^{\KKK}.$$
By construction, we have $Q_{\KKK}(\mu)\in\cF^{\Delta}[\KKK]\subset {}^{\KKK}\bO$. The module $Q_{\KKK}(\mu)$ is generated by a vector~$v_\mu$, which we take in the image of~$\mk_\mu$ under the epimorphism in \eqref{presentation}.

\begin{ex}If $\KKK=\{\nu\in\fh^\ast\,|\,\nu\le \mu\}$, we have $V_\mu^{\KKK}=\mk_\mu$ and thus ${Q}_{\KKK}(\mu)\cong \Delta(\mu)$.
\end{ex}

%In particular ${Q}^k(\lambda)\in \bO^\Delta_\lambda$, by Corollary~\ref{CorAdd}.

\begin{lemma}\label{Qk}
The module~$Q_{\KKK}(\mu)$ is projective in~${}^{\KKK}\bO${,} and for any $M\in {}^{\KKK}{\bO}$ we have an isomorphism
$$\Hom_{\fg}(Q_{\KKK}(\mu),M)\;\stackrel{\sim}{\to}\;M_{\mu},\qquad \alpha\mapsto \alpha(v_\mu).$$
\end{lemma}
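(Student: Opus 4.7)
The plan is to derive the displayed Hom-isomorphism from standard induction-restriction adjunctions together with the fact that modules in ${}^{\KKK}\bO$ have support inside $\KKK$, and then deduce projectivity as an immediate corollary.

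First I would apply the tensor-hom adjunction for $\ind^\fg_\fb = U(\fg)\otimes_{U(\fb)}-$ to obtain, for any $M\in{}^{\KKK}\bO$,
\[
\Hom_{\fg}\bigl(U(\fg)\otimes_{U(\fb)}V_\mu^{\KKK},\,M\bigr)\;\cong\;\Hom_{\fb}\bigl(V_\mu^{\KKK},\,\res^{\fg}_{\fb}M\bigr).
\]
Next I would apply the left-exact contravariant functor $\Hom_{\fb}(-,M)$ to the defining presentation \eqref{presentation} of $V_\mu^{\KKK}$. This yields an exact sequence whose middle term, by another instance of the tensor-hom adjunction (for $U(\fb)\otimes_{U(\fh)}-$), is
\[
\Hom_{\fb}\bigl(U(\fb)\otimes_{U(\fh)}\mk_\mu,M\bigr)\;\cong\;\Hom_{\fh}(\mk_\mu,M)\;\cong\;M_\mu,
\]
and whose rightmost term decomposes as $\prod_{\kappa\in S}\Hom_{\fh}(\mk_\kappa,M)=\prod_{\kappa\in S}M_\kappa$. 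Since each $\kappa\in S$ lies outside $\KKK$ and $\supp M\subset\KKK$, every factor $M_\kappa$ vanishes, so the restriction map $\Hom_{\fb}(V_\mu^{\KKK},M)\to M_\mu$ is an isomorphism. Chasing through the adjunctions, one checks that this isomorphism sends $\alpha$ to $\alpha(v_\mu)$, matching the statement.

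For projectivity, the functor $\bO\to\Ab$, $M\mapsto M_\mu$ is exact (it is exact already on $\bC$), so the above isomorphism shows $\Hom_{\fg}(Q_{\KKK}(\mu),-)$ is exact on ${}^{\KKK}\bO$. The only step requiring care is the identification of the cokernel side of the presentation under the various adjunctions, but since all modules involved are $\fh$-semisimple and the adjunctions are compatible with the $\fh$-grading, this is a routine verification rather than the main obstacle. In fact there is no real obstacle here: the argument is essentially Frobenius reciprocity combined with the support condition built into ${}^{\KKK}\bO$, which is exactly what was engineered when defining $V_\mu^{\KKK}$.
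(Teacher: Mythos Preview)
Your argument is correct and follows essentially the same route as the paper: the paper first applies the exact functor $U(\fg)\otimes_{U(\fb)}-$ to the presentation~\eqref{presentation} and then $\Hom_{\fg}(-,M)$, whereas you first pass to $\Hom_{\fb}(V_\mu^{\KKK},M)$ via adjunction and then apply $\Hom_{\fb}(-,M)$ to the presentation---but these are trivially equivalent reorderings of the same adjunction-and-support computation. The conclusion that the rightmost term vanishes because $S\subset\fh^\ast\setminus\KKK$ and $\supp M\subset\KKK$, and the deduction of projectivity from exactness of $M\mapsto M_\mu$, match the paper exactly.
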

\begin{proof}
Apply the exact induction functor~$U(\fg)\otimes_{U(\fb)}-$ to the exact sequence~\eqref{presentation}, followed by application of the left exact contravariant functor~$\Hom_{\fg}(-,M)$. This yields an exact sequence
$$0\to \Hom_{\fg}(Q_{\KKK}(\mu),M)\to \Hom_{\fh}(\mk_\mu,M)\to \prod_{\kappa\in S}\Hom_{\fh}(\mk_\kappa,M),$$
where we have used adjunction and equation~\eqref{eqCo}.
The right term is zero since $\supp M \subset \KKK$, which concludes the proof.
\end{proof}

\begin{proof}[Proof of Theorem~\ref{ThmProj}]

First assume that there are no simple modules~$L$ in~${}^{\KKK}\bO$, other than $L{\simeq}L(\mu)$, for which~$L_\mu\not=0$. It then follows from Lemma~\ref{Qk} that~$Q_{\KKK}(\mu)$ satisfies the property of~$P_{\KKK}(\mu)$ in (i)(a). If there are other~$\nu\in\KKK$ such that~$L(\nu)_\mu\not=0$, then {these} are only finitely many. By induction we can assume that we already constructed $P_{\KKK}(\nu)$ for all {such $\nu$}. It follows that all these are {isomorphic to} direct summands ({with respective multiplicities} $\dim L(\nu)_\mu$) of~$Q_{\KKK}(\mu)$. 
The remaining direct summand of~$Q_{\KKK}(\mu)$ satisfies the properties of~$P_{\KKK}(\mu)$ in (i)(a).

By Corollary~\ref{CorAdd}, the module ${P}_{\KKK}(\mu)$ is {an object of}~$\cF^\Delta$.  
Lemma~\ref{LemDN} implies that for any $\nu\in\fh^\ast$
$$(P_{\KKK}(\mu):\Delta(\nu))\;=\;\dim\Hom_{\fg}(P_{\KKK}(\mu),\nabla(\nu)).$$
If $\nu\in\KKK$, then part (i)(a) {yields}
$$\dim\Hom_{\fg}(P_{\KKK}(\mu),\nabla(\nu))\;=\;[\nabla(\nu):L(\mu)]\;=\; [\Delta(\nu):L(\mu)],$$
where the latter equality follows from the duality $\vee$. This concludes the proof of part (i)(b).

Part (i)(c) follows from part (i)(b) and Corollary~\ref{CorBGGThm}(i).

We consider an arbitrary module $M\in {}^{\KKK}\bO$. It has a set of generating elements~$\{v_\alpha\}\subset M$, which we can choose to be weight vectors. Since $M\in {}^{\KKK}\bO$, it follows {that} $U(\fg)v_\alpha$ is  {isomorphic to} a quotient of~$Q_{\KKK}(\mu)$ {where} $\mu\in\fh^\ast$ {is} the weight of~$v_\alpha$. Hence, {there is} an epimorphism $\bigoplus_\mu Q_{\KKK}(\mu)\tto M$. From the universality property of coproducts, or alternatively from Lemma~\ref{LemCopr}, it follows that~$\bigoplus_\mu Q_{\KKK}(\mu)$ is projective. This proves part (ii).
\end{proof}

\subsection{Category $\bO$ as a Grothendieck category}\label{GroCat}

An object $G$ in an abelian category $\cC$ is a generator if the functor $\Hom_{\cC}(G,-):\cC\to\Ab$ is faithful. Following \cite{KS}, a Grothendieck category is an abelian category which admits set valued direct sums and a generator{,} and in which direct limits of short exact sequences are exact. By \cite[Theorem~9.6.2]{KS}, Grothendieck categories have enough injective objects.
 
\begin{prop}\label{PropGroth}
The category $\bO$ is a Grothendieck category. In particular, $\bO$ has enough injective objects.
\end{prop}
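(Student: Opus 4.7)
The plan is to verify the defining properties of a Grothendieck category for $\bO$: abelianness, existence of set-indexed coproducts, exactness of direct limits (AB5), and existence of a generator; the statement about injective objects then follows from \cite[Theorem~9.6.2]{KS}. It has already been noted that $\bO$ is abelian and closed under direct limits. Arbitrary coproducts in $\bO$ are formed as in $\bC$: a direct sum of weight modules is a weight module, and $\fb$ acts locally finitely on any direct sum because each element has finite support. For a filtered colimit $\varinjlim M_i$ with $M_i \in \bO$, any element $v$ is the image of some $v_i \in M_i$; since $U(\fb)v_i$ is finite dimensional, its image $U(\fb)v$ is finite dimensional too, so the colimit lies in $\bO$. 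Exactness of filtered colimits in $\fg$-Mod then transfers to $\bO$.

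The main task is to exhibit a generator. I propose
\[
G \;:=\; \bigoplus_{\KKK \in \cK}\; \bigoplus_{\mu \in \KKK} P_{\KKK}(\mu),
\]
a set-indexed coproduct that belongs to $\bO$. To verify that $G$ is a generator it suffices to show that every $M \in \bO$ is a quotient of some copower $G^{(I)}$. Given any $v \in M$ of weight $\mu$, the $\fb$-submodule $U(\fb)v$ is finite dimensional with finite support $\{\mu_1,\ldots,\mu_k\}$ satisfying $\mu_i \geq \mu$, and the PBW decomposition $U(\fg) = U(\fn^-)U(\fb)$ yields
\[
\supp U(\fg)v \;\subset\; \bigcup_{i=1}^{k}\{\nu \in \fh^\ast \,|\, \nu \leq \mu_i\} \;=:\; \KKK_v .
\]
Interval finiteness (Proposition~\ref{equivCond}(iii)) shows that each principal ideal $\{\nu \leq \mu_i\}$, and hence their finite union $\KKK_v$, is upper finite, so $\KKK_v \in \cK$ and $U(\fg)v$ lies in ${}^{\KKK_v}\bO$. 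By Theorem~\ref{ThmProj}(ii), $\bigoplus_{\mu \in \KKK_v} P_{\KKK_v}(\mu)$ is a projective generator of ${}^{\KKK_v}\bO$, so $v$ lies in the image of some morphism from that projective to $M$; assembling such morphisms over a generating collection of vectors $v$ yields the desired surjection $G^{(I)} \tto M$.

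The only substantive content is the generator construction, whose cornerstone is the observation that, under the Dynkin hypothesis on $\fb$, every cyclic $\fg$-submodule of an object of $\bO$ lies in some ${}^{\KKK}\bO$ with $\KKK \in \cK$. This reduces the generator problem to the truncated categories already treated in Theorem~\ref{ThmProj}, where projective covers are available.
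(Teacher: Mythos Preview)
Your proof is correct and follows essentially the same route as the paper. The paper packages the key observation---that for any weight vector $v\in M\in\bO$ there exists $\KKK\in\cK$ with $U(\fg)v\in{}^{\KKK}\bO$---as a separate lemma (Lemma~\ref{LemQO}), and then takes $G=\bigoplus_{\mu,\KKK}Q_{\KKK}(\mu)$ rather than your $\bigoplus_{\KKK}\bigoplus_{\mu\in\KKK}P_{\KKK}(\mu)$; since each $Q_{\KKK}(\mu)$ is a finite direct sum of various $P_{\KKK}(\nu)$, the two generators are interchangeable, and your explicit construction of $\KKK_v$ as a finite union of principal ideals is exactly the content behind the paper's one-line ``we can construct $\KKK\in\cK$ with $\supp U(\fb)v\subset\KKK$''.
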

First we prove the following lemma, which will be useful in the following sections as well.

\begin{lemma}\label{LemQO}
For each $M\in\bO$, $\mu\in\fh^\ast$ and $v\in M_\mu$, there exists $\KKK\in\cK$ such that $v$ {lies} the image of a morphism $Q_{\KKK}(\mu)\to M$.
\end{lemma}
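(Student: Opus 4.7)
The plan is to use the construction of $Q_\KKK(\mu)$ via Lemma~\ref{Qk}: if we can exhibit an upper finite ideal $\KKK\ni\mu$ such that the cyclic submodule $U(\fg)v\subset M$ lies in ${}^\KKK\bO$, then Lemma~\ref{Qk} produces a morphism $Q_\KKK(\mu)\to U(\fg)v$ sending the canonical generator $v_\mu$ to $v$, and composing with the inclusion $U(\fg)v\hookrightarrow M$ finishes the job. So the real content is constructing $\KKK$.

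First, since $M\in\bO$, the subspace $W:=U(\fb)v$ is finite dimensional, and being generated by a weight vector under $\fb=\fh\oplus\fn^+$, it is $\fh$-semisimple. Let $\Sigma_W:=\supp W$, a finite subset of $\fh^\ast$ containing $\mu$. I would then define
$$\KKK\;:=\;\{\nu\in\fh^\ast\,|\, \nu\le\lambda\text{ for some }\lambda\in\Sigma_W\}.$$
This is manifestly a (downward-closed) ideal, and it contains $\mu$. By the PBW theorem, $U(\fg)v=U(\fn^-)W$, so every weight of $U(\fg)v$ is of the form $\lambda-\gamma$ with $\lambda\in\Sigma_W$ and $\gamma\in\Gamma^+$; thus $\supp(U(\fg)v)\subset\KKK$, which gives $U(\fg)v\in{}^\KKK\bO$.

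The one step that requires an actual argument is showing $\KKK\in\cK$, i.e.\ that $\KKK$ is upper finite. This is where the Dynkin hypothesis enters decisively: by Proposition~\ref{equivCond}(iii) the partial order $\le$ is interval finite. Given $\nu\in\KKK$, any $\kappa\in\KKK$ with $\kappa\ge\nu$ satisfies $\nu\le\kappa\le\lambda$ for some $\lambda\in\Sigma_W$, hence lies in $\bigcup_{\lambda\in\Sigma_W}[\nu,\lambda]$, a finite union of finite intervals, hence finite. This is the main (and really only) obstacle; once it is settled, everything reduces to invoking Lemma~\ref{Qk}.

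To conclude, apply Lemma~\ref{Qk} to the module $U(\fg)v\in{}^\KKK\bO$ and the weight vector $v\in(U(\fg)v)_\mu$: the isomorphism
$$\Hom_\fg(Q_\KKK(\mu),U(\fg)v)\;\stackrel{\sim}{\to}\;(U(\fg)v)_\mu,\qquad \alpha\mapsto\alpha(v_\mu),$$
yields a morphism $\alpha:Q_\KKK(\mu)\to U(\fg)v$ with $\alpha(v_\mu)=v$. Composition with the inclusion $U(\fg)v\hookrightarrow M$ gives the desired morphism $Q_\KKK(\mu)\to M$ having $v$ in its image.
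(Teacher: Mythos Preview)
Your proof is correct and follows essentially the same approach as the paper's: both use that $U(\fb)v$ is finite dimensional to produce an upper finite ideal $\KKK$ containing its support, observe that the submodule $U(\fg)v$ then lies in ${}^{\KKK}\bO$, and conclude via Lemma~\ref{Qk}. The paper's argument is much terser, leaving the explicit construction of $\KKK$ and the verification of upper finiteness (where you correctly invoke interval finiteness from Proposition~\ref{equivCond}(iii)) to the reader.
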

\begin{proof}
By definition, the space $U(\fb)v$ is finite dimensional and hence, we can construct $\KKK\in\cK$ with $\supp U(\fb)v\subset \KKK$. The submodule of $M$ generated by $v$ is thus {an object of} ${}^{\KKK}\bO${,} and the result follows from Lemma~\ref{Qk}.
\end{proof}

\begin{proof}[Proof of Proposition~\ref{PropGroth}]
It follows from the definition that $\bO$ admits arbitrary direct sums. Direct limits of short exact sequences are exact as this is true for vector spaces. Finally, by Lemma~\ref{LemQO}, the $\fg$-module
$$G:=\bigoplus_{\mu\in\fh^\ast,\KKK\in\cK}Q_{\KKK}(\mu)$$
 is a generator in $\bO$.
\end{proof}

\begin{cor}\label{CorDL}
For $M\in\bO$ and $\KKK\in\cK$, denote by ${}^{\KKK}M$ the maximal submodule of $M$ in ${}^{\KKK}\bO$.
Then we have $M\cong \varinjlim ({}^{\KKK}M)$.
\end{cor}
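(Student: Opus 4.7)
The plan is to identify the direct limit $\varinjlim {}^{\KKK}M$, taken over the directed poset $\cK$ ordered by inclusion, with the union $\bigcup_{\KKK\in\cK} {}^{\KKK}M$ inside $M$, and then show that this union exhausts $M$ by a weight-space argument powered by Lemma~\ref{LemQO}.

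First I would verify that the assignment $\KKK\mapsto {}^{\KKK}M$ is well-defined and functorial on $\cK$. Since ${}^{\KKK}\bO$ is a Serre subcategory of $\bO$ (it is closed under subobjects, quotients, extensions, and coproducts), the sum inside $M$ of all submodules with support in $\KKK$ is again a submodule with support in $\KKK$; call it ${}^{\KKK}M$. If $\KKK\subseteq \KKK'$, then ${}^{\KKK}M\subseteq {}^{\KKK'}M$, so we obtain a directed system of subobjects of $M$ indexed by $\cK$. Because $\bO$ is a Grothendieck category (Proposition~\ref{PropGroth}), in particular with exact filtered colimits and arbitrary unions of subobjects, the colimit of this system exists and is realized as $\bigcup_{\KKK\in\cK} {}^{\KKK}M\subseteq M$.

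Next I would show this union equals $M$. Pick any $v\in M$ and write $v=\sum_{i=1}^{k} v_{\mu_i}$ as a finite sum of nonzero weight vectors, where the $\mu_i\in\fh^\ast$ are distinct. By Lemma~\ref{LemQO}, for each $i$ there exists $\KKK_i\in\cK$ such that $v_{\mu_i}$ lies in the image of a morphism $Q_{\KKK_i}(\mu_i)\to M$; in particular $v_{\mu_i}\in {}^{\KKK_i}M$. Set $\KKK:=\KKK_1\cup\cdots\cup \KKK_k$. This $\KKK$ is clearly an ideal of $(\fh^\ast,\le)$, and it is upper finite: for any $\mu\in \KKK$,
\[\{\nu\in\KKK\mid \nu\ge \mu\}\;=\;\bigcup_{i=1}^{k}\{\nu\in\KKK_i\mid \nu\ge \mu\}\]
is a finite union of finite sets. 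Hence $\KKK\in\cK$, and $v_{\mu_i}\in {}^{\KKK_i}M\subseteq {}^{\KKK}M$ for every $i$, so $v\in {}^{\KKK}M$. This proves $M=\bigcup_{\KKK\in\cK} {}^{\KKK}M$, and combined with the previous paragraph, $M\cong \varinjlim_{\KKK\in\cK} {}^{\KKK}M$.

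There is no real obstacle here; the only point that requires a moment of care is the stability of upper finiteness under finite unions, which is immediate from the definition, and the reduction from an arbitrary element of $M$ to finitely many weight vectors, which is automatic because $M\in\bC$.
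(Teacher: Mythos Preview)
Your proof is correct and follows essentially the same approach as the paper: both identify the direct limit with the union of the ${}^{\KKK}M$ inside $M$ and then invoke Lemma~\ref{LemQO} to show that every weight vector of $M$ lies in some ${}^{\KKK}M$. Your version is simply more detailed, spelling out the reduction from arbitrary elements to weight vectors and the closure of $\cK$ under finite unions, whereas the paper compresses this to a single sentence.
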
 
\begin{proof}
We only need to prove that the canonical inclusion $\varinjlim ({}^{\KKK}M)\subset M$ is an equality. The latter follows from the fact that any weight vector in $M$ is included in a submodule of $M$ in ${}^{\KKK}\bO$, indeed we can take that submodule to be the image of $Q_{\KKK}(\mu)$ under the morphism in Lemma~\ref{LemQO}.
\end{proof}

\subsubsection{} The analogue of Theorem~\ref{ThmProj} for injective objects holds. Concretely, we can define the injective cover $I_{\KKK}(\mu)=P_{\KKK}(\mu)^\vee$ of $L(\mu)$ in ${}^{\KKK}\bO$. That $I_{\KKK}(\mu)$ is injective follows for instance from first defining $J_{\KKK}(\mu)=Q_{\KKK}(\mu)^\vee$. Indeed, the fact that any $M\in {}^{\KKK}\bO$ is the union of its finitely generated submodules $\{M^\alpha\}$, where the latter belong to $\bar{\cO}$, and {the isomorphisms}
$$\Hom_{\fg}(M,J_{\KKK}(\mu))\;\cong\;\varprojlim \Hom_{\fg}(M^\alpha,J_{\KKK}(\mu))\;\cong\;\varprojlim (M^\alpha_\mu)^\ast\;\cong\; (M_\mu)^\ast$$
show that $J_{\KKK}(\mu)$ is injective. By construction, $I_{\KKK}(\mu)$ is a direct summand of $J_{\KKK}(\mu)$.
\begin{prop}
The injective hull of $L(\mu)$ in $\bO$ is given by $I(\mu):=\varinjlim_{\KKK\in\cK} I_{\KKK}(\mu)$.\end{prop}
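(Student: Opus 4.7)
The plan is to use Corollary~\ref{CorDL} to reduce to the analogous statement already established for each truncated category ${}^\KKK\bO$. By Proposition~\ref{PropGroth}, let $E$ denote an injective hull of $L(\mu)$ in $\bO$. I will show that ${}^\KKK E\cong I_\KKK(\mu)$ for each $\KKK\in\cK$ with $\mu\in\KKK$, compatibly with transition maps, and then invoke $E=\varinjlim_\KKK{}^\KKK E$ to conclude $E\cong\varinjlim_\KKK I_\KKK(\mu)=I(\mu)$.

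The first ingredient is a transfer principle: if $J\in\bO$ is injective and $\KKK\in\cK$, then ${}^\KKK J$ is injective in the Serre subcategory ${}^\KKK\bO$. Indeed, given a monomorphism $A\hookrightarrow B$ in ${}^\KKK\bO$ and a morphism $f:A\to{}^\KKK J$, extend via injectivity of $J$ in $\bO$ to $g:B\to J$; since $B\in{}^\KKK\bO$, the image $g(B)$, being a quotient of $B$, lies in the Serre subcategory ${}^\KKK\bO$, hence in ${}^\KKK J$, so $g$ factors through ${}^\KKK J$ and extends $f$. The second ingredient is that ${}^\KKK E$ remains essential over $L(\mu)$: any nonzero submodule of ${}^\KKK E$ is a nonzero submodule of $E$, hence meets $L(\mu)$ nontrivially. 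Combining these, ${}^\KKK E$ is an injective envelope of $L(\mu)$ in ${}^\KKK\bO$, and the uniqueness of injective envelopes gives ${}^\KKK E\cong I_\KKK(\mu)$.

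The main subtlety is assembling these isomorphisms into a single identification, which requires that the chosen isomorphisms be natural in $\KKK$. For $\KKK\subseteq\KKK'$ there is a tautological inclusion ${}^\KKK E\hookrightarrow{}^{\KKK'}E$, and under the identifications ${}^\KKK E\cong I_\KKK(\mu)$ and ${}^{\KKK'}E\cong I_{\KKK'}(\mu)$ this must correspond to the canonical transition map used in the definition of $I(\mu)$. The needed naturality rests on the uniqueness lemma that any two morphisms $I_\KKK(\mu)\to I_{\KKK'}(\mu)$ restricting to the inclusion of the socle $L(\mu)$ must coincide: their difference has kernel containing the simple socle of the essential extension $I_\KKK(\mu)$, so its kernel is all of $I_\KKK(\mu)$. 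Once naturality is established, Corollary~\ref{CorDL} applied to $E$ yields $E\cong\varinjlim_\KKK{}^\KKK E\cong\varinjlim_\KKK I_\KKK(\mu)=I(\mu)$, completing the proof. I expect the main obstacle to be precisely this bookkeeping of naturality for the transition maps, though it is dispatched cleanly by the uniqueness observation.
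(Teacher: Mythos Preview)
Your approach is exactly the paper's: let $E$ be the injective hull of $L(\mu)$ (Proposition~\ref{PropGroth}), verify ${}^{\KKK}E\cong I_{\KKK}(\mu)$ for each $\KKK\in\cK$, and apply Corollary~\ref{CorDL}. Your expansion of the step ``${}^{\KKK}E$ is an injective hull of $L(\mu)$ in ${}^{\KKK}\bO$'' (which the paper records only as ``clearly'') is correct and useful.

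However, your uniqueness lemma for the transition maps is wrong. You claim that if two morphisms $I_{\KKK}(\mu)\to I_{\KKK'}(\mu)$ agree on the socle, their difference $\psi$ satisfies $L(\mu)\subseteq\ker\psi$ and \emph{hence} $\ker\psi=I_{\KKK}(\mu)$. This is a non-sequitur: essentiality says every \emph{nonzero} submodule contains $L(\mu)$, not that every submodule containing $L(\mu)$ is the whole module. Concretely, already for $\mathfrak{sl}_2$ with $\mu=s\cdot 0$, the big projective-injective $I(\mu)$ has a nonzero nilpotent endomorphism $\nu$ killing the socle, so $\mathrm{id}$ and $\mathrm{id}+\nu$ are distinct endomorphisms agreeing on $L(\mu)$.

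Fortunately this naturality discussion is unnecessary and can simply be deleted. The paper does not fix an independent direct system on the $I_{\KKK}(\mu)$; the tautological inclusions ${}^{\KKK}E\hookrightarrow{}^{\KKK'}E$ already give the system, and the isomorphisms ${}^{\KKK}E\cong I_{\KKK}(\mu)$ transport it. Thus $E=\varinjlim_{\KKK}{}^{\KKK}E$ from Corollary~\ref{CorDL} \emph{is} the presentation $\varinjlim_{\KKK}I_{\KKK}(\mu)$, with no further compatibility to check.
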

\begin{proof}
If $I$ is the injective hull of $L(\mu)$, which exists by Proposition~\ref{PropGroth}, then clearly $^{\KKK}I=I_{\KKK}(\mu)$ for all $\KKK\in\cK$. The proposition thus follows as a special case of Corollary~\ref{CorDL}.
\end{proof}

\subsection{Blocks}
For $\lambda\in\fh^\ast$, let $\bO_{\lbra \lambda\rbra}$ denote the full subcategory of modules $M$ in~$\bO$ such that~$[M:L(\mu)]=0$ whenever $\mu\not\in\lbra\lambda\rbra$. We use similar notation for $\bar{\cO}$ and~$\cO$.

\begin{prop}\label{PropBlocks}
{There is} an equivalence of categories
$$\prod_{\lbra \lambda\rbra}\bO_{\lbra \lambda\rbra}\;\stackrel{\sim}{\to}\; \bO,\qquad (M_{\lbra\lambda\rbra})_{\lbra\lambda\rbra}\mapsto\bigoplus_{\lbra\lambda\rbra}M_{\lbra\lambda\rbra}.$$
\end{prop}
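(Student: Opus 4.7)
The plan is to produce a canonical decomposition $M=\bigoplus_{\lbra\lambda\rbra}M_{\lbra\lambda\rbra}$ for every $M\in\bO$, with $M_{\lbra\lambda\rbra}\in\bO_{\lbra\lambda\rbra}$; once this is established, the stated equivalence follows formally. The starting observation is that the subcategories $\bO_{\lbra\lambda\rbra}$ are Serre subcategories of $\bO$ (closed under subobjects, quotients, extensions, and coproducts), and that Theorem~\ref{ThmProj}(i)(c) already places each indecomposable projective: $P_{\KKK}(\mu)\in\bO_{\lbra\mu\rbra}$ for every $\KKK\in\cK$ and $\mu\in\KKK$.

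First I would establish the decomposition inside each truncated subcategory ${}^{\KKK}\bO$. Given $N\in{}^{\KKK}\bO$, Theorem~\ref{ThmProj}(ii) provides a surjection $\pi:P\tto N$ with $P=\bigoplus_\alpha P_{\KKK}(\mu_\alpha)$. Partitioning the index set by Weyl orbits for the dot action yields $P=\bigoplus_{\lbra\lambda\rbra}P^{\lbra\lambda\rbra}$ with $P^{\lbra\lambda\rbra}\in\bO_{\lbra\lambda\rbra}$, and I would set $N_{\lbra\lambda\rbra}:=\pi(P^{\lbra\lambda\rbra})\in\bO_{\lbra\lambda\rbra}$. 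Surjectivity of $\pi$ gives $N=\sum_{\lbra\lambda\rbra}N_{\lbra\lambda\rbra}$, and the sum is direct because the intersection $N_{\lbra\lambda\rbra}\cap\sum_{\lbra\lambda'\rbra\ne\lbra\lambda\rbra}N_{\lbra\lambda'\rbra}$ is simultaneously a subobject of an object of $\bO_{\lbra\lambda\rbra}$ and a quotient of an object of $\prod_{\lbra\lambda'\rbra\ne\lbra\lambda\rbra}\bO_{\lbra\lambda'\rbra}$, so its Jordan--H\"older content is empty.

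Next I would extend to arbitrary $M\in\bO$. Define $M_{\lbra\lambda\rbra}$ intrinsically as the sum of all subobjects of $M$ that lie in $\bO_{\lbra\lambda\rbra}$; this is itself in $\bO_{\lbra\lambda\rbra}$ because the latter is closed under subobjects and coproducts. For any weight vector $v\in M$, Lemma~\ref{LemQO} produces an upper finite ideal $\KKK$ and a submodule $N\subseteq M$ with $N\in{}^{\KKK}\bO$ containing $v$; applying the previous step to $N$ places $v$ inside $\sum_{\lbra\lambda\rbra}M_{\lbra\lambda\rbra}$, so $M=\sum_{\lbra\lambda\rbra}M_{\lbra\lambda\rbra}$. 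Directness is again immediate from the Serre-subcategory/composition-factor argument, and this is precisely compatible with Corollary~\ref{CorDL}. Finally, the proposition follows: the functor is essentially surjective by this decomposition, and fully faithful because for distinct orbits $\lbra\lambda\rbra\ne\lbra\lambda'\rbra$, any morphism $X\to Y$ with $X\in\bO_{\lbra\lambda\rbra}$, $Y\in\bO_{\lbra\lambda'\rbra}$ has image that is simultaneously a quotient of $X$ and a subobject of $Y$, hence zero.

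The decisive input is the orbit separation of composition factors of $P_{\KKK}(\mu)$, already recorded in Theorem~\ref{ThmProj}(i)(c), which itself rests on the BGG linkage (Corollary~\ref{CorBGGThm}(i)) imported via Theorem~\ref{ThmMult}. The only genuinely delicate point is the passage from truncated subcategories to all of $\bO$, but defining the blockwise pieces intrinsically (as maximal subobjects in each $\bO_{\lbra\lambda\rbra}$) together with Lemma~\ref{LemQO} handles this cleanly without any compatibility bookkeeping for the colimit.
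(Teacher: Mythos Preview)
Your proof is correct and relies on the same decisive input as the paper's, namely Theorem~\ref{ThmProj}(i)(c) together with Lemma~\ref{LemQO}, but the organisation is different. The paper first establishes full faithfulness by a direct computation of Hom spaces using the coproduct property~\eqref{eqCo}, obtaining
\[
\Hom_{\bO}\Bigl(\bigoplus_{\lbra\lambda\rbra}M_{\lbra\lambda\rbra},\bigoplus_{\lbra\mu\rbra}N_{\lbra\mu\rbra}\Bigr)\;\cong\;\prod_{\lbra\lambda\rbra}\Hom_{\bO}(M_{\lbra\lambda\rbra},N_{\lbra\lambda\rbra}),
\]
and then argues essential surjectivity abstractly: the isomorphism closure $\bO'$ of the image is closed under quotients, contains the generator $G=\bigoplus_{\mu,\KKK}Q_{\KKK}(\mu)$ from Proposition~\ref{PropGroth}, and therefore equals $\bO$. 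You instead construct the block decomposition of each $M\in\bO$ explicitly, first in the truncations ${}^{\KKK}\bO$ via projective covers and then globally via the intrinsic definition of $M_{\lbra\lambda\rbra}$, deducing full faithfulness from the Serre-subcategory Hom-vanishing. Your approach is more constructive and yields the internal decomposition directly; the paper's is shorter and exploits the Grothendieck-category structure already set up in Section~\ref{GroCat}. One small remark: your appeal to Corollary~\ref{CorDL} is only illustrative and not logically needed, and your full-faithfulness sketch would benefit from one more line spelling out that a morphism $M_{\lbra\lambda\rbra}\to\bigoplus_{\lbra\mu\rbra}N_{\lbra\mu\rbra}$ factors through $N_{\lbra\lambda\rbra}$, which is exactly what the paper's Hom computation makes explicit.
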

\begin{proof}
By definition,
$$\Hom_{\prod_{\lbra \lambda\rbra}\bO_{\lbra \lambda\rbra}}\left((M_{\lbra\nu\rbra})_{\lbra\nu\rbra},(N_{\lbra\mu\rbra})_{\lbra\mu\rbra}\right)\;=\;\prod_{\lbra\lambda\rbra}\Hom_{\bO_{\lbra\lambda\rbra}}(M_{\lbra\lambda\rbra},N_{\lbra\lambda\rbra}).$$
On the other hand, by \eqref{eqCo},
$$\Hom_{\bO}(\bigoplus_{\lbra\lambda\rbra}M_{\lbra\lambda\rbra},\bigoplus_{\lbra\mu\rbra}N_{\lbra\mu\rbra})\;\cong\; \prod_{\lbra\lambda\rbra}\Hom_{\bO}(M_{\lbra\lambda\rbra},\bigoplus_{\lbra\mu\rbra}N_{\lbra\mu\rbra})\;\cong\; \prod_{\lbra\lambda\rbra}\Hom_{\bO}(M_{\lbra\lambda\rbra},N_{\lbra\lambda\rbra}).$$
Hence, the functor $\prod_{\lbra \lambda\rbra}\bO_{\lbra \lambda\rbra}\to \bO$ is fully faithful. We denote the isomorphism closure of its image by $\bO'$, which is a subcategory closed under taking quotients. The generator $G$ of $\bO$ in the proof of Proposition~\ref{PropGroth} is included in $\bO'$. This shows that $\bO'=\bO$.
\end{proof}

\begin{rem}
For any ideal $\KKK\subset\fh^\ast$ and~$\lambda\in\KKK$, we use the notation ${}^{\KKK}\bO_{\lbra\lambda\rbra}$ for the full subcategory of $\bO$ consisting of modules which are both in~${}^{\KKK}\bO$ and~$\bO_{\lbra\lambda\rbra}$. It is clear, by equation~\eqref{eqDtrunc}, that ${}^{\KKK'}\bO_{\lbra\lambda\rbra}={}^{\KKK}\bO_{\lbra\lambda\rbra}$ for two ideals $\KKK$ and~$\KKK'$ such that~$\KKK\cap\lbra\lambda\rbra=\KKK'\cap\lbra\lambda\rbra$.
\end{rem}

\begin{rem}
Proposition~\ref{PropBlocks} implies in particular \cite[Theorem~3.4]{Nam}, which was obtained through a different approach.  Note however that we have proper inclusions of categories
$$\bigoplus_{\lbra \lambda\rbra}\bar{\cO}_{\lbra \lambda\rbra}\;\subsetneq\;\bar{\cO}\;\subsetneq\; \prod_{\lbra \lambda\rbra}\bar{\cO}_{\lbra \lambda\rbra}.$$
\end{rem}

\subsection{Describing algebras}
Fix an upper finite ideal $\KKK\subset\fh^\ast$ and~$\lambda\in \KKK$. 

\subsubsection{}\label{DefAlgA}We set ${}^{\KKK}\lbra\lambda\rbra=\KKK\cap \lbra\lambda\rbra$. We define the vector space
$$A^{\KKK}_{\lbra\lambda\rbra}(\fg,\fb)=A^{\KKK}_{\lbra\lambda\rbra}\;:=\; \bigoplus_{\mu,\nu\in {}^{\KKK}\lbra\lambda\rbra}\Hom_{\fg}(P_{\KKK}(\mu), P_{\KKK}(\nu)),$$
which is an algebra with multiplication $fg=g\circ f$. The algebra $A^{\KKK}_{\lbra\lambda\rbra}$ is then locally finite, with idempotents~$e_\nu$ given by the identity of $P_{\KKK}(\nu)$, for all $\nu\in\KKK$.

\begin{thm}\label{ThmAMod}
{There is} an equivalence of categories
$${}^{\KKK}\bO_{\lbra\lambda\rbra}\;\stackrel{\sim}{\to}\; A^{\KKK}_{\lbra\lambda\rbra}\mbox{{\rm -Mod}},\quad M\mapsto \bigoplus_{\mu\in {}^{\KKK}\lbra\lambda\rbra}\Hom_{\fg}(P_{\KKK}(\mu),M).$$
\end{thm}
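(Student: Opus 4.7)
The plan is to apply a locally unital Morita-style argument to the family $\{P_{\KKK}(\mu) : \mu \in {}^{\KKK}\lbra\lambda\rbra\}$, which by Theorem~\ref{ThmProj} consists of the indecomposable projectives of ${}^{\KKK}\bO_{\lbra\lambda\rbra}$. First I would check that $F$ is well-defined into $A^{\KKK}_{\lbra\lambda\rbra}$-Mod: for $M \in {}^{\KKK}\bO_{\lbra\lambda\rbra}$, the space $F(M) = \bigoplus_\mu \Hom_\fg(P_{\KKK}(\mu), M)$ carries a natural left $A^{\KKK}_{\lbra\lambda\rbra}$-action by precomposition (compatible with the convention $fg = g\circ f$), and the idempotent $e_\mu$ acts as the projection onto the $\mu$-th summand, making $F(M)$ locally unital. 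No information is lost by restricting to $\mu \in {}^{\KKK}\lbra\lambda\rbra$ thanks to Theorem~\ref{ThmProj}(i)(c).

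Next I would verify three Morita-type properties: (a) $F$ is exact, because each $P_{\KKK}(\mu)$ is projective in ${}^{\KKK}\bO$ (Theorem~\ref{ThmProj}(ii)) and hence in the Serre subcategory ${}^{\KKK}\bO_{\lbra\lambda\rbra}$; (b) $F$ is faithful, because $\{P_{\KKK}(\mu)\}$ generates ${}^{\KKK}\bO_{\lbra\lambda\rbra}$ (Theorem~\ref{ThmProj}(ii)), so any nonzero $\phi : M \to N$ remains nonzero after precomposition with some $P_{\KKK}(\mu) \to M$; and (c) $F(P_{\KKK}(\mu)) = Ae_\mu$ canonically, and evaluation at $e_\mu$ realises the bijection
\[
\Hom_\fg(P_{\KKK}(\mu), P_{\KKK}(\nu)) \;=\; e_\mu A^{\KKK}_{\lbra\lambda\rbra} e_\nu \;=\; \Hom_{A^{\KKK}_{\lbra\lambda\rbra}}(Ae_\mu, Ae_\nu).
\]
The crucial technical step is upgrading (c) to arbitrary direct sums of projectives: the identity $\Hom_\fg(P_{\KKK}(\mu), \bigoplus_i N_i) \cong \bigoplus_i \Hom_\fg(P_{\KKK}(\mu), N_i)$ holds because $P_{\KKK}(\mu)$ is finitely generated (it is a summand of the cyclic module $Q_{\KKK}(\mu)$ from~\ref{DefQ}), so any map from it to a coproduct lands in a finite sub-coproduct.

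With these tools, fullness and essential surjectivity follow by standard presentation arguments. For fullness, given $\psi : F(M) \to F(N)$, choose a presentation $\bigoplus_\beta P_{\KKK}(\nu_\beta) \xrightarrow{d} \bigoplus_\alpha P_{\KKK}(\mu_\alpha) \xrightarrow{\pi} M \to 0$ (Theorem~\ref{ThmProj}(ii)), lift $\psi \circ F(\pi)$ via (c) and the direct-sum identification to $\tilde\phi : \bigoplus_\alpha P_{\KKK}(\mu_\alpha) \to N$, observe that $\tilde\phi \circ d = 0$ by faithfulness applied to $F(\tilde\phi \circ d) = \psi \circ F(\pi) \circ F(d) = 0$, and factor through $\pi$ to produce $\phi : M \to N$ with $F(\phi) = \psi$. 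For essential surjectivity, given $V$, take a free presentation $\bigoplus_\beta Ae_{\nu_\beta} \xrightarrow{d} \bigoplus_\alpha Ae_{\mu_\alpha} \to V \to 0$, lift $d$ to a $\fg$-map $\tilde d$, and set $M := \coker(\tilde d)$; exactness of $F$ then yields $F(M) \cong V$, and $M$ lies in ${}^{\KKK}\bO_{\lbra\lambda\rbra}$ as a quotient of a direct sum of objects already in this Serre subcategory. The only substantive ingredient beyond formalism is the preservation of coproducts of projectives, which as noted reduces to finite generation of $P_{\KKK}(\mu)$.
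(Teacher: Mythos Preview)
Your proposal is correct and follows essentially the same Morita-style approach as the paper: both arguments verify that $F$ sends $P_{\KKK}(\mu)$ to $Ae_\mu$, induces isomorphisms on homs between indecomposable projectives, preserves arbitrary coproducts, and is exact, then conclude the equivalence from these facts. Your treatment is more explicit than the paper's, particularly in justifying coproduct preservation via finite generation of $P_{\KKK}(\mu)$ (the paper simply asserts this is ``clear'') and in spelling out the presentation arguments for fullness and essential surjectivity, whereas the paper compresses the last step into the single remark that an exact functor which restricts to an equivalence on projectives is an equivalence.
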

\begin{proof}
We write $A=A^{\KKK}_{\lbra\lambda\rbra}$ and~$\Ffun=\bigoplus_{\mu}\Hom_{\fg}(P_{\KKK}(\mu),-)$.
We observe that
$$\Ffun(P_{\KKK}(\nu))\;\cong\; Ae_{\nu}\quad\mbox{for all $\nu\in\KKK$.}$$
Furthermore, $\Ffun$ induces an isomorphism
$$\Hom_{\fg}(P_{\KKK}(\nu),P_{\KKK}(\kappa))\;\stackrel{\sim}{\to}\; \Hom_A(Ae_{\nu},Ae_{\kappa})\quad\mbox{for all $\nu,\kappa\in\KKK$.}$$

It is clear that the functor~$\Ffun$ preserves arbitrary coproducts. Hence, $\Ffun$ restricts to an equivalence between the categories of projective objects in~${}^{\KKK}\bO$ and~$A$-Mod. The fact that the functor~$\Ffun$ is exact then implies that $\Ffun$ is an equivalence of categories between  ${}^{\KKK}\bO$ and~$A$-Mod.
 \end{proof}
%Take a morphism $f:M\to N$ in~${}^{\KKK}\bO_{\lbra\lambda\rbra}$ and a vector in the coimage of~$f$. This vector must be in the image of a morphism $\alpha:\bigoplus_{\nu}P_{\KKK}(\nu)\to M$, for some {\em finite} direct sum over elements~$\nu\in\KKK$. Since $f\circ\alpha\not=0$, it follows that~$\Ffun$ is faithful.

%Next, we observe that
%$$\Ffun\left(\bigoplus_{\nu\in {}^{\KKK}\lbra\lambda\rbra}P_{\KKK}(\nu)\right)\;\cong\;\bigoplus_{\nu\in {}^{\KKK}\lbra\lambda\rbra}\Ffun(P_{\KKK}(\nu))\;\cong\; {}_AA.$$
%Furthermore, 

\begin{rem}\label{RemDomA}
{If} $\lambda\in\fh^\ast$ is dominant we can take $\KKK:=\{\mu\in\fh^\ast\,|\, \mu\le\lambda\}$, in which case we have ${}^{\KKK}\bO_{\lbra\lambda\rbra}=\bO_{\lbra\lambda\rbra}$. It thus follows that~$\bO_{\lbra\lambda\rbra}$ is equivalent to the category of modules over a locally finite associative algebra.
\end{rem}

\begin{cor}\label{CorAmod}
{We have} an equivalence of categories
$${}^{\KKK}\bar{\cO}_{\lbra\lambda\rbra}\;\stackrel{\sim}{\to}\; A^{\KKK}_{\lbra\lambda\rbra}\mbox{{\rm -mod}},\quad M\mapsto \bigoplus_{\mu\in {}^{\KKK}\lbra\lambda\rbra}\Hom_{\fg}(P_{\KKK}(\mu),M).$$
\end{cor}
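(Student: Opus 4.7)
The plan is to show that the equivalence $\Ffun$ of Theorem~\ref{ThmAMod} restricts to an equivalence between the prescribed full subcategories on either side. Concretely, I would verify that for $M \in {}^{\KKK}\bO_{\lbra\lambda\rbra}$, the condition $M \in {}^{\KKK}\bar{\cO}_{\lbra\lambda\rbra}$ is equivalent to $\Ffun(M) \in A^{\KKK}_{\lbra\lambda\rbra}\text{-mod}$, i.e. to $\dim_\mk e_\mu \Ffun(M) < \infty$ for every $\mu \in {}^{\KKK}\lbra\lambda\rbra$.

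For the translation of finiteness, the key observation is Theorem~\ref{ThmProj}(i)(a), which gives
\[
\dim_\mk e_\mu \Ffun(M) \;=\; \dim_\mk \Hom_{\fg}(P_{\KKK}(\mu), M) \;=\; [M : L(\mu)].
\]
Combined with Lemma~\ref{LemKmult}, which characterises ${}^{\KKK}\bar{\cO}$ inside ${}^{\KKK}\bO$ precisely by the finiteness of all multiplicities $[M:L(\mu)]$ with $\mu \in \KKK$, this gives the desired equivalence of finiteness conditions. Note that only $\mu \in {}^{\KKK}\lbra\lambda\rbra$ need be tested, since a module in the block $\bO_{\lbra\lambda\rbra}$ has $[M:L(\mu)] = 0$ for $\mu \notin \lbra\lambda\rbra$ by definition.

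Putting this together, $\Ffun$ takes ${}^{\KKK}\bar{\cO}_{\lbra\lambda\rbra}$ into $A^{\KKK}_{\lbra\lambda\rbra}\text{-mod}$, and a quasi-inverse $\Gfun$ (which exists by Theorem~\ref{ThmAMod}) takes any $N \in A^{\KKK}_{\lbra\lambda\rbra}\text{-mod}$ to some $M = \Gfun(N) \in {}^{\KKK}\bO_{\lbra\lambda\rbra}$ with $[M:L(\mu)] = \dim_\mk e_\mu N < \infty$ for each $\mu$, hence $\Gfun(N) \in {}^{\KKK}\bar{\cO}_{\lbra\lambda\rbra}$ by Lemma~\ref{LemKmult}. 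Since ${}^{\KKK}\bar{\cO}_{\lbra\lambda\rbra}$ and $A^{\KKK}_{\lbra\lambda\rbra}\text{-mod}$ are full subcategories of ${}^{\KKK}\bO_{\lbra\lambda\rbra}$ and $A^{\KKK}_{\lbra\lambda\rbra}\text{-Mod}$, respectively, the restriction of $\Ffun$ is still fully faithful, and the above shows it is essentially surjective.

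There is no substantial obstacle here: the whole argument is a direct bookkeeping check that Theorem~\ref{ThmAMod} matches the two natural finiteness conditions. The only point requiring any care is making sure that ``finite-dimensional weight spaces'' in the definition of $\bar{\cO}$ really is equivalent, after restricting to a block and truncating by an upper-finite ideal, to ``finite composition multiplicities'' at each simple; this is precisely the content of Lemma~\ref{LemKmult}, so the argument is immediate.
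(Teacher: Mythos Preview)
Your proposal is correct and follows essentially the same approach as the paper: both argue that the equivalence of Theorem~\ref{ThmAMod} restricts to the indicated Serre subcategories of modules with finite multiplicities, invoking Lemma~\ref{LemKmult} for the translation. You have simply spelled out in detail the identification $\dim_\mk e_\mu \Ffun(M)=[M:L(\mu)]$ via Theorem~\ref{ThmProj}(i)(a), which the paper leaves implicit.
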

\begin{proof}
{T}he equivalence of Theorem~\ref{ThmAMod} {induces an equivalence of the respective} Serre subcategories of modules with finite multiplicities, by Lemma~\ref{LemKmult}.
\end{proof}

\begin{lemma}
For $\KKK\subset \KKK'\in\cK$, we have an isomorphism 
$$A^{\KKK'}_{\lbra\lambda\rbra}/I\;\cong\; A^{\KKK}_{\lbra\lambda\rbra}\quad\mbox{with}\quad I:=\sum_{\mu\in {}^{\KKK'}\lbra\lambda\rbra\backslash{}^{\KKK}\lbra\lambda\rbra }A^{\KKK'}_{\lbra\lambda\rbra}e_\mu A^{\KKK'}_{\lbra\lambda\rbra}.$$
\end{lemma}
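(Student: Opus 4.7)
The plan is to construct an explicit algebra homomorphism $\phi:A^{\KKK'}_{\lbra\lambda\rbra}\to A^{\KKK}_{\lbra\lambda\rbra}$ which is surjective with kernel $I$, using the natural surjections between projective covers together with the equivalences of Theorem~\ref{ThmAMod}. Write $A':=A^{\KKK'}_{\lbra\lambda\rbra}$ and $A:=A^{\KKK}_{\lbra\lambda\rbra}$ throughout.

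For each $\mu\in{}^{\KKK}\lbra\lambda\rbra$, projectivity of $P_{\KKK'}(\mu)$ in ${}^{\KKK'}\bO$ applied to the top map $P_{\KKK}(\mu)\tto L(\mu)$ yields a morphism $\pi_\mu:P_{\KKK'}(\mu)\to P_{\KKK}(\mu)$, which is surjective because its image contains the simple top of $P_{\KKK}(\mu)$. By Theorem~\ref{ThmProj}(i)(a), for any $X\in{}^{\KKK}\bO$, precomposition with $\pi_\nu$ is an isomorphism $\Hom_{\fg}(P_{\KKK}(\nu),X)\stackrel{\sim}{\to}\Hom_{\fg}(P_{\KKK'}(\nu),X)$, both spaces being $[X:L(\nu)]$-dimensional and the map injective. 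I would then define $\phi$ on $e_\nu A' e_\mu$ with $\nu,\mu\in{}^{\KKK}\lbra\lambda\rbra$ by sending $f$ to the unique $\bar f:P_{\KKK}(\nu)\to P_{\KKK}(\mu)$ satisfying $\bar f\circ\pi_\nu=\pi_\mu\circ f$; set $\phi(f)=0$ otherwise.

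Verifying that $\phi$ respects multiplication (with the convention $fg=g\circ f$) is a direct diagram chase. The only subtle case is compositions through some $P_{\KKK'}(\xi)$ with $\xi\notin\KKK$, where both sides vanish since $\Hom_{\fg}(P_{\KKK'}(\xi),P_{\KKK}(\mu))=[P_{\KKK}(\mu):L(\xi)]=0$. Surjectivity follows by lifting: any $g:P_{\KKK}(\nu)\to P_{\KKK}(\mu)$ equals $\phi(\tilde g)$ for any lift of $g\circ\pi_\nu$ along $\pi_\mu$, which exists by projectivity of $P_{\KKK'}(\nu)$. The inclusion $I\subseteq\ker\phi$ is immediate since $\phi(e_\xi)=0$ for all $\xi\in{}^{\KKK'}\lbra\lambda\rbra\setminus{}^{\KKK}\lbra\lambda\rbra$.

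The main obstacle is the reverse containment $\ker\phi\subseteq I$, which I would handle by a dimension count via the equivalences rather than by direct analysis of $\ker\pi_\mu$ (the latter would require fine control over the radical of $P_{\KKK'}(\mu)$). Under the equivalence of Theorem~\ref{ThmAMod} for $\KKK'$, the Serre subcategory ${}^{\KKK}\bO_{\lbra\lambda\rbra}\hookrightarrow{}^{\KKK'}\bO_{\lbra\lambda\rbra}$ corresponds to the full subcategory of $A'$-modules $M$ with $e_\xi M=0$ for every $\xi\in{}^{\KKK'}\lbra\lambda\rbra\setminus{}^{\KKK}\lbra\lambda\rbra$, which coincides with $(A'/I)\mbox{-Mod}$ since $I$ is the two-sided ideal generated by these idempotents. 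Composing with the equivalence ${}^{\KKK}\bO_{\lbra\lambda\rbra}\simeq A\mbox{-Mod}$ of Theorem~\ref{ThmAMod} for $\KKK$, the indecomposable projective $P_{\KKK}(\mu)$ corresponds to $(A'/I)\bar e_\mu$ on one side and to $A e_\mu$ on the other, so
\[\dim_{\mk}\bar e_\nu (A'/I)\bar e_\mu\;=\;[P_{\KKK}(\mu):L(\nu)]\;=\;\dim_{\mk}e_\nu A e_\mu,\]
and these dimensions are finite by Theorem~\ref{ThmProj}(i)(b) combined with Corollary~\ref{CorBGGThm}(i). The surjection $A'/I\tto A$ induced by $\phi$ is thus block-wise a surjection between finite-dimensional spaces of equal dimension, hence an isomorphism.
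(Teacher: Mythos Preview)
Your proof is correct and builds the same algebra epimorphism as the paper: both construct $\phi$ from the surjections $\pi_\mu:P_{\KKK'}(\mu)\tto P_{\KKK}(\mu)$ and the identification $\Hom_{\fg}(P_{\KKK}(\nu),P_{\KKK}(\mu))\stackrel{\sim}{\to}\Hom_{\fg}(P_{\KKK'}(\nu),P_{\KKK}(\mu))$ via $-\circ\pi_\nu$. The paper reaches the latter isomorphism through the long exact sequence associated to $0\to N(\nu)\to P_{\KKK'}(\nu)\to P_{\KKK}(\nu)\to 0$ and the vanishing $\Hom_{\bC}(N(\nu),P_{\KKK}(\kappa))=0=\Ext^1_{\bC}(P_{\KKK'}(\nu),N(\kappa))$; you get it instead by a dimension count from Theorem~\ref{ThmProj}(i)(a). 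Both are fine.

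The genuine difference is in identifying $\ker\phi$. The paper argues directly: since $N(\kappa)=\ker\pi_\kappa$ has a Verma flag with all highest weights in $\KKK'\setminus\KKK$ (by comparing BGG reciprocity for $\KKK$ and $\KKK'$), its top consists only of simples $L(\xi)$ with $\xi\notin\KKK$, so any $\alpha$ with $\pi_\kappa\circ\alpha=0$ factors through a direct sum of $P_{\KKK'}(\xi)$, $\xi\notin\KKK$, and hence lies in $I$. Your route is categorical: you identify ${}^{\KKK}\bO_{\lbra\lambda\rbra}$ with $(A'/I)$-Mod inside $A'$-Mod via Theorem~\ref{ThmAMod}, observe that $P_{\KKK}(\mu)$ corresponds to $(A'/I)\bar e_\mu$, and deduce $\dim \bar e_\nu(A'/I)\bar e_\mu=[P_{\KKK}(\mu):L(\nu)]=\dim e_\nu A e_\mu$, forcing the induced surjection $A'/I\tto A$ to be an isomorphism block by block. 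The paper's argument is more explicit and self-contained (it never needs the equivalence with module categories), while yours is slicker once Theorem~\ref{ThmAMod} is in hand and avoids any analysis of the Verma flag of $N(\kappa)$.
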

\begin{proof}
Consider the short exact sequence 
$$0\to N(\nu)\to P_{\KKK'}(\nu)\stackrel{p_\nu}{\to} P_{\KKK}(\nu)\to0,$$ for any $\nu\in\KKK$. For all $\nu,\kappa\in\KKK$, 
we have
$$\Hom_{\bC}(N(\nu),P_{\KKK}(\kappa))=0\quad\mbox{and}\quad \Ext^1_{\bC}(P_{\KKK'}(\nu),N(\kappa))=0.$$
Studying the long exact sequences coming from the bifunctor $\Hom_{\bC}(-,-)${,} acting on short exact sequences as above{,} then yields an epimorphism and isomorphism
$$\xymatrix{
\Hom_{\bC}(P_{\KKK'}(\nu),P_{\KKK'}(\kappa))\ar@{->>}[rr]^{p_{\kappa}\circ-}&&\Hom_{\cC}(P_{\KKK'}(\nu),P_{\KKK}(\kappa))&&\ar[ll]^{-\circ p_\nu}_{\sim}\Hom_{\cC}(P_{\KKK}(\nu),P_{\KKK}(\kappa)).
\\
}$$
Composing the epimorphism and the inverse of the isomorphism {gives} the epimorphism
$$\Hom_{\bC}(P_{\KKK'}(\nu),P_{\KKK'}(\kappa))\tto \Hom_{\cC}(P_{\KKK}(\nu),P_{\KKK}(\kappa)),\quad \alpha\mapsto \phi,\quad\mbox{when $p_\kappa\circ\alpha=\phi\circ p_\nu$}.$$
This clearly yields an algebra {epi}morphism ${\varepsilon\colon}A^{\KKK'}_{\lbra\lambda\rbra}\tto A^{\KKK}_{\lbra\lambda\rbra}$.

{A} projective cover of $N(\kappa)$ is {isomorphic to} a direct sum of modules $P_{\KKK'}(\mu)$ with $\mu\in {}^{\KKK'}\lbra\lambda\rbra\backslash{}^{\KKK}\lbra\lambda\rbra$. Any $\alpha: P_{\KKK'}(\nu)\to P_{\KKK'}(\kappa)$ with $p_\kappa\circ\alpha=0$ thus factors through such a projective module. This shows that the ideal $I$ is the kernel of the epimorphism {$\varepsilon$}.
\end{proof}

\begin{prop}\label{FDprop}
Assume that~$\fg$ is finite dimensional, and hence a reductive Lie algebra. 
\begin{enumerate}[(i)]
\item We have equivalences of categories
$$\bO\;\cong\; \prod_{\lbra\lambda\rbra} \bO_{\lbra\lambda\rbra}\quad\mbox{and}\quad \cO\;\cong\;\bigoplus_{\lbra\lambda\rbra} \cO_{\lbra\lambda\rbra}. $$
\item For each $\lambda\in\fh^\ast$, there exist finite dimensional algebra $A$ such that
$$\cO_{\lbra\lambda\rbra}\;\cong\; A\mbox{{\rm -mod}}\quad\mbox{and}\quad \bO_{\lbra\lambda\rbra}\;\cong\; A\mbox{{\rm -Mod}}.$$
\item The subcategory $\cO$ is extension full in~$\bO$.
\end{enumerate}
\end{prop}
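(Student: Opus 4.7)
The proof splits naturally along the three parts. For (i), the first equivalence $\bO\simeq\prod_{\lbra\lambda\rbra}\bO_{\lbra\lambda\rbra}$ is Proposition~\ref{PropBlocks}. To restrict this to the direct-sum statement for $\cO$, I would show that every $M\in\cO$ has finite length and hence is supported on only finitely many blocks. Since $M$ is finitely generated, Lemma~\ref{LemQO} produces an epimorphism $\bigoplus_{i=1}^{r} Q_{\KKK_i}(\mu_i)\twoheadrightarrow M$; each $Q_{\KKK_i}(\mu_i)$ has a finite $\Delta$-flag by construction, and each Verma module $\Delta(\nu)$ has finite composition length by the classical BGG theorem for finite dimensional $\fg$. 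Hence $M$ has finite length, and since every $L(\nu)$ lies in a single block, only finitely many components $M_{\lbra\lambda\rbra}$ are nonzero.

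For (ii), fix $\lambda\in\fh^\ast$. Because $W$ is finite, the orbit $\lbra\lambda\rbra=W[\lambda]\cdot\lambda$ is finite and admits a dominant representative $\lambda^+$. Set $\KKK:=\{\mu\in\fh^\ast\,|\,\mu\le\lambda^+\}$. By Remark~\ref{RemDomA} one has $\bO_{\lbra\lambda\rbra}={}^{\KKK}\bO_{\lbra\lambda\rbra}$, and Theorem~\ref{ThmAMod} produces an equivalence $\bO_{\lbra\lambda\rbra}\simeq A\mbox{-Mod}$ for $A:=A^{\KKK}_{\lbra\lambda\rbra}$. To see that $A$ is finite dimensional I would use that $\lbra\lambda\rbra$ is finite (so $A$ has finitely many idempotents $e_\mu$) and that each $P_{\KKK}(\mu)$ has a finite $\Delta$-flag of finite-length Verma modules by Theorem~\ref{ThmProj}(i)(b) together with the classical finite-length property, so that $e_\nu A e_\mu\cong\Hom_{\fg}(P_{\KKK}(\mu),P_{\KKK}(\nu))$ is finite dimensional. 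Corollary~\ref{CorAmod} then identifies $\bar{\cO}_{\lbra\lambda\rbra}\simeq A\mbox{-mod}$; since $A$ is finite dimensional, finitely generated $A$-modules coincide with finite dimensional $A$-modules, so the equivalence identifies $\cO_{\lbra\lambda\rbra}$ with $A\mbox{-mod}$ as well.

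For (iii), the decompositions of (i) reduce the problem to showing that each $\cO_{\lbra\lambda\rbra}$ is extension full in $\bO_{\lbra\lambda\rbra}$: one observes that $\bO_{\lbra\lambda\rbra}$ is a Serre direct factor of $\bO$, so $\Ext$ between objects of a single block equals $\Ext$ computed in that block, and that $\Ext$ of a finite direct sum into an object agrees with the corresponding product over finite support. Via (ii) the question becomes whether $A\mbox{-mod}$ is extension full in $A\mbox{-Mod}$ for the finite dimensional algebra $A$: any finite dimensional $A$-module admits a resolution by finitely generated (hence finite dimensional) projective $A$-modules, built iteratively from projective covers, and these remain projective in $A\mbox{-Mod}$. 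The same resolution computes both $\Ext$-groups, which therefore agree.

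The main obstacle is bookkeeping rather than conceptual: verifying carefully that the block decomposition in (i) interacts correctly with finite generation (producing a \emph{direct} sum and not merely a product), and tracking the correspondence between $\cO_{\lbra\lambda\rbra}$, $\bar{\cO}_{\lbra\lambda\rbra}$ and $A\mbox{-mod}$ under the equivalence of part (ii). With those handled, (iii) reduces cleanly to a standard homological statement for finite dimensional algebras.
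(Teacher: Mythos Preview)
Your proposal is correct and follows essentially the same route as the paper: part (i) via Proposition~\ref{PropBlocks} plus finite length, part (ii) via Theorem~\ref{ThmAMod} and Corollary~\ref{CorAmod} together with the identification $\bar{\cO}_{\lbra\lambda\rbra}=\cO_{\lbra\lambda\rbra}$, and part (iii) via the observation that projective resolutions of objects in $\cO$ stay in $\cO$. The only cosmetic difference is that for (iii) you transfer to $A\mbox{-mod}\subset A\mbox{-Mod}$ via the equivalence of (ii), whereas the paper phrases the same projective-resolution argument directly in $\bO$; the content is identical.
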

\begin{proof}
Part (i) is a special case of Proposition~\ref{PropBlocks}, see also \cite{BGG}. Part (ii) follows from Theorem~\ref{ThmAMod} and Corollary~\ref{CorAmod}, by observing that~$\bar{\cO}_{\lbra\lambda\rbra}=\cO_{\lbra\lambda\rbra}$, see e.g.~\cite[Section~5.1]{NamT}.
Part (iii) follows immediately from observing that a minimal projective resolution in~$\bO$ of $M\in\cO$, is actually in~$\cO$.
\end{proof}
%\begin{prop}
%Assume that~$\fg$ is finite dimensional (so a reductive Lie algebra). 
%\end{prop}

\subsection{Extension fullness}

\begin{thm}\label{ThmExtFull}
Let $\KKK\subset\fh^\ast$ be an upper finite ideal.
\begin{enumerate}[(i)]
%\item For upper finite ideals $\KKK\subset\KKK'\subset\fh^\ast$, the Serre subcategory ${}^{\KKK}\bO$ of~${}^{\KKK'}\bO$ is extension full.
\item The category ${}^{\KKK}\bO$ is extension full in~$\bO$.
\item The category ${}^{\KKK}\bO$ is extension full in~$\bC$.
\item For the inclusion $\iota:{}^{\KKK}\bar{\cO}\hookrightarrow {}^{\KKK}\bO$, the morphism $\iota^i_{MN}$ in \eqref{XYi} is an isomorphism for all $i\in\mN$, if $M\in\cF^\Delta$ or $N\in \cF^{\nabla}$.
\end{enumerate}
\end{thm}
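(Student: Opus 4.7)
The three parts will be handled in a unified way by producing resolutions of one of the arguments that live simultaneously in both categories, and then appealing to dimension shifting.

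For part (iii), my plan is to build, for $M\in\cF^\Delta$, a projective resolution of $M$ in ${}^{\KKK}\bO$ whose terms lie entirely in $\cF^\Delta\subset {}^{\KKK}\bar{\cO}$. Since $M\in\bar{\cO}$ has a finite $\Delta$-flag, its top is finite-dimensional, so its projective cover in ${}^{\KKK}\bO$ is a \emph{finite} direct sum $\bigoplus P_{\KKK}(\lambda_i)$. By Theorem~\ref{ThmProj}(i)(b) combined with upper-finiteness of $\KKK$, each $P_{\KKK}(\lambda)$ has a finite $\Delta$-flag, so the projective cover itself is in $\cF^\Delta$; Corollary~\ref{resolv} then keeps the syzygy inside $\cF^\Delta$, and iteration yields the desired resolution. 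As ${}^{\KKK}\bar{\cO}$ is a full subcategory of ${}^{\KKK}\bO$, the two Ext-groups are computed by the same complex $\Hom(P_\bullet, N)$, so $\iota^i_{MN}$ is an isomorphism. The case $N\in\cF^\nabla$ is handled dually via an injective coresolution by $I_{\KKK}(\mu)=P_{\KKK}(\mu)^\vee\in\cF^\nabla$, using the dual form of Corollary~\ref{resolv}.

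For parts (i) and (ii), by dimension shifting together with Lemma~\ref{LemCopr} (which handles arbitrary coproducts of the $P_{\KKK}(\mu)$), it suffices to verify
\[
\Ext^i_{\bO}(P_{\KKK}(\mu), N) \;=\; \Ext^i_{\bC}(P_{\KKK}(\mu), N) \;=\; 0 \qquad (i\ge 1,\ N\in {}^{\KKK}\bO).
\]
For $i=1$ this is immediate from the Serre property of ${}^{\KKK}\bO$ in both ambient categories — ${}^{\KKK}\bO$ is evidently Serre in $\bO$, while $\bO$ is Serre in $\bC$ because local finiteness of the $\fb$-action is preserved under taking sub-objects, quotients and extensions — so the middle term of any such short exact sequence lies in ${}^{\KKK}\bO$ and splits there by projectivity of $P_{\KKK}(\mu)$.

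For $i\ge 2$, my plan is to invoke the Grothendieck spectral sequence associated with the adjoint pair $\iota\colon{}^{\KKK}\bO\rightleftarrows\bO\colon\rho$, where $\rho(M):={}^{\KKK}M$ is the maximal submodule of $M$ contained in ${}^{\KKK}\bO$:
\[
E_2^{p,q} \;=\; \Ext^p_{{}^{\KKK}\bO}\bigl(M,\,R^q\rho\, N\bigr) \;\Longrightarrow\; \Ext^{p+q}_{\bO}(M,N).
\]
Extension fullness reduces to the vanishing $R^q\rho|_{{}^{\KKK}\bO}=0$ for $q\ge 1$, i.e.\ to the existence of an injective coresolution of each $N\in {}^{\KKK}\bO$ in $\bO$ with all terms still in ${}^{\KKK}\bO$. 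I expect to obtain this by showing that the cogenerator $J_{\KKK}(\mu)=Q_{\KKK}(\mu)^\vee$ of ${}^{\KKK}\bO$ is in fact injective in the larger categories $\bO$ and $\bC$: extending the direct-limit argument of Section~\ref{SecConstProj}, any homomorphism $M\to J_{\KKK}(\mu)$ from $M\in\bO$ (resp.\ $M\in\bC$) vanishes on every weight vector of weight outside $\KKK$ and hence factors through the maximal quotient of $M$ in ${}^{\KKK}\bO$. The main obstacle will be this last step, because the functor sending $M$ to its maximal quotient in ${}^{\KKK}\bO$ is a priori only right exact; the argument will likely require either directly checking the lifting property for $J_{\KKK}(\mu)$ or constructing an auxiliary family of $\Ext_{\bO}$-acyclic modules in ${}^{\KKK}\bO$ sufficient to resolve arbitrary $N\in {}^{\KKK}\bO$.
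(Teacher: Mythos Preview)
Your argument for part~(iii) is correct and is essentially the paper's: Lemma~\ref{CorNPM} gives exactly the $\cF^\Delta$-resolution you describe, and the $\cF^\nabla$ case is obtained by applying $\vee$.

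For parts~(i) and~(ii), however, there is a genuine gap. The approach hinges on $J_{\KKK}(\mu)$ (or $I_{\KKK}(\mu)$) being injective in the larger category $\bO$, and this is \emph{false}. Take $\KKK$ to be the principal ideal generated by $\mu$; then $Q_{\KKK}(\mu)=\Delta(\mu)$ and $J_{\KKK}(\mu)=\nabla(\mu)$. But $\Ext^1_{\bO}(L(\nu),\nabla(\mu))\neq 0$ for suitable $\nu>\mu$ (by Theorem~\ref{ExtDL} and duality this reduces to the corresponding nonvanishing in finite rank category $\cO$). More generally, the injective hull of $L(\mu)$ in $\bO$ is $\varinjlim_{\KKK'}I_{\KKK'}(\mu)$, which is \emph{never} contained in any fixed ${}^{\KKK}\bO$ unless the block is trivial. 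So $R^q\rho$ does not vanish on ${}^{\KKK}\bO$, and your spectral-sequence reduction collapses to a statement equivalent to what you are trying to prove. Your fallback suggestion of finding ``$\Ext_{\bO}$-acyclic'' objects in ${}^{\KKK}\bO$ runs into the same wall: there simply are no nonzero $\bO$-injectives with support in $\KKK$.

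The paper's route avoids this entirely by working on the \emph{projective} side, but not by making $P_{\KKK}(\mu)$ projective in $\bO$ (it is not, for the dual reason). Instead one resolves $P_{\KKK}(\mu)$ inside $\bO$ so that all higher terms are direct sums of objects in $\cF^{\Delta}[\fh^\ast\backslash\KKK]$ (this comes from the construction of $Q_{\KKK}(\mu)$ together with Corollary~\ref{resolv}), and then proves directly, by Yoneda-splicing and induction on $k$, that such direct sums satisfy $\Ext^k_{\bO}(-,N)=0$ for every $N\in{}^{\KKK}\bO$; this is Proposition~\ref{PropSumD}. Combined with \cite[Corollary~5]{CM} this gives~(i). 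For~(ii) one uses the explicit presentation~\eqref{presentation} to resolve $P_{\KKK}(\mu)$ in $\bC$ by modules $U(\fg)\otimes_{U(\fh)}\mk_\kappa$ with $\kappa\notin\KKK$ in positive degrees, which are automatically $\Hom_{\bC}(-,N)$-acyclic for $N\in{}^{\KKK}\bO$. The key conceptual point you are missing is that acyclicity is obtained not from objects \emph{inside} ${}^{\KKK}\bO$, but from objects supported \emph{outside} $\KKK$.
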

Before proving the theorem, we note the following special case.
\begin{cor}
For all~$\mu,\nu\in \fh^\ast$, we have
$$\Ext^i_{\bO}(\Delta(\mu),\nabla(\nu))=0\quad\mbox{for all $i>0$.}$$
\end{cor}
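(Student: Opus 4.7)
The plan is to reduce to a truncated subcategory where enough projectives exist, and then run an induction on $i$ using the standard filtration on the kernel of a projective cover of $\Delta(\mu)$.

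First, I would choose an upper finite ideal $\KKK \subset \fh^\ast$ containing both $\supp \Delta(\mu)$ and $\supp \nabla(\nu)$. A concrete choice is $\KKK := \{\kappa \in \fh^\ast \mid \kappa \le \mu \text{ or } \kappa \le \nu\}$, which is easily seen to be an ideal, and is upper finite by interval finiteness (Proposition~\ref{equivCond}(iii)): for any $\kappa \in \KKK$, the set $\{\kappa' \in \KKK \mid \kappa' \ge \kappa\}$ is contained in the finite union $[\kappa,\mu] \cup [\kappa,\nu]$. Theorem~\ref{ThmExtFull}(i) then gives $\Ext^i_\bO(\Delta(\mu),\nabla(\nu)) \cong \Ext^i_{{}^\KKK\bO}(\Delta(\mu),\nabla(\nu))$, so it suffices to prove vanishing of the right-hand side for all $i > 0$.

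Fix $\nu$ and induct on $i$, with the stronger statement ``$\Ext^i_{{}^\KKK\bO}(\Delta(\lambda),\nabla(\nu)) = 0$ for all $\lambda \in \KKK$.'' For the base case $i=1$, Proposition~\ref{PropDN}(i) yields $\Ext^1_\bC(\Delta(\lambda),\nabla(\nu)) = 0$, and Theorem~\ref{ThmExtFull}(ii) identifies this with $\Ext^1_{{}^\KKK\bO}(\Delta(\lambda),\nabla(\nu))$. For the inductive step, I would use the surjection $P_\KKK(\mu) \twoheadrightarrow \Delta(\mu)$. Such a surjection exists: Theorem~\ref{ThmProj}(i)(a) gives a one-dimensional $\Hom_\fg(P_\KKK(\mu),\Delta(\mu)) = \mk$, and any nonzero morphism must hit the generator of $\Delta(\mu)$ (since both modules have top $L(\mu)$). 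Denoting its kernel by $K$, Corollary~\ref{resolv} shows $K \in \cF^\Delta[\KKK]$, and Theorem~\ref{ThmProj}(i)(b) determines its $\Delta$-multiplicities to be $(K:\Delta(\lambda)) = [\Delta(\lambda):L(\mu)] - \delta_{\lambda,\mu}$, which by Corollary~\ref{CorBGGThm}(i) are supported on weights $\lambda$ with $\mu \uparrow \lambda$ (in particular $\lambda > \mu$). Applying $\Hom_\fg(-,\nabla(\nu))$ to the short exact sequence $0 \to K \to P_\KKK(\mu) \to \Delta(\mu) \to 0$ and invoking projectivity of $P_\KKK(\mu)$ in ${}^\KKK\bO$ (Theorem~\ref{ThmProj}(ii)) gives
\[
\Ext^i_{{}^\KKK\bO}(\Delta(\mu),\nabla(\nu)) \;\cong\; \Ext^{i-1}_{{}^\KKK\bO}(K,\nabla(\nu)) \qquad \text{for } i \ge 2.
\]
Since $K$ admits a \emph{finite} $\Delta$-flag (its length is bounded by $|\{\lambda \in \KKK \mid \lambda > \mu\}| < \infty$), dévissage through that flag together with the inductive hypothesis applied to each $\Delta(\lambda)$-section yields $\Ext^{i-1}_{{}^\KKK\bO}(K,\nabla(\nu)) = 0$.

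The only real obstacle is arranging that the projective cover one resolves $\Delta(\mu)$ by lives in a category where it is genuinely projective and has a $\Delta$-flag of finite length -- this is where the upper finiteness of $\KKK$ and Theorem~\ref{ThmProj} do all the work, and it is also why one must first pass from $\bO$ to ${}^\KKK\bO$ via the extension fullness of Theorem~\ref{ThmExtFull}(i). Once the reduction is in place, the remainder is standard homological dévissage.
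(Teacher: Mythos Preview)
Your argument is correct, but the paper's proof is considerably shorter because it avoids the induction altogether. The paper observes that the two cases $\nu\not>\mu$ and $\nu>\mu$ can each be handled in one line: taking $\KKK$ to be the ideal generated by $\mu$ and $\nu$, in the first case $\mu$ is maximal among the $\lambda\in\KKK$ with $\lambda\ge\mu$, so Theorem~\ref{ThmProj}(i)(b) forces $P_\KKK(\mu)=\Delta(\mu)$ and the Ext-groups in ${}^\KKK\bO$ vanish by projectivity; in the second case the same reasoning gives $I_\KKK(\nu)=\nabla(\nu)$, so one uses injectivity instead (or applies $\vee$ together with Theorem~\ref{ThmExtFull}(iii)). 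Your d\'evissage is the general-purpose machinery one would reach for when $\Delta(\mu)$ is \emph{not} already projective in the truncated category, and it works fine here, but it is doing more than necessary: the point of choosing $\KKK$ minimally is precisely that one of the two modules becomes (co)acyclic on the nose. One minor presentational point: your inductive hypothesis is stated for all $\lambda\in\KKK$, so the inductive step should be carried out for an arbitrary $\lambda'\in\KKK$ rather than only for the original $\mu$; the argument you wrote goes through verbatim with $\mu$ replaced by $\lambda'$.
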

\begin{proof}
Assume first that~$\nu\not>\mu$ and let $\KKK$ be the ideal generated by~$\mu$ and~$\nu$. Then $\Delta(\mu)=P_{\KKK}(\mu)${.} {Hence} $\Ext^i_{{}^{\KKK}\bO}(\Delta(\mu),\nabla(\nu))=0${,} and the conclusion follows from Theorem~\ref{ThmExtFull}(i). If $\nu>\mu$, we can use $\nabla(\nu)=I_{\KKK}(\nu)$, or alternatively Theorem~\ref{ThmExtFull}(iii) and the duality $\vee$.
\end{proof}

We start the proof of the theorem with the following proposition.

\begin{prop}\label{PropSumD}
For $\KKK\in\cK$ and a family $\{M_\alpha\}$ of objects in $\cF^\Delta[\fh^\ast\backslash \KKK]$, set $M=\bigoplus_\alpha M_\alpha$. {Then}
$$\Ext^k_{\bO}(M,N)=0\quad\mbox{for all $N\in{}^{\KKK}\bO$ and~$k\in\mN$.}$$
\end{prop}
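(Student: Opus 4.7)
The plan is to argue by a sequence of reductions followed by an inductive treatment of higher $\Ext$. The main obstacle, appearing only for $k\ge 2$, is that the Serre inclusion $\bO\hookrightarrow\bC$ preserves only $\Ext^0$ and $\Ext^1$, so the projectivity of $\Delta(\mu)$ in the Serre subcategory $\bC_\mu=\{M\in\bC:M_\nu=0\ \forall\nu>\mu\}$ does not automatically transfer to higher $\Ext$ in $\bO$.

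First I would use that $\bO$ is a Grothendieck category (Proposition~\ref{PropGroth}) to fix an injective resolution $N\hookrightarrow I^\bullet$ in $\bO$. The isomorphism~\eqref{eqCo} turns $\Hom_\bO(\bigoplus_\alpha M_\alpha,I^j)$ into $\prod_\alpha\Hom_\bO(M_\alpha,I^j)$, and since products are exact in $\Ab$, cohomology commutes with the product, giving $\Ext^k_\bO(M,N)\cong\prod_\alpha\Ext^k_\bO(M_\alpha,N)$. An induction on the length of a finite $\Delta$-flag of each $M_\alpha\in\cF^\Delta[\fh^\ast\backslash\KKK]$, using the long exact sequence of $\Ext_\bO$, then reduces the statement to showing $\Ext^k_\bO(\Delta(\mu),N)=0$ for all $\mu\in\fh^\ast\backslash\KKK$ and $N\in{}^\KKK\bO$. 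Since $\KKK$ is downward closed, $\mu\notin\KKK$ forces $N_\nu=0$ for every $\nu\ge\mu$: otherwise $\mu\le\nu\in\supp N\subset\KKK$ would contradict $\mu\notin\KKK$. For $k=0$ this gives $\Hom_\bO(\Delta(\mu),N)\hookrightarrow N_\mu=0$ by Lemma~\ref{LemDeltaProj}, and for $k=1$ Corollary~\ref{CorExt1} yields $\Ext^1_\bC(\Delta(\mu),N)=0$, which transfers to $\Ext^1_\bO$ by the Serre property.

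The case $k\ge 2$ I would handle by strong induction on $k$ combined with dimension shifting inside ${}^\KKK\bO$. Using that ${}^\KKK\bO$ has enough injectives (by Theorem~\ref{ThmAMod} and the Grothendieck structure of $A^\KKK_{\lbra\lambda\rbra}\mbox{{\rm -Mod}}$), I would embed $N$ into an injective object $J\in{}^\KKK\bO$. Since both $J$ and $J/N$ lie in ${}^\KKK\bO$, the induction hypothesis kills $\Ext^{k-1}_\bO(\Delta(\mu),J/N)$, and the long exact sequence gives an injection $\Ext^k_\bO(\Delta(\mu),N)\hookrightarrow\Ext^k_\bO(\Delta(\mu),J)$. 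By Theorem~\ref{ThmProj}(i)(b) and the duality $\vee$, the indecomposable injective $I_\KKK(\mu)=P_\KKK(\mu)^\vee$ of ${}^\KKK\bO$ admits a finite $\nabla$-flag with factors $\nabla(\kappa')$, $\kappa'\in\KKK$, so a further induction on this $\nabla$-flag reduces the task to $\Ext^k_\bO(\Delta(\mu),\nabla(\kappa'))=0$ for $\mu\notin\KKK$ and $\kappa'\in\KKK$. Here $\nabla(\kappa')_\nu=0$ for $\nu\ge\mu$ by the same ideal argument, so Corollary~\ref{CorExt1} handles $k=1$; for $k\ge 2$ I would complete the argument by showing that every Yoneda $k$-extension of $\Delta(\mu)$ by $\nabla(\kappa')$ in $\bO$ is equivalent to one whose middle terms all lie in $\bC_\mu$, obtained by iteratively replacing each middle term with the submodule generated by a highest-weight lift of the $\mu$-generator and its successive preimages. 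Inside $\bC_\mu$, $\Delta(\mu)$ is projective by Lemma~\ref{LemDeltaProj}, so the class vanishes; this Yoneda-truncation step is the one I expect to require the most delicate bookkeeping.
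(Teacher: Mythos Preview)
Your reduction to a single $M_\alpha$ via an injective resolution of $N$ in $\bO$ and the product formula is correct and is a pleasant simplification; the paper instead carries the direct sum $\bigoplus_\alpha M_\alpha$ through the whole induction. The cases $k=0,1$ are also fine.

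The argument for $k\ge 2$, however, has two genuine gaps.

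First, the passage from an arbitrary injective $J\in{}^{\KKK}\bO$ to modules with a finite $\nabla$-flag is not justified. Only the \emph{indecomposable} injectives $I_{\KKK}(\nu)$ have finite $\nabla$-flags; a general injective is (at best) an infinite direct sum or direct product of such, and $\Ext^k_{\bO}(\Delta(\mu),-)$ does not commute with infinite coproducts or products in the second variable. So the ``further induction on this $\nabla$-flag'' does not get off the ground, and you are left needing $\Ext^k_{\bO}(\Delta(\mu),J)=0$ for $J\in{}^{\KKK}\bO$, which is the statement you are trying to prove.

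Second, even granting the reduction to $\nabla(\kappa')$, the proposed ``Yoneda truncation'' does not work as stated. Replacing each middle term $E_i$ by the submodule generated by a lift of the generator produces submodules of objects in $\bO$, hence objects of $\bO$, but there is no reason these submodules lie in $\bC_\mu$: the generator $v\in (E_1)_\mu$ satisfies only that $U(\fb)v$ is finite dimensional, and its support may well contain weights strictly above $\mu$. What you are implicitly asking for is that the Serre subcategory of $\bO$ of modules with no weights above $\mu$ be extension full in $\bO$; but that ideal is not upper finite, and extension fullness of such truncations (Theorem~\ref{ThmExtFull}) is precisely what Proposition~\ref{PropSumD} is used to establish.

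The paper avoids both problems by dimension-shifting in the \emph{first} variable. Given a Yoneda $k$-extension, it covers each $M_\alpha$ by a finite direct sum $P^\alpha$ of modules $Q_{\KKK^\alpha_j}(\mu^\alpha_j)$ using Lemma~\ref{LemQO}; since every $\mu^\alpha_j\notin\KKK$ and $\KKK$ is an ideal, $P^\alpha\in\cF^\Delta[\fh^\ast\backslash\KKK]$, and by Corollary~\ref{resolv} so is the kernel $K^\alpha$ of $P^\alpha\tto M_\alpha$. Pulling back yields a Yoneda $(k{-}1)$-extension of $\bigoplus_\alpha K^\alpha$ (again a coproduct of objects in $\cF^\Delta[\fh^\ast\backslash\KKK]$) by $N$, which vanishes by induction. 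The point is that the class $\cF^\Delta[\fh^\ast\backslash\KKK]$ is closed under this syzygy operation, so the induction is self-contained and never appeals to injectives in ${}^{\KKK}\bO$ or to extension fullness.
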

\begin{proof}
The case $k=0$ is obvious {from} equation~\eqref{eqCo}. The case $k=1$ follows from Lemma~\ref{LemCopr} and Corollary~\ref{CorExt1}. {Next,} we take $k>1$ and assume the proposition is proved for $k-1$. An element {of} $\Ext^k_{\bO}(M,N)$ can be represented by the upper exact sequence in the diagram
$$\xymatrix{
0\ar[r]& N\ar@{=}[d]\ar[r]& E_k\ar[r]& E_{k-1}\ar[r]&\cdots\ar[r]& E_2\ar[r]& E_1\ar[r]& M\ar[r]\ar@{=}[d]& 0\\
0\ar[r]& N\ar[r]& E_k'\ar[u]\ar[r]& E_{k-1}'\ar[u]\ar[r]&\cdots\ar[r]& E_2'\ar[u]\ar[r]& \bigoplus_\alpha P^\alpha\ar[r]\ar[u]& M\ar[r]& 0.
}$$
Now $M_\alpha\in \cF^\Delta$ is generated by finitely many weight vectors, say of weights $\{\mu^\alpha_j\}_j$ in $\fh^\ast\backslash\KKK$, and we take a weight vector in~$E_1$ in the preimage of each such generating weight vector. By Lemma~\ref{LemQO} the submodule of $E_1$ generated by those weight vectors is {isomorphic to} a quotient of a finite direct sum $P^\alpha:=\bigoplus_j Q_{\KKK^\alpha_j}(\mu^\alpha_j)$ for upper finite ideals $\KKK^\alpha_j\ni \mu^\alpha_j$. Using pull-backs we thus arrive at the above commutative diagram with exact rows. By Corollary~\ref{resolv}, the kernel $K^\alpha$ of $P^\alpha\tto M_\alpha$ is {an object of}~$\cF^\Delta$. By construction, $K^\alpha$ is {actually an object of}~$\cF^\Delta[\fh^\ast\backslash \KKK]$. By induction, the 
 {element in $\operatorname{Ext}^{k-1}_{{\bf O}}(\bigoplus_\alpha K^\alpha, N)$ represented by the sequence}
$$\xymatrix{
0\ar[r]& N\ar[r]& E_k'\ar[r]& E_{k-1}'\ar[r]&\cdots\ar[r]& E_2'\ar[r]& \bigoplus_\alpha K^\alpha \ar[r]& 0
}$$
{equals} zero. It follows that the {element in $\operatorname{Ext}^k_{{\bf O}}(M,N)$ represented by the above diagram also equals zero}.
\end{proof}

%\begin{prop}\label{PropPP}
%For upper finite ideals $\KKK\subset\KKK'\subset\fh^\ast$, we have a short exact sequence
%$$0\to N\to P_{\KKK'}(\mu)\to P_{\KKK}(\mu)\to0,$$
%for some $N\in \cF^\Delta[\KKK'\backslash\KKK]$.
%\end{prop}
%\begin{proof}
%Since $P_{\KKK}(\mu)\in {}^{\KKK'}\bO$ and has simple top, there exists an epimorphism $P_{\KKK'}(\mu)\tto P_{\KKK}(\mu)$. Denote the cokernel by~$N$. Corollary~\ref{resolv} implies that~$N\in \cF^\Delta$. Take $\nu\in\KKK$ and apply $\Hom_{\fg}(-,\nabla(\nu))$ to the corresponding short exact sequence. Theorem~\ref{ThmProj}(i)(a) then implies that~$\Hom_{\fg}(N,\nabla(\nu))=0$. 
%Lemma~\ref{LemDN} then implies that~$(N:\Delta(\nu))=0$, so $N\in \cF^\Delta[\KKK'\backslash\KKK]$.
%\end{proof}

%The arguments in the proof of the above proposition also imply the following lemma.
\begin{lemma}\label{CorNPM} Let $\KKK\subset\fh^\ast$ be an upper finite ideal and~$\SSS$ a coideal in~$\KKK$ (for instance $\SSS=\KKK$).
For any $M\in \cF^{\Delta}[\SSS]$, we have a short exact sequence in  $\cF^{\Delta}[\SSS]$
$$0\to X\to \bigoplus_{\mu}P_{\KKK}(\mu)\to M\to 0.$$
\end{lemma}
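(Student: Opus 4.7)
My strategy is to construct a surjection onto~$M$ from a finite direct sum of projectives $P_\KKK(\mu)$ whose indexing weights all lie in~$\SSS$, and then to verify that its kernel remains in~$\cF^\Delta[\SSS]$ by additivity of $\Delta$-multiplicities.

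To produce the surjection, I would fix a $\Delta$-flag $0=F_kM\subset F_{k-1}M\subset\cdots\subset F_0M=M$ with $F_iM/F_{i+1}M\cong\Delta(\nu_i)$ and~$\nu_i\in\SSS$. Since $F_iM\tto\Delta(\nu_i)$ is surjective on weight spaces, I lift the canonical generator of each~$\Delta(\nu_i)$ to a weight vector $v_i\in F_iM$ of weight $\nu_i$; an immediate downward induction on~$i$ shows that $\{v_i\}_{0\leq i<k}$ generates~$M$. Noting that $\supp M\subset\KKK$ (since $\KKK$ is an ideal containing all~$\nu_i$), Lemma~\ref{Qk} yields a morphism $\bigoplus_{i=0}^{k-1}Q_\KKK(\nu_i)\to M$ sending the canonical generator of $Q_\KKK(\nu_i)$ to~$v_i$, which is surjective because its image contains a generating set of~$M$. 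From the inductive construction in the proof of Theorem~\ref{ThmProj}(i), each $Q_\KKK(\nu_i)$ decomposes as a finite direct sum of modules $P_\KKK(\mu)$ indexed by weights $\mu\in\KKK$ with $L(\mu)_{\nu_i}\neq0$; these satisfy $\mu\geq\nu_i$, and since $\SSS$ is a coideal in~$\KKK$ containing~$\nu_i$, we conclude $\mu\in\SSS$. This produces the desired epimorphism $P:=\bigoplus_\alpha P_\KKK(\mu_\alpha)\tto M$ with all $\mu_\alpha\in\SSS$.

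Letting $X$ denote the kernel, Corollary~\ref{CorAdd} gives $P\in\cF^\Delta$ and then Corollary~\ref{resolv} gives $X\in\cF^\Delta$. To upgrade this to $X\in\cF^\Delta[\SSS]$, I would use that the functional $N\mapsto(N:\Delta(\lambda))$ is additive on short exact sequences in~$\cF^\Delta$, which follows immediately from Lemma~\ref{LemDN} together with the vanishing $\Ext^1(\Delta,\nabla)=0$ from Proposition~\ref{PropDN}(i). Thus
$$(X:\Delta(\lambda))\;=\;(P:\Delta(\lambda))-(M:\Delta(\lambda))\qquad\mbox{for all $\lambda\in\fh^\ast$.}$$
Fix $\lambda\notin\SSS$; then $(M:\Delta(\lambda))=0$ by hypothesis. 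If $\lambda\notin\KKK$ then $(P:\Delta(\lambda))=0$ since $P\in\cF^\Delta[\KKK]$. If $\lambda\in\KKK\setminus\SSS$, Theorem~\ref{ThmProj}(i)(b) gives $(P:\Delta(\lambda))=\sum_\alpha[\Delta(\lambda):L(\mu_\alpha)]$; any nonzero summand would force $\mu_\alpha\uparrow\lambda$ by Corollary~\ref{CorBGGThm}(i), hence $\mu_\alpha\leq\lambda$, and combined with $\mu_\alpha\in\SSS$ and the coideal condition this yields $\lambda\in\SSS$, a contradiction. In either case $(X:\Delta(\lambda))=0$, so $X\in\cF^\Delta[\SSS]$.

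The main technical point is ensuring that every projective summand lies over a weight in~$\SSS$, which is precisely where the coideal hypothesis enters, via the explicit decomposition of the auxiliary modules $Q_\KKK(\nu)$ from the proof of Theorem~\ref{ThmProj}(i); the remaining multiplicity bookkeeping is routine.
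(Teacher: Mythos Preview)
Your proof is correct and follows essentially the same approach as the paper, which merely states that the lemma ``is an immediate consequence of the structure of projective objects in ${}^{\KKK}\bO$ and of Corollary~\ref{resolv}.'' You have supplied the details the paper omits: the construction of the surjection with projective summands indexed by weights in~$\SSS$ (your route via $Q_\KKK(\nu_i)$ is a mild variant of taking the projective cover directly, since the top of~$M$ consists of $L(\nu_i)$'s), and the multiplicity bookkeeping showing $X\in\cF^\Delta[\SSS]$.
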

\begin{proof}
This is an immediate consequence of the structure of projective objects in ${}^{\KKK}\bO$ and {of} Corollary~\ref{resolv}.
\end{proof}

\begin{proof}[Proof of Theorem~\ref{ThmExtFull}]
%Part (i) follows from part (ii) and the sequence of inclusions ${}^{\KKK}\bO\subset {}^{\KKK'}\bO\subset\bO$.
For part (i), { [CM1, Corollary 5] shows that} it suffices to prove that{,} {if $i>0$ then}~$\Ext^i_{\bO}(P,N)=0$ {for any} $N\in {}^{\KKK}\bO$ and~ {any projective object} $P\in{}^{\KKK}\bO$.
%, by \cite[Corollary~5]{CM}
 Using the main method in the proof of Proposition~\ref{PropSumD}, one shows that~$\Ext^i_{\bO}(P,N)\not=0$ implies~$\Ext^{i-1}_{\bO}(M,N)\not=0$ for some direct sum {$M$}  of modules in~$\cF^{\Delta}[\fh^\ast\backslash\KKK]$. This contradicts Proposition~\ref{PropSumD}.

Part~(ii) can also be proved by an application of \cite[Corollary~5]{CM}. It follows from the construction in Section~\ref{SecConstProj} that each projective object $P$ in ${}^{\KKK}\bO$ has a resolution by direct sums of modules $U(\fg)\otimes_{U(\fh)}\mk_\kappa$, with $\kappa\not\in\KKK$ outside of position $0$ of the resolution. This implies that $\Ext^i_{\bC}(P,N)=0$ for all $i>0$ and~$N\in{}^{\KKK}\bO$.

By Lemma~\ref{CorNPM}, any module in~$\cF^{\Delta}[\KKK]$ has a projective resolution in~${}^{\KKK}\bO$ which is actually contained in~${}^{\KKK}\bar{\cO}$.
 Part (iii) follows from this observation by applying the duality $\vee$ on $\cC$.
\end{proof}

\begin{cor}\label{CornCoh}
For arbitrary $i\in\mN$, $\mu\in\fh^\ast$ and $M\in\bO$ with upper finite $\supp M $, we have
$$\Ext^i_{\bO}(\Delta(\mu),M)\;\cong\;\Hom_{\fh}(\mk_\mu,{\mathrm H}^i(\fn^+,M)).$$
\end{cor}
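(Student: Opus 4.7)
The plan is to reduce the Ext computation to one in the larger $\fh$-semisimple category $\bC$, where one can write down an explicit projective resolution of $\Delta(\mu)$ arising from a Chevalley--Eilenberg resolution of $\mk_\mu$ as a $\fb$-module.

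First, I choose an upper finite ideal $\KKK\in\cK$ containing $\supp M$ together with the downward closure $\{\nu\in\fh^\ast\,|\,\nu\le\mu\}$ of $\mu$ (the latter is upper finite by the interval-finiteness of $\le$ for Dynkin Borel subalgebras, Proposition~\ref{equivCond}(iii)). This places both $\Delta(\mu)$ and $M$ in ${}^{\KKK}\bO$, and Theorem~\ref{ThmExtFull}(i)--(ii) yields
$$\Ext^i_{\bO}(\Delta(\mu),M)\;\cong\;\Ext^i_{{}^{\KKK}\bO}(\Delta(\mu),M)\;\cong\;\Ext^i_{\bC}(\Delta(\mu),M).$$

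Next, I construct a projective resolution of $\Delta(\mu)$ in $\bC$. Starting from the Chevalley--Eilenberg free resolution of $\mk$ as a $U(\fn^+)$-module, tensoring with $\mk_\mu$ gives a free $U(\fn^+)$-resolution of $\mk_\mu$ which is equivariant for the diagonal $\fh$-action, and so is identified with a resolution of $\mk_\mu$ by $\fb$-modules induced from $\fh$. Applying the exact functor $U(\fg)\otimes_{U(\fb)}-$ (exact by PBW), I obtain an exact sequence in $\bC$
$$\cdots\to U(\fg)\otimes_{U(\fh)}\bigl(\Lambda^i\fn^+\otimes\mk_\mu\bigr)\to\cdots\to U(\fg)\otimes_{U(\fh)}\mk_\mu\to \Delta(\mu)\to 0.$$
Each term $U(\fg)\otimes_{U(\fh)}W$, for $W$ an $\fh$-semisimple module, lies in $\bC$ (by PBW the $\fh$-action decomposes into weight spaces), and is projective in $\bC$: the induction-restriction adjunction and weight decomposition give
$$\Hom_{\bC}(U(\fg)\otimes_{U(\fh)}W,N)\;\cong\;\Hom_{\fh}(W,N)\;\cong\;\prod_\lambda \Hom_{\mk}(W_\lambda,N_\lambda),$$
which is exact in $N\in\bC$ since products of exact sequences of vector spaces are exact.

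Applying $\Hom_{\bC}(-,M)$ to the resolution and using the same adjunction, the resulting complex has $i$-th term equal to the $\mu$-weight space of $\Hom_{\mk}(\Lambda^i\fn^+,M)$, that is, to the $\mu$-weight component of the Chevalley--Eilenberg cochain complex $C^\bullet(\fn^+,M)$. The differential induced from the resolution is a routine match for the Chevalley--Eilenberg differential, and both preserve $\fh$-weights. Taking cohomology then yields
$$\Ext^i_{\bC}(\Delta(\mu),M)\;\cong\;\bigl(\mathrm{H}^i(\fn^+,M)\bigr)_\mu\;=\;\Hom_{\fh}(\mk_\mu,\mathrm{H}^i(\fn^+,M)),$$
as required.

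The main obstacle is that the terms $U(\fg)\otimes_{U(\fh)}(\Lambda^i\fn^+\otimes\mk_\mu)$ of the resolution are generically not $\fb$-locally finite and hence do not lie in $\bO$ itself; the detour through $\bC$ via Theorem~\ref{ThmExtFull}(ii) is therefore essential, and the key technical step is the verification of projectivity of these induced modules in $\bC$. A minor additional point is to check that, because $M$ is $\fh$-semisimple, the cohomology of the $\mu$-weight subcomplex of $C^\bullet(\fn^+,M)$ agrees with the $\mu$-weight part of $\mathrm{H}^i(\fn^+,M)$.
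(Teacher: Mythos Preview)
Your proof is correct and follows essentially the same route as the paper: first reduce from $\bO$ to ${}^{\KKK}\bO$ and then to $\bC$ via Theorem~\ref{ThmExtFull}(i)--(ii), and then identify $\Ext^i_{\bC}(\Delta(\mu),M)$ with the $\mu$-weight part of $\mathrm{H}^i(\fn^+,M)$ using the Chevalley--Eilenberg resolution induced up from $\fh$. The paper simply cites \cite[Theorem~6.15(b)]{Humphreys} and \cite[Corollary~14]{CM} for the second step, whereas you spell it out explicitly; your verification that $U(\fg)\otimes_{U(\fh)}W$ is projective in $\bC$ and your remark about weight components commuting with cohomology are exactly the ingredients behind those references.
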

\begin{proof}
Let $\KKK$ be the ideal generated by $\supp M $ and $\mu$. Theorem~\ref{ThmExtFull}(i) and (ii) imply
$$\Ext^i_{\bO}(\Delta(\mu),M)\;\cong\;\Ext^i_{{}^{\KKK}\bO}(\Delta(\mu),M)\;\cong\;\Ext^i_{\bC}(\Delta(\mu),M).$$
The reformulation in terms of $\fn^+$-Lie algebra cohomology then follows as in the finite dimensional case, see e.g. \cite[Theorem~6.15(b)]{Humphreys} or \cite[Corollary~14]{CM}.
\end{proof}

\begin{que}\label{QueExtFull}
\begin{enumerate}[(i)]
\item Is $\bO$ extension full in~$\bC$?
\item Is $\bar{\cO}$ extension full in~$\bO$?
\end{enumerate}
\end{que}
Note that Question~\ref{QueExtFull}(i) has a positive answer when restricting to the block $\bO_{\lbra\lambda\rbra}$, for a dominant $\lambda$, by Remark~\ref{RemDomA} and Theorem~\ref{ThmExtFull}(ii).

\subsection{Extensions in Serre quotients}For any ideal $\LLL$, we denote {by~${}_{\LLL}\bO$} the Serre quotient $\bO/{}^{\LLL}\bO$, see Appendix~\ref{AppSerre}. %We will refer to results in that appendix in this section.
\begin{prop}\label{PropExtpi}
Let $\LLL\subset\KKK\subset\fh^\ast$ be ideals and let $\KKK$ be upper finite. For $M\in\cF^{\Delta}[\KKK\backslash \LLL]$ and~$N\in{}^{\KKK}\bO$, we have isomorphisms
$$\pi:\Ext^i_{{}^{\KKK}\bO}(M,N)\;\;\tilde\to\;\; \Ext^i_{{}_{\LLL}^{\KKK}\bO}(M,N)\quad\mbox{for all $i\in\mN$.}$$
\end{prop}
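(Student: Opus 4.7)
The strategy is to reduce the claim to a vanishing statement for $\operatorname{Ext}$ against objects of the Serre subcategory ${}^{\LLL}\bO \subset {}^{\KKK}\bO$, and then invoke a standard Serre-quotient comparison lemma. Concretely, I want to establish that
$$\Ext^i_{{}^{\KKK}\bO}(M,B)\;=\;0\qquad\text{for all $i\ge 0$ and all $B\in{}^{\LLL}\bO$,}$$
since once this is in hand, a general result about Serre quotients (which I expect is supplied by Appendix~\ref{AppSerre}) promotes such a vanishing to the statement that the localization functor $\pi$ induces isomorphisms $\Ext^i_{{}^{\KKK}\bO}(M,N)\xrightarrow{\sim}\Ext^i_{{}_{\LLL}^{\KKK}\bO}(\pi M,\pi N)$ for every $N\in{}^{\KKK}\bO$ and every $i$.

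First I would prove the key case $M=\Delta(\mu)$ with $\mu\in\KKK\setminus\LLL$. Since $\LLL\subset\KKK\in\cK$, the ideal $\LLL$ is itself upper finite, so any $B\in{}^{\LLL}\bO$ has upper finite support. Thus Theorem~\ref{ThmExtFull}(i) and Corollary~\ref{CornCoh} together give
$$\Ext^i_{{}^{\KKK}\bO}(\Delta(\mu),B)\;\cong\;\Ext^i_{\bO}(\Delta(\mu),B)\;\cong\;\Hom_{\fh}(\mk_\mu,\HH^i(\fn^+,B)).$$
The $\fh$-weights appearing in the Chevalley--Eilenberg complex $\Hom_{\mk}(\wedge^\bullet\fn^+,B)$ computing $\HH^\bullet(\fn^+,B)$ are the $\nu-(\alpha_1+\cdots+\alpha_i)$ with $\nu\in\supp B\subset\LLL$ and $\alpha_j\in\Phi^+$; equivalently, a weight-$\mu$ cochain must land in weight spaces $B_{\mu+\beta}$ with $\beta\in\Gamma^+$. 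Since $\LLL$ is an ideal and $\mu\notin\LLL$, no such weight $\mu+\beta\ge\mu$ can lie in $\LLL$, so the $\mu$-weight space of the complex vanishes, and hence so does that of its cohomology. This yields the required $\Ext$-vanishing for Verma modules.

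Next I would extend to general $M\in\cF^{\Delta}[\KKK\setminus\LLL]$ by induction on the length of a $\Delta$-flag: a short exact sequence $0\to M'\to M\to\Delta(\mu_0)\to 0$ with $M'\in\cF^{\Delta}[\KKK\setminus\LLL]$ of shorter length, together with the long exact $\Ext$-sequence applied to any $B\in{}^{\LLL}\bO$, propagates the vanishing from the base case to $M$. With this done, the Serre-quotient comparison lemma in Appendix~\ref{AppSerre} applied to $\cA={}^{\KKK}\bO$, $\cB={}^{\LLL}\bO$, and to $M,N$ yields the desired isomorphism. The main obstacle I anticipate is simply ensuring that the cohomological input is available and applicable — that is, verifying that $B$ has upper-finite support (which is where the hypothesis $\KKK\in\cK$ enters crucially) so that Corollary~\ref{CornCoh} applies, and making sure the precise form of the Serre-quotient $\Ext$-comparison statement in the appendix matches the situation. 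The weight-support argument for $\HH^i(\fn^+,B)$ itself is essentially forced by the ideal property of $\LLL$ and is straightforward once the framework is set up.
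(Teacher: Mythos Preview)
Your $\Ext$-vanishing computation against objects of ${}^{\LLL}\bO$ via Corollary~\ref{CornCoh} and the weight-support argument is correct and clean. However, the crucial second step has a genuine gap: Appendix~\ref{AppSerre} does \emph{not} contain a general ``Ext-comparison lemma'' of the form you invoke. The only relevant tool there is Lemma~\ref{ProjBC}, which says that a projective object $P$ with simple top not in $\cB$ remains projective in the quotient and that $\Hom_{\cC/\cB}(\pi P,\pi-)\cong\Hom_{\cC}(P,-)$. This is much weaker than the statement ``$\Ext^i_{\cC}(M,B)=0$ for all $B\in\cB$ implies $\pi$ induces isomorphisms on all $\Ext^i(M,-)$''. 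The latter is true in sufficient generality, but proving it is essentially the content of the proposition itself, so you cannot simply cite it.

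The paper's proof avoids this by arguing more concretely. It first handles $i=0$ directly from the definition of morphisms in a Serre quotient, using only that $\Hom_{\bO}(M,N')=0=\Ext^1_{\bO}(M,N')$ for $N'\in{}^{\LLL}\bO$ (your vanishing for $i\le 1$ suffices here). Then, rather than invoking an abstract comparison, it uses Lemma~\ref{CorNPM} to build a projective resolution $P_\bullet\to M$ in ${}^{\KKK}\bO$ all of whose terms lie in $\cF^{\Delta}[\KKK\setminus\LLL]$; by Lemma~\ref{ProjBC}, $\pi(P_\bullet)$ is still a projective resolution of $\pi M$ in ${}^{\KKK}_{\LLL}\bO$, and the $i=0$ case applied termwise to $\Hom(P_\bullet,N)$ yields the isomorphism on higher $\Ext$. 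Your full $\Ext$-vanishing is thus stronger than what the paper actually needs, but it does not by itself replace the projective-resolution step. To salvage your route you would need to supply (and prove) the missing Serre-quotient lemma; alternatively, keep your vanishing argument for $i\le 1$ and then follow the paper's resolution argument.
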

\begin{proof}
First we consider the case $i=0$. Clearly{,}~$M\in\cF^{\Delta}[\KKK\backslash \LLL]$ has no proper submodule~$M'$ such that~$M/M'$ is in~${}^{\LLL}\bO$, hence
$$\Hom_{{}_{\LLL}\bO}( M,N)\;=\;\varinjlim \Hom_{\bO}(M,N/N'),$$
where~$N'$ runs over all submodules of~$N$ which are in~${}^{\LLL}\bO$.
For such $N'$, the exact sequence
$$\Hom_{\bO}(M,N')\to \Hom_{\bO}(M,N)\to\Hom_{\bO}(M,N/N')\to \Ext^1_{\bO}(M,N')$$
has first and last term equal to zero, see Lemma~\ref{CorExt1}. Consequently
all the maps in the direct limit are isomorphisms. Hence, we find an isomorphism
$$\pi:\Hom_{\bO}(M,N)\;\;\tilde\to\;\; \Hom_{{}_{\LLL}\bO}(M,N).$$

Lemma~\ref{CorNPM} implies that, inside ${}^{\KKK}\bO$ the module $M$ has a projective resolution $P_\bullet$ with $P_i\in\cF^{\Delta}(\KKK\backslash\LLL)$ for all $i\in\mN$. By Lemma~\ref{ProjBC}, $\pi(P_\bullet)$ is a projective resolution of $M$ in ${}^{\KKK}_{\LLL}\bO$. Since the extension groups are then calculated as ${\mathrm H}^i(\Hom(P_\bullet,N))$ in the respective categories, the conclusion follows from the above paragraph.
\end{proof}
 
% $P_{\KKK}(\mu)$ is projective in~${}^{\KKK}_{\LLL}\bO$ for any $\mu\in\KKK$.
%We take a short exact sequence $X\hookrightarrow P\tto M$ in~$\cF^{\Delta}[\KKK]$ as in Lemma~\ref{CorNPM}.
%We then find a commuting diagram
%$$\xymatrix{
%\Hom_{{}^{\KKK}\bO}(P,N)\ar[r]\ar[d]^{\pi}&\Hom_{{}^{\KKK}\bO}(X,N)\ar[r]\ar[d]^{\pi}&\Ext^1_{{}^{\KKK}\bO}(M,N)\ar[r]\ar[d]^{\pi}&0\\
%\Hom_{{}^{\KKK}_{\LLL}\bO}(P,N)\ar[r]&\Hom_{{}^{\KKK}_{\LLL}\bO}(X,N)\ar[r]&\Ext^1_{{}^{\KKK}_{\LLL}\bO}(M,N)\ar[r]&0
%}$$
%with exact rows. Since the left two vertical morphisms are isomorphisms, the right one is an isomorphism too. This proves the claim for $i=1$. We can proceed by induction on $i$ to 

\begin{rem}
For an upper finite ideal $\KKK\subset\fh^\ast$ and an arbitrary ideal $\LLL\subset \KKK$, the category ${}^{\KKK}_{\LLL}\bO$ has enough projective objects by Theorem~\ref{ThmProj} and Lemma~\ref{ProjBC}.

This observation extends to the case of ideals $\LLL\subset \KKK\subset \fh^\ast$ such that~$\KKK\backslash\LLL$ is upper finite in~$\fh^\ast\backslash\LLL$, using Remark~\ref{RemBruhat}(ii).
\end{rem}

\subsection{Example: $\mathfrak{gl}_{\infty}$}
For $\mathfrak{gl}_{\infty}$ we have precisely two Dynkin Borel subalgebras~$\fb$ and~$\fb'$, up to conjugacy. We consider a Cartan subalgebra $\fh$ contained in both {$\mathfrak{b}$ and $\mathfrak{b}'$}.
\subsubsection{}
For $\fg\supset\fb\supset\fh$, we choose a realisation where
$$\rR=\{\epsilon_i-\epsilon_j\,|\, i\not=j\in\mN\} \quad\mbox{and }\quad \rR^+=\{\epsilon_i-\epsilon_j\,|\, i<j\}.$$
For $\fg\supset\fb'\supset\fh$, we choose a realisation where
$$\rR=\{\epsilon'_i-\epsilon'_j\,|\, i\not=j\in\mZ\} \quad\mbox{and }\quad \rR^+=\{\epsilon'_i-\epsilon'_j\,|\, i<j\}.$$
We show that these lead to different theories in the following sense.
\begin{lemma}
There exists no equivalence of categories $\bO_{\lbra 0\rbra }(\fg,\fb)\to \bO_{\lbra 0\rbra }(\fg,\fb')$, or $\bar\cO_{\lbra 0\rbra }(\fg,\fb)\to \bar\cO_{\lbra 0\rbra }(\fg,\fb')$ which exchanges Verma modules. 
\end{lemma}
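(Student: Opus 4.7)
The plan is to convert any such equivalence into an order isomorphism between the Bruhat posets of the two Weyl groups $W^\fb$ and $W^{\fb'}$, and then to recover the Dynkin diagram of a Coxeter system from its Bruhat poset. The contradiction will come from the fact that these two Dynkin diagrams differ: for $\fb$ it is the one-sided chain $A_\infty$, which has a unique vertex of degree $1$, while for $\fb'$ it is the two-sided chain $A_\infty^\infty$, in which every vertex has degree $2$.

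For both Borels, $0\in\fh^\ast$ is dominant and regular: take $\rho\in\fh^\ast$ with $\rho_i=-i$ in either labelling, so that $\langle\rho,\alpha_k^\vee\rangle=1$ on every simple coroot and $\rho-w(\rho)\in\Gamma^+$ for every $w\in W$. Hence $w\mapsto w\cdot 0$ is a bijection $W\to\lbra 0\rbra$ and $0$ is the unique $\uparrow$-maximal element of $\lbra 0\rbra$. Suppose $F\colon\bO_{\lbra 0\rbra}(\fg,\fb)\to\bO_{\lbra 0\rbra}(\fg,\fb')$ is an equivalence together with a bijection $\phi\colon\lbra 0\rbra^\fb\to\lbra 0\rbra^{\fb'}$ satisfying $F(\Delta^\fb(\mu))\cong\Delta^{\fb'}(\phi(\mu))$. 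Since $F$ preserves dimensions of Hom-spaces, Corollary~\ref{CorBGGThm}(ii) forces $\phi$ to be an isomorphism of posets $(\lbra 0\rbra^\fb,\uparrow)\to(\lbra 0\rbra^{\fb'},\uparrow)$, and necessarily $\phi(0)=0$. Transporting through $w\leftrightarrow w\cdot 0$, one obtains an isomorphism $\psi\colon W^\fb\to W^{\fb'}$ of Bruhat posets fixing the identity element.

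The key step is to recover the Dynkin diagram of $(W,S)$ from the abstract poset $(W,\le)$ (Bruhat). The Bruhat order on any Coxeter group is graded by the length function, so $\psi$ preserves length and, in particular, maps atoms (simple reflections) bijectively to atoms. For two distinct simple reflections $s,s'\in S$, the length-$2$ elements of $W$ that dominate both in Bruhat order are exactly the products $t_1 t_2$ with $\{t_1,t_2\}=\{s,s'\}$, i.e.\ $ss'$ and $s's$; there is one such element when $s$ and $s'$ commute and two when they do not. Thus the relation ``$s$ and $s'$ are adjacent in the Dynkin diagram'' is purely poset-theoretic, and so $\psi$ induces a graph isomorphism between the Dynkin diagrams of $W^\fb$ and $W^{\fb'}$. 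But $A_\infty$ and $A_\infty^\infty$ are not isomorphic as graphs, giving the desired contradiction. Since every Verma module lies in $\bar\cO$, the identical argument rules out the $\bar\cO$-version as well.

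The only step requiring any care is the poset-theoretic characterisation of Dynkin adjacency, but this reduces to an elementary computation inside the rank-$2$ parabolic subgroup $\langle s,s'\rangle\subset W$, so it is not a real obstacle.
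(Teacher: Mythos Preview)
Your argument is correct and is essentially the paper's proof: both reduce via Corollary~\ref{CorBGGThm}(ii) to showing the Bruhat posets on $W\cdot 0$ are not isomorphic, then detect Dynkin adjacency of simple reflections by counting length-$2$ elements lying above a given pair of atoms. The paper packages this last step as an ad hoc invariant $c(\mu)$ (which is exactly the vertex degree in the Coxeter graph), whereas you phrase it as reconstructing the full Coxeter graph, but the content is identical.
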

\begin{proof} We set $W=W(\fg:\fh)$.  {D}enote the simple reflections with respect to $\fb$ by $s_i=r_{\epsilon_i-\epsilon_{i+1}}\in W$ for $i\in\mN${,} and with respect to $\fb'$ by~$s_i'=r_{\epsilon'_i-\epsilon'_{i+1}}\in W$ for $i\in\mZ$.
 {Let $\uparrow$ denote} the Bruhat order corresponding to $\fb$, {and let $\uparrow'$ be the corresponding order for $\fb'$}.
By Corollary~\ref{CorBGGThm}(ii), it suffices to prove that the partially ordered sets $(W\cdot 0,\uparrow )$ and~$(W\cdot 0,\uparrow' )$ are not isomorphic. 

To look for a contradiction, we assume that we have an isomorphism of posets $\phi :(W\cdot 0,\uparrow )\to (W\cdot 0,\uparrow' )$.
Both posets have a unique maximal element, $0$, which must be exchanged by $\phi$. The sets of elements~$\mu$ which are {immediate predecessors of} $0$ {in terms of the  orders $\uparrow$ and $\uparrow'$} must also be exchanged by $\phi$. We must thus have a bijection
$$\phi: \{s_i\cdot 0,i\in\mN \}\;\to\;  \{s_i'\cdot 0,i\in\mZ \}.$$

Now we define $c(\mu)\in\mN$ for any of the weights $\mu$ in~$\{s_i\cdot 0,i\in\mN \}$ as the number of other elements~$\nu$ in that set such that there exist (at least) $2$ elements in~$W\cdot0$ which are covered by both $\mu$ and~$\nu$. We find that~$c(s_0\cdot 0)=1$ because {$\nu:=s_1\cdot 0$ is the only weight such that $\mu=s_0\cdot 0$  and $\nu$} cover two weights, namely $s_0s_1\cdot 0$ and~$s_1s_0\cdot 0$. We have $c(s_i\cdot 0)=2$ for $i>0$, coming from~$s_{i-1}\cdot0$ and~$s_{i+1}\cdot0$. Similarly, the same definition { for the set $\{s'_i\cdot 0, i\in\mathbb{Z}\}$ yields} $c(s'_i\cdot 0)=2$ for all $i\in\mZ$. This contradicts the existence of $\phi$.
\end{proof}

We conclude this section with an example of an infinite dimensional extension space in~$\bO$ for simple modules.
\begin{lemma}
For $\fg=\mathfrak{gl}_{\infty}$ and both of the Dynkin Borel subalgebras {$\fb$ and $\fb'$}, we have
$$\dim_{\mk}\Ext^2_{\bO}(\mk,\mk)\,=\,\infty.$$
\end{lemma}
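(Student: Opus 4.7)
The strategy is to reduce $\Ext^2_\bO(\mk,\mk)$ to an $\Ext^1$-computation via the injective hull of $\mk$ in $\bO$, and then to exhibit infinitely many linearly independent classes using the infinite set $\Sigma$ of simple roots. Since $\mk\in\bO_{\lbra 0\rbra}$, Proposition~\ref{PropBlocks} lets us work inside this block throughout. As $0$ is integral and dominant, Remark~\ref{RemDomA} identifies $\bO_{\lbra 0\rbra}$ with ${}^\KKK\bO_{\lbra 0\rbra}$ for $\KKK:=\{\nu\in\fh^\ast\,|\,\nu\le 0\}$, and Corollary~\ref{CorBGGThm}(i) shows that $\nu=0$ is the unique element of $W\cdot 0$ with $0\uparrow\nu$; hence Theorem~\ref{ThmProj}(i)(b) gives $P_\KKK(0)=\Delta(0)$, whose $\vee$-dual $\nabla(0)$ is the injective hull of $\mk$ in the block, and via the block decomposition also in all of $\bO$.

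Setting $C:=\nabla(0)/\mk$ and applying $\Hom_\fg(\mk,-)$ to $0\to\mk\to\nabla(0)\to C\to 0$, the injectivity of $\nabla(0)$ yields
\begin{equation*}
\Ext^2_\bO(\mk,\mk)\;\cong\;\Ext^1_\bO(\mk,C).
\end{equation*}
Since $[\nabla(0):L(0)]=[\Delta(0):L(0)]=1$ is accounted for entirely by the socle, $L(0)$ occurs in no subquotient of $C$, and hence $\Hom_\fg(\mk,-)$ vanishes on all such subquotients. A weight/PBW inspection shows that for each simple root $\alpha\in\Sigma$ the vector $f_\alpha v_0\in\mathrm{rad}\,\Delta(0)$ is non-zero modulo $\mathrm{rad}^2\,\Delta(0)$ and contributes an $L(-\alpha)$-summand of $\mathrm{rad}\,\Delta(0)/\mathrm{rad}^2\,\Delta(0)$; dualising under $\vee$ (and using $L(-\alpha)^\vee = L(-\alpha)$) produces inclusions $L(-\alpha)\hookrightarrow\mathrm{soc}(C)$ for each $\alpha\in\Sigma$. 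For any finite $F\subset\Sigma$, set $L_F:=\bigoplus_{\alpha\in F}L(-\alpha)\subset C$. The long exact sequence attached to $0\to L_F\to C\to C/L_F\to 0$ then has all of its $\Hom$-terms involving $C$ or $C/L_F$ equal to zero, producing an injection
\begin{equation*}
\bigoplus_{\alpha\in F}\Ext^1_\bO(\mk,L(-\alpha))\;=\;\Ext^1_\bO(\mk,L_F)\;\hookrightarrow\;\Ext^1_\bO(\mk,C)\;\cong\;\Ext^2_\bO(\mk,\mk).
\end{equation*}

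It remains to see that each summand $\Ext^1_\bO(\mk,L(-\alpha))$ is non-zero. By the $\vee$-duality on $\cC$ combined with Theorem~\ref{ThmExtFull}(i,ii), this space is isomorphic to $\Ext^1_\bO(L(-\alpha),\mk)$, and the latter is witnessed by the length-two submodule of $\nabla(0)$ consisting of its socle together with the $L(-\alpha)$-summand of its second socle layer. Consequently $\dim\Ext^2_\bO(\mk,\mk)\ge|F|$ for every finite $F\subset\Sigma$, and since $\Sigma$ is infinite for both Dynkin Borel subalgebras $\fb$ and $\fb'$ of $\mathfrak{gl}_\infty$, the claim follows.

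The main obstacle I expect is the Loewy-layer analysis inside $\Delta(0)$: verifying that the vectors $f_\alpha v_0$ are genuine top-of-radical generators and that distinct simple roots contribute distinct summands to $\mathrm{rad}\,\Delta(0)/\mathrm{rad}^2\,\Delta(0)$. This is a straightforward but slightly delicate PBW/weight computation that must be carried out carefully in the infinite-rank setting, where Loewy structure is not automatic; the remaining steps are formal manipulations of long exact sequences and dualities available already in Section~\ref{Sec4}.
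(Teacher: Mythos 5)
Your argument is correct in outline, but it takes a genuinely different route from the paper's. The paper argues directly: by Theorem~\ref{ThmExtFull}(i) and (ii) the computation of $\Ext^2_{\bO}(\mk,\mk)$ reduces to $\Ext^2_{\bC}(\mk,\mk)$, which is relative Lie algebra cohomology $\HH^2(\fg,\fh;\mk)$; the standard Chevalley--Eilenberg resolution shows this is the kernel of $\Hom_{\fh}(\Lambda^2(\fg/\fh),\mk)\to\Hom_{\fh}(\Lambda^3(\fg/\fh),\mk)$, and that kernel is visibly infinite dimensional. Your route instead stays inside the block $\bO_{\lbra 0\rbra}$, uses the injectivity of $\nabla(0)$ to shift $\Ext^2(\mk,\mk)\cong\Ext^1(\mk,C)$ with $C=\nabla(0)/\mk$, and then exhibits $\mk$-linearly independent classes indexed by $\Sigma$ via the socle of $C$. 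The trade-off is that the paper's proof is short and computational, while yours is more structural: it traces the infinitude directly to the infinitely many simple roots contributing $L(-\alpha)$ to the second socle layer of the dominant dual Verma module, which is closer to the category-$\cO$ picture and gives the explicit lower bound $\dim\Ext^2\ge|\Sigma|$.

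The one step you flagged as "delicate" — that $f_\alpha v_0$ contributes an $L(-\alpha)$-summand to $\mathrm{rad}\,\Delta(0)/\mathrm{rad}^2\Delta(0)$ — does need to be filled in, and here is how one can do it cleanly. Since $\alpha$ is simple, $e_\alpha f_\alpha v_0 = h_\alpha v_0 = 0$ and $e_\beta f_\alpha v_0 = 0$ for $\beta\in\Sigma$, $\beta\ne\alpha$, so $f_\alpha v_0$ is a highest-weight vector of weight $-\alpha$, generating a copy of $\Delta(-\alpha)\subset\mathrm{rad}\,\Delta(0)$. Moreover $\mathrm{rad}\,\Delta(0)=\sum_{\beta\in\Sigma}\Delta(-\beta)$ because $U(\fn^-)$ is generated by the simple negative root vectors (Proposition~\ref{equivCond}(iv)). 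Now for $\beta\ne\alpha$, $\Delta(-\beta)_{-\alpha}=0$ (since $\beta-\alpha\notin\Gamma^+$), and $(\mathrm{rad}\,\Delta(-\alpha))_{-\alpha}=0$; so the submodule $K_\alpha:=\sum_{\beta\ne\alpha}\Delta(-\beta)+\mathrm{rad}\,\Delta(-\alpha)$ does not contain $f_\alpha v_0$, and $\mathrm{rad}\,\Delta(0)/K_\alpha\cong L(-\alpha)$. This furnishes the surjection $\mathrm{rad}\,\Delta(0)\tto L(-\alpha)$ you need, hence $f_\alpha v_0\notin\mathrm{rad}^2\Delta(0)$. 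Dualising under $\vee$ then gives the non-split length-two submodule of $\nabla(0)$ used to establish $\Ext^1_{\bO}(\mk,L(-\alpha))\ne 0$, and the rest of your long-exact-sequence argument goes through as written.

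Two minor points worth noting. First, the isomorphism $\Ext^2_{\bO}(\mk,\mk)\cong\Ext^1_{\bO}(\mk,C)$ uses that $\nabla(0)$ is injective in all of $\bO$, not merely in the truncation; this is fine because, as you say, Remark~\ref{RemDomA} identifies $\bO_{\lbra 0\rbra}$ with ${}^{\KKK}\bO_{\lbra 0\rbra}$, and injectivity in a factor of the product decomposition of Proposition~\ref{PropBlocks} gives injectivity in $\bO$. Second, you do not actually need Theorem~\ref{ThmExtFull}(i,ii) for the claim $\Ext^1_{\bO}(\mk,L(-\alpha))\cong\Ext^1_{\bO}(L(-\alpha),\mk)$: since any extension of these modules has finite dimensional weight spaces and Serre subcategories preserve $\Ext^1$, one can pass to $\cC(\fg,\fh)$ directly and apply $\vee$ there.
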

\begin{proof}
{By} Theorem~\ref{ThmExtFull}(i){,} we can equivalently calculate  {$\operatorname{Ext}^2_{{}^{\KKK}\bO}(\mk,\mk)$} for some $0\in\KKK\in\cK$. Theorem~\ref{ThmExtFull}(ii) then shows we can {instead} calculate  {$\operatorname{Ext}^2_{\bC}(\mk,\mk)$}{, and this {is} what we will do.}

 Taking the standard projective resolution of $\mk$ in~$\bC$ shows that~$\Ext^\bullet_{{\bC}}(\mk,\mk)$ is the cohomology of the complex
$$0\to\mk \to \Hom_{\fh}(\fg/\fh,\mk)\to \Hom_{\fh}(\Lambda^2{(}\fg/\fh{)},\mk)\to  \Hom_{\fh}(\Lambda^3{(}\fg/\fh{)},\mk)\to\cdots.$$ 
Since $\Hom_{\fh}(\fg/\fh,\mk)=0$, we find the well-known properties $\Hom_{\bC}(\mk,\mk)=\mk$, $\Ext^1_{{\bC}}(\mk,\mk)=0$ and also that~$\Ext^2_{{\bC}}(\mk,\mk)$ is the kernel of $ \Hom_{\fh}(\Lambda^2{(}\fg/\fh{)},\mk)\to  \Hom_{\fh}(\Lambda^3{(}\fg/\fh{)},\mk)$. That kernel is easily seen to be infinite dimensional.
\end{proof}

%%%%%%%%%%%%%%%%%%%%%%%%%%%%%%%%%%%%%%%%%%%%%%%%%%%%%%%%%%%%%%%%%%%%%%%%%%%%%%%%%%%%%%%%%%%%%%%%%%%%%%%%%%%%%%%%

\section{Link with finite case}
%Without loss of generality we may assume that each $\fg_n=\widetilde{\fg}_n+\fh$ is a thick root-reductive subalgebra. %We introduce the corresponding parabolic subalgebra
%$$\fq_n:=\fg_n+\fb=\widetilde{\fg}_n+\fb=\fg_n\oplus\fv^+_n.$$
%Note that~$\fq_n\subset\fq_{n+1}$ and~$\fv^+_n\supset\fv^+_{n+1}$.

\subsection{Induction and restriction functors}
%Fix for this subsection a weight $\lambda\in\fh^\ast$. 

\subsubsection{} For each $n\in\mN$ and~$\lambda\in\fh^\ast$, we have the complete {sub}set $\Lambda_n:=[\lambda]_n=\lambda+\mZ\rR_n$ {of}~$\fh^\ast${, and} the corresponding ideals $\mathring{\Lambda}_n$ and~$\overline{\Lambda}_n$ as in \ref{Complete2}.
The exact functors of Section~\ref{IndRes} restrict to exact functors
$$\ind^{n}_+=\ind^{\fg}_{\fg_n,+}:{}^{\Lambda_n}\bO(\fg_n,\fb_n)\to{}^{\overline{\Lambda}_n}\bO,\;\quad \res^n_\lambda=\res^{\fg}_{\fg_n,\lambda}: {}^{\overline{\Lambda}_n}\bO\to {}^{\Lambda_n}\bO(\fg_n,\fb_n).$$

The following is an infinite rank version of \cite[Theorem~32]{CMZ}. We provide an alternative proof. 
\begin{thm}\label{ThmEquiv}
For $n\in\mN$ and~$\lambda\in\fh^\ast$, {there are} mutually inverse equivalences of abelian categories~$\Psi$ and~$\Phi$, admitting a {commutative} diagram
$$
\xymatrix{
{}^{\Lambda_n}\bO({\fg_n},\fb_n)\ar[rr]^{\ind^n_+}\ar[drr]^{\Psi}&&{}^{\overline{\Lambda}_n}\bO\ar[rr]^{\res_\lambda^n}\ar[d]^{\pi}&&{}^{\Lambda_n}\bO({\fg_n},\fb_n)\\
&&{}^{\overline{\Lambda}_n}_{\mathring{\Lambda}_n}\bO\ar[rru]^{\Phi}
}{.}
$$
Moreover, for any $\mu\in {\Lambda_n}$, {there is an isomorphism} $\Psi(\Delta_n(\mu))\cong\Delta(\mu)$ in~${}^{\overline{\Lambda}_n}_{\mathring{\Lambda}_n}\bO$.
\end{thm}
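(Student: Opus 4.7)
The plan is to verify four things in turn: (a) both $\Psi$ and $\Phi$ are well-defined and exact, and $\Psi(\Delta_n(\mu))\cong\Delta(\mu)$; (b) $\Phi\Psi\cong\mathrm{id}$; (c) $\Psi\Phi\cong\mathrm{id}$; step (c) is where the real work lies.

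For (a): the functor $\res^n_\lambda$ annihilates ${}^{\mathring{\Lambda}_n}\bO$ since supports there are disjoint from $\Lambda_n$, so by the universal property of Serre quotients it descends uniquely through $\pi$ to an exact functor $\Phi$. The PBW decomposition $U(\fg)\cong U(\fu_n^-)\otimes U(\fp_n)$ (using that $\widetilde{\fg}_n+\fb$ is a parabolic $\fp_n$ with Levi $\fg_n$) gives $\ind^n_+ M\cong U(\fu_n^-)\otimes_\mk M$ as $(\fh,\fg_n)$-modules; in particular $\supp\ind^n_+ M\subseteq\supp M-\Gamma^+\subseteq\overline{\Lambda}_n$, so $\Psi=\pi\circ\ind^n_+$ is defined and exact. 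Transitivity of induction yields $\ind^n_+\Delta_n(\mu)\cong U(\fg)\otimes_{U(\fb)}\mk_\mu=\Delta(\mu)$. For (b), the key combinatorial input is that the images in $\fh^\ast/\mZ\Phi_n$ of the roots of $\fu_n^+$ (the nilradical of $\fp_n$) all lie in a common open half-space, so any non-trivial non-negative integer combination of those images is non-zero. Hence every non-zero weight of $U(\fu_n^-)$ lies outside $\mZ\Phi_n$, the $\Lambda_n$-component of $\ind^n_+ M$ is exactly $1\otimes M\cong M$, and $\Phi\Psi\cong\mathrm{id}$ follows.

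For (c), apply the same half-space argument to $\nu\in\mathring{\Lambda}_n$: then $(\nu-\Gamma^+)\cap\Lambda_n=\emptyset$, so $\supp\Delta(\nu)\subseteq\mathring{\Lambda}_n$ and in particular $\Delta(\nu),L(\nu)\in{}^{\mathring{\Lambda}_n}\bO$. The same cone argument inside $L(\mu)$ for $\mu\in\Lambda_n$ shows $\res^n_\lambda L(\mu)\cong L_n(\mu)$. Thus $\Phi$ induces a bijection between simples in ${}^{\overline{\Lambda}_n}_{\mathring{\Lambda}_n}\bO$ and simples in ${}^{\Lambda_n}\bO(\fg_n,\fb_n)$. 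Using Corollary~\ref{CorDL} one reduces to $X\in{}^{\KKK}\bO$ for some upper finite $\KKK\in\cK$ with $\supp X\subseteq\KKK\subseteq\overline{\Lambda}_n$. By Theorem~\ref{ThmAMod} together with Lemma~\ref{ProjBC}, both ${}^{\KKK}_{\mathring{\Lambda}_n}\bO$ and ${}^{\KKK\cap\Lambda_n}\bO(\fg_n,\fb_n)$ are equivalent to module categories over locally finite algebras, so the claim reduces to a comparison on indecomposable projective generators. Theorem~\ref{ThmProj} provides a $\Delta$-flag for $P_\KKK(\mu)$; the exact functor $\Phi$ kills its factors indexed by $\mathring{\Lambda}_n$, and Theorem~\ref{ThmMult} identifies the remaining $\Delta_n$-multiplicities with those of the projective cover of $L_n(\mu)$ in ${}^{\KKK\cap\Lambda_n}\bO(\fg_n,\fb_n)$, so $\Phi P_\KKK(\mu)$ is that projective cover. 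Applying $\Psi$ and Theorem~\ref{ThmMult} again, together with the fact that $\pi$ preserves projectives (its right adjoint being exact in the standard Gabriel localisation), one sees that $\Psi\Phi P_\KKK(\mu)$ and $\pi P_\KKK(\mu)$ are indecomposable projectives in ${}^{\KKK}_{\mathring{\Lambda}_n}\bO$ with simple top $\pi L(\mu)$ and matching $\Delta$-flag multiplicities, and are therefore isomorphic. Naturality and exactness then extend the isomorphism from projective generators to the whole category.

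The principal obstacle is the absence of an obvious natural transformation $\Psi\Phi\to\mathrm{id}$: the naive candidate $\ind^n_+ X_\Lambda\to X$, $u\otimes v\mapsto u\cdot v$, is not $U(\fg)$-linear, because the induction imposes trivial $\fu_n^+$-action on $X_\Lambda$ while the genuine action of $\fu_n^+$ on $X_\Lambda\subseteq X$ shifts weights into $\mathring{\Lambda}_n$. Passage to the Serre quotient annihilates precisely this defect, and the projective-generator argument above circumvents the missing natural transformation by matching indecomposable projectives and reading off multiplicities via Theorem~\ref{ThmMult}.
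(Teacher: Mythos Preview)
Your steps (a) and (b) are correct and essentially match the paper's argument. Your route for (c), however, is far longer than necessary, and the reason is that your ``principal obstacle'' is not an obstacle at all.

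You assert that the naive map $\ind^n_+ X_{\Lambda}\to X$, $u\otimes v\mapsto uv$, fails to be $U(\fg)$-linear because $\fu_n^+$ moves $\Lambda_n$-weight vectors of $X$ into $\mathring{\Lambda}_n$. This is false for $X\in{}^{\overline{\Lambda}_n}\bO$: the very half-space argument you use elsewhere shows that for $\mu\in\Lambda_n$ and $\alpha\in\Phi^+\setminus\Phi_n$ the weight $\mu+\alpha$ lies \emph{outside} $\overline{\Lambda}_n$ (not merely outside $\Lambda_n$), since $\kappa-(\mu+\alpha)$ has a strictly negative coefficient on some simple root in $\Sigma\setminus\Sigma_n$ for every $\kappa\in\Lambda_n$. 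Hence $\fu_n^+$ genuinely annihilates $\res^n_\lambda X\subset X$, the inclusion $\res^n_\lambda X\hookrightarrow X$ is $\fp_n$-linear, and Frobenius reciprocity gives a natural $\fg$-morphism $\alpha_X:\ind^n_+\res^n_\lambda X\to X$. Since $\res^n_\lambda(\alpha_X)$ is the identity, $\ker\alpha_X$ and $\coker\alpha_X$ lie in ${}^{\mathring{\Lambda}_n}\bO$, so $\pi\alpha_X$ is an isomorphism and one obtains $\Psi\Phi\cong\id$ directly. Equivalently, this exhibits $\Psi\dashv\Phi$, and then $\Phi\Psi\cong\id$ together with $\Phi$ faithful (hence conservative, as $\Phi$ is exact) forces the counit to be an isomorphism.

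This is precisely the paper's strategy: after establishing $\Phi\Psi\cong\id$ it simply shows that $\Phi$ is faithful by an elementary support argument (if $\Phi(f)=0$ then any representative of $f$ has image supported in $\mathring{\Lambda}_n$, so $f=0$). Your projective-generator detour, besides being unnecessary, also contains a slip: the right adjoint of $\pi$ in Gabriel localisation is only left exact in general, so your justification that $\pi$ preserves projectives via ``its right adjoint being exact'' is not valid; one should instead invoke Lemma~\ref{ProjBC} directly.
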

\begin{proof}
 We define $\Psi:=\pi\circ\ind^n_+$. The existence of a functor~$\Phi$ which completes the {commutative} diagram follows from Lemma~\ref{factorspi} in Appendix~\ref{AppSerre}. By construction, {there is an isomorphism of functors} $\res_\lambda^n\circ\ind^n_+\cong\id_{{}^{\Lambda_n}\bO({\fg_n},\fb_n)}$. By commutativity of the diagram, we then have {an isomorphism of functors} $\Phi\circ\Psi\cong\id_{\bO_{{\Lambda_n}}({\fg_n},\fb_n)}$. To conclude the proof it suffices to show that~$\Phi$ is faithful. 

We will write $\cA:={}^{\overline{\Lambda}_n}_{\mathring{\Lambda}_n}\bO$.
Consider $M,N\in {}^{\overline{\Lambda}_n}\bO$ and~$f\in \Hom_{\cA}(M,N)$. %We consider $\res^n_\lambda M$ as a subspace of~$M$.
By Lemma~\ref{factorspi}, we have
$\Phi(f)=\res^{\Lambda_n}_n(g)$, for any representative $g\in \Hom_{\fg}(M',N/N')$ of~$f$, where~$M'\subset M$ { and $N'\subset N$ satisfy $\supp (M/M')\subset{\mathring{\Lambda}_n}$, $\supp N'\subset {\mathring{\Lambda}_n}$}. 

Now assume $\Phi(f)=0$, which thus
 implies that~$g$ restricted to the weight spaces of~$M'$ for weights in~${\Lambda_n}$ {equals} zero. Since $g$ is a morphism of~$\fh$-modules{,} this means that the image of~$g$ is of the form $N''/N'$ for some $N''\supset N'$ with $\supp N''\subset \mathring{\Lambda}_n$. Thus {the element corresponding to $g$ }
% $$\Hom_{\fg}(M',N/N')\to \Hom_{\fg}(M',N/N''),$$
 in the direct limit defining $\Hom_{\cA}(M,N)$ { equals zero}. {Consequently,}~$f=0$. We hence find that~$\Phi$ is indeed faithful. 
 \end{proof}

\begin{cor}\label{CorEqBlocks} We use the notation of Theorem~\ref{ThmEquiv}.
\begin{enumerate}[(i)]
\item For dominant $\lambda\in\fh^\ast$, the functor $\Psi$ restricts to an equivalence between $\bO_{\lbra\lambda\rbra_n}(\fg_n,\fb_n)$ and the Serre quotient of $\bO_{\lbra\lambda\rbra}$ with respect to the subcategory with only non-zero multiplicities for simple modules $L(w\cdot\lambda)$ with $w\not\in W_n$.
\item For antidominant $\lambda\in\fh^\ast$, the functor $\Psi$ restricts to an equivalence between $\bO_{\lbra\lambda\rbra_n}(\fg_n,\fb_n)$ and the Serre subcategory of $\bO_{\lbra\lambda\rbra}$ with only non-zero multiplicities for simple modules $L(w\cdot\lambda)$ with $w\in W_n$.
\end{enumerate}
\end{cor}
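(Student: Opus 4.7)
The plan is to deduce both parts directly from Theorem~\ref{ThmEquiv}, which gives mutually inverse equivalences $\Psi:{}^{\Lambda_n}\bO(\fg_n,\fb_n)\stackrel{\sim}{\to}{}^{\overline{\Lambda}_n}_{\mathring{\Lambda}_n}\bO$ and $\Phi$, satisfying $\Psi(\Delta_n(\mu))\cong\Delta(\mu)$ for every $\mu\in\Lambda_n$. The task in each case is to realise the block $\bO_{\lbra\lambda\rbra_n}(\fg_n,\fb_n)$ of the domain as the stated Serre sub/quotient of the block $\bO_{\lbra\lambda\rbra}\subset\bO$, which reduces to tracking the block decomposition of Proposition~\ref{PropBlocks} across $\Psi$. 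A preliminary ingredient I will need is the orbit identification $\lbra\lambda\rbra\cap\Lambda_n=\lbra\lambda\rbra_n$, which uses the (anti)dominance of $\lambda$ and guarantees that the classification of simples matches on both sides.

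For part (i), the starting observation is that for dominant $\lambda$ every $\mu\in\lbra\lambda\rbra$ satisfies $\mu\le\lambda\in\Lambda_n$, whence $\lbra\lambda\rbra\subset\overline{\Lambda}_n$ and so $\bO_{\lbra\lambda\rbra}\subset{}^{\overline{\Lambda}_n}\bO$. Using the orbit identification, the subcategory $\bO_{\lbra\lambda\rbra}\cap{}^{\mathring{\Lambda}_n}\bO$ coincides with the Serre subcategory of $\bO_{\lbra\lambda\rbra}$ whose composition factors are $L(w\cdot\lambda)$ for $w\in W[\lambda]\setminus W_n$. General properties of Serre quotients applied to the inclusion $\bO_{\lbra\lambda\rbra}\hookrightarrow{}^{\overline{\Lambda}_n}\bO$ then show that $\pi$ induces a fully faithful embedding of the corresponding quotient of $\bO_{\lbra\lambda\rbra}$ into ${}^{\overline{\Lambda}_n}_{\mathring{\Lambda}_n}\bO$, and the image is exactly the block of the latter that $\Phi$ transports to $\bO_{\lbra\lambda\rbra_n}(\fg_n,\fb_n)$.

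For part (ii), with $\lambda$ antidominant, let $\cS\subset\bO_{\lbra\lambda\rbra}$ denote the Serre subcategory with composition factors $L(w\cdot\lambda)$ for $w\in W_n$. Each such weight lies in $\Lambda_n$, so $L(w\cdot\lambda)$ has support in $\Lambda_n-\Gamma^+\subset\overline{\Lambda}_n$; hence $\cS\subset{}^{\overline{\Lambda}_n}\bO$. No nonzero object of $\cS$ belongs to ${}^{\mathring{\Lambda}_n}\bO$, which implies that in the direct-limit description of Hom in the Serre quotient both filtrations collapse trivially, so $\pi|_{\cS}$ is fully faithful; composing with $\Phi$ yields a fully faithful exact functor landing in $\bO_{\lbra\lambda\rbra_n}(\fg_n,\fb_n)$. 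Essential surjectivity follows because $\Psi(\Delta_n(\mu))\cong\Delta(\mu)\in\cS$ for every $\mu\in\lbra\lambda\rbra_n$, and these Verma modules generate the block $\bO_{\lbra\lambda\rbra_n}(\fg_n,\fb_n)$ under extensions and subquotients.

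I expect the main obstacle to be the orbit identification $\lbra\lambda\rbra\cap\Lambda_n=\lbra\lambda\rbra_n$. The inclusion $\supseteq$ is immediate; the reverse demands that for any $w\in W[\lambda]$ with $w\cdot\lambda-\lambda\in\mZ\rR_n$, one can find $w'\in W_n\cap W[\lambda]$ with $w'\cdot\lambda=w\cdot\lambda$. Under the Dynkin hypothesis on $\fb$, $W_n$ sits inside $W$ as a parabolic-like subgroup, and (anti)dominance pushes $w'$ into $W_n$ by a standard chamber argument on the $\rho$-shifted dominant/antidominant wall structure of $W[\lambda]$. This step must be executed separately, with opposite flavour, for dominant and antidominant $\lambda$, and it is precisely this asymmetry that accounts for obtaining a Serre quotient in (i) versus a Serre subcategory in (ii).
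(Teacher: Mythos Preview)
Your proposal is essentially correct and matches the paper's intent; note that the paper states this corollary without proof, treating it as an immediate restriction of Theorem~\ref{ThmEquiv} to blocks. You have correctly identified the substantive point that needs checking, namely the orbit identification $\lbra\lambda\rbra\cap\Lambda_n=\lbra\lambda\rbra_n$, and your strategy for both parts is sound.

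Two small remarks. First, the orbit identification can be obtained cleanly from results already in the paper: for dominant $\lambda$ one has $\lbra\lambda\rbra=\{\mu:\mu\uparrow\lambda\}$ and $\lbra\lambda\rbra_n=\{\mu:\mu\uparrow_n\lambda\}$, and Theorem~\ref{ThmMult}(i) together with Corollary~\ref{CorBGGThm}(i) gives $\mu\uparrow\lambda\Leftrightarrow\mu\uparrow_n\lambda$ for $\mu\in\Lambda_n$; the antidominant case is symmetric. This avoids appealing to an unspecified ``chamber argument''. Second, in part~(ii) your essential-surjectivity step via ``Verma modules generate the block'' is a bit indirect; it is cleaner to observe that for antidominant $\lambda$ and $\mu\in\lbra\lambda\rbra_n$ the module $\Delta(\mu)$ itself lies in $\cS$ (its composition factors $L(\nu)$ satisfy $\lambda\le\nu\le\mu$ with $\mu-\lambda\in\Gamma_n^+$, forcing $\nu\in\Lambda_n$), hence so does every quotient of $\Delta(\mu)$, and since $\ind^n_+$ is exact this gives $\ind^n_+(N)\in\cS$ for all $N\in\bO_{\lbra\lambda\rbra_n}(\fg_n,\fb_n)$, so $\Psi(N)\in\pi(\cS)$ directly.
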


\subsubsection{}\label{ForEquivK} Consider an arbitrary set $\{\lambda_1,\lambda_2,\cdots,\lambda_k\}\subset\fh^\ast$ and~$n\in\mN$ large enough such that~$\lambda_i-\lambda_j\in\mZ\rR_n$ for all $1\le i,j\le k$. Denote by~$\KKK$, resp. $\KKK_n$, the ideal in~$(\fh^\ast,\le)$, resp. $(\fh^\ast,\le_n)$, generated by~$\{\lambda_1,\lambda_2,\cdots,\lambda_k\}$. The set $\KKK_n$ is complete in~$(\fh^\ast,\le)$, so $\mathring{\KKK}_n:=\KKK\backslash \KKK_n$ is also an ideal in~$(\fh^\ast,\le)$. 

By restricting the equivalence in Theorem~\ref{ThmEquiv}, we obtain the following corollary.

\begin{cor}\label{CorEquiv}
With notation and assumptions as in \ref{ForEquivK}, {there are} mutually inverse equivalences of abelian categories~$\Psi$ and~$\Phi$, admitting a commutative diagram
$$
\xymatrix{
{}^{\KKK_n}\bO({\fg_n},\fb_n)\ar[rr]^{\ind^n_+}\ar[drr]^{\Psi}&&{}^{\KKK}\bO\ar[rr]^{\res_\lambda^n}\ar[d]^{\pi}&&{}^{\KKK_n}\bO({\fg_n},\fb_n)\\
&&{}^{\KKK}_{\mathring{\KKK}_n}\bO\ar[rru]^{\Phi}
}
$$
Moreover, {there is an isomorphism} $\Psi(\Delta_n(\mu))\cong\Delta(\mu)$ in~${}^{\KKK}_{\mathring{\KKK}_n}\bO$ for {any} $\mu\in {\KKK_n}$.
\end{cor}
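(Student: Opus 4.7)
The plan is to directly restrict the equivalence in Theorem~\ref{ThmEquiv} to the full Serre subcategories cut out by the ideal~$\KKK$. I would begin by fixing any $\lambda\in\{\lambda_1,\ldots,\lambda_k\}$, say $\lambda=\lambda_1$, so that $\Lambda_n=\lambda+\mZ\rR_n$ and all $\lambda_i$ lie in this single coset by the hypothesis $\lambda_i-\lambda_j\in\mZ\rR_n$. Directly from the definitions we then have $\KKK_n\subset\Lambda_n$ and $\KKK\subset\overline{\Lambda}_n$, so that the inclusions ${}^{\KKK_n}\bO(\fg_n,\fb_n)\hookrightarrow{}^{\Lambda_n}\bO(\fg_n,\fb_n)$ and ${}^{\KKK}\bO\hookrightarrow{}^{\overline{\Lambda}_n}\bO$ are inclusions of Serre subcategories.

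The crucial combinatorial point is the identity $\KKK\cap\Lambda_n=\KKK_n$. To verify it, suppose $\mu\in\KKK$ lies in $\Lambda_n$, so $\lambda_i-\mu\in\Gamma^+$ for some $i$ while simultaneously $\lambda_i-\mu\in\mZ\rR_n$. Under the standing assumption that $\widetilde{\fg}_n+\fb$ is a parabolic subalgebra of~$\fg$, the root subsystem $\rR_n\subset\rR$ is the root system of a Levi factor, and in particular satisfies $\mZ\rR_n\cap\rR=\rR_n$. Consequently any non-negative expression $\lambda_i-\mu=\sum_\alpha c_\alpha\alpha$ with $\alpha\in\rR^+$ whose total sum belongs to $\mZ\rR_n$ must in fact have $c_\alpha=0$ whenever $\alpha\notin\rR_n^+$, so $\mu\le_n\lambda_i$ and hence $\mu\in\KKK_n$. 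An immediate consequence, using $\mathring{\Lambda}_n=\overline{\Lambda}_n\setminus\Lambda_n$, is the compatible equality $\KKK\cap\mathring{\Lambda}_n=\mathring{\KKK}_n$.

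With these compatibilities in hand, the functors $\ind^n_+$ and $\res^n_\lambda$ restrict correctly: since $\KKK$ is an ideal in $(\fh^\ast,\le)$ and $\ind^n_+\Delta_n(\mu)\cong\Delta(\mu)$ is supported in $\KKK$ for $\mu\in\KKK_n$, induction carries ${}^{\KKK_n}\bO(\fg_n,\fb_n)$ into ${}^{\KKK}\bO$; and since $\res^n_\lambda$ selects the summand of the restriction supported in $\Lambda_n$, it carries ${}^{\KKK}\bO$ into the category of $\fg_n$-modules supported in $\KKK\cap\Lambda_n=\KKK_n$, hence into ${}^{\KKK_n}\bO(\fg_n,\fb_n)$. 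The Serre subcategory of ${}^{\overline{\Lambda}_n}\bO$ of modules with support in $\mathring{\Lambda}_n$, intersected with ${}^{\KKK}\bO$, is precisely the Serre subcategory of modules with support in $\mathring{\KKK}_n$, so ${}^{\KKK}_{\mathring{\KKK}_n}\bO$ is identified with a Serre subcategory of ${}^{\overline{\Lambda}_n}_{\mathring{\Lambda}_n}\bO$, compatibly with the localization functor $\pi$.

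The equivalences $\Psi$ and $\Phi$ of Theorem~\ref{ThmEquiv} therefore restrict to mutually inverse equivalences between ${}^{\KKK_n}\bO(\fg_n,\fb_n)$ and ${}^{\KKK}_{\mathring{\KKK}_n}\bO$, commutativity of the diagram is inherited from Theorem~\ref{ThmEquiv}, and the isomorphism $\Psi(\Delta_n(\mu))\cong\Delta(\mu)$ in ${}^{\KKK}_{\mathring{\KKK}_n}\bO$ for $\mu\in\KKK_n$ is immediate from the corresponding statement at the level of $\Lambda_n$. I expect the only non-routine step to be the Levi-type argument identifying $\KKK\cap\Lambda_n$ with $\KKK_n$; everything else is a straightforward verification that the restrictions are well-defined.
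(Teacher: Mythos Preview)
Your proposal is correct and follows exactly the paper's approach, which simply states that the corollary is obtained ``by restricting the equivalence in Theorem~\ref{ThmEquiv}.'' You have spelled out the routine verifications (in particular the identity $\KKK\cap\Lambda_n=\KKK_n$, which is where the parabolic assumption on $\widetilde{\fg}_n+\fb$ enters) that the paper leaves implicit.
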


\begin{thm}\label{Thmidemptr}
With notation and assumptions as in \ref{ForEquivK} and~$\lambda\in\KKK_n$, consider the algebras~$A:=A^{\KKK}_{\lbra\lambda\rbra}$ and~$A_n:=A^{\KKK_n}_{\lbra\lambda\rbra_n}(\fg_n,\fb_n)$ as in \ref{DefAlgA}. For the idempotent $\varepsilon_n=\sum_{\mu}e_\mu\in A$, with $\mu$ ranging over $\KKK_n\cap\lbra\lambda\rbra_n$, we have an algebra isomorphism $\varepsilon_nA\varepsilon_n\cong A_n$.
\end{thm}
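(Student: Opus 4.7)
The plan is to build the isomorphism $\varepsilon_n A \varepsilon_n \cong A_n$ in two steps: first pass through the Serre quotient $\mathring{\KKK}_n$-localisation, then use the equivalence $\Psi$ of Corollary~\ref{CorEquiv} to identify the answer with $A_n$.

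By definition $\varepsilon_n A \varepsilon_n = \bigoplus_{\mu,\nu} \Hom_{\fg}(P_{\KKK}(\mu), P_{\KKK}(\nu))$ and $A_n = \bigoplus_{\mu,\nu} \Hom_{\fg_n}(P^n_{\KKK_n}(\mu), P^n_{\KKK_n}(\nu))$, with $\mu,\nu$ ranging over $\KKK_n\cap\lbra\lambda\rbra_n$. For each such $\mu$, I first verify that $P_{\KKK}(\mu)$ lies in $\cF^{\Delta}[\KKK_n]=\cF^{\Delta}[\KKK\setminus\mathring{\KKK}_n]$. Indeed, by Theorem~\ref{ThmProj}(i)(b) the Verma subquotients of $P_{\KKK}(\mu)$ correspond to weights $\nu\in\KKK$ with $[\Delta(\nu):L(\mu)]\neq 0$, hence $\mu\uparrow\nu$ by Corollary~\ref{CorBGGThm}(i); in particular $\mu\le\nu\le\lambda_j$ for some generator $\lambda_j$ of $\KKK$. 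Since $\mu,\lambda_j\in\KKK_n$ and $\KKK_n$ is complete in $(\fh^\ast,\le)$ (as recalled in~\ref{ForEquivK}), we conclude $\nu\in\KKK_n$.

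Hence Proposition~\ref{PropExtpi} (with $i=0$, $\LLL=\mathring{\KKK}_n$) gives, for every pair $\mu,\nu\in\KKK_n\cap\lbra\lambda\rbra_n$, an isomorphism
$$\pi:\Hom_{{}^{\KKK}\bO}(P_{\KKK}(\mu),P_{\KKK}(\nu))\;\stackrel{\sim}{\to}\;\Hom_{{}^{\KKK}_{\mathring{\KKK}_n}\bO}(\pi P_{\KKK}(\mu),\pi P_{\KKK}(\nu)).$$
Since $\pi$ is a functor, these isomorphisms are compatible with composition. Next, under the equivalence $\Psi:{}^{\KKK_n}\bO(\fg_n,\fb_n)\stackrel{\sim}{\to} {}^{\KKK}_{\mathring{\KKK}_n}\bO$ of Corollary~\ref{CorEquiv}, the simple $L_n(\mu)$ corresponds to $\pi L(\mu)$ (it is the head of $\Psi\Delta_n(\mu)\cong\Delta(\mu)$), so $\Psi(P^n_{\KKK_n}(\mu))$ is the projective cover of $\pi L(\mu)$ in the quotient. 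On the other hand, $\pi P_{\KKK}(\mu)$ is projective in ${}^{\KKK}_{\mathring{\KKK}_n}\bO$ (the quotient functor sends projectives to projectives by Lemma~\ref{ProjBC}, invoked in the proof of Prop.~\ref{PropExtpi}), and it has simple head $\pi L(\mu)$ because $L(\mu)$ is not annihilated by $\pi$. By uniqueness of projective covers, $\pi P_{\KKK}(\mu)\cong \Psi(P^n_{\KKK_n}(\mu))$. Composing with $\Psi$, a fully faithful functor, we obtain
$$\Hom_{{}^{\KKK}_{\mathring{\KKK}_n}\bO}(\pi P_{\KKK}(\mu),\pi P_{\KKK}(\nu))\;\cong\;\Hom_{\fg_n}(P^n_{\KKK_n}(\mu),P^n_{\KKK_n}(\nu)),$$
again compatibly with composition. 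Summing over $\mu,\nu\in\KKK_n\cap\lbra\lambda\rbra_n$ yields the desired algebra isomorphism $\varepsilon_n A\varepsilon_n\cong A_n$.

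The only genuinely substantive step is the verification that $P_{\KKK}(\mu)\in\cF^{\Delta}[\KKK_n]$; everything else is bookkeeping with functors known to preserve composition. The completeness of $\KKK_n$ in $(\fh^\ast,\le)$ is the key combinatorial input that makes Proposition~\ref{PropExtpi} available here, which is why the formulation of the equivalence in~\ref{ForEquivK} was set up in that way.
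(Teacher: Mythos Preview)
Your proof is correct and follows essentially the same conceptual arc as the paper's: pass to the Serre quotient ${}^{\KKK}_{\mathring{\KKK}_n}\bO$ and invoke the equivalence $\Psi$ of Corollary~\ref{CorEquiv}. The difference is in how you identify $\varepsilon_n A\varepsilon_n$ with the endomorphism algebra of the projectives in the quotient. The paper does this in one stroke via Lemma~\ref{LemQuoAlg}, which says abstractly that the Serre quotient of $A$-Mod by the modules killed by $\varepsilon_n$ is $\varepsilon_n A\varepsilon_n$-Mod; it then reads off the isomorphism from the fact that both $A_n$ and $\varepsilon_n A\varepsilon_n$ are endomorphism algebras of a full set of indecomposable projectives in the same category. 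You instead verify by hand that $P_{\KKK}(\mu)\in\cF^{\Delta}[\KKK_n]$ (using completeness of $\KKK_n$ and BGG reciprocity), which lets you apply Proposition~\ref{PropExtpi} to see that $\pi$ is fully faithful on these projectives, and then match projective covers through $\Psi$. Your route is more explicit and avoids the general appendix lemma, at the cost of the extra combinatorial check; the paper's route is slicker but leans on the abstract machinery. Both are perfectly fine.
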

\begin{proof}
By Theorem~\ref{ThmAMod}, {there is} an equivalence
$${}^{\KKK_n}\bO_{\lbra\lambda\rbra_n}(\fg_n,\fb_n)\;\cong\;A_n\mbox{-Mod}.$$
By Theorem~\ref{ThmAMod} and Lemma~\ref{LemQuoAlg}, we have equivalences
$${}^{\KKK_n}\bO_{\lbra\lambda\rbra_n}(\fg_n,\fb_n)\;\cong\;{}^{\KKK}_{\mathring{\KKK}_n}\bO\;\cong\;\varepsilon_n A\varepsilon_n\mbox{-Mod}.$$
By construction, both $A_n$ and $\varepsilon_nA\varepsilon_n$ are the endomorphism algebra{s} of {respective maximal} direct sum{s} of {mutually non-isomorphic} indecomposable projective objects in ${}^{\KKK_n}\bO_{\lbra\lambda\rbra_n}(\fg_n,\fb_n)$, implying that {the algebras $A_n$  and $\varepsilon_n A \varepsilon_n$} are isomorphic.
\end{proof}

\begin{cor}
For two integral dominant regular weights $\lambda,\lambda'$, we have an equivalence of categories
$$\bO_{\lbra\lambda\rbra}\;\stackrel{\sim}{\to}\;\bO_{\lbra\lambda'\rbra}\quad\mbox{with}\quad L(w\cdot\lambda)\mapsto L(w\cdot\lambda'),\;\mbox{ for all $w\in W$.}$$
\end{cor}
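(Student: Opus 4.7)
The plan is to realize $\bO_{\lbra\lambda\rbra}$ and $\bO_{\lbra\lambda'\rbra}$ as modules over two locally finite associative algebras whose primitive idempotents are canonically indexed by $W$, and then to construct an isomorphism of these algebras by coherently assembling the classical equivalences of integral regular blocks in the finite-dimensional case.

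By Remark~\ref{RemDomA}, the upper finite ideals $\KKK:=\{\mu\in\fh^\ast\mid\mu\le\lambda\}$ and $\KKK':=\{\mu\in\fh^\ast\mid\mu\le\lambda'\}$ satisfy ${}^{\KKK}\bO_{\lbra\lambda\rbra}=\bO_{\lbra\lambda\rbra}$ and ${}^{\KKK'}\bO_{\lbra\lambda'\rbra}=\bO_{\lbra\lambda'\rbra}$, so Theorem~\ref{ThmAMod} yields equivalences $\bO_{\lbra\lambda\rbra}\cong A\text{-Mod}$ and $\bO_{\lbra\lambda'\rbra}\cong A'\text{-Mod}$ for $A:=A^{\KKK}_{\lbra\lambda\rbra}$ and $A':=A^{\KKK'}_{\lbra\lambda'\rbra}$. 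Since $\lambda,\lambda'$ are integral and regular, $W[\lambda]=W=W[\lambda']$ and both orbits $W\cdot\lambda$, $W\cdot\lambda'$ are free $W$-sets, so the primitive idempotents of $A$ (resp.\ $A'$) are canonically labeled by $W$ via $w\mapsto e_w:=e_{w\cdot\lambda}$ (resp.\ $w\mapsto e'_w:=e_{w\cdot\lambda'}$). It therefore suffices to exhibit an algebra isomorphism $\varphi:A\stackrel{\sim}{\to}A'$ with $\varphi(e_w)=e'_w$ for every $w\in W$, since the induced equivalence $A\text{-Mod}\cong A'\text{-Mod}$ will then send $L(w\cdot\lambda)$ to $L(w\cdot\lambda')$.

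Using the filtration $W=\bigcup_n W_n$ coming from $\fg=\varinjlim\fg_n$, set $\varepsilon_n:=\sum_{w\in W_n}e_w\in A$ and $\varepsilon'_n:=\sum_{w\in W_n}e'_w\in A'$. Local finiteness together with $\bigcup_nW_n=W$ gives $A=\bigcup_n\varepsilon_n A\varepsilon_n$ and $A'=\bigcup_n\varepsilon'_n A'\varepsilon'_n$. Integrality, regularity and dominance of $\lambda$ for $W$ pass to $W_n$ (with respect to $\mZ\rR_n$ and $\le_n$), and likewise for $\lambda'$, so Theorem~\ref{Thmidemptr} yields algebra isomorphisms $\varepsilon_n A\varepsilon_n\cong A_n$ and $\varepsilon'_n A'\varepsilon'_n\cong A'_n$, where by Proposition~\ref{FDprop}(ii) $A_n$ and $A'_n$ are the finite-dimensional algebras describing the classical integral regular dominant blocks $\cO_{\lbra\lambda\rbra_n}(\fg_n,\fb_n)$ and $\cO_{\lbra\lambda'\rbra_n}(\fg_n,\fb_n)$ of $\fg_n$. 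The classical translation principle (see e.g.~\cite[Chapter~7]{Humphreys}) provides an equivalence between these two blocks matching $L_n(w\cdot\lambda)$ with $L_n(w\cdot\lambda')$ for all $w\in W_n$, hence an algebra isomorphism $\varphi_n:A_n\stackrel{\sim}{\to}A'_n$ with $\varphi_n(e_w)=e'_w$ for every $w\in W_n$.

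The remaining task, and the main obstacle, is to choose the $\varphi_n$ coherently so that $\varphi_{n+1}$ restricts to $\varphi_n$ along the embedding $A_n\cong\varepsilon_nA_{n+1}\varepsilon_n\hookrightarrow A_{n+1}$; once this is done, $\varphi:=\varinjlim\varphi_n$ is the desired isomorphism. This is carried out inductively. Suppose $\varphi_n$ has been constructed; pick any translation isomorphism $\psi_{n+1}:A_{n+1}\stackrel{\sim}{\to}A'_{n+1}$ sending $e_w\mapsto e'_w$ for $w\in W_{n+1}$, and let $\psi_n:=\psi_{n+1}|_{A_n}$. Then $\varphi_n\circ\psi_n^{-1}$ is an automorphism of $A'_n$ fixing every $e'_w$ with $w\in W_n$; basicness of $A'_n$ forces this automorphism to be of the form $\mathrm{Ad}(u)$ for some unit $u\in A'_n$ commuting with all $e'_w$. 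The element $\widetilde u:=u+(1_{A'_{n+1}}-\varepsilon'_n)$ is then a unit of $A'_{n+1}$ fixing every $e'_w$ with $w\in W_{n+1}\setminus W_n$, so $\varphi_{n+1}:=\mathrm{Ad}(\widetilde u)\circ\psi_{n+1}$ still satisfies $\varphi_{n+1}(e_w)=e'_w$ for all $w\in W_{n+1}$ and restricts to $\varphi_n$ on $A_n$, completing the induction and the proof.
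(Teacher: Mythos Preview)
Your overall strategy coincides with the paper's: realize both blocks as module categories over locally finite algebras $A,A'$, identify $\varepsilon_nA\varepsilon_n$ and $\varepsilon'_nA'\varepsilon'_n$ with the finite-rank block algebras $A_n,A'_n$ via Theorem~\ref{Thmidemptr}, invoke the classical equivalence of regular integral blocks at each level, and pass to the limit. You are also right that the crux is the \emph{coherence} of the isomorphisms $\varphi_n$ with the inclusions $A_n\hookrightarrow A_{n+1}$.

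However, your resolution of this coherence problem contains a genuine error. The assertion that ``basicness of $A'_n$ forces this automorphism to be of the form $\mathrm{Ad}(u)$ for some unit $u$ commuting with all $e'_w$'' is false. Already for $\Bbbk[x]/(x^2)$ the automorphism $x\mapsto 2x$ fixes the unique idempotent but is not inner. More to the point, for the principal block of $\mathfrak{sl}_2$ the block algebra has arrows $\alpha,\beta$ between the two idempotents with $\beta\alpha=0$, and the idempotent-fixing automorphism $\alpha\mapsto 2\alpha$, $\beta\mapsto\beta$ is not conjugation by any unit commuting with the idempotents (such conjugation necessarily sends $\alpha\mapsto t\alpha$, $\beta\mapsto t^{-1}\beta$). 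So the inductive adjustment by $\mathrm{Ad}(\widetilde u)$ cannot in general repair an arbitrary choice of $\psi_{n+1}$, and the argument breaks down.

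The paper avoids this by not choosing the level-$n$ isomorphisms arbitrarily: the reference to \cite[Proposition~7.8]{Humphreys} is meant to indicate that one uses a \emph{specific} equivalence (translation, or equivalently Soergel's combinatorial description of regular integral blocks in terms of the coinvariant algebra of $W_n$) which is intrinsically compatible with the inclusions $W_n\subset W_{n+1}$, yielding the commuting squares directly. You should replace the inner-automorphism step by an appeal to such a canonical construction.
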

\begin{proof}
We denote by $\KKK$, resp. $\KKK'$, the ideal in $(\fh^\ast,\le)$ generated by $\lambda$, resp. $\lambda'$. Set $A:=A^{\KKK}_{\lbra\lambda\rbra}$ and $B:=A^{\KKK'}_{\lbra\lambda'\rbra}$. It follows from Theorem~\ref{Thmidemptr} and \cite[Proposition~7.8]{Humphreys} that, for all $n$, {there is} a commuting square of algebra morphisms
$$\xymatrix{
\varepsilon_n A\varepsilon_n\ar@{^{(}->}[rr]\ar[d]^{\sim}&& \varepsilon_{n+1} A\varepsilon_{n+1}\ar[d]^{\sim}\\
\varepsilon_n B\varepsilon_n\ar@{^{(}->}[rr]&& \varepsilon_{n+1}B\varepsilon_{n+1}.
}$$
We thus have $A\cong\varinjlim \varepsilon_n A\varepsilon_n\cong B${,} and the {claimed} equivalence follows.
\end{proof}

%\begin{cor}
%Assume that for $\lambda,\lambda'\in\fh^\ast$, we have a group isomorphism $\phi:W[\lambda]\stackrel{\sim}{\to}W[\lambda']$ mapping simple reflections to simple reflections and which restricts to an isomorphism of stabiliser subgroups $W[\lambda]_\lambda\stackrel{\sim}{\to}W[\lambda']_{\lambda'}$ for the shifted actions. 

%Take $\{w_1,\ldots,w_k\}\subset W[\lambda]$ and the denote the ideal in~$(\fh^\ast,\le)$ generated by $\{w_i\cdot\lambda\}\subset \lbra\lambda\rbra$ by~$\KKK$. Similarly, let $\KKK'$ be the ideal generated by $\{\phi(w_i)\cdot\lambda'\}\subset \lbra\lambda'\rbra$. Then we have an equivalence 
%$${}^{\KKK}\bO_{\lbra\lambda\rbra}\;\,\stackrel{\sim}{\to}\;\,{}^{\KKK'}\bO_{\lbra\lambda'\rbra},\qquad\mbox{with }\qquad L(w\cdot\lambda)\mapsto L(\phi(w)\cdot\lambda'),$$
%for each $w\in W$ such that~$w\cdot\lambda\in\KKK$.
%\end{cor}
%\begin{proof}
%It suffices to prove that~$A:=A^{\KKK}_{\lbra\lambda\rbra}$ and~$A':=A^{\KKK'}_{\lbra\lambda'\rbra}$ as defined in \ref{DefAlgA} are isomorphic, by Theorem~\ref{ThmAMod}.
%It follows from \cite[Theorem~11]{SoergelD} that the basic algebras~$A_n$ and~$A'_n$ are isomorphic, where the corresponding equivalence of module categories maps $L_n(w\cdot\lambda)$ to $ L_n(\phi(w)\cdot\lambda')$, for all $w\in W_n[\lambda]$.

%We choose finite subset $\Sigma_n^+\subset\Sigma $ such that~$\Sigma =\cup_n \Sigma_n^+$ and consider the corresponding reductive subalgebras~$\widetilde{\fg}_n\subset\fg$. 
%\end{proof}

\subsection{Extensions of Verma modules}
\begin{thm}\label{ExtDL}
Consider arbitrary $\lambda,\mu\in\fh^\ast$ and~$i\in\mN$. For any $n\in\mN$ such that~$\lambda-\mu \in\mZ\rR_n$, {there is an isomorphism}
$$\Ext^i_{{\bO}}(\Delta(\mu),L(\lambda))\;\cong\; \Ext^i_{\cO(\fg_n,\fb_n)}(\Delta_n(\mu),L_n(\lambda)).$$
\end{thm}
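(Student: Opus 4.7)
The plan is to string together three results already established in the paper: Theorem~\ref{ThmExtFull} (extension fullness of truncations), Proposition~\ref{PropExtpi} (extensions are invariant under the Serre quotient $\pi$ for modules with a $\Delta$-flag disjoint from the kernel), and Corollary~\ref{CorEquiv} (the equivalence $\Phi : {}^{\KKK}_{\mathring{\KKK}_n}\bO \to {}^{\KKK_n}\bO(\fg_n,\fb_n)$ intertwining Verma modules). The remaining step reduces the finite-dimensional side back to ordinary category $\cO$ using Proposition~\ref{FDprop}(iii).

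First I would fix $\KKK$ to be the ideal in $(\fh^\ast,\le)$ generated by $\{\lambda,\mu\}$, which is upper finite by Proposition~\ref{equivCond}(iii), and $\KKK_n$ the ideal generated by the same two elements in $(\fh^\ast,\le_n)$, so that $\KKK_n \subset \KKK$, $\KKK_n$ is complete in $(\fh^\ast,\le)$, and $\mathring{\KKK}_n = \KKK\setminus \KKK_n$ is an ideal in $(\fh^\ast,\le)$, exactly matching the hypotheses of \ref{ForEquivK}. Since $\Delta(\mu),L(\lambda)\in {}^{\KKK}\bO$, Theorem~\ref{ThmExtFull}(i) gives
$$\Ext^i_{\bO}(\Delta(\mu),L(\lambda))\;\cong\;\Ext^i_{{}^{\KKK}\bO}(\Delta(\mu),L(\lambda)).$$
Because $\mu\in\KKK_n$, we have $\Delta(\mu)\in \cF^{\Delta}[\KKK\setminus\mathring{\KKK}_n]$, so Proposition~\ref{PropExtpi} applies and yields
$$\Ext^i_{{}^{\KKK}\bO}(\Delta(\mu),L(\lambda))\;\cong\;\Ext^i_{{}^{\KKK}_{\mathring{\KKK}_n}\bO}(\pi\Delta(\mu),\pi L(\lambda)).$$

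Next I would transport this across the equivalence $\Phi$ of Corollary~\ref{CorEquiv}. By that corollary, $\Phi(\pi\Delta(\mu))\cong \Delta_n(\mu)$. One needs the analogous statement $\Phi(\pi L(\lambda))\cong L_n(\lambda)$: since $\pi L(\lambda)$ is the unique simple quotient of $\pi\Delta(\lambda)$ (exactness of $\pi$) and $\Phi$ is an equivalence of abelian categories, $\Phi(\pi L(\lambda))$ is the unique simple quotient of $\Phi(\pi\Delta(\lambda))\cong\Delta_n(\lambda)$, which is $L_n(\lambda)$. Thus
$$\Ext^i_{{}^{\KKK}_{\mathring{\KKK}_n}\bO}(\pi\Delta(\mu),\pi L(\lambda))\;\cong\;\Ext^i_{{}^{\KKK_n}\bO(\fg_n,\fb_n)}(\Delta_n(\mu),L_n(\lambda)).$$

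To finish, I would apply Theorem~\ref{ThmExtFull}(i) and Proposition~\ref{FDprop}(iii) on the finite-dimensional side: the former identifies the truncated Ext with $\Ext^i_{\bO(\fg_n,\fb_n)}(\Delta_n(\mu),L_n(\lambda))$, and the latter identifies this in turn with $\Ext^i_{\cO(\fg_n,\fb_n)}(\Delta_n(\mu),L_n(\lambda))$. I do not anticipate a serious obstacle: the only step that is not completely mechanical is verifying $\Phi(\pi L(\lambda))\cong L_n(\lambda)$, but as just observed this is an immediate simple-quotient argument using that $\Phi\circ\Psi\cong \id$ and $\Psi(\Delta_n(\lambda))\cong \pi\Delta(\lambda)$.
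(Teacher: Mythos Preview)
Your proof is correct and follows essentially the same route as the paper's own proof: fix $\KKK$ and $\KKK_n$ as the ideals generated by $\{\lambda,\mu\}$, reduce to ${}^{\KKK}\bO$ via Theorem~\ref{ThmExtFull}, pass to the Serre quotient via Proposition~\ref{PropExtpi}, and transport across the equivalence of Corollary~\ref{CorEquiv}. The paper is more terse, leaving the identification $\Phi(\pi L(\lambda))\cong L_n(\lambda)$ and the final descent from $\bO(\fg_n,\fb_n)$ to $\cO(\fg_n,\fb_n)$ implicit, whereas you spell these out.
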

\begin{proof}
Let $\KKK$ be the ideal in~$(\fh^\ast,\le)$ generated by~$\mu$ and~$\lambda$, and~$\KKK_n$ be the ideal in~$(\fh^\ast,\le_n)$ generated by~$\mu$ and~$\lambda$. By Theorem~\ref{ThmExtFull}, it suffices to prove 
$$\Ext^i_{{}^{\KKK}\bO}(\Delta(\mu),L(\lambda))\;\cong\; \Ext^i_{{}^{\KKK_n}\bO(\fg_n,\fb_n)}(\Delta_n(\mu),L_n(\lambda)).$$
By Proposition~\ref{PropExtpi}, the left-hand side is isomorphic to $\Ext^i_{{}^{\KKK}_{\mathring{\KKK}_n}\bO}(\Delta(\mu),L(\lambda))$. The theorem then follows from Corollary~\ref{CorEquiv}.
\end{proof}

In {the} BGG category $\cO(\fg_n,\fb_n)$, the dimensions of the extension spaces $\Ext^i(\Delta_n(\mu),L_n(\lambda))$ are determined by  KLV polynomials. Theorem~\ref{ExtDL} thus shows that the same is true in $\bO$. For instance, let $\mu\in\fh^\ast$ be integral, regular and anti-dominant. With any unexplained notation taken from \cite[Section~8]{Humphreys}, the combination of Theorem~\ref{ExtDL} and \cite[Theorem~8.11(b)]{Humphreys} yields
$$P_{x,w}(q)\;=\;\sum_{i\ge 0}q^i\dim\Ext^{\ell(x,w)-2i}_{\bO}(\Delta(x\cdot\mu),L(w\cdot\mu))\quad\mbox{for all $x,w\in W$},$$ 
{where} $P_{x,w}$ {is} the KLV polynomial corresponding to the Weyl group $W_n$, {and} $n$ {is} big enough so that $x,w\in W_n$. In \cite[Conjecture~8.17]{NamT}, this formula was conjectured for extensions in $\overline{\cO}$. 

\begin{cor}
Conjecture~8.17 in \cite{NamT} is true for ${\bO}$.
\end{cor}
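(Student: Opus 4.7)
The plan is to deduce this as an immediate consequence of Theorem~\ref{ExtDL} combined with the classical Kazhdan-Lusztig-Vogan formula for the finite-dimensional BGG category $\cO(\fg_n,\fb_n)$. Indeed, the displayed equation immediately preceding the corollary already reformulates Conjecture~8.17 in the $\bO$-setting, so the work is essentially to verify that this equation holds.

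First I would fix an integral, regular, anti-dominant weight $\mu\in\fh^\ast$ and arbitrary $x,w\in W$. Since $W=\varinjlim W_n$, I can choose $n\in\mN$ large enough that $x,w\in W_n$, which ensures $x\cdot\mu-w\cdot\mu\in\mZ\rR_n$. Next, I would invoke Theorem~\ref{ExtDL}, applied with the roles of $\mu$ and $\lambda$ played by $x\cdot\mu$ and $w\cdot\mu$, to obtain
$$\Ext^i_{\bO}(\Delta(x\cdot\mu),L(w\cdot\mu))\;\cong\;\Ext^i_{\cO(\fg_n,\fb_n)}(\Delta_n(x\cdot\mu),L_n(w\cdot\mu))\quad\text{for all }i\in\mN.$$
Applying \cite[Theorem 8.11(b)]{Humphreys} to the right-hand side then yields
$$P_{x,w}(q)\;=\;\sum_{i\ge 0}q^i\dim\Ext^{\ell(x,w)-2i}_{\bO}(\Delta(x\cdot\mu),L(w\cdot\mu)),$$
which is exactly the content of \cite[Conjecture~8.17]{NamT} transported to $\bO$.

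The only subtlety is a bookkeeping one: one must check that the length function $\ell(x,w)$ and the KLV polynomial $P_{x,w}$ are the same whether computed inside $W_n$ or inside $W$. This is immediate from the construction in \ref{equivCond}(ii): the embedding $W_n\hookrightarrow W_{n+1}$ sends simple reflections to simple reflections, so both $\ell_{W_n}(x,w)$ and the polynomial $P_{x,w}^{W_n}$ stabilize as $n$ grows, and agree with their counterparts in $W$. Hence the formula is independent of the choice of sufficiently large $n$, and the corollary follows.
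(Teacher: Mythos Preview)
Your proposal is correct and follows essentially the same route as the paper: the corollary is stated without a separate proof, as it is an immediate consequence of Theorem~\ref{ExtDL} combined with \cite[Theorem~8.11(b)]{Humphreys}, exactly as you argue. Your additional remark on the stability of $\ell(x,w)$ and $P_{x,w}$ under the inclusions $W_n\hookrightarrow W_{n+1}$ makes explicit a point the paper leaves implicit.
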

The original question in \cite{NamT} therefore becomes a special case of Question~\ref{QueExtFull}(ii).

\begin{lemma}
Consider arbitrary $\lambda,\mu\in\fh^\ast$ and~$i\in\mN$. For any $n\in\mN$ such that~$\lambda-\mu \in\mZ\rR_n$, we have {an isomorphism}
$$\Ext^i_{{\bO}}(\Delta(\mu),\Delta(\lambda))\;\cong\; \Ext^i_{\cO(\fg_n,\fb_n)}(\Delta_n(\mu),\Delta_n(\lambda)).$$
\end{lemma}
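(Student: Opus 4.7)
The plan is to adapt the proof of Theorem~\ref{ExtDL} \emph{mutatis mutandis}, replacing $L(\lambda)$ by $\Delta(\lambda)$. Let $\KKK$ denote the ideal in $(\fh^\ast,\le)$ generated by $\mu$ and $\lambda$, and $\KKK_n$ the ideal in $(\fh^\ast,\le_n)$ with the same generators; $\KKK$ is upper finite because $\le$ is interval finite by Proposition~\ref{equivCond}. Theorem~\ref{ThmExtFull}(i) then reduces the computation to $\Ext^i_{{}^{\KKK}\bO}(\Delta(\mu),\Delta(\lambda))$.

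Next I would apply Proposition~\ref{PropExtpi} with $\LLL=\mathring{\KKK}_n$. The Verma module $\Delta(\lambda)$ automatically lies in ${}^{\KKK}\bO$, while $\Delta(\mu)$ belongs to $\cF^{\Delta}[\KKK\setminus\mathring{\KKK}_n]=\cF^{\Delta}[\KKK_n]$, since its unique $\Delta$-flag factor is $\Delta(\mu)$ itself and $\mu\in\KKK_n$. Proposition~\ref{PropExtpi} thus yields an isomorphism with $\Ext^i_{{}^{\KKK}_{\mathring{\KKK}_n}\bO}(\Delta(\mu),\Delta(\lambda))$, and Corollary~\ref{CorEquiv} (which sends $\Delta_n(\nu)\mapsto\Delta(\nu)$ for $\nu\in\KKK_n$) transports this to $\Ext^i_{{}^{\KKK_n}\bO(\fg_n,\fb_n)}(\Delta_n(\mu),\Delta_n(\lambda))$. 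To conclude, both ${}^{\KKK_n}\bO(\fg_n,\fb_n)$, by Theorem~\ref{ThmExtFull}(i) applied to $\fg_n$, and $\cO(\fg_n,\fb_n)$, by Proposition~\ref{FDprop}(iii), are extension full in $\bO(\fg_n,\fb_n)$, and both subcategories contain the Verma modules $\Delta_n(\mu)$ and $\Delta_n(\lambda)$. Hence the two Ext spaces each coincide with $\Ext^i_{\bO(\fg_n,\fb_n)}(\Delta_n(\mu),\Delta_n(\lambda))$, and therefore with each other.

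The main delicate point is verifying the hypothesis $\Delta(\mu)\in\cF^{\Delta}[\KKK_n]$ required by Proposition~\ref{PropExtpi}: the support of $\Delta(\mu)$ is typically \emph{not} contained in $\KKK_n$, because one can subtract from $\mu$ positive roots lying outside $\rR_n^+$ and thereby leave $\KKK_n$. However, for a coideal $\SSS\subset\KKK$ the correct reading of the symbol $\cF^{\Delta}[\SSS]$ is that the $\Delta$-flag factors of $M$ (rather than the entire support of $M$) are indexed by elements of $\SSS$; otherwise even the projective covers constructed in Lemma~\ref{CorNPM} would fail to lie in $\cF^{\Delta}[\SSS]$. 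Under this reading the hypothesis is trivially satisfied by $\Delta(\mu)$, and the proof is a faithful copy of the one for Theorem~\ref{ExtDL}.
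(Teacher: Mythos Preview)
Your proof is correct and is exactly the ``mutatis mutandis'' argument the paper intends: the paper's own proof reads in its entirety ``Mutatis mutandis Theorem~\ref{ExtDL}.'' Your additional care in reading $\cF^{\Delta}[\SSS]$ (for $\SSS$ not an ideal) as the condition on $\Delta$-flag labels rather than on the support, and in spelling out the final passage from ${}^{\KKK_n}\bO(\fg_n,\fb_n)$ to $\cO(\fg_n,\fb_n)$ via extension fullness, simply makes explicit what the paper leaves tacit.
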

\begin{proof}
Mutatis mutandis Theorem~\ref{ExtDL}.
\end{proof}

\subsection{Standard Koszulity}
{There exists a} notion of graded cover of an abelian category as in Definition~\ref{DefCover}{. In addition, we} refer to Appendix~\ref{AppKos} for  the use of the term ``standard Koszulity''. {In what follows we frequently use results from the appendices.}
\begin{thm}[Standard Koszulity]\label{ThmKosz}
Let $\KKK$ be a finitely generated ideal in $(\fh^\ast,\le)$. The category ${}^{\KKK}\bO$ admits a graded cover ${}^{\KKK}\bO^{\mZ}$ such that simple, Verma and dual Verma modules admit graded lifts. We use the same symbol for the graded lifts and can choose the normalisation such that, for any $\mu\in\KKK$,
we have non-zero morphisms $\Delta(\mu)\to L(\mu)\to\nabla(\mu)$ (without applying shifts~$\langle k\rangle$) in ${}^{\KKK}\bO^{\mZ}$.
Then, for all $\mu,\nu\in\KKK$, we have
$$\Ext^i_{{}^{\KKK}\bO^{\mZ}}(\Delta(\mu),L(\nu)\langle j\rangle)\;=\;0\;=\; \Ext^i_{{}^{\KKK}\bO^{\mZ}}(L(\mu),\nabla(\nu)\langle j\rangle),\quad\mbox{if $i\not=j$}.$$
\end{thm}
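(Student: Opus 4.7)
The strategy is to reduce everything to the standard Koszulity of finite-dimensional blocks of BGG category~$\cO$, transported through the idempotent-truncation description of $A^{\KKK}_{\lbra\lambda\rbra}$ given by Theorem~\ref{Thmidemptr}.

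First I would observe that, since $\le$ is interval finite (Proposition~\ref{equivCond}(iii)), the finitely generated ideal $\KKK$ is automatically upper finite, so Theorems~\ref{ThmAMod} and~\ref{Thmidemptr} apply. Decomposing blockwise via Proposition~\ref{PropBlocks}, it suffices to produce a graded cover of each block ${}^{\KKK}\bO_{\lbra\lambda\rbra}\simeq A\mbox{-Mod}$ with $A:=A^{\KKK}_{\lbra\lambda\rbra}$. Writing $\varepsilon_n$ for the idempotent of Theorem~\ref{Thmidemptr}, we have $A=\varinjlim_n \varepsilon_n A\varepsilon_n$, with each~$\varepsilon_n A\varepsilon_n\cong A_n:=A^{\KKK_n}_{\lbra\lambda\rbra_n}(\fg_n,\fb_n)$ the describing algebra of an upper-finite truncation of a block of ordinary BGG category~$\cO$ for the finite-dimensional reductive Lie algebra~$\fg_n$.

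Second, I would transfer the graded structure. By Beilinson--Ginzburg--Soergel, Backelin and Agoston--Dlab--Luk\'acs, as recalled in Appendix~\ref{AppKos}, each $A_n$ is standard Koszul: it carries a positive grading in which simples sit in degree zero, $\Delta_n(\mu)$ is a non-negatively graded quotient of the graded projective cover with head in degree zero, and dually for $\nabla_n(\mu)$. Such a grading is determined uniquely up to graded automorphism by this normalization. Combined with the fact that the idempotent embedding $\varepsilon_n A\varepsilon_n\hookrightarrow\varepsilon_{n+1}A\varepsilon_{n+1}$ identifies simples with simples and preserves standard objects (by Corollary~\ref{CorEquiv} and compatibility of parabolic induction), this rigidity forces the embedding to be graded. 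Taking the colimit equips $A$ with a $\mZ$-grading, and we define ${}^{\KKK}\bO^{\mZ}$ to be the product over blocks of the categories of $\mZ$-graded $A^{\KKK}_{\lbra\lambda\rbra}$-modules. Graded lifts of $L(\mu)$, $\Delta(\mu)$, $\nabla(\mu)$ are then inherited with the standard normalization (heads and socles in degree zero), and the canonical morphisms $\Delta(\mu)\to L(\mu)\to\nabla(\mu)$ automatically live in degree zero.

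Finally, the Koszulity vanishing reduces to finite rank. For fixed $\mu,\nu\in\KKK$, pick $n$ with $\mu-\nu\in\mZ\rR_n$ and both $\mu,\nu$ contained in the $\fg_n$-truncation. Combining Proposition~\ref{PropExtpi}, Corollary~\ref{CorEquiv} (in its graded form, which follows from the same Serre-quotient argument applied to graded modules) and Theorem~\ref{ExtDL} yields
\[
\Ext^i_{{}^{\KKK}\bO^{\mZ}}(\Delta(\mu),L(\nu)\langle j\rangle)\;\cong\;\Ext^i_{\cO(\fg_n,\fb_n)^{\mZ}}(\Delta_n(\mu),L_n(\nu)\langle j\rangle),
\]
and the right-hand side vanishes for $i\neq j$ by the standard Koszulity of finite BGG~$\cO$. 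The dual vanishing for $\Ext^i(L(\mu),\nabla(\nu)\langle j\rangle)$ follows by the symmetric argument using injective resolutions, or by applying the graded lift of the duality~$\vee$ (Corollary~\ref{CorDua}). The hardest step is the compatibility of gradings across~$n$: one must verify that the idempotent inclusions $\varepsilon_n A\varepsilon_n\hookrightarrow\varepsilon_{n+1}A\varepsilon_{n+1}$ are graded, without which the colimit grading is ill-defined. This is exactly where the uniqueness of the standard Koszul grading is used, and it could alternatively be established by exhibiting the Koszul grading on BGG~$\cO$ via the Soergel coinvariant/bimodule presentation, under which the parabolic embeddings are manifestly graded.
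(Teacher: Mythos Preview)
Your proposal is correct and follows essentially the same approach as the paper: construct the grading on $A=A^{\KKK}_{\lbra\lambda\rbra}$ as a colimit of the standard Koszul gradings on the $A_n\cong\varepsilon_nA\varepsilon_n$, using uniqueness of Koszul gradings for compatibility, then reduce the graded Ext vanishing to the finite-rank case. The only cosmetic difference is that the paper carries out the Ext reduction by applying the exact full functor $\varepsilon_n\colon A\text{-gMod}\to A_n\text{-gMod}$ directly to a graded projective resolution of $\Delta(\mu)$ (invoking Lemma~\ref{CorNPM} to ensure all terms lie in the range of $\varepsilon_n$), and makes the compatibility step explicit via Theorem~\ref{ThmADL}(ii) before appealing to uniqueness --- both of which your sketch implicitly relies on.
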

\begin{proof}
It suffices to take an arbitrary $\lambda\in\KKK$ and {consider} ${}^{\KKK}\bO_{\lbra\lambda\rbra}$.
Set $A:=A^{\KKK}_{\lbra\lambda\rbra}$ and {recall} the equivalence
$$\Ffun:\;{}^{\KKK}\bO_{\lbra\lambda\rbra}\;\stackrel{\sim}{\to}\;A\mbox{-Mod}$$
from Theorem~\ref{ThmAMod}. For each $n\in\mN$ large enough we define the ideal $\mathring{\KKK}_n$ in $(\fh^\ast,\le)$ as in \ref{ForEquivK}.  We have the idempotents $\varepsilon_n\in A$ from Theorem~\ref{Thmidemptr}, {such that $A\cong\varinjlim_n A_n$ for $A_n=\varepsilon_n A\varepsilon_n$}. {According to} Proposition~\ref{PropOSK}, the algebras~$A_n$ have Koszul grading{s}. By Theorem~\ref{ThmADL}(ii), the grading on $A_n$ inherited from the one on  $A_{n+1}$ via the relation $\varepsilon_n A_{n+1}\varepsilon_n=A_n$ is also Koszul. By uniqueness of Koszul gradings, see e.g.~\cite[Corollary~2.5.2]{BGS}, the gradings on the algebras~$\{A_n\}$ are thus consistent and induce a grading on $A\cong\varinjlim_n A_n$. Example~\ref{ExAg} {shows that} the category
$${}^{\KKK}\bO_{\lbra\lambda\rbra}^{\mZ}:=A\mbox{-gMod}$$ is a graded cover of ${}^{\KKK}\bO_{\lbra\lambda\rbra}$.

{Next}, for $\mu\in\KKK$, we consider the $A$-module $M:=\Ffun(\Delta(\mu))$. By observing that $M=\varinjlim \varepsilon_nM$ and using the fact that the $A_n$-module $\varepsilon_nM$ is uniquely gradable up to shift, it follows that $M$ admits a graded lift.
 We thus have a projective resolution of $M$ in $A$-gMod. For $n$ large enough {so} that $\mu\in\KKK_n$, it follows from Lemma~\ref{CorNPM} (or Corollary~\ref{CornCoh}) that all terms in {this} complex are direct sums of modules $P_{\KKK}(\kappa)$ with $\kappa\in \KKK_n$.
The exact full functor
\begin{equation}\label{eqgMod}\varepsilon_n: A\mbox{-gMod}\to A_n\mbox{-gMod}\end{equation}
shows via the standard Koszulity of $A_n$ that $\Ext^i_{{}^{\KKK}\bO_{\lbra\lambda\rbra}^{\mZ}}(\Delta(\mu),L(\nu)\langle j\rangle)=0$, if $i\not=j$. The statement for dual Verma modules follows similarly.
\end{proof}

The following proposition suggests that any complete theory of Koszul {\em duality} for $\bO$ would restrict to a duality between dominant and antidominant blocks. For $\mu\in\KKK$, $j\in\mZ$ and
$M\in {}^{\KKK}\bO^{\mZ}$, we set
$$[M:L(\mu)\langle j\rangle]=\dim\Hom_{{}^{\KKK}\bO^{\mZ}}(P_{\KKK}(\mu)\langle j\rangle ,M),$$
with $P_{\KKK}(\mu)$ the projective cover of $L(\mu)\langle 0\rangle$ in ${}^{\KKK}\bO^{\mZ}$.
\begin{prop}
Let $\lambda,\mu\in\fh^\ast$ be integral and regular, with $\lambda$ dominant and $\mu$ antidominant. For all $w,x\in W$ and $j\in\mN$, we have
\begin{enumerate}[(i)]
\item  $\dim\Ext^j_{\bO}(\Delta(w\cdot\lambda),L(x\cdot\lambda))\;=\; [\Delta(w^{-1}\cdot\mu):L(x^{-1}\cdot\mu)\langle j\rangle]$,
\item $\dim\Ext^j_{\bO}(\Delta(w\cdot\mu),L(x\cdot\mu))\;=\; [\Delta(w^{-1}\cdot\lambda):L(x^{-1}\cdot\lambda\langle j\rangle)]$.
\end{enumerate}
\end{prop}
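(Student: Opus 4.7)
The plan is to reduce both sides of (i) to the corresponding identity inside a finite-rank block $\cO(\fg_n,\fb_n)$ and then invoke the classical Koszul duality of Beilinson-Ginzburg-Soergel (see~\cite{BGS}) between regular integral dominant and antidominant blocks of finite-dimensional category~$\cO$. Part (ii) will follow from (i) by exchanging the roles of $\lambda$ and $\mu$, so I focus on (i).

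Fix $w,x\in W$ and choose $n\in\mN$ large enough that $w,x\in W_n$; then both $w\cdot\lambda-x\cdot\lambda$ and $w^{-1}\cdot\mu-x^{-1}\cdot\mu$ lie in $\mZ\rR_n$. Theorem~\ref{ExtDL} immediately identifies the left hand side of~(i) with
$$\dim\Ext^j_{\cO(\fg_n,\fb_n)}(\Delta_n(w\cdot\lambda),L_n(x\cdot\lambda)).$$
For the right hand side, I let $\KKK\subset\fh^\ast$ be the finitely generated ideal generated by $W_n\cdot\mu$; since $\mu$ is antidominant, this contains every $v^{-1}\cdot\mu$ with $v\in W_n$, in particular $w^{-1}\cdot\mu$ and $x^{-1}\cdot\mu$. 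Theorem~\ref{ThmKosz} provides a graded cover of ${}^{\KKK}\bO_{\lbra\mu\rbra}$ realized as $A$-gMod with $A=A^{\KKK}_{\lbra\mu\rbra}$, constructed so that for every sufficiently large $n$ the idempotent truncation $\varepsilon_n A\varepsilon_n\cong A_n:=A^{\KKK_n}_{\lbra\mu\rbra_n}(\fg_n,\fb_n)$ of Theorem~\ref{Thmidemptr} is an isomorphism of graded algebras, where $A_n$ carries the Koszul grading of the finite-rank setting. The exact and full functor $\varepsilon_n\colon A\text{-gMod}\to A_n\text{-gMod}$ therefore preserves graded multiplicities and, via Corollary~\ref{CorEqBlocks}(ii), sends the graded lifts of $\Delta(w^{-1}\cdot\mu)$ and $L(x^{-1}\cdot\mu)$ to those of $\Delta_n(w^{-1}\cdot\mu)$ and $L_n(x^{-1}\cdot\mu)$. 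This yields
$$[\Delta(w^{-1}\cdot\mu):L(x^{-1}\cdot\mu)\langle j\rangle]\;=\;[\Delta_n(w^{-1}\cdot\mu):L_n(x^{-1}\cdot\mu)\langle j\rangle]$$
in the Koszul-graded cover of $\cO(\fg_n,\fb_n)_{\lbra\mu\rbra_n}$.

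The classical Koszul duality identity for the regular integral block of finite-rank~$\cO$,
$$\dim\Ext^j_{\cO(\fg_n,\fb_n)}(\Delta_n(w\cdot\lambda),L_n(x\cdot\lambda))\;=\;[\Delta_n(w^{-1}\cdot\mu):L_n(x^{-1}\cdot\mu)\langle j\rangle],$$
now completes part~(i). The hardest point will be to justify rigorously that the Koszul grading on the locally finite-dimensional algebra $A=\varinjlim A_n$ inherited from the finite pieces is such that the truncation $\varepsilon_n A\varepsilon_n$ recovers $A_n$ as a \emph{graded} algebra with its standard Koszul grading, and that the graded lifts of Verma and simple modules are normalized consistently on both sides of the truncation. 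Uniqueness of Koszul grading, invoked in the proof of Theorem~\ref{ThmKosz}, together with the fact that the equivalence $\Psi$ of Corollary~\ref{CorEqBlocks}(ii) sends Verma modules to Verma modules, is the key input.
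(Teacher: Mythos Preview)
Your proposal is correct and follows essentially the same route as the paper: reduce the left-hand side to $\cO(\fg_n,\fb_n)$ via Theorem~\ref{ExtDL}, reduce the right-hand side to $\cO^{\mZ}(\fg_n,\fb_n)$ via the graded idempotent truncation $\varepsilon_n$ built in Theorem~\ref{ThmKosz}, and then apply the finite-rank Koszul duality identity of \cite[Proposition~1.3.1]{BGS}. Your remark that part~(ii) follows by swapping the roles of $\lambda$ and $\mu$ is slightly loose---one really reruns the same argument with Theorem~\ref{ExtDL} applied to the $\mu$-side and the graded truncation applied to the $\lambda$-block---but this is exactly what the paper does when it treats both ``left-hand sides'' simultaneously.
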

\begin{proof}
Take $n\in\mN$ big enough such that $w\cdot\lambda-x\cdot\lambda\in\mZ\rR_n$ and the corresponding conditions for $\mu$ and $x^{-1},w^{-1}$ are satisfied. By Theorem~\ref{ExtDL}, the left-hand sides equal the corresponding dimensions in $\cO(\fg_n,\fb_n)$. Choosing an appropriate finitely generated ideal $\KKK\subset\fh^\ast$ and using equation~\eqref{eqgMod} shows that the right-hand side can be computed in $\cO^{\mZ}(\fg_n,\fb_n)$. The result thus follows from \cite[Proposition~1.3.1]{BGS}.
\end{proof}

Despite the fact that the property in Theorem~\ref{ThmKosz} implies ordinary Koszulity in the case of finite dimensional (quasi-hereditary) algebras, see Theorem~\ref{ThmADL}(i), we still have the following open question.
\begin{que}
{Is} 
$\Ext^i_{{}^{\KKK}\bO_{\lbra\lambda\rbra}^{\mZ}}(L(\mu),L(\nu)\langle j\rangle)\;=\;0$ {for} $i\not=j$?
\end{que}
The difficulty in answering this question lies in the fact that the indecomposable projective modules appearing in a fixed position in the projective resolution of a simple module in ${}^{\KKK}\bO_{\lbra\lambda\rbra}$ will generally form a set $\{P(\mu)\,|\,\mu\in S\}$, for some multiset of weights $S$ which is not lower finite. This already happens for instance in the projective cover of the kernel of $P_{\KKK}(0)\tto L(0)$.

Another open question is whether we can construct a cover without taking a Serre subcategory of $\bO$ via truncation.
\begin{que}
Is it possible to construct a graded cover of ${\bf O}$?
\end{que}

%%%%%%%%%%%%%%%%%%%%%%%%%%%%%%%%%%%%%%%%%%%%%%%%%%%%%%%%%%%%%%%%%%%%%%%%%%%%%%%%%%%%%%%%%

%%%%%%%%%%%%%%%%%%%%%%%%%%%%%%%%%%%%%%%%%%%%%%%%%%%%%%%%%%%%%%%%%%%%%%%%%%%%%%%%%%%%%

%\section{Koszulity}

%\subsection{Koszul duality}

%%%%%%%%%%%%%%%%%%%%%%%%%%%%%%%%%%%%%%%%%%%%%%%%%%%%%%%%%%%%%%%%%%%%%%%%%%%%%%%%%%%%%

\section{The semiregular bimodule}

\subsection{Definitions}

\subsubsection{The group $\Gamma$}\label{defGamma} Let $S$ be a countable set.
We consider the free abelian group $\Gamma_S\in\Ab$ with basis $S$,
$$\Gamma_S:=\bigoplus_{s\in S}\mZ \qquad\mbox{with group homomorphism}\quad \htt:\Gamma_S\to \mZ{,} \quad (a_s)_{s\in S}\mapsto\sum_{s\in S}a_s.$$
%The identity element in~$\Gamma_S$ will be denoted by~$\fnul$.
Hence $\Gamma_S$ is isomorphic either to~$\mZ^{\oplus k}$ for some $k\in\mN$, or to $\mZ^{\oplus\aleph_0}$. In the following we {omit} the reference to~$S$.

For any two $\Gamma$-graded vector spaces~$V=\bigoplus_a V^{a}$ and~$W=\bigoplus_a W^{a}$, we define the $\Gamma$-graded vector space $\fHom_{\mk}(V,W)$ by {setting}
$$\fHom_{\mk}(V,W)^a\;{:}=\;\{f\in \Hom_{\mk}(V,W)\;|\; \mbox{with }\; f(V^b)\subset W^{b+a}\;\mbox{for all $b\in\Gamma$}\}.$$
We equip the one dimensional vector space $\mk$ with the trivial $\Gamma$-grading. Then $\fHom_{\mk}(V,\mk)$ is the subspace of~$V^\ast$ of functionals which vanish {at} all but finitely many degrees. {We write  $V^{\circledast}=\fHom_{\mk}(V,\mk)$ and}  will interpret $(-)^\circledast$ as a duality functor on the category of~$\Gamma$-graded vector spaces with finite dimensional graded components.

\subsubsection{}We will work with $\Gamma$-graded Lie algebras over $\mk$, denoted by
$\fk=\bigoplus_{a\in\Gamma}\fk^a.$
Any $\Gamma$-graded Lie algebra has an {\bf associated $\mZ$-grading} through the homomorphism $\htt$:
$$\fk\;=\;\bigoplus_{i\in\mZ}\fk^{(i)},\qquad\fk^{(i)}=\bigoplus_{\htt(a)=i}\fk^{a}.$$

For $\Gamma$-graded $\fk$-modules $M,N$ we write $\fHom_{U(\fk)}(M,N)$ for the subspace of $\fHom_{\mk}(M,N)$ of $\fk$-linear morphisms.

\begin{ddef}\label{defg}
We say that a $\Gamma$-grading on a Lie algebra~$\fk$ is {\bf triangular} if
\begin{enumerate}[(i)]
\item $\fk^a=0$, whenever $a=(a_s)\in \Gamma$ contains both positive and negative integers;
\item $\dim_{\mk}\fk^a<\infty$ if $\htt(a)<0$;
\item $\fk$ is generated by the subspace $\fk^{(1)}\oplus\fk^{(0)}\oplus\fk^{(-1)}$.
\end{enumerate}
\end{ddef}
Condition~(i) implies in particular that~$\fk^{(0)}=\fk^{\fnul}$.
\begin{ex}
Definition~\ref{defg} is tailored to cover Kac-Moody algebras for arbitrary (possibly infinite) generalised Cartan matrices. The group $\Gamma$ is then to be identified with the root lattice. When the Cartan matrix is finite (and hence $\Gamma$ is finitely generated) the spaces~$\fk^{(i)}$ are already finite dimensional. In this case, one might as well work with the associated $\mZ$-grading.
\end{ex}

%\begin{ex}
%It would be nice to include some non-trivial parabolic stuff, e.g. $\fk=\mathfrak{gl}_{\infty}$ with a parabolic decomposition with Levi subalgebra~$\fk_0=\mk^{\oplus k}\oplus\mathfrak{gl}_{\infty}$. In this case we can actually just use a $\mZ$-grading.
%\end{ex}

%\begin{rem}
%All of our constructions in this section can be extended to a Lie {\em super}algebra with triangular $\Gamma$-grading. For the case where~$\dim_{\mk}\fk^{(i)}<\infty$ this was worked out in detail in~\cite{Brundan}. The general case follows from using the sign conventions {\it loc. cit.} together with the approach in the current section.
%\end{rem}

\subsubsection{}\label{defBN} For a triangularly~$\Gamma$-graded Lie algebra~$\fk$, we set
$$\fk_{<}{:}=\bigoplus_{i<0}\fk^{(i)},\;\;\fk_{\ge}{:}=\bigoplus_{i\ge 0}\fk^{(i)},\;\; N{:}=U(\fk_{<}),\;\; B{:}=U(\fk_{\ge}),\mbox{ and }\; U{:}=U(\fk).$$
All these algebras are naturally~$\Gamma$-graded.
\subsubsection{Semi-infinite characters}\label{DefSemi} Consider a triangularly~$\Gamma$-graded Lie algebra~$\fk$.
Following \cite[Definition~1.1]{Soergel}, see also \cite{Arkhipov}, we call a character $\gamma:\fk^{\fnul}\to\mk$ {\bf semi-infinite for~$\fk$} if
$$\gamma([X,Y])\,=\,\tr(\ad_X\ad_Y:\fk^{\fnul}\to\fk^{\fnul})\quad \mbox{for all $X\in\fk^{(1)}$ and~$Y\in\fk^{(-1)}$}.$$

\subsection{Some bimodules}
Keeping notation as above, we consider a triangularly~$\Gamma$-graded Lie algebra~$\fk$.

\subsubsection{The bimodule~$N^{\circledast}$}\label{Nast}We have the natural $N$-bimodule structure on $N^\ast=\Hom_{\mk}(N,\mk)$, with $(fn)(u)=f(nu)$ and~$(nf)(u)=f(un)$, for~$f\in N^\ast$ and~$u,n\in N$.
The subspace
$$N^{\circledast}:=\fHom_{\mk}(N,\mk)$$
clearly constitutes a sub-bimodule of~$N^\ast$.

\subsubsection{}
The $(N,B)$-bimodule structure on $N^{\circledast}\otimes_{\mk}B$ is induced from the left $N$-module structure on $N^{\circledast}$ and the right module structure on $B$.
The $N$-bimodule structure on $N^{\circledast}$ and {the} $(N,U)$-bimodule {structure on $U$} yield an $(N,U)$-bimodule structure on $N^{\circledast}\otimes_{N}U$.

\subsubsection{} Now fix an arbitrary character $\gamma:\fk^{(0)}\to\mk$ and define the one dimensional left $B$-module~$\mk_\gamma$ via the character $\gamma:\fk^{(0)}\to \mk$ and the surjection $\fk_{\ge}\tto\fk^{(0)}$.
Then we have the $B$-bimodule~$\mk_\gamma\otimes_{\mk}B$, which as a left module is the tensor product of~$\mk_\gamma$ and the left regular module. The {structure of right $B$-module comes from $B$ as a right $B$-module. Next, considering}
 $U$ as a $(B,U)$-bimodule, allows to introduce the $(U,B)$-bimodule
$\fHom_B(U,\mk_\gamma\otimes B)$.

\begin{lemma}\label{LemLR}We consider arbitrary elements~$n\in N$, $b,b'\in B$ and~$f\in N^{\circledast}$.
\begin{enumerate}[(i)]
\item The $(N,B)$-bimodule morphism
$$\psi:N^{\circledast}\otimes_{\mk}B\,\to\,N^{\circledast}\otimes_{N}U,\qquad\psi(f\otimes b)=f\otimes b $$ is an isomorphism. 
\item
The $(N,B)$-bimodule morphism
$$\phi:N^{\circledast}\otimes_{\mk}B\,\to\, \fHom_{B}(U,\mk_\gamma\otimes B),\quad\phi(f\otimes b)(b'n)=b'(f(n)\otimes b)$$ 
is an isomorphism. \end{enumerate}\end{lemma}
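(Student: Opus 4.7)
The two parts both boil down to combining PBW with an appropriate adjunction, but they use PBW in complementary ways.

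For part (i), my plan is to use the PBW isomorphism of $(N,B)$-bimodules
$$\mu_{NB}\colon N\otimes_{\mk}B\;\stackrel{\sim}{\to}\;U,\qquad n\otimes b\mapsto nb,$$
where $N$ acts on the left factor by left multiplication and $B$ on the right factor by right multiplication. Using $\mu_{NB}$ to rewrite $U$ inside the tensor product, I get
$$N^{\circledast}\otimes_{N}U\;\cong\;N^{\circledast}\otimes_{N}(N\otimes_{\mk}B)\;\cong\;(N^{\circledast}\otimes_{N}N)\otimes_{\mk}B\;\cong\;N^{\circledast}\otimes_{\mk}B,$$
and the composite inverse sends $f\otimes nb$ to $fn\otimes b$. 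Tracking where $f\otimes b\in N^{\circledast}\otimes_{\mk}B$ is sent by this chain identifies it with $f\otimes b\in N^{\circledast}\otimes_N U$, which is exactly the map $\psi$. The $(N,B)$-bimodule compatibility is automatic because each step of the chain is $(N,B)$-linear.

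For part (ii), I would dualize the PBW picture by using the $(B,N)$-bimodule isomorphism $\mu_{BN}\colon B\otimes_{\mk}N\stackrel{\sim}{\to}U$, $b\otimes n\mapsto bn$. Applied to left $B$-modules, restriction along $1\otimes N\hookrightarrow U$ gives the adjunction isomorphism
$$\fHom_{B}(U,V)\;\stackrel{\sim}{\to}\;\fHom_{\mk}(N,V),\qquad g\mapsto g|_{N},$$
with inverse $\tilde g(b'n)=b'\cdot \tilde g(n)$, where the dot is the left $B$-action on $V$. Take $V=\mk_{\gamma}\otimes B$. The remaining step is the identification
$$\fHom_{\mk}(N,\mk_{\gamma}\otimes B)\;\cong\;N^{\circledast}\otimes_{\mk}(\mk_{\gamma}\otimes B)\;\cong\;N^{\circledast}\otimes_{\mk}B,$$
given by $f\otimes v\mapsto (n\mapsto f(n)\,v)$. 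Chasing through the three isomorphisms, $f\otimes b\in N^{\circledast}\otimes_{\mk}B$ goes to the $B$-linear map $b'n\mapsto b'\cdot(f(n)\otimes b)$, which is precisely $\phi(f\otimes b)$. The $(N,B)$-bimodule structures again transport across each step in the chain, so it only remains to verify one of them by hand, say the left $N$-action; the right $B$-action is immediate since it acts solely on the final $B$-factor.

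The step I expect to be the main technical obstacle is the ``$\fHom_{\mk}=(-)^{\circledast}\otimes(-)$'' identification in part (ii), since in general only the inclusion $V^{\circledast}\otimes W\hookrightarrow \fHom_{\mk}(V,W)$ is automatic: the target is a product while the source is a direct sum of the same pieces. Here I plan to use the triangular grading: the support of $N$ lies in the ``negative cone'' $\Gamma_{\le 0}$ with $\dim_{\mk}N^{a}<\infty$ for all $a$ (a consequence of Definition~\ref{defg}(i)--(ii)), while $\supp(\mk_{\gamma}\otimes B)\subset\Gamma_{\ge 0}$. For any fixed degree $c\in\Gamma$, the pairs $(a,a+c)$ with $N^{a}\ne 0$ and $(\mk_{\gamma}\otimes B)^{a+c}\ne 0$ must satisfy $-c\le a\le 0$ componentwise, of which there are only finitely many. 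This collapses the product in $\fHom_{\mk}(N,\mk_{\gamma}\otimes B)^{c}$ into a finite direct sum, giving the desired equality. Once this is established, the rest of the argument for both parts is a bookkeeping exercise.
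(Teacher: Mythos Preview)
Your argument is correct and follows exactly the route the paper has in mind: the paper's proof reads in full ``These are immediate applications of the PBW theorem,'' and your two chains of isomorphisms are precisely the PBW manipulations that justify this. Your care about the finiteness step in part~(ii)---showing that for fixed $c\in\Gamma$ only finitely many degrees $a$ with $-c\le a\le 0$ contribute, using that $c\in\Gamma=\bigoplus_s\mZ$ has finite support---is the one nontrivial point hidden in the paper's one-line proof, and you handle it correctly via Definition~\ref{defg}(i)--(ii).
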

\begin{proof}
These are immediate applications of the PBW theorem. 
\end{proof}

\subsection{The semi-regular bimodule}
We continue with assumptions and notation as above and now also {\em assume that~$\gamma:\fk^{(0)}\to\mk$ is a semi-infinite character for~$\fk$.} On the space $N^{\circledast}\otimes_{\mk}B$, we can define a right $U$-action through the isomorphism $\psi$ in Lemma~\ref{LemLR}(i){,} and a left $U$-action through the isomorphism $\phi$ in Lemma~\ref{LemLR}(ii).

\begin{prop}\label{PropBM}
The left and right $U$-action on $N^{\circledast}\otimes_{\mk}B$ commute if $\gamma$ is a semi-infinite character. 
\end{prop}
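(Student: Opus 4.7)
The plan is to adapt the classical argument of \cite{Arkhipov, Soergel} to the triangular $\Gamma$-grading of Definition~\ref{defg}. First I would reduce the claim to a finite list of generating cases. By triangularity condition~(iii), $U$ is generated as an associative algebra by $\fk^{(-1)}\cup\fk^{(0)}\cup\fk^{(1)}$. Since $L\colon U\to \fEnd_{\mk}(N^{\circledast}\otimes B)$ and $R\colon U^{\op}\to \fEnd_{\mk}(N^{\circledast}\otimes B)$ are algebra homomorphisms, the commutator $[L_u,R_v]$ is a derivation in each of $u$ and $v$; hence it suffices to check $[L_X,R_Y]=0$ for $X$ and $Y$ running over this generating set.

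Next I would dispose of the "natural" cases. The left $N$-action on $N^{\circledast}$ and the right $N$-action $(fn)(u)=f(nu)$ on $N^{\circledast}$ commute as they make $N^{\circledast}$ into an $N$-bimodule, and the right $B$-action on $\mk_{\gamma}\otimes B$ (by right multiplication on the second factor) commutes with the left $B$-action on $\mk_{\gamma}\otimes B$ (which is diagonal via the coproduct). Unwinding Lemma~\ref{LemLR}, this means that for every $X\in N$ and every $Y\in U$, as well as for every $X\in U$ and every $Y\in B$, the operators $L_X$ and $R_Y$ commute on $N^{\circledast}\otimes_{\mk}B$. This dispatches all cases except those with $X\in\fk^{(0)}\cup\fk^{(1)}$ and $Y\in\fk^{(-1)}$.

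The case $X\in\fk^{(0)}$, $Y\in\fk^{(-1)}$ is handled by a direct computation: rewriting $bY\in BN$ in $NB$-order via $bY=Yb+[b,Y]$ and applying $L_X$ to the resulting expression, the identity $\gamma([\fk^{(0)},\fk^{(0)}])=0$ (part of $\gamma$ being a character) suffices. The critical case is $X\in\fk^{(1)}$, $Y\in\fk^{(-1)}$. Fixing $f\otimes b\in N^{\circledast}\otimes B$ and reducing by a further Leibniz argument to $b$ a homogeneous PBW monomial, the PBW reordering of $bY\in BN$ into $NB$-form produces commutator terms landing in $\fk^{(0)}$ and iteratively in $\fk_{<}$; feeding these through the left $X$-action via $\phi$ contributes $\gamma([X,Y])$. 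Comparing with the reverse composition $L_XR_Y$, the extra contribution from reordering $X$ past the PBW factors (via the iterated adjoint action of $\fk^{(0)}$ on $\fk^{(-1)}$) collects to $\tr\bigl(\ad_X\ad_Y:\fk^{(0)}\to\fk^{(0)}\bigr)$. The difference $[L_X,R_Y](f\otimes b)$ therefore collapses to a scalar multiple of $f\otimes b$ whose coefficient is
$$\gamma([X,Y])\;-\;\tr\bigl(\ad_X\ad_Y:\fk^{(0)}\to\fk^{(0)}\bigr),$$
and this vanishes precisely by the semi-infinite character hypothesis of \ref{DefSemi}.

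The hard part will be the PBW bookkeeping in this last step: tracking the reorderings precisely enough to recognise the trace in exactly the form that appears in \ref{DefSemi}. Condition~(ii) of Definition~\ref{defg}, which forces $\fk^{a}$ with $\htt(a)<0$ to be finite dimensional, is what makes $\ad_X\ad_Y$ restricted to $\fk^{(0)}$ factor through the finite-dimensional space $\fk^{a}$ (for $Y\in\fk^{a}$) and hence the trace to be finite; this is the only place where infinite rank of $\fk$ could cause a problem, and it is precisely controlled by the triangularity axioms. Apart from this key identification, the argument is a verbatim extension of \cite{Soergel} to the infinite-rank triangular setting.
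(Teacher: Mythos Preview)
Your proposal is correct and follows essentially the same route as the paper: reduce via condition~(iii) to generators, observe that by construction the left $N$-action commutes with the full right $U$-action and the right $B$-action commutes with the full left $U$-action, leaving only the commutation of the left $\fk^{(0)}\oplus\fk^{(1)}$-action with the right $N$-action, where the semi-infinite condition enters exactly as you describe. One small refinement: the paper also uses condition~(i), not just~(ii), to ensure that for $X\in\fk^{\gamma}$ with $\gamma$ a basis element, commuting $X$ past elements of $\fk_{<}$ produces terms that land either in $\fk^{0}$ (only via $[X,\fk^{-\gamma}]$) or back in $\fk_{<}$, so that the PBW reordering $nX=Xn+\sum_i H_iH^i(n)+F(n)$ with $F(n)\in N$ is well-defined; you should make this explicit when you carry out the bookkeeping.
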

\begin{proof}
This results from the same reasoning as in the proof of \cite[Theorem~1.3]{Soergel}. 
By construction, we only need to prove that the left $B$-action commutes with the right $N$-action.

For the left $B$-action it suffices to consider the action of~$\fk^{(0)}\oplus \fk^{(1)}$, by \ref{defg}(iii). For~$H\in \fk^{(0)}$, $f\in N^{\circledast}$ and~$b\in B$, a direct computation shows that
\begin{equation}\label{eqHact}H(\phi(f\otimes b))\;=\; -\phi(f\circ\ad_H\otimes b)+\gamma(H)\phi(f\otimes b)+\phi(f\otimes Hb).\end{equation}
Note that~$\ad_H\in \End_{\mk}(N)^{0} \subset\fEnd_{\mk}(N)$, so that~$f\circ\ad_H\in N^{\circledast}$ is well-defined. That this left action commutes with the right $N$-action follows as in \cite[Theorem~1.3]{Soergel}.
Now we consider the left action of~$\fk^{(1)}$. By~\ref{defg}(ii), $\fk^{(1)}$ is spanned by vectors $X\in \fk^{\gamma}$ for basis elements~$\gamma\in S\subset\Gamma$. For such $X$, by~\ref{defg}(i) we then find that the dimension of~$[X,\fk_{<0}]\cap\fk^0=[X,\fk^{-\gamma}]$ is finite. We take a basis $\{H_i\}$ of this space, which allows to define  $H^i, F\in\fEnd_{\mk}(N)$ by
$$nX\;=\; Xn+\sum_i H_iH^i(n)+F(n)\;\mbox{ in $U(\fk)$}\qquad\mbox{for all $n\in N$}.$$
A direct computation shows that
\begin{equation}\label{eqXact}X\phi(f\otimes b)\;=\;\phi(f\otimes Xb)+\phi(f\circ F\otimes b)+\sum_i\gamma(H_i)\phi(f\circ H^i\otimes b)+\sum_i\phi(f\circ H^i\otimes H_ib).\end{equation}
That this action commutes with the left $N$-action follows again from the same computation as in \cite[Theorem~1.3]{Soergel}.
%Alternatively, one observes that the claim is a local property and as such the result follows from applying \cite[Theorem~1.3]{Soergel} by Remark~\ref{RemLocFin}.
\end{proof}

The resulting bimodule in Proposition~\ref{PropBM} will be denoted by~$S_{\gamma}$, and referred to as the {\bf semi-regular} bimodule. %We have the restrictions
%$${}_{N}(S_{\gamma})_U\;\cong\; N^{\circledast}\otimes_NU\quad\mbox{and}\quad {}_U(S_\gamma)_{B}\;\cong\; \Hom_U(U,\mk_\gamma\otimes B).$$

\begin{cor}\label{CorAdH}
Consider the inclusion of~$N$-bimodules~$\iota: N^{\circledast}\hookrightarrow S_\gamma$, corresponding to~$N^{\circledast}\hookrightarrow N^{\circledast}\otimes_{\mk}B=S_\gamma$.
\begin{enumerate}[(i)]
\item The adjoint action of~$H$ on the bimodule~$S_\gamma$ satisfies
$$\ad_H(\iota(f))\;=\;\gamma(H)\iota(f)-\iota(f\circ \ad_H)\quad\mbox{for~$H\in\fk^{(0)}$ and~$f\in N^{\circledast}$.}$$
\item The $(U,N)$-bimodule morphism 
$$\xi:U\otimes_N N^{\circledast}\;\to\;S_\gamma,\quad u\otimes f\mapsto u\iota(f),$$
is an isomorphism.
\end{enumerate}
\end{cor}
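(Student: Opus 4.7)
For part (i), the plan is to compute $\ad_H(\iota(f))=H\cdot\iota(f)-\iota(f)\cdot H$ using the explicit formulas for the two actions on $S_\gamma$. Setting $b=1$ in \eqref{eqHact} gives
$$H\cdot\iota(f)\;=\;\gamma(H)\iota(f)\,-\,\iota(f\circ\ad_H)\,+\,f\otimes H,$$
while Lemma~\ref{LemLR}(i), together with the trivial PBW decomposition $H=1\cdot H$ of $H\in B$ inside $U=N\otimes_{\mk}B$, yields $\iota(f)\cdot H=f\otimes H$. The $f\otimes H$ terms cancel upon subtraction, producing the stated formula.

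For part (ii), I would first verify that $\xi$ is well-defined and a $(U,N)$-bimodule morphism. This reduces to checking the identities $n\cdot\iota(f)=\iota(nf)$ and $\iota(f)\cdot n=\iota(fn)$ for $n\in N$, which follow directly from the definitions of $\phi$ and $\psi$ combined with the actions $(nf)(n')=f(n'n)$ and $(fn)(n')=f(nn')$ on $N^{\circledast}$.

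Surjectivity would follow by induction on the $B$-degree of $b$, first treating the $\fk^{(0)}$-generators via \eqref{eqHact} and then the $\fk^{(1)}$-generators via \eqref{eqXact}. At each step, $f\otimes Xb'$ is expressed as $X\cdot(f\otimes b')$ plus corrections: those of strictly lower $B$-degree are handled by the induction hypothesis and the fact that $\im\xi$ is a left $U$-submodule, while the one correction from \eqref{eqXact} at the same $B$-degree, namely $\sum_i(f\circ H^i)\otimes H_ib'$, is covered by the previously treated $\fk^{(0)}$-case, since $H_i\in\fk^{(0)}$.

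For injectivity, equip both $S_\gamma=N^{\circledast}\otimes_{\mk}B$ and $U\otimes_N N^{\circledast}\cong B\otimes_{\mk}N^{\circledast}$ (using $U=B\otimes_{\mk}N$ as right $N$-module) with the PBW filtration by $B$-degree; the surjectivity computation shows that $\xi$ respects these filtrations. The principal obstacle is that $\mathrm{gr}\,\xi:\sym(\fk_{\ge})\otimes N^{\circledast}\to N^{\circledast}\otimes\sym(\fk_{\ge})$ is \emph{not} the naive flip, since the correction $\sum_i(f\circ H^i)\otimes H_ib$ in \eqref{eqXact} contributes in top $B$-degree. However, under the auxiliary decomposition of $\sym(\fk_{\ge})$ by number of $\fk^{(1)}$-factors, this correction lies strictly below the leading term (an $\fk^{(1)}$-factor is traded for an $\fk^{(0)}$-factor), so $\mathrm{gr}\,\xi$ is block-triangular with diagonal blocks equal to the flip, hence bijective. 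Since both filtrations are exhaustive and start at~$0$, $\xi$ is itself bijective.
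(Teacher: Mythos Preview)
Your proposal is correct and follows essentially the same route as the paper. For part~(i) you are slightly more explicit than the paper (which simply points to~\eqref{eqHact}), correctly computing the right action via $\psi$ and subtracting. For part~(ii), both you and the paper reduce to the same triangularity observation coming from~\eqref{eqHact} and~\eqref{eqXact}: the leading term of $\sigma(X_1\cdots X_k\otimes f)$ is $f\otimes X_1\cdots X_k$, and the corrections are lower either in total $B$-degree or, at the same $B$-degree, have strictly more $\fk^{(0)}$-factors. The paper packages this as a single statement about $\sigma=\phi^{-1}\circ\xi$ and leaves the conclusion as ``easy'', whereas you separate surjectivity (by induction) from injectivity (via the associated graded); this is a presentational difference only.
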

\begin{proof}
Part~(i) is essentially equation~\eqref{eqHact}.

For part (ii), we will prove that the composition $\sigma:=\phi^{-1}\circ\xi$ with $\phi$ from lemma \ref{LemLR}(ii)
$$\sigma:\;B\otimes_{\mk}N^{\circledast}\to N^{\circledast}\otimes_{\mk}B,\quad b\otimes f\mapsto \phi^{-1}(b\phi(f\otimes 1)),$$
is an isomorphism.
Consider arbitrary $X_1,\ldots, X_k\in \fk^{(0)}\cup\fk^{(1)}$. Equations~\eqref{eqHact} and~\eqref{eqXact} imply that, for $f\in N^\circledast$, we have
$$\xi(X_1\cdots X_k\otimes f)\;=\; f\otimes X_1\cdots X_k\;+\;\sum g\otimes u,$$
where $\sum g\otimes u$ stands for a finite sum of elements~$g\otimes u$, where $g\in N^{\circledast}$ and~$u\in B$ such that~$u$ is
\begin{itemize}
\item a product of strictly fewer than $k$ elements of $\fk^{(0)}\cup\fk^{(1)}$
\end{itemize}
{or}
\begin{itemize}
\item a product of exactly $k$ elements of $\fk^{(0)}\cup\fk^{(1)}$, but strictly more elements belonging to $\fk^{(0)}$ than in~$X_1\cdots X_k$.
\end{itemize}
From this, it is easy to show that~$\sigma$ must be an isomorphism.
\end{proof}

%%%%%%%%%%%%%%%%%%%%%%%%%%%%%%%%%%%%%%%%%%%%%%%%%%%%%%%%%%%%%%%%%%%%%%%%%%%%%

\section{Ringel duality}
Now we return to {a} root-reductive Lie algebra~$\fg$ as in the beginning of Section~\ref{Sec4}.
\subsection{Triangular $\Gamma$-grading and semi-infinite characters}

\subsubsection{}
Using the notation of~\ref{defGamma}, we set 
$$\Gamma:=\Gamma_{\Sigma }\cong \mZ\Sigma \cong\mZ\rR.$$
The root decomposition~\eqref{rootdec}, where $\fh=\fg^0$, is thus a $\Gamma$-grading.
It is easily checked that this makes $\fg$ a triangularly~$\Gamma$-graded Lie algebra. We then have $\fg_{\ge}=\fb$ and~$\fg_{<}=\fn^-$, and thus $B=U(\fb)$ and~$N=U(\fn^-)$, for the algebras introduced in \ref{defBN}.

\subsubsection{}By the above, we can view $\Gamma$ as a subgroup of~$\fh^\ast$ {and write} $\sigma:\Gamma\hookrightarrow \fh^\ast$. In particular, this equips any $\Gamma$-graded vector space $V$ with the structure of a semisimple $\fh$-module, by setting $H(v)=\sigma(\gamma)(H)v$, for any $v\in V_\gamma$ and~$H\in\fh$.
The dual $V^{\circledast}$ of~\ref{defGamma} then corresponds to the finite dual of~$V$ as a semisimple $\fh$-module{, see}~\ref{SecWeightM}.
In particular, we can interpret $N^\circledast$ as in \ref{Nast} in this way by using the adjoint $\fh$-action.

%\subsubsection{}
%We have
%$$\fg\;=\; \fg'\,\supplus\,\fa,\qquad\mbox{with $\fg'=[\fg',\fg']=[\fg,\fg]$ and~$\fa\subset \fh$,}$$
%where~$\dim_{\mk}\fa\le \aleph_0$. We define $\fh'=\fh\cap[\fg,\fg]$.
 %If~$\fa=0$, the global half sum $\rho$ is unique.

\begin{lemma}\label{Lem2rho}The semi-infinite characters $\gamma\in\fh^\ast$ are those characters $\gamma:\fh\to\mk$, for which~$\gamma(H)=2\rho(H)$ for all $H\in \fh\cap[\fg,\fg]${, for $\rho$ as defined} in \ref{rhoshift}.
\end{lemma}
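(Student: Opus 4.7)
The plan is to unpack Definition~\ref{DefSemi} in this setup. Here $\mathfrak{k}^{(0)} = \mathfrak{h}$, while $\mathfrak{k}^{(\pm 1)} = \bigoplus_{\alpha\in\Sigma}\mathfrak{g}^{\pm\alpha}$, so the semi-infinite condition needs only be checked on pairs $X\in\mathfrak{g}^{\alpha}$, $Y\in\mathfrak{g}^{-\beta}$ with $\alpha,\beta\in\Sigma$. When $\alpha\neq\beta$, the difference $\alpha-\beta$ is neither zero nor a root, hence $[X,Y]=0$ and moreover $\mathrm{ad}_X\mathrm{ad}_Y$ maps $\mathfrak{h}$ into $\mathfrak{g}^{\alpha-\beta}=0$; both sides of the semi-infinite identity vanish in this case. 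So the content is concentrated on $\alpha=\beta\in\Sigma$.

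For a fixed $\alpha\in\Sigma$, I would pick a standard $\mathfrak{sl}_2$-triple $(e_\alpha,f_\alpha,H_\alpha)$ with $[e_\alpha,f_\alpha]=H_\alpha$ and $\alpha(H_\alpha)=2$. For $H\in\mathfrak{h}$ a direct computation gives
\[
\mathrm{ad}_{e_\alpha}\mathrm{ad}_{f_\alpha}(H)=[e_\alpha,\alpha(H)f_\alpha]=\alpha(H)H_\alpha,
\]
which is a rank-one endomorphism of $\mathfrak{h}$ with trace $\alpha(H_\alpha)=2$ (well-defined even though $\dim\mathfrak{h}=\infty$). Thus the semi-infinite condition reduces to the system of equalities $\gamma(H_\alpha)=2$, one for each $\alpha\in\Sigma$.

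Next I would identify $\mathfrak{h}\cap[\mathfrak{g},\mathfrak{g}]$ with the linear span of the simple coroots $\{H_\alpha\,|\,\alpha\in\Sigma\}$. Since $\mathfrak{g}$ is generated by $\mathfrak{h}$ and the simple root spaces (Proposition~\ref{equivCond}(iv)), any $\mathfrak{h}$-component of a commutator in $\mathfrak{g}$ is a linear combination of the $H_\alpha$; conversely each $H_\alpha=[e_\alpha,f_\alpha]$ obviously lies there. Finally, the normalisation of $\rho$ from~\ref{rhoshift}, applied inside each finite dimensional $\widetilde{\mathfrak{g}}_n$, gives $\rho(H_\alpha)=1$ for every simple root $\alpha$, so $2\rho(H_\alpha)=2$. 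Combining these observations shows that the semi-infinite condition $\gamma(H_\alpha)=2$ for all $\alpha\in\Sigma$ is equivalent to $\gamma|_{\mathfrak{h}\cap[\mathfrak{g},\mathfrak{g}]}=2\rho|_{\mathfrak{h}\cap[\mathfrak{g},\mathfrak{g}]}$.

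The only mildly subtle point is ensuring that the trace in Definition~\ref{DefSemi} is well-defined when $\mathfrak{h}$ is infinite dimensional; this is immediate here because $\mathrm{ad}_X\mathrm{ad}_Y$ is of finite rank on $\mathfrak{h}$ (in fact rank one for $X,Y$ in single root spaces, and zero otherwise), so extending the computation from the finite dimensional setting to the root-reductive setting is routine rather than a real obstacle.
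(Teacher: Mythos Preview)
Your proof is correct and follows essentially the same approach as the paper's: reduce the semi-infinite condition to the Chevalley generators $(e_\alpha,f_\alpha,H_\alpha)$ for simple $\alpha$, compute the trace as $\alpha(H_\alpha)=2=2\rho(H_\alpha)$, and conclude using that $\mathfrak{h}\cap[\mathfrak{g},\mathfrak{g}]$ is spanned by the $H_\alpha$. Your version is in fact more careful than the paper's, since you explicitly dispose of the off-diagonal case $\alpha\neq\beta$ and address the well-definedness of the trace on an infinite dimensional $\mathfrak{h}$, both of which the paper leaves implicit.
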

\begin{proof}
For each simple positive root $\alpha$, we consider the Chevalley generator{s}~$E_\alpha$ and~$F_\alpha$, and set $ H_\alpha:=[E_\alpha,F_\alpha]$. By applying the definition in \ref{DefSemi}, we find
$$\gamma([E_\alpha,F_\alpha])=\alpha(H_\alpha)={2\rho(H_\alpha)}.$$
 The conclusion then follows, since $H\in \fh\cap[\fg,\fg]$ is spanned by vectors $H_\alpha$ as above, for a Dynkin Borel subalgebra.
\end{proof}
This determines all semi-infinite characters for Dynkin Borel subalgebras in case $\fg$ is simple.
\begin{cor}
For~$\fg$ equal to~$\mathfrak{sl}_\infty$, $\mathfrak{so}_\infty$ or~$\mathfrak{sp}_\infty$, the unique semi-infinite character is $2\rho$.
\end{cor}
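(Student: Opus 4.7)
The plan is to deduce the corollary directly from Lemma~\ref{Lem2rho} by checking that in each of the three cases one has $\fh\cap [\fg,\fg]=\fh$, i.e.\ that $\fg$ is perfect. Since Lemma~\ref{Lem2rho} already forces any semi-infinite character to agree with $2\rho$ on $\fh\cap[\fg,\fg]$, and conversely any character with that property is semi-infinite, once we know $\fh\subseteq [\fg,\fg]$ we immediately obtain uniqueness of $\gamma$ and that $\gamma=2\rho$.

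The first step is therefore to observe that $\mathfrak{sl}(\infty)$ is, by construction, the derived subalgebra of $\mathfrak{gl}(\infty)$, so perfectness holds trivially. For $\mathfrak{so}(\infty)$ and $\mathfrak{sp}(\infty)$, I would invoke the fact that these are simple infinite-dimensional Lie algebras (as recorded in the classification theorem stated just before \ref{rootdec}, see \cite{DP}), and simple Lie algebras are perfect. A more hands-on variant of the same argument, which avoids quoting simplicity and fits the style of the proof of Lemma~\ref{Lem2rho}, is to note that for a Dynkin Borel subalgebra the elements $H_\alpha=[E_\alpha,F_\alpha]$ with $\alpha\in\Sigma$ already span $\fh$ in all three cases (which is immediate from the inductive description $\fh=\varinjlim \fh_n$, since $\fh_n$ is spanned by simple coroots of the finite-dimensional simple algebra $\widetilde{\fg}_n$); this realises $\fh$ explicitly inside $[\fg,\fg]$.

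Combining either variant with Lemma~\ref{Lem2rho} gives $\gamma(H)=2\rho(H)$ for every $H\in\fh$, establishing both existence and uniqueness of the semi-infinite character. There is no real obstacle here: the corollary is a clean specialisation of the lemma, and the only point requiring a moment's attention is the spanning statement for $\fh$ in the infinite-rank setting, which is precisely what distinguishes these three algebras from, for instance, $\mathfrak{gl}(\infty)$, where the ``trace direction'' falls outside $[\fg,\fg]$ and correspondingly allows a one-parameter family of semi-infinite characters.
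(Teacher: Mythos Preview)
Your proposal is correct and follows the same route as the paper: the corollary is stated immediately after Lemma~\ref{Lem2rho} with only the remark that the lemma ``determines all semi-infinite characters for Dynkin Borel subalgebras in case $\fg$ is simple'', which is exactly your observation that $\fh\cap[\fg,\fg]=\fh$ in these three cases. You supply more detail than the paper does (the explicit spanning by the $H_\alpha$ and the contrast with $\mathfrak{gl}(\infty)$), but the underlying idea is identical.
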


%For any $\alpha\in \rR^+$, we can then associate $\htt(\alpha)\in\mZ_{\ge 1}$ as the sum of the coefficients of its expansion into elements of~$\Sigma $. We define similarly~$\htt(\alpha)\in\mZ_{\le 1}$ for~$\alpha\in\Delta^-$.

\subsection{The AS duality functor} In this subsection, we consider the analogue of the duality functor constructed by Arkhipov and Soergel for (affine) Kac-Moody algebras in \cite{Arkhipov, Soergel}.

We set $\gamma=2\rho$, which is a semi-infinite character by Lemma~\ref{Lem2rho}, and consider the corresponding semiregular bimodule   {$S:=S_{2\rho}$}.

%\begin{lemma}
%Tensoring with the bimodule~$S$ yields an endofunctor~$\TT(-)=S\otimes_U-$ on $\bC(\fg,\fh)$.
%\end{lemma}
%\begin{proof}
%For~$M\in\bC(\fg,\fh)$, the module~$S\otimes_UM$ is spanned by elements of the form~$\iota(f)\otimes v$, with $f\in N^{\circledast}$ and~$v\in M$. By Corollary~\ref{CorAdH}, we have
%$$H(\iota(f)\otimes v)\;=\; \gamma(H)(\iota(f\otimes v))+\iota(f)\otimes Hv-\iota(f\circ\ad_H)\otimes v.$$
%Hence, if $v$ is a weight vector of weight~$\lambda$ and~$f$ is the dual basis element corresponding to a $n\in\bBN$ of weight~$\mu$, we have
%$$H(\iota(f)\otimes v)\;=\;(\gamma+\lambda-\mu)(H)\, (\iota(f)\otimes v).$$
%\end{proof}

\begin{lemma}\label{TwistVerma}
For any $\lambda\in\fh^\ast$, we have an isomorphism $S\otimes_U\Delta(\lambda)\,\cong\,\Delta(-\lambda-2\rho)^{\circledast}$.
\end{lemma}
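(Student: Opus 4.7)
The plan is to construct an explicit isomorphism by first reducing $S \otimes_U \Delta(\lambda)$ to a tractable vector space and then matching it to $\Delta(-\lambda-2\rho)^\circledast$ via a canonical morphism. First, using $\Delta(\lambda) = U \otimes_B \mk_\lambda$, we have $S \otimes_U \Delta(\lambda) \cong S \otimes_B \mk_\lambda$, and the isomorphism $\psi$ of Lemma~\ref{LemLR}(i) (restricted to right $B$-modules) identifies this as
$$S \otimes_U \Delta(\lambda) \;\cong\; N^\circledast \otimes_{\mk} B \otimes_B \mk_\lambda \;\cong\; N^\circledast \otimes_{\mk} \mk_\lambda$$
at the level of vector spaces. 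By Corollary~\ref{CorAdH}(i), the left $\fh$-action on $\iota(f) \otimes v_\lambda$ for $f \in (N^\circledast)_\mu$ produces weight $\lambda + 2\rho + \mu$, the $2\rho$ shift originating from the semi-infinite character. This matches term-by-term the character of $\Delta(-\lambda-2\rho)^\circledast$, whose weight-$\eta$ component is the $\mk$-dual of $\Delta(-\lambda-2\rho)_{-\eta}$, of dimension $\dim N_{-\mu}$ precisely at $\eta = \lambda + 2\rho + \mu$ with $\mu \in \Gamma^+$.

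Next I would single out the distinguished vector $w_0 := \iota(f_0) \otimes v_\lambda$, where $f_0 \in (N^\circledast)_0$ is the functional sending $1 \in N_0$ to $1$ and vanishing on strictly negative weight spaces. A short calculation using the left $N$-action $(Xf)(m) = f(mX)$ on $N^\circledast$ shows $X \cdot w_0 = 0$ for every $X \in \fn^-$: the element $mX$ has strictly negative weight and thus lies outside the support of $f_0$. Dually, the functional $\phi \in ((S \otimes_U \Delta(\lambda))^\circledast)_{-\lambda-2\rho}$ that picks off $w_0$ is $\fn^+$-invariant for weight reasons, since $\lambda + 2\rho$ is the minimal element of $\supp(S\otimes_U \Delta(\lambda))$ and hence no weight of the form $\lambda + 2\rho - \gamma$ with $\gamma > 0$ occurs. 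By (the argument behind) Lemma~\ref{LemDeltaProj}, $\phi$ defines a nonzero morphism $\Delta(-\lambda-2\rho) \to (S \otimes_U \Delta(\lambda))^\circledast$. Dualizing via $\circledast$, and using that $S \otimes_U \Delta(\lambda)$ lies in $\cC(\fg,\fh)$ with finite-dimensional weight spaces (hence is reflexive under $\circledast$), yields a nonzero morphism
$$\Phi:\; S \otimes_U \Delta(\lambda) \;\longrightarrow\; \Delta(-\lambda-2\rho)^\circledast.$$

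Since both modules share the same character, proving that $\Phi$ is an isomorphism reduces to showing that it is injective, or equivalently that $\Delta(-\lambda-2\rho)^\circledast$ is generated as a $\fg$-module by its highest weight vector via the $U(\fn^-)$-action. Dually, this amounts to verifying that the map $U(\fn^+)_\mu \to (S \otimes_U \Delta(\lambda))_{\lambda+2\rho+\mu}$, $X \mapsto X \cdot w_0$, is injective for each $\mu \in \Gamma^+$; it is then bijective by the dimension equality $\dim U(\fn^+)_\mu = \dim N_{-\mu}$. I expect this last step to be the main obstacle: it requires unwinding the left $\fn^+$-action on $w_0$ through the semi-regular bimodule formulas~\eqref{eqHact} and~\eqref{eqXact}, and then invoking a PBW-type argument, in the spirit of Arkhipov's and Soergel's treatment in the Kac-Moody case, to conclude that no nontrivial element of $U(\fn^+)_\mu$ can annihilate the lowest weight vector $w_0$.
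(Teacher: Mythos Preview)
Your setup is correct and matches the paper's: both identify $S\otimes_U\Delta(\lambda)$ with $N^\circledast\otimes_{\mk}\mk_\lambda$ as a vector space and use Corollary~\ref{CorAdH}(i) to see that the $\fh$-module structure is $N^\circledast\otimes_{\mk}\mk_{\lambda+2\rho}$. From here, however, the paper takes a much shorter route that completely bypasses the obstacle you flag at the end.

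The key observation you are missing is that the \emph{left $U(\fn^-)$-module} structure on $S\otimes_U\Delta(\lambda)$ is transparently $N^\circledast$, by the very construction of $S$ as an $(N,U)$-bimodule via $\psi$. Hence its $\circledast$-dual has $U(\fn^-)$-restriction isomorphic to $N=U(\fn^-)$. Now Lemma~\ref{stupidlemma} says that a weight module whose $U(\fn^-)$-restriction is free of rank one is automatically a Verma module $\Delta(\mu)$ for some $\mu$. Your character computation then forces $\mu=-\lambda-2\rho$, and the proof is finished. No explicit highest or lowest weight vector needs to be produced, and the formulas \eqref{eqHact}, \eqref{eqXact} for the $\fn^+$-action never enter.

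Your proposed route---constructing a map and then proving the $U(\fn^+)$-action on $w_0$ is free---is not wrong in principle, but it reproves a special case of Lemma~\ref{stupidlemma} by hand, and the PBW-type argument you anticipate is genuinely more involved than invoking that lemma. Also, a minor logical slip: you write that injectivity of $\Phi$ is ``equivalently'' that $\Delta(-\lambda-2\rho)^\circledast$ is $U(\fn^-)$-generated by its highest weight vector, but that statement is about surjectivity of $\Phi$, not injectivity; what you actually argue afterwards (freeness of the $U(\fn^+)$-action on $w_0$) is a third, different condition which does imply the isomorphism once characters are matched.
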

\begin{proof}
%By definition, the restriction of~$S$ to an $(N,U)$-bimodule is $N^{\circledast}\otimes_{N} U$. We thus find
%$$\res^{\fg}_{\fn^-}\,S\otimes_U\Delta(\lambda)\,\cong\,N^{\circledast}.$$
Using the notation of Corollary~\ref{CorAdH} we find that~$S\otimes_U\Delta(\lambda)$ is equal to its subspace
$\iota(N^{\circledast})\otimes \mk_\lambda,$
with $\mk_\lambda$ the one dimensional subspace of~$\Delta(\lambda)$ of weight~$\lambda$. By Corollary~\ref{CorAdH}(i) we have for any $H\in\fh$, $f\in N^\circledast$ and~$v\in \mk_\lambda$
$$H(\iota(f)\otimes v)\;=\; 2\rho(H)(\iota(f\otimes v))+\iota(f)\otimes Hv-\iota(f\circ\ad_H)\otimes v.$$
Hence, {there is} an isomorphism
\begin{equation}\label{isoresh}\res^{\fg}_{\fh}S\otimes_U\Delta(\lambda)\;\cong\; N^{\circledast}\otimes_{\mk}\mk_{\lambda+2\rho},\end{equation}
for the canonical adjoint $\fh$-action on $N^{\circledast}$.
In particular, $S\otimes_U\Delta(\lambda)$ is a weight module, so Lemma~\ref{stupidlemma} implies
$$(S\otimes_U\Delta(\lambda))^{\circledast}\cong \Delta(\mu)$$ for some $\mu\in\fh^\ast$. Equation~\eqref{isoresh} implies that $\mu=-\lambda-2\rho$.
\end{proof}

\begin{lemma}\label{LemFF}
The functor~$\FF:\cF^\Delta(\fg,\fb)\to \cF^{\nabla}(\fg,\fb^-)$, obtained by the restriction of~$S\otimes_U-$, is an equivalence of exact categories.
\end{lemma}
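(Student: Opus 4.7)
The strategy is to verify three things in turn: that $\FF:=S\otimes_U-$ is exact on $\cF^\Delta(\fg,\fb)$, that it maps into $\cF^\nabla(\fg,\fb^-)$ with matching flag multiplicities, and that it admits a quasi-inverse. For the first two points, I would invoke Corollary~\ref{CorAdH}(ii), which gives an $(U,N)$-bimodule isomorphism $S\cong U\otimes_N N^{\circledast}$. Dualising the variable yields a natural vector-space identification $S\otimes_U M\cong N^{\circledast}\otimes_N M$ for every left $U$-module $M$. By Lemma~\ref{stupidlemma}, any $M\in\cF^\Delta(\fg,\fb)$ is a finite direct sum of copies of $N$ as a left $N$-module and is hence $N$-flat, so $\FF$ is exact on $\cF^\Delta(\fg,\fb)$. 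Combined with Lemma~\ref{TwistVerma} (under the identification $\Delta^{\fb}(-\lambda-2\rho)^{\circledast}\cong\nabla^{\fb^-}(\lambda+2\rho)$), an induction on the length of a $\Delta$-flag gives $\FF(M)\in\cF^\nabla(\fg,\fb^-)$ with
$$(\FF(M):\nabla^{\fb^-}(\lambda+2\rho))\;=\;(M:\Delta^{\fb}(\lambda))\qquad\text{for all }\lambda\in\fh^\ast.$$

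To produce a quasi-inverse, I would re-run the construction of Section~6 with $\fb$ replaced by $\fb^-$. The analogue of Lemma~\ref{Lem2rho} identifies $-2\rho$ as the unique semi-infinite character for $\fb^-$, yielding a semi-regular bimodule $\check{S}$ and an exact functor $\GG:=\check{S}\otimes_U-\colon \cF^\Delta(\fg,\fb^-)\to\cF^\nabla(\fg,\fb)$ satisfying $\GG(\Delta^{\fb^-}(\nu))\cong\nabla^{\fb}(\nu-2\rho)$. Composing with the dualities of Corollary~\ref{CorDua} produces the candidate quasi-inverse $\check{\FF}:=\vee\circ\GG\circ\vee\colon \cF^\nabla(\fg,\fb^-)\to\cF^\Delta(\fg,\fb)$, which on Verma objects satisfies $\check{\FF}(\nabla^{\fb^-}(\lambda+2\rho))\cong\Delta^{\fb}(\lambda)$.

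The main obstacle is to show that $\check{\FF}\circ\FF$ is naturally isomorphic to $\id_{\cF^\Delta(\fg,\fb)}$ (and symmetrically). The cleanest route is to exhibit a $U$-bimodule isomorphism $\check{S}\otimes_U S\cong U$, which would directly induce the functor isomorphism; this can be checked by applying Corollary~\ref{CorAdH}(ii) to both $S$ and $\check{S}$, first matching the resulting vector spaces through the PBW theorem and then verifying that both left- and right-actions agree by a direct calculation using equations~\eqref{eqHact} and~\eqref{eqXact}. If this explicit identification proves too delicate, my fallback is a d\'evissage argument: full faithfulness on Verma modules follows from Corollary~\ref{CorBGGThm}(ii) (applied to both $\fb$ and $\fb^-$, whose Bruhat orders yield the same nonzero Hom spaces after the $2\rho$-shift, with the induced map on Homs verified to be nonzero by tracking the highest-weight vector through the bimodule); this extends to flag modules by induction using the five-lemma together with the $\Ext^1$-vanishing of Proposition~\ref{PropDN}(i), and essential surjectivity is then obtained by a symmetric induction on $\nabla$-flag length.
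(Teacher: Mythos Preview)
Your first two points (exactness and landing in $\cF^\nabla(\fg,\fb^-)$) are essentially the paper's argument, except that you cite the wrong bimodule isomorphism: to compute $S\otimes_U M$ you need the \emph{right} $U$-structure on $S$, which is the $(N,U)$-bimodule isomorphism $S\cong N^{\circledast}\otimes_N U$ coming from Lemma~\ref{LemLR}(i) (the very definition of the right action). Corollary~\ref{CorAdH}(ii) gives the $(U,N)$-bimodule isomorphism $S\cong U\otimes_N N^{\circledast}$, which is the wrong side; your phrase ``dualising the variable'' does not bridge this gap.

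Where your proposal diverges substantially is in constructing the quasi-inverse. The paper takes a much shorter route: the right adjoint $\GG:=\Hom_U(S,-)$ is the candidate inverse, and now Corollary~\ref{CorAdH}(ii) \emph{is} the relevant isomorphism, since it gives $\res^{\fg}_{\fn^-}\circ\GG\cong\Hom_N(N^{\circledast},\res^{\fg}_{\fn^-}-)$. Checking that the unit and counit of the adjunction $(\FF,\GG)$ are isomorphisms then reduces to a purely $N$-module statement, exactly as in Soergel's argument. No second semiregular bimodule, no dualities, no d\'evissage.

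Your route~(a) has a real gap: because $\check{\FF}=\vee\circ(\check{S}\otimes_U-)\circ\vee$ interposes two contravariant dualities, the composite $\check{\FF}\circ\FF$ is \emph{not} given by $\check{S}\otimes_U S\otimes_U-$, so an isomorphism $\check{S}\otimes_U S\cong U$ would not directly yield $\check{\FF}\circ\FF\cong\id$. Your fallback~(b) is also shakier than you suggest: the five-lemma induction on flag length requires control of the map $\FF$ induces on $\Ext^1$ between Verma modules, and Proposition~\ref{PropDN}(iii) only gives vanishing of $\Ext^1(\Delta(\lambda),\Delta(\mu))$ when $\lambda\not<\mu$; the nonvanishing cases have to be handled, and Proposition~\ref{PropDN}(i) (which concerns $\Ext^1(\Delta,\nabla)$) does not do this for you.
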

\begin{proof}
We start by considering the functor~$S\otimes_U-\,:\cF^\Delta(\fg,\fb)\to U$-Mod. Since 
$$\res^{\fg}_{\fn^-}S\otimes_U-\;\cong \;N^{\circledast}\otimes_N\res^{\fg}_{\fn^-}-,$$ Lemma~\ref{stupidlemma} implies that~$S\otimes_U-$ is exact. Lemma~\ref{TwistVerma} then implies that the image of objects in~$\cF^\Delta(\fg,\fb)$ are contained in~$ \cF^{\nabla}(\fg,\fb^-)$. We denote the corresponding exact functor by~$\FF$.

By tensor-hom adjunction, we have the right adjoint functor~$\Hom_U(S,-)$. By Corollary~\ref{CorAdH}(ii), we have an isomorphism of functors
$$\res^{\fg}_{\fn^-}\circ\Hom_{U}(S,-)\;\cong\; \Hom_{N}(N^{\circledast},\res^{\fg}_{\fn^-}-).$$
Hence, this yields an exact functor~$\GG: \cF^{\nabla}(\fg,\fb^-)\to \cF^\Delta(\fg,\fb)$. That the adjoint {functors} $(\FF,\GG)$ are mutually inverse follows as in \cite[Theorem~2.1]{Soergel}.
\end{proof}

We can compose the functor~$\FF$ with the duality functor~$(-)^\circledast$ on $\cC(\fg,\fh)$, {and} denote the corresponding functor by~$\DD$. 

\begin{cor}${}$\label{PropRD}
The functor~$\DD$ yields an exact contravariant equivalence ${\cF}^{\Delta}\;\tilde\to\;{\cF}^{\Delta}$, mapping~$\Delta(\lambda)$ to~$\Delta(-\lambda-2\rho)$.
\end{cor}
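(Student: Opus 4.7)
The plan is to recognise $\DD$ as the composition of two equivalences whose properties have already been established, so that only a bookkeeping argument remains. By definition, $\DD = (-)^{\circledast} \circ \FF$, and Lemma~\ref{LemFF} gives that $\FF \colon \cF^{\Delta}(\fg,\fb) \to \cF^{\nabla}(\fg,\fb^-)$ is an exact (covariant) equivalence of exact categories. Corollary~\ref{CorDua} provides that the $\fh$-finite duality $\circledast$ restricts to a contravariant equivalence of exact categories $\cF^{\Delta}(\fg,\fb) \to \cF^{\nabla}(\fg,\fb^-)$. Since $\circledast$ is (up to natural isomorphism) its own quasi-inverse on the subcategory of $\cC(\fg,\fh)$ of modules with finite dimensional weight spaces — and objects of $\cF^{\Delta}$ and $\cF^{\nabla}$ live there by Proposition~\ref{equivCond} — the same functor $\circledast$ yields a contravariant equivalence $\cF^{\nabla}(\fg,\fb^-) \to \cF^{\Delta}(\fg,\fb)$.

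Composing, $\DD = \circledast \circ \FF$ becomes an exact contravariant equivalence $\cF^{\Delta} \to \cF^{\Delta}$, as claimed. For the action on Verma modules, I combine Lemma~\ref{TwistVerma}, which says $\FF(\Delta(\lambda)) = S \otimes_U \Delta(\lambda) \cong \Delta(-\lambda-2\rho)^{\circledast}$, with the involutivity of $\circledast$ on $\cC(\fg,\fh)$, giving
\[
\DD(\Delta(\lambda)) \;=\; \bigl(\Delta(-\lambda-2\rho)^{\circledast}\bigr)^{\circledast} \;\cong\; \Delta(-\lambda-2\rho).
\]

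There is essentially no obstacle: everything is assembled from results already in place. The only point that merits a brief verification is that $\circledast$ does indeed restrict to a contravariant equivalence in the direction $\cF^{\nabla}(\fg,\fb^-) \to \cF^{\Delta}(\fg,\fb)$ (rather than only the direction stated in Corollary~\ref{CorDua}); but since $\circledast$ is an involution on $\cC(\fg,\fh)$, the equivalence from Corollary~\ref{CorDua} is its own quasi-inverse, so both directions hold simultaneously.
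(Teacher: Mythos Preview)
Your proof is correct and follows essentially the same approach as the paper, which simply cites Lemmata~\ref{LemFF}, \ref{TwistVerma} and Corollary~\ref{CorDua}. You have merely spelled out the composition and the involutivity of~$\circledast$ more explicitly than the paper's one-line proof.
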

\begin{proof}
This is immediate from Lemmata~\ref{LemFF}, \ref{TwistVerma} and Corollary~\ref{CorDua}.
\end{proof}

\subsection{Ringel duality and tilting modules}

We can also compose the functor~$\FF$ with the twist by the automorphism $\tau$, or equivalently, the functor~$\DD$ with the duality functor $\vee$ on $\cC(\fg,\fh)$ of~\ref{SecWeightM}. By comparing the following proposition with Theorem~\ref{ThmRingelAlg}(iii), we can interpret 
$$\RR:={}_{\tau}S\otimes_U\cong (-)^\vee\circ\DD$$ as the Ringel duality functor.
\begin{prop}[Ringel self-duality of $\bO$] \label{PropRD}
 The functor~$\RR$ yields an exact equivalence ${\cF}^{\Delta}\;\tilde\to\;{\cF}^{\nabla}$, mapping $\Delta(\lambda)$ to~$\nabla(-\lambda-2\rho)$.
\end{prop}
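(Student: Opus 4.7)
The plan is to recognize $\RR$ as the composition of two previously established contravariant exact equivalences, namely $\DD$ and~$\vee$, and then to identify this composite with ${}_\tau S \otimes_U -$. The statement is then essentially a formal consequence of the preceding corollary and Corollary~\ref{CorDua}.

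First, I would assemble the equivalence. The preceding corollary gives a contravariant exact equivalence $\DD\colon \cF^\Delta \tilde\to \cF^\Delta$ sending $\Delta(\lambda)$ to $\Delta(-\lambda-2\rho)$, and Corollary~\ref{CorDua} supplies a contravariant exact equivalence $\vee\colon \cF^\Delta \tilde\to \cF^\nabla$ sending $\Delta(\mu)$ to $\nabla(\mu)$. Their composite is therefore a covariant exact equivalence $\cF^\Delta \tilde\to \cF^\nabla$ which carries
$$\Delta(\lambda) \;\mapsto\; \Delta(-\lambda-2\rho) \;\mapsto\; \nabla(-\lambda-2\rho),$$
matching the behavior claimed for $\RR$ on Verma modules.

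Second, I would verify that this composite is actually $\RR = {}_\tau S \otimes_U -$. Unwinding the definitions, $\DD(M) = \FF(M)^\circledast$ with $\FF = S \otimes_U -$, and by~\ref{SecWeightM} the duality $\vee$ is $\circledast$ twisted by the anti-involution $\tau$; hence $\DD(M)^\vee \cong {}_\tau(\FF(M)^{\circledast\circledast})$. By Lemma~\ref{stupidlemma} the module $\FF(M) \in \cF^\nabla(\fg,\fb^-)$ has finite dimensional weight spaces, so $\FF(M)^{\circledast\circledast} \cong \FF(M)$, giving $\DD(M)^\vee \cong {}_\tau \FF(M)$. Finally, the $\tau$-twist of the left $\fg$-action on $S \otimes_U M$ is by construction the same as $({}_\tau S) \otimes_U M$, completing the identification.

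No genuine obstacle arises: once the preceding corollary is established the proof is purely formal, the only mild bookkeeping being the final identification of the $\tau$-twist on $\FF(M)$ with the bimodule construction ${}_\tau S \otimes_U M$.
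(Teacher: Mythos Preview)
Your proposal is correct and follows essentially the same approach as the paper: the paper's proof simply cites Lemmata~\ref{LemFF}, \ref{TwistVerma} and Corollary~\ref{CorDua}, while you route through the preceding corollary on~$\DD$ (which is itself proved from those same lemmata) and then compose with~$\vee$. Your extra paragraph verifying the identification $(-)^\vee\circ\DD \cong {}_\tau S\otimes_U -$ is a welcome addition, since the paper merely asserts this isomorphism in the line defining~$\RR$; note that the finite-dimensionality of weight spaces of $\FF(M)$ is perhaps more cleanly justified by the remark after~\eqref{eqDflag} (that $\cF^\nabla(\fg,\fb^-)\subset\cC(\fg,\fh)$, since $\fb^-$ is also Dynkin) than by Lemma~\ref{stupidlemma}.
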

\begin{proof}
This is immediate from Lemmata~\ref{LemFF}, \ref{TwistVerma} and Corollary~\ref{CorDua}.
\end{proof}

\begin{rem}
By Theorem~\ref{ThmRingelAlg}(iii), we can thus state that~$\bO_{\lbra\lambda\rbra}$ is {\em Ringel dual} to~$\bO_{\lbra-\lambda-2\rho\rbra}$. \end{rem}

The following Proposition represents the combinatorial shadow of the Ringel duality between $\bO_{\lbra\lambda\rbra}$ and~$\bO_{\lbra-\lambda-2\rho\rbra}$, see Theorem~\ref{ThmRingelAlg}(iv).
\begin{prop} \label{PropRing2}
Let $\CCC\subset\fh^\ast$ be a lower finite coideal and~$\nu\in\CCC$. There exists a module $T_{\CCC}(\nu)\in \cF^{\nabla}[\CCC]$ such that, for all $\kappa\in\CCC$,
$$(T_{\CCC}(\nu):\nabla(\kappa))\;=\;[\Delta(-\kappa-2\rho):L(-\nu-2\rho)]\qquad\mbox{and}\qquad \Ext^1_{\bO}(T_{\CCC}(\nu),\nabla(\kappa))\;=\;0.$$
\end{prop}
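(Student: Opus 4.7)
The plan is to realize $T_{\CCC}(\nu)$ as the image under the Ringel duality functor $\RR$ of Proposition~\ref{PropRD} of an appropriate projective module in a truncated category. First I will set
$$\KKK \;:=\; \{-\kappa - 2\rho \mid \kappa \in \CCC\} \;\subset\; \fh^{\ast}.$$
The affine involution $\kappa \mapsto -\kappa - 2\rho$ reverses the partial order $\le$, so the hypothesis that $\CCC$ is a lower finite coideal translates into $\KKK$ being an upper finite ideal, with $-\nu - 2\rho \in \KKK$. Theorem~\ref{ThmProj} then supplies the projective $P := P_{\KKK}(-\nu - 2\rho) \in \cF^{\Delta}[\KKK]$ satisfying $(P : \Delta(\mu)) = [\Delta(\mu) : L(-\nu-2\rho)]$ for all $\mu \in \KKK$.

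Defining $T_{\CCC}(\nu) := \RR(P)$, the exact equivalence $\RR \colon \cF^{\Delta} \stackrel{\sim}{\to} \cF^{\nabla}$ of Proposition~\ref{PropRD} sends each $\Delta(\mu)$ to $\nabla(-\mu - 2\rho)$. Applying $\RR$ to a $\Delta$-flag of $P$ therefore yields a $\nabla$-flag of $T_{\CCC}(\nu)$ with
$$(T_{\CCC}(\nu) : \nabla(\kappa)) \;=\; (P : \Delta(-\kappa - 2\rho)) \;=\; [\Delta(-\kappa-2\rho) : L(-\nu-2\rho)]$$
for $\kappa \in \CCC$, and zero otherwise (since $-\kappa - 2\rho \notin \KKK$ when $\kappa \notin \CCC$). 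This establishes that $T_{\CCC}(\nu) \in \cF^{\nabla}[\CCC]$ with the prescribed multiplicities.

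For the $\Ext^{1}$-vanishing I will take an arbitrary short exact sequence $0 \to \nabla(\kappa) \to E \to T_{\CCC}(\nu) \to 0$ in $\bO$ and exhibit a splitting. The combination of Corollary~\ref{resolv} with the duality $\vee$ of Corollary~\ref{CorDua} shows that $\cF^{\nabla}$ is closed under extensions in $\bC$, whence $E \in \cF^{\nabla}$. Applying the quasi-inverse of $\RR$ produces a short exact sequence
$$0 \to \Delta(-\kappa - 2\rho) \to \RR^{-1}(E) \to P \to 0$$
in $\cF^{\Delta}$; both outer terms have $\Delta$-labels in $\KKK$, so $\RR^{-1}(E) \in \cF^{\Delta}[\KKK] \subset {}^{\KKK}\bO$. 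Projectivity of $P$ in ${}^{\KKK}\bO$ splits this sequence, and reapplying $\RR$ yields a splitting of the original one.

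The delicate step will be verifying that $\cF^{\nabla}$ is closed under extensions, which is what allows $\RR^{-1}$ to act on the chosen extension and what guarantees that the transported sequence stays inside $\cF^{\Delta}[\KKK]$, where projectivity of $P$ becomes available. Once that closure is in hand, the rest is formal transport through the Ringel duality equivalence.
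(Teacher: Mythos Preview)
Your proof is correct and follows the same strategy as the paper: define $\KKK=\{-\kappa-2\rho\mid\kappa\in\CCC\}$, set $T_{\CCC}(\nu)=\RR(P_{\KKK}(-\nu-2\rho))$, and read off the $\nabla$-multiplicities from Theorem~\ref{ThmProj}(i)(b) via Proposition~\ref{PropRD}. The only difference is in the treatment of the $\Ext^1$-vanishing: the paper simply writes $\Ext^1_{\bO}(T_{\CCC}(\nu),\nabla(\kappa))\cong\Ext^1_{\bO}(P_{\KKK}(-\nu-2\rho),\Delta(-\kappa-2\rho))=0$, invoking Proposition~\ref{PropRD} for the first isomorphism and (implicitly) extension fullness of ${}^{\KKK}\bO$ in $\bO$ for the second; you instead unpack this by showing that any extension lies in $\cF^{\nabla}$, transporting it through $\RR^{-1}$ into $\cF^{\Delta}[\KKK]\subset{}^{\KKK}\bO$, and splitting there --- which is precisely the content that makes the paper's one-line claim valid.
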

\begin{proof}
We define the upper finite ideal
$$\KKK\;:=\;\{-\mu-2\rho\,|\, \mu\in\CCC\}$$
and the module $N:=\RR(P_{\KKK}(\lambda))$, with $\lambda:=-\nu-2\rho$. We use freely the results of Theorem~\ref{ThmProj}. By Proposition~\ref{PropRD}, we have $N\in \cF^{\nabla}[\CCC]$ with
$$(N:\nabla(\kappa))\;=\; (P_{\KKK}(\lambda):\Delta(-\kappa-2\rho))\;=\;[\Delta(-\kappa-2\rho):L(\lambda)].$$
By Proposition~\ref{PropRD}, we also have
$$\Ext^1_{\bO}(N,\nabla(\mu))\;=\;\Ext^1_{\bO}(P_{\KKK}(\lambda),\Delta(-\mu-2\rho))\;=\;0\quad\mbox{for all $\mu\in\CCC$}.$$
This concludes the proof.\end{proof}

\begin{ex}
For $\nu\in\fh^\ast$ we set $\CCC:=\{\lambda\in\fh^\ast\,|\, \lambda\ge\nu\}$. Then  $T_{\CCC}(\nu)=\nabla(\nu)$.
\end{ex}

\begin{rem}
The vanishing of extensions in Proposition~\ref{PropRing2} implies that inside the quotient ${}_{\LLL}\bO$, for $\LLL=\fh^\ast\backslash\CCC$, the module $T_{\CCC}(\nu)$ becomes a tilting module, that is a module with Verma and dual Verma flag. This follows from \cite[Theorem~4]{Ringel} and the results in Section~\ref{Sec4}.
\end{rem}

As a special case of the above remark, we have the following corollary, which also follows from Corollary~\ref{CorEqBlocks}(ii). %This result was first established in \cite{NamT}, through different methods.

\begin{cor}
If $\lambda\in\fh^\ast$ is antidominant, we have a $\fg$-module $T(\mu)$ for each $\nu\in \lbra\lambda\rbra$ which is in~$\cF^{\Delta}\cap\cF^{\nabla}$ and satisfies
$$(T(\nu):\nabla(\kappa))\;=\;[\Delta(-\kappa-2\rho):L(-\nu-2\rho)]\quad\mbox{for all $\kappa\in \lbra\lambda\rbra$.}$$

\end{cor}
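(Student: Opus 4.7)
My plan is to realize $T(\nu)$ as the module $T_\CCC(\nu)$ from Proposition~\ref{PropRing2} for the coideal $\CCC := \{\mu \in \fh^\ast \mid \mu \ge \lambda\}$, and then to use the antidominance of $\lambda$ to collapse the Serre quotient appearing in the preceding remark. The set $\CCC$ is a coideal by construction, and it is lower finite because the order $\le$ is interval finite for a Dynkin Borel (Proposition~\ref{equivCond}(iii)); antidominance gives $w\cdot\lambda \ge \lambda$ for every $w \in W[\lambda]$, so $\lbra\lambda\rbra \subset \CCC$. Applying Proposition~\ref{PropRing2} with this $\CCC$ and a given $\nu \in \lbra\lambda\rbra$ yields $T_\CCC(\nu) \in \cF^\nabla[\CCC]$ satisfying $(T_\CCC(\nu):\nabla(\kappa)) = [\Delta(-\kappa-2\rho):L(-\nu-2\rho)]$ and $\Ext^1_\bO(T_\CCC(\nu),\nabla(\kappa)) = 0$ for all $\kappa \in \CCC$. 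This already yields the $\cF^\nabla$-part and the multiplicity identity restricted to $\kappa \in \lbra\lambda\rbra$.

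Next, I verify that $T_\CCC(\nu)$ lies in the single block $\bO_{\lbra\lambda\rbra}$. Since $T_\CCC(\nu)$ has a finite $\nabla$-flag, it suffices to check that every $\nabla(\kappa)$ appearing with non-zero multiplicity has $\kappa \in \lbra\lambda\rbra$. A short dot-action computation shows $w \cdot (-\mu - 2\rho) = -(w\cdot\mu) - 2\rho$, so the involution $\mu \mapsto -\mu - 2\rho$ preserves $W$-dot-orbits and hence the block decomposition. Consequently, for $\kappa \in \CCC \setminus \lbra\lambda\rbra$ the weights $-\kappa - 2\rho$ and $-\nu - 2\rho$ sit in distinct blocks, forcing $[\Delta(-\kappa-2\rho):L(-\nu-2\rho)] = 0$.

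I then invoke the remark preceding the corollary: the $\Ext^1$-vanishing combined with \cite[Theorem~4]{Ringel} upgrades $T_\CCC(\nu)$ to a tilting module inside the Serre quotient ${}_\LLL\bO$ for $\LLL := \fh^\ast \setminus \CCC$, so it acquires a $\Delta$-flag in the quotient. The decisive observation is $\LLL \cap \lbra\lambda\rbra = \emptyset$, which forces ${}^\LLL\bO \cap \bO_{\lbra\lambda\rbra} = 0$ and makes the quotient functor an equivalence on $\bO_{\lbra\lambda\rbra}$. Because $T_\CCC(\nu) \in \bO_{\lbra\lambda\rbra}$ by the previous step, its $\Delta$-flag in the quotient lifts to a genuine $\Delta$-flag in $\bO$, and setting $T(\nu) := T_\CCC(\nu)$ gives a module in $\cF^\Delta \cap \cF^\nabla$ with the stated multiplicities.

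The main subtlety is the block-concentration step, which is essential for replacing the Serre quotient by the block itself; everything else is a direct specialization of Proposition~\ref{PropRing2} and the tilting-module remark. The alternative path suggested by the authors uses Corollary~\ref{CorEqBlocks}(ii) instead: pick $n$ large enough that $\nu \in \lbra\lambda\rbra_n$, take the classical tilting module $T_n(\nu) \in \cO(\fg_n,\fb_n)$, and transport it through the equivalence $\Psi$, using that $\Psi$ sends Verma and dual Verma modules to Verma and dual Verma modules.
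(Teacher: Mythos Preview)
Your proposal is correct and follows the paper's intended route: the paper merely states the corollary as ``a special case of the above remark'' (with the alternative via Corollary~\ref{CorEqBlocks}(ii) noted in passing), and you have carefully unpacked that remark, supplying the block-concentration argument needed to show that the Serre quotient ${}_\LLL\bO$ is an equivalence on $\bO_{\lbra\lambda\rbra}$ so that the $\Delta$-flag lifts to $\bO$. The only detail the paper leaves entirely implicit---that the involution $\mu\mapsto -\mu-2\rho$ preserves dot-orbits and hence blocks, forcing $T_\CCC(\nu)\in\bO_{\lbra\lambda\rbra}$---is exactly the step you identify as the main subtlety, and your verification of it is sound.
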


%%%%%%%%%%%%%%%%%%%%%%%%%%%%%%%%%%%%%%%%%%%%%%%%%%%%%%%%%%%%%%%%%%%%%%%%%%%%%%%%%%%%%%%

\appendix

\section{Homological algebra in Serre quotient categories}

\label{AppSerre}

\subsection{Serre quotient categories}We recall some results from \cite[Chapitre~III]{Gabriel}. We fix an abelian category $\cC$ with Serre subcategory~$\cB\subset\cC$ for the entire subsection.

\subsubsection{}The Serre quotient category~$\cC/\cB$ is defined by setting~$\Ob(\cC/\cB){:}=\Ob \cC$ and for $X,Y\in\cC$
$$\Hom_{\cC/\cB}(X,Y)\;:=\;\varinjlim \Hom_{\cC}(X',Y/Y'),$$
where~$X'$, resp. $Y'$, runs over all subobjects in~$\cC$ (ordered by inclusion) of~$X$, resp. $Y$, such that~$X/X'\in\cB\ni Y'$. For the precise definition of the composition of two morphisms in~$\cC/\cB$ we refer to \cite{Gabriel}.

By \cite[Proposition~III.1.1]{Gabriel}, the category~$\cC/\cB$ is abelian and we have an exact functor~$\pi:\cC\to\cC/\cB$, which is the identity on objects and is given on morphisms by the canonical morphism from
$\Hom_{\cC}(X,Y)$ to $\varinjlim \Hom_{\cC}(X',Y/Y').$

%\begin{rem}
%It follows easily that~$X\cong 0$ inside $\cC/\cB$ if and only if $X\in \cB$.
%\end{rem}

The following is the universality property of Serre quotient categories.
\begin{lemma}\cite[Corollaires~III.1.2 and III.1.3]{Gabriel}\label{factorspi}
Assume that for an abelian category $\cC'$ and an exact functor~$\Ffun :\cC\to\cC'$, we have $\Ffun( X)=0$ for all $X\in\cB$. {Then} there exists a unique functor~$\widetilde{\Ffun }$ {which makes the diagram }
$$\xymatrix{
\cC\ar[rr]^{\Ffun}\ar[d]^{\pi}&&\cC'\\
\cC/\cB\ar[rru]^{\widetilde{{\Ffun}}}
}$$
{commutative.}
Furthermore, the functor $\widetilde{{\Ffun}}$ is exact
and  $\widetilde{{\Ffun}}(f)={\Ffun}(g)$ for any $f\in \Hom_{\cC/\cB}(X,Y)$ represented by some $g\in \Hom_{\cC}(X',Y/Y')$.
\end{lemma}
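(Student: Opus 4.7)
The plan is essentially forced by the universal nature of the statement: since $\pi$ is the identity on objects, one must set $\widetilde{\Ffun}(X):=\Ffun(X)$, and everything reduces to defining and analysing $\widetilde{\Ffun}$ on morphisms. The key preparatory observation is that whenever $Z\in\cB$ sits as the kernel or cokernel of a morphism in $\cC$, exactness of $\Ffun$ combined with $\Ffun(Z)=0$ forces $\Ffun$ to send that morphism to an isomorphism. In particular, for any $X'\subset X$ with $X/X'\in\cB$ and any $Y'\subset Y$ with $Y'\in\cB$, the canonical maps $\iota:X'\hookrightarrow X$ and $q:Y\twoheadrightarrow Y/Y'$ become isomorphisms after applying $\Ffun$.

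Given this, for a morphism $f\in\Hom_{\cC/\cB}(X,Y)$ represented by some $g:X'\to Y/Y'$, I would define
$$\widetilde{\Ffun}(f)\;:=\;\Ffun(q)^{-1}\circ\Ffun(g)\circ\Ffun(\iota)^{-1}\;:\;\Ffun(X)\to\Ffun(Y).$$
The first substantive step is to verify independence of the representative. Two representatives $g_1,g_2$ of the same $f$ admit a common refinement in the direct system: one produces $X_3'\subset X_1'\cap X_2'$ and $Y_3'\supset Y_1'+Y_2'$ with $X/X_3',Y_3'\in\cB$ such that the induced morphisms $X_3'\to Y/Y_3'$ obtained by composing $g_1,g_2$ with the canonical maps agree. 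Applying $\Ffun$ to this equality, together with the isomorphisms from the preparatory observation, forces the two candidate definitions of $\widetilde{\Ffun}(f)$ to coincide. Functoriality is then a bookkeeping exercise: for a composable pair $(f_1,f_2)$ one chooses representatives whose intermediate object can be adjusted by subobjects in $\cB$ so that a representative of $f_2\circ f_1$ becomes available, and then applies $\Ffun$.

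Commutativity of the diagram follows because $\pi(\phi)$ admits the trivial representative $g=\phi$ with $X'=X$ and $Y'=0$, so $\iota$ and $q$ are identities and $\widetilde{\Ffun}(\pi(\phi))=\Ffun(\phi)$. For uniqueness, note that every morphism $f$ in $\cC/\cB$ factors as $\pi(q)^{-1}\circ\pi(g)\circ\pi(\iota)^{-1}$, where $\pi(\iota)$ and $\pi(q)$ are invertible in $\cC/\cB$ because their kernels and cokernels lie in $\cB$; any functor satisfying $\widetilde{\Ffun}\circ\pi=\Ffun$ must respect this factorisation, fixing $\widetilde{\Ffun}$ uniquely. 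Exactness of $\widetilde{\Ffun}$ I would prove by lifting short exact sequences from $\cC/\cB$ to diagrams in $\cC$ whose defects at each spot lie in $\cB$, applying $\Ffun$, and using exactness together with $\Ffun(\cB)=0$. The hard part will be the well-definedness check, because it requires a careful diagram chase combining the direct limit structure of $\Hom_{\cC/\cB}(X,Y)$ with the isomorphisms produced by the preparatory observation; everything else is either forced by universality or a routine computation once the description of the quotient's Hom-sets has been unpacked.
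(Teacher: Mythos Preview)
Your proposal is correct and follows the standard argument. However, the paper does not supply its own proof of this lemma: it simply records the statement and cites \cite[Corollaires~III.1.2 and III.1.3]{Gabriel} for the proof, so there is nothing to compare against beyond noting that your argument is precisely the one Gabriel gives.
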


\begin{lemma}\label{ProjBC}
Consider a projective object $P$ in~$\cC$ with a unique maximal subobject $X$, and assume $P/X$ is not an object of~$\cB$.
 Then $P$ is also projective in~$\cC/\cB$, and {there is an isomorphism of functors}
$$\Hom_{\cC/\cB}(\pi P,\pi -)\;=\;\Hom_{\cC}(P,-).$$
\end{lemma}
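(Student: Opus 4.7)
The plan is to first establish the natural isomorphism of Hom-groups claimed in the lemma, and then deduce projectivity of $\pi P$ in $\cC/\cB$ from projectivity of $P$ in $\cC$ by lifting short exact sequences from $\cC/\cB$ back to $\cC$.

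The key observation is that because $P$ has $X$ as its unique maximal subobject, any proper subobject $P'\subsetneq P$ satisfies $P'\subseteq X$, so $P/P'$ admits a surjection onto the simple quotient $P/X$. Since $\cB$ is closed under quotients, the hypothesis $P/X\notin\cB$ forces $P/P'\notin\cB$ whenever $P'\subsetneq P$. Consequently, in the direct limit
$$\Hom_{\cC/\cB}(\pi P,\pi Y)\;=\;\varinjlim \Hom_{\cC}(P',Y/Y')$$
only the terms with $P'=P$ contribute, and the system simplifies to $\varinjlim_{Y'\in\cB,\, Y'\subseteq Y}\Hom_{\cC}(P,Y/Y')$.

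Next I would show that all transition maps in this simplified system are isomorphisms. Surjectivity of $\Hom_{\cC}(P,Y/Y_1)\to \Hom_{\cC}(P,Y/Y_2)$ for $Y_1\subseteq Y_2$ in $\cB$ follows from projectivity of $P$ in $\cC$. For injectivity, any element of the kernel factors through $Y_2/Y_1\in\cB$, so its image lies in $\cB$; but a nonzero morphism $P\to Z$ with $Z\in\cB$ is impossible, since its image would simultaneously be an object of $\cB$ and a nonzero quotient of $P$ surjecting onto $P/X\notin\cB$. Taking $Y_1=0$ yields the desired natural isomorphism $\Hom_{\cC/\cB}(\pi P,\pi Y)\cong\Hom_{\cC}(P,Y)$ in $Y$.

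Finally, for projectivity of $\pi P$ in $\cC/\cB$ I would invoke the standard consequence of Gabriel's construction that every short exact sequence $0\to A\to B\to C\to 0$ in $\cC/\cB$ is isomorphic to the image under $\pi$ of a short exact sequence in $\cC$ (built by choosing a representative of the defining epimorphism and taking its kernel and image in $\cC$). Applying $\Hom_{\cC/\cB}(\pi P,-)$ and using the natural isomorphism just established reduces the required exactness to exactness of $\Hom_{\cC}(P,-)$ on the lifted sequence, which is automatic by projectivity of $P$ in $\cC$. The only mildly delicate point is the direct-limit analysis in the second step; no substantial obstacle arises once the subobjects of $P$ have been pinned down.
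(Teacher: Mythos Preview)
Your argument for the natural isomorphism $\Hom_{\cC/\cB}(\pi P,\pi-)\cong\Hom_{\cC}(P,-)$ is correct and matches the paper's proof: both observe that no proper subobject of $P$ has quotient in $\cB$, reduce the direct limit to one over the $Y'\subset Y$ with $Y'\in\cB$, and then use $\Hom_{\cC}(P,Y')=0$ together with projectivity of $P$ to see that all transition maps are isomorphisms.

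The only difference is in deducing projectivity of $\pi P$. You lift short exact sequences from $\cC/\cB$ back to $\cC$ and use the natural isomorphism to transfer exactness of $\Hom_{\cC}(P,-)$. The paper instead notes that $\Hom_{\cC}(P,-):\cC\to\Ab$ is exact and vanishes on $\cB$, hence by the universal property of the Serre quotient (Lemma~\ref{factorspi}) factors uniquely through $\pi$ as an exact functor; commutativity of the resulting triangle forces this factorisation to be $\Hom_{\cC/\cB}(\pi P,-)$. The paper's route avoids discussing how to lift exact sequences and instead quotes a lemma already in hand, but your approach is equally valid.
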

\begin{proof}
By assumption, there is no subobject~$P'\subset P$ with $P/P'\in\cB$. This implies
$$\Hom_{\cC/\cB}(P,M)\;=\;\varinjlim\Hom_{\cC}(P,M/M')\quad \mbox{for all $M\in\cC$,}$$
where the limit is taken over all $\cB\ni M'\subset M$. By assumption, $\Hom_{\cC}(P,M')=0$, so all morphisms in the limit are isomorphisms
and the diagram
$$\xymatrix{
\cC\ar[rrrr]^{\Hom_{\cC}(P,-)}\ar[rrd]^{\pi}&&&&\Ab\\
&&\cC/\cB\ar[rru]_{\Hom_{\cC/\cB}(P,-)}
}$$
commutes. {The exactness of} $\Hom_{\cC/\cB}(P,-)$ thus follows from Lemma~\ref{factorspi}.
\end{proof}

\subsection{Example: Locally unital algebras}

Let $A$ be a locally unital algebra with mutually orthogonal idempotents $\{e_\alpha\,|\,\alpha\in\Lambda\}$. For any subset $\Lambda'\subset \Lambda$, we have the locally unital algebra 
$$A'\;=\;\bigoplus_{\alpha,\beta\in \Lambda'}e_\alpha A e_\beta.$$

\begin{lemma}\label{LemQuoAlg}
Set {\rm $\cC=A$-Mod} and $\cC'=A'${\rm -Mod}.
With
$\cB$ the Serre subcategory of $A$-modules which satisfy $e_\alpha M=0$ for all $\alpha\in\Lambda'$, we have $\cC/\cB\cong \cC'$.
\end{lemma}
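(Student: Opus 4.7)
The plan is to mimic the standard Morita-style result that, for a single idempotent $e$ in a ring $R$, the Serre quotient $R\text{-Mod}/\{M\mid eM=0\}$ is equivalent to $eRe\text{-Mod}$, adapted to the locally unital setting where the ``idempotent'' $\varepsilon=\sum_{\alpha\in\Lambda'}e_\alpha$ may not literally lie in $A$.

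First I would define the functor $F:\cC\to\cC'$ by $F(M):=\bigoplus_{\alpha\in\Lambda'}e_\alpha M$ with its natural $A'$-action, noting that $F$ is exact (it is a direct-summand functor in the weight decomposition $M=\bigoplus_\alpha e_\alpha M$) and that $F(M)=0$ iff $M\in\cB$. Lemma~\ref{factorspi} then supplies an exact functor $\widetilde{F}:\cC/\cB\to\cC'$ with $F=\widetilde{F}\circ\pi$, and it remains to verify that $\widetilde{F}$ is an equivalence.

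To produce a quasi-inverse, I would introduce the $(A,A')$-bimodule $A\varepsilon:=\bigoplus_{\alpha\in\Lambda'}Ae_\alpha$ and the functor $G:=A\varepsilon\otimes_{A'}-:\cC'\to\cC$. The pair $(G,F)$ is adjoint by the standard tensor-hom computation, using that any element of an $A'$-module is killed by all but finitely many $e_\alpha$, so that $A'$-linear maps into $\prod_{\alpha\in\Lambda'}e_\alpha M$ land in the direct sum $F(M)$. The unit $\eta_N:N\to FGN$ is immediately an isomorphism, since $F(A\varepsilon\otimes_{A'}N)\cong \varepsilon A\varepsilon\otimes_{A'}N\cong A'\otimes_{A'}N\cong N$. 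For the counit $\epsilon_M:A\varepsilon\otimes_{A'}FM\to M$, the cokernel $M/A\varepsilon M$ is killed by every $e_\beta$ with $\beta\in\Lambda'$ (because $e_\beta M\subset A\varepsilon M$), so lies in $\cB$; applying the exact functor $F$ to the four-term sequence $0\to\ker\epsilon_M\to GF(M)\to M\to\operatorname{coker}\epsilon_M\to 0$ and invoking the triangle identity $F\epsilon\circ\eta F=\id_F$ together with $FG\cong\id$ forces $F(\ker\epsilon_M)=0$ as well. Thus $\pi(\epsilon_M)$ becomes an isomorphism in $\cC/\cB$, so $\pi G$ and $\widetilde{F}$ are mutually inverse equivalences.

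The argument is essentially bookkeeping; the only minor subtlety is that $\varepsilon$ is a formal infinite sum when $|\Lambda'|=\infty$ and does not lie in $A$, but local unitality guarantees that both $\varepsilon M=\bigoplus_{\alpha\in\Lambda'}e_\alpha M$ and $A\varepsilon=\bigoplus_{\alpha\in\Lambda'}Ae_\alpha$ are well-defined $A$-modules and $(A,A')$-bimodules respectively, so the classical idempotent-truncation proof applies verbatim.
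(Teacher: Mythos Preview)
Your proof is correct and follows essentially the same strategy as the paper: both introduce the truncation functor $F(M)=\bigoplus_{\alpha\in\Lambda'}e_\alpha M$ and the induction functor $G=A\varepsilon\otimes_{A'}-$, verify $FG\cong\id_{\cC'}$, and invoke Lemma~\ref{factorspi} to factor $F$ through the quotient. The only difference is in the final step: the paper checks faithfulness of $\widetilde F$ directly by chasing a representative $g\in\Hom_\cC(M',N/N')$ of a morphism killed by $\widetilde F$ and showing $\im g\in\cB$, whereas you package the same content via the adjunction, proving that the counit $\epsilon_M$ has kernel and cokernel in $\cB$ so that $\pi G$ is a genuine quasi-inverse. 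Your route is arguably a touch cleaner, since the paper's intermediate assertion that $F$ itself (rather than $\widetilde F$) is full from $FG\cong\id$ is not literally true in general; your counit argument sidesteps this and directly yields the equivalence.
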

\begin{proof}
Consider the exact functor
$${\Ffun}:\cC\to \cC',\quad M\mapsto \bigoplus_{\alpha\in\Lambda'}e_\alpha M$$
and the right exact functor
$${\Kfun}=X\otimes_{A'}-:\cC'\to\cC\quad\mbox{with $X=\bigoplus_{\alpha\in\Lambda'}Ae_\alpha$}. $$
Clearly, the composition ${\Ffun}\circ {\Kfun}$ is isomorphic the identity on $\cC'$, so $\Ffun$ is dense and full. It then follows from Lemma~\ref{factorspi} that we have a dense and full functor $\widetilde{{\Ffun}}:\cC/\cB\to \cC'$. 

Now we prove that $\widetilde{{\Ffun}}$ is faithful. Assume that $\widetilde{{\Ffun}}(f)=0$ for $f\in\Hom_{\cC/\cB}(M,N)$. We take a representative $g\in \Hom_{{\mathcal{C}}}(M',N/N')$ of $f$ with $M/M'\in\cB\ni N'$. Lemma~\ref{factorspi} implies ${\Ffun}(g)=0$. This means that the restriction of $g$ to $\bigoplus_{\alpha\in\Lambda'}e_\alpha M'$ is zero. It follows that $\im g\in\cB$. So we can define $N''\in\cB$ with $N'\subset N''\subset N$ and $N''/N'=\im g$. Hence, {$g$ is the mapped to zero by the map} $\Hom_{\cC}(M',N/N')\to\Hom_{\cC}(M',N/N'')$, which shows that $f=0$.
\end{proof}

\section{Ringel duality}
% The category of finite dimensional left modules is denoted by~$A$-mod. We denote the number of isomorphism classes of simple modules in~$A$-mod by~$n$.

\subsection{Quasi-hereditary algebras}\label{QHAlg}
For a {\em finite dimensional, unital and basic} algebra~$A$, fix an orthogonal decomposition of the identity element $1=e_1+e_2+\cdots+e_n$ into primitive idempotents. We write $(A,\mathsf{e})$ when we consider the algebra $A$ together with the above {\em ordered} choice of primitive idempotents. Set 
$$\varepsilon_i{:}=e_i+\cdots+e_n\quad\mbox{for all $1\le i\le n,\;\,$ and }\quad\;\varepsilon_{n+1}{:}=0.$$

\subsubsection{}The {\bf standard modules} are given by
$$\Delta(i)\;=\;Ae_i/(A\varepsilon_{i+1}Ae_i),\qquad1\le i\le n.$$
We also have the projective cover $P(i)=Ae_i$ of the simple module~$L(i)$. 
{Then} $\Delta(n)=P(n)$ and, if $A$ has finite global dimension, also $\Delta(1)=L(1)$.

We denote the category of finite dimensional modules with $\Delta$-flag by~$\cF^{\Delta}_A$.
Dually {we} define the costandard modules~$\nabla(i)$ and the category~$\cF^{\nabla}_A$.

The algebra $(A,\mathsf{e})$ is {\bf quasi-hereditary}, see~\cite{CPS}, if and only if  $[\Delta(i):L(i)]=1$ for all $1\le i\le n$ and
 $A\in \cF_A^\Delta$. The following well-known lemma follows from the definition.

\begin{lemma}\label{LemBC}
If $(A,\mathsf{e})$ is quasi-hereditary, then both $\varepsilon_iA\varepsilon_i$ and~$A/(A\varepsilon_{i+1}A)$ are quasi-hereditary for any $1\le i\le n$. The order on the simple modules is induced from the one for $A$.
\end{lemma}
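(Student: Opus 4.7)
The plan is to verify the two defining conditions of quasi-heredity separately for $B:=A/A\varepsilon_{i+1}A$ and $C:=\varepsilon_iA\varepsilon_i$, equipped with the ordered primitive idempotents $(\bar e_1,\ldots,\bar e_i)$ and $(e_i,\ldots,e_n)$ respectively. To avoid ambiguity I write $\Delta_X(j)$ and $L_X(j)$ for the standard and simple modules attached to $X\in\{A,B,C\}$.

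For the quotient $B$, the starting point is the inclusion $A\varepsilon_{i+1}A\subseteq A\varepsilon_{j+1}A$ for every $j\le i$, which is immediate from $\varepsilon_{i+1}=\varepsilon_{i+1}\varepsilon_{j+1}$. Combined with the identity $\overline{\varepsilon_{j+1}}=\bar e_{j+1}+\cdots+\bar e_i$ in $B$, this yields $B\,\overline{\varepsilon_{j+1}}\,B=A\varepsilon_{j+1}A/A\varepsilon_{i+1}A$ and hence $\Delta_B(j)\cong \Delta_A(j)$ for all $j\le i$; the multiplicity-one condition is then inherited from $A$. The real content is producing a $\Delta_B$-flag on $B$: my plan is to argue that the left ideal $A\varepsilon_{i+1}A$, being the trace of $\{Ae_j:j>i\}$ in $A$, is itself $\Delta_A$-filtered with factors drawn from $\{\Delta_A(l):l>i\}$, so that removing it from a $\Delta_A$-flag of $A$ leaves a $\Delta_A$-flag on $B$ with surviving factors $\Delta_A(l)=\Delta_B(l)$ for $l\le i$.

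For the idempotent truncation $C$, the exact functor $\varepsilon_i\cdot\colon A\mbox{-mod}\to C\mbox{-mod}$ does most of the work: it annihilates exactly the simples $L_A(j)$ with $j<i$ and sends $L_A(j)\mapsto L_C(j)$ for $j\ge i$. Setting $\eta_{j+1}:=e_{j+1}+\cdots+e_n=\varepsilon_{j+1}$ for $j\ge i$, a short unwinding of definitions identifies $\Delta_C(j)$ with $\varepsilon_i\Delta_A(j)$, whence
$$[\Delta_C(j):L_C(j)]=[\varepsilon_i\Delta_A(j):\varepsilon_iL_A(j)]=[\Delta_A(j):L_A(j)]=1,$$
and a $\Delta_C$-flag on $C$ is obtained by applying $\varepsilon_i\cdot$ to a $\Delta_A$-flag of $A$ and discarding the zero subquotients coming from standards of label $<i$.

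The main obstacle is the claim, needed for $B$, that the trace of $\{Ae_j:j>i\}$ in a $\Delta_A$-filtered module is itself $\Delta_A$-filtered with factors only of label strictly greater than $i$. This is a classical feature of the Cline--Parshall--Scott setup, coming ultimately from the vanishing $\Ext^1_A(\Delta_A(l),\Delta_A(m))=0$ for $l\ge m$, and I would simply invoke it rather than reprove it here.
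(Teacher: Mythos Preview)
The paper does not actually prove this lemma: it is stated as a ``well-known lemma [which] follows from the definition'' and left at that. Your proposal is therefore not competing with a proof in the paper but filling in a standard argument that the authors chose to omit.

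Your argument is essentially correct and follows the standard route. Two small remarks. First, for the quotient $B$ the fact you isolate as the ``main obstacle'' --- that $A\varepsilon_{i+1}A$ is $\Delta_A$-filtered with labels $>i$ --- is indeed the heart of the matter; it is immediate once one recalls the equivalent definition of quasi-hereditary via a heredity chain $0=J_0\subset J_1\subset\cdots\subset J_n=A$ with $J_k=A\varepsilon_{n+1-k}A$, so invoking it is entirely appropriate here. Second, for $C=\varepsilon_iA\varepsilon_i$ there is a minor slip: applying $\varepsilon_i\cdot$ to a $\Delta_A$-flag of $A$ yields a $\Delta_C$-flag of $\varepsilon_iA$, not of $C$. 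This is harmless, since $C=\bigoplus_{j\ge i}\varepsilon_iAe_j$ is a direct summand of $\varepsilon_iA=\bigoplus_{j}\varepsilon_iAe_j$ and $\cF^\Delta_C$ is closed under summands; alternatively, just apply $\varepsilon_i\cdot$ to the $\Delta_A$-flag of each $Ae_j$ with $j\ge i$ separately.
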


\subsection{Ringel duality}

The following is proved in \cite[Section~6]{Ringel} and \cite[Theorem~3]{DR}.
\begin{thm}\label{ThmRingelAlg}
Let $(A,\mathsf{e})$ denote a quasi-hereditary algebra. 
\begin{enumerate}[(i)]
\item The category $\cF_A^{\Delta}\cap\cF_A^{\nabla}$ contains precisely $n$ indecomposable non-isomorphic modules~$\{T(i)\,|\,1\le i\le n\}$. For every $1\le  i\le n$, there exists a short exact sequence
$$0\to N(i)\to T(i)\to \nabla(i)\to 0\quad\mbox{with $N(i)\in \cF^{\nabla}_A$.}$$
\item The {\bf Ringel dual} algebra $(A',\mathsf{f})$ of~$(A,\mathsf{e})$, defined by~$A':=\End_A(\oplus_i T(i))^{\op}$ and~$f_i:=1_{T(n+1-i)}$, is quasi-hereditary and satisfies $A''\cong A$.
\item The algebra $(A',\mathsf{f})$ is the unique (basic) quasi-hereditary algebra for which there exists an exact equivalence
$$\RR:\; \cF_A^{\nabla}\;\;\tilde{\to}\;\; \cF_{A'}^{\Delta}.$$
\item For all $1\le i,j\le n$, we have
$$(T_A(i):\nabla_A(j))\;=\;(P_{A'}(n+1-i):\Delta_{A'}(n+1-j)).$$
\end{enumerate}
\end{thm}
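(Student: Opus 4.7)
The plan is to prove the four statements in sequence, using standard techniques for quasi-hereditary algebras; the main input throughout is the vanishing $\Ext^1_A(\Delta(s),\nabla(t))=0$ for all $s,t$, together with the recognition criterion that $M\in\cF_A^\Delta$ if and only if $\Ext^{>0}_A(M,\nabla(t))=0$ for all $t$ (and dually for $\cF_A^\nabla$).

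For part (i), I would construct the $T(i)$ by reverse induction on $i$. The base case is $T(n):=\Delta(n)=\nabla(n)$, which is both projective and injective since it is the minimal standard module. For $i<n$, assuming $T(j)$ has been constructed for $j>i$, I build $T(i)$ starting from $M_0:=\Delta(i)$ by iterated universal extensions: at each stage, if $\Ext^1_A(M_k,\nabla(j))\neq 0$ for some $j>i$, I pass to a universal extension
$$0\to\nabla(j)^{\oplus d}\to M_{k+1}\to M_k\to 0,\qquad d=\dim\Ext^1_A(M_k,\nabla(j)).$$
The vanishing $\Ext^1_A(\Delta(s),\nabla(j))=0$ guarantees that $M_{k+1}$ remains in $\cF_A^\Delta$, and finite-dimensionality of $A$ bounds total length and ensures termination. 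The limiting module $T(i)$ lies in $\cF_A^\Delta\cap\cF_A^\nabla$ and satisfies $[T(i):L(i)]=1$, hence is indecomposable. Classification of all indecomposable objects of $\cF_A^\Delta\cap\cF_A^\nabla$ follows by tracking the largest index appearing as a composition factor and applying Krull--Schmidt. The short exact sequence $0\to N(i)\to T(i)\to\nabla(i)\to 0$ is obtained by choosing $\nabla(i)$ as the top quotient in a $\nabla$-flag of $T(i)$.

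For parts (ii) and (iii) I would set $T:=\bigoplus_i T(i)$, $A':=\End_A(T)^{\op}$ with ordered idempotents $\mathsf{f}$, and define $\RR:=\Hom_A(T,-)$. Since $T\in\cF_A^\Delta$, the functor $\RR$ is exact on $\cF_A^\nabla$. By direct computation $\RR(\nabla_A(j))=\Delta_{A'}(n+1-j)$ and $\RR(T_A(i))=P_{A'}(n+1-i)$. Applying $\RR$ to a $\nabla$-flag of $T_A(i)$ yields a $\Delta$-flag of $P_{A'}(n+1-i)$, which simultaneously demonstrates that $A'\in\cF_{A'}^\Delta$ (so $A'$ is quasi-hereditary) and that $\RR$ restricts to an exact equivalence $\cF_A^\nabla\tilde{\to}\cF_{A'}^\Delta$. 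Reflexivity $A''\cong A$ follows from the observation that the tilting modules of $(A',\mathsf{f})$ correspond under $\RR$ to the indecomposable projectives of $(A,\mathsf{e})$. Uniqueness of $(A',\mathsf{f})$ among basic quasi-hereditary algebras admitting such an equivalence comes from the fact that the image of the $\nabla$'s determines the $\Delta$-structure, and hence the projective cover structure, of the target algebra up to isomorphism.

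Part (iv) is then immediate: applying the exact functor $\RR$ to a $\nabla$-flag of $T_A(i)$ gives a $\Delta$-flag of $P_{A'}(n+1-i)$ with identical multiplicities, yielding the reciprocity formula. The main obstacle I anticipate is part (i): establishing termination of the universal extension procedure and verifying that the resulting $T(i)$ is independent of the choices made at each stage. Once the tilting modules are constructed, parts (ii)--(iv) follow from a careful translation through the functor $\Hom_A(T,-)$ in the spirit of Ringel's original arguments.
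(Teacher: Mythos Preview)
The paper does not prove this theorem: it is stated in an appendix as background and attributed to \cite[Section~6]{Ringel} and \cite[Theorem~3]{DR}. So there is no ``paper's proof'' to compare against; your proposal is attempting more than the authors do.

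That said, your construction in part~(i) is broken as written. In the paper's indexing convention, $\Delta(i)$ has composition factors only among $L(1),\dots,L(i)$, so $n$ is the \emph{maximal} index and $\Delta(n)=P(n)$, $\nabla(n)=I(n)$; these are not equal in general, so your base case $T(n):=\Delta(n)=\nabla(n)$ is false. The correct trivial tilting is $T(1)=\Delta(1)=\nabla(1)=L(1)$ at the minimal index, and the induction should run upward. More seriously, your inductive step starts from $M_0=\Delta(i)$ and forms extensions $0\to\nabla(j)^{\oplus d}\to M_{k+1}\to M_k\to 0$ governed by $\Ext^1_A(M_k,\nabla(j))$. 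But $\Ext^1_A(\Delta(i),\nabla(j))=0$ for all $j$ by the very vanishing you invoke, so the procedure halts immediately with $T(i)=\Delta(i)$, which is wrong. Even ignoring that, an extension with $\nabla(j)$ as a subobject of a module in $\cF^\Delta_A$ does not stay in $\cF^\Delta_A$; your claim that ``$M_{k+1}$ remains in $\cF^\Delta_A$'' is unjustified.

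The standard (Ringel) construction is: set $M_0=\Delta(i)$ and successively form universal extensions $0\to M_k\to M_{k+1}\to\Delta(j)^{\oplus d}\to 0$ with $d=\dim\Ext^1_A(\Delta(j),M_k)$ for $j<i$, stacking $\Delta$'s on top until $\Ext^1_A(\Delta(j),M_k)=0$ for all $j$; the result then lies in $\cF^\nabla_A$ by the recognition criterion. Dually one can start from $\nabla(i)$ and extend below by $\nabla(j)$ with $j<i$. Your outline for parts~(ii)--(iv) via $\RR=\Hom_A(T,-)$ is the right idea and matches the approach in \cite{Ringel, DR}, but it rests on having the $T(i)$ correctly in hand.
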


Note that we can take $\Hom_A(\oplus_i T_A(i),-)$ for the functor $\RR$ in (iii), yielding in particular $\RR(T_A(i))\cong P_{A'}(n+1-i)$.

\section{Graded covers}
We introduce `graded covers' of abelian categories $\cC$, similarly to \cite[Section~4.3]{BGS}.

\subsection{Definition}
By an {\bf abelian $\mZ$-category $\cG$}, we mean an abelian category with a strict $\mZ$-action. A strict $\mZ$-action is a collection of exact functors $\{\langle j\rangle\,|\, j\in\mZ\}$ on $\cG$, which satisfy $\langle i\rangle\langle j\rangle=\langle i+j\rangle$ and~$\langle0\rangle={\mathbf I}{\mathbf d}_{\cG}$.

%Let $\cC$ be an arbitrary abelian category.
\begin{ddef}\label{DefCover}
A {\bf graded cover} of~$\cC$ is an abelian $\mZ$-category $\cC^{\mZ}$ with an exact functor~$\GG:\cC^{\mZ}\to\cC$, such that 
\begin{enumerate}[(i)]
\item $\GG\langle i\rangle=\GG$ for all $i\in\mZ$;
\item for all $M,N\in \cC^{\mZ}$, the functor $\GG$ induces group isomorphisms 
$$\bigoplus_{i\in\mZ}\Ext^l_{\cC^{\mZ}}(M,N\langle i\rangle)\;\stackrel{\sim}{\to}\;\Ext^l_{\cC}(\GG M,\GG N)\quad\mbox{for all $l\in\mN$;}$$
\item all simple objects in~$\cC$ are isomorphic to objects in the image of~$\GG$.
\end{enumerate}
\end{ddef}

\subsection{Positively graded algebras}
\subsubsection{}We say that a locally unital algebra $A$ is $\mZ$-graded if $A=\bigoplus_{j\in\mZ}A_j$ with $A_jA_k=A_{j+k}$ and $e_\alpha\in A_0$ for all $\alpha$. The category of $\mZ$-graded locally unital $A$-modules with morphism preserving the grading is denoted by $A$-gMod. If $A$ is locally finite, we denote by $A$-gmod the full subcategory of $A$-gMod of locally finite dimensional modules. 

For $M\in A$-gMod and~$i\in\mZ$, the shifted module~$M\langle i\rangle$ is identical to~$M$ as an ungraded module, but with grading
$$M\langle i\rangle_j\;=\; M_{j-i}\quad\mbox{for all $j\in\mZ$}.$$ We denote the exact functor {which forgets} the grading by 
$$\GG: A\mbox{-gMod}\to A\mbox{-Mod}.$$
A module $\widetilde{M}\in A$-gMod satisfying $\GG \widetilde{M}\cong M$ is a {\bf graded lift} of $M$.

A $\mZ$-graded algebra $A$ is {\bf positively graded} if $A_i=0$ for $i<0$ and  $A_0$ is semisimple. Clearly, for such an algebra, the simple modules admit graded lifts which are contained in {any chosen} degree.

\begin{ex}\label{ExAg}
Let $A$ be a locally unital $\mZ$-graded algebra such that every simple module has a graded lift, for instance $A$ is positively graded. Then $A$-gMod is a graded cover of $A$-Mod for the forgetful functor {$\GG$}.
\end{ex}

\section{Standard Koszul algebras}\label{AppKos}
We review some results about Koszul quasi-hereditary algebras, based on \cite{ADL, BGS}. The algebra $A$ is assumed to be associative, unital, finite dimensional and basic. 

\subsection{Algebras}

%In this subsection, For convenience, we also assume that algebras are basic and posses an anti-automorphism fixing primitive idempotents. The latter means that~$(A,\mathsf{e})\cong(A^{\op},\mathsf{e})$.

\subsubsection{} %For a $\mZ$-graded algebra~$A$, the category $A$-gmod is the category of finite dimensional graded modules, with grading preserving homomorphisms. Similarly, we use $A$-gMod for the category of all graded modules. 
Assume that $A$ is $\mZ$-graded.
The homomorphism spaces in~$A$-gmod are denoted by~$\hom_A${,} and the extension functors by~$\ext^k_A$. %Since $A$-gmod is extension full in~$A$-gMod, we can use the same notation for extensions in both categories. 
We take the convention that, unless otherwise specified, a graded lift of a simple, standard or projective module is normalised (using $\langle j\rangle$) such that the top is in degree zero. Similarly, graded lifts of costandard {or} injective modules are chosen to have their socle in degree zero.

%and use the same notation for its restriction to finite dimensional modules. A graded lift of a module $M$ in~$A$-Mod is $\widetilde M$ in~$A$-gMod with $\GG \widetilde M\cong M$.
%Clearly, $\widetilde{M}\langle j\rangle$ is then also a graded lift, for $j\in\mZ$. Often we denote the graded lift by the same symbol. 
%\subsubsection{} A $\mZ$-graded algebra~$A=\bigoplus_{i\in\mZ} A^i$ is {\bf positively graded} if $A^i=0$, for $i<0$, and~$A^0$ is a semisimple $\mk$-algebra.

%For a  graded algebra~$A$ and an idempotent $e\in A$, the algebra~$A/(AeA)$ is clearly  graded again.
\subsubsection{} Now assume that $A$ is positively graded. The subcategory of~$A$-gmod of modules~$M$ which satisfy
$$\ext^j_A(M,L\langle i\rangle)=0\qquad\mbox{if $i\not=j$},$$
for all simple modules~$L$ (contained in degree $0$), is denoted by~$\PL_A$. Similarly, the subcategory of~$A$-gmod of modules~$M$ which satisfy
$$\ext^j_A(L,M\langle i\rangle)=0\qquad\mbox{if $i\not=j$},$$
for all simple modules~$L$, is denoted by~$\IL_A$.
 The positively graded algebra~$A$ is {\bf Koszul} if all simple modules are in~$\PL_A$, or equivalently in $\IL_A$, see~\cite[Proposition~2.1.3]{BGS}.

\subsubsection{} 
Consider a quasi-hereditary algebra $(A,\mathsf{e})$ as in Section~\ref{QHAlg}, which is positively graded. Then $A$ is {\bf standard Koszul} if
 each standard module~$\Delta(i)$ is {an object of}~$\PL_A$ and each costandard module is {an object of}~$\IL_A$.
Note that, by construction, $\Delta(i)$ and $\nabla(i)$ admit graded lifts. By definition, standard Koszul algebras are thus assumed to be positively graded quasi-hereditary algebras.

If $A$ is positively graded, then so are $A/(AeA)$ and~$eAe$ for any idempotent $e\in A_0$.

\begin{thm}\cite[Theorem~1.4 and Proposition~3.9]{ADL}\label{ThmADL}
Let $(A,\mathsf{e})$ be a standard Koszul algebra.
\begin{enumerate}[(i)]
\item The positively graded algebra $A$ is Koszul.
\item The algebras~$A/(A\varepsilon_{i+1}A)$ and~$\varepsilon_i A\varepsilon_i$, from Lemma~\ref{LemBC}, are standard Koszul for all $1\le i\le n$. 
\end{enumerate}
\end{thm}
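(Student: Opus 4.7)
The plan for part (i) is to prove $L(i) \in \PL_A$ (and dually $L(i) \in \IL_A$) for every $i$ by induction on $i$, starting from $i=1$, where $\Delta(1) = L(1)$ lies in $\PL_A$ by hypothesis. For the inductive step I would apply $\ext^{\bullet}_A(-, L(k)\langle m \rangle)$ to the short exact sequence
$$0 \to K(i) \to \Delta(i) \to L(i) \to 0, \qquad K(i) := \rad \Delta(i),$$
whose first term has composition factors $L(j)$ with $j < i$, each appearing in internal degree $\geq 1$. The middle term contributes nothing outside the linear range by the hypothesis $\Delta(i) \in \PL_A$, so vanishing of $\ext^p_A(L(i), L(k)\langle m \rangle)$ for $p \neq m$ reduces to analogous vanishing for $K(i)$, shifted by one homological degree. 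This would follow from a dévissage of $K(i)$ by its top composition factors, combined with the inductive hypothesis $L(j) \in \PL_A$ for $j < i$, provided the internal grading shifts appearing in the dévissage line up with the Koszulity pattern.

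Compatibility of the grading shifts is the main obstacle, and the plan is to address it by running the induction simultaneously for $L(i) \in \PL_A$ and $L(i) \in \IL_A$, using both halves of the standard Koszul hypothesis. The costandard condition $\nabla(j) \in \IL_A$ controls the internal degrees in the socle filtration of $\nabla(i)$, which, via the embedding $L(i) \hookrightarrow \nabla(i)$ and an Ext calculation dual to the one above, forces the shifts occurring in $K(i)$ to match their homological positions in the resolution of $L(i)$. This interplay is precisely what rigidifies the filtrations enough to propagate linearity of Ext down the quasi-hereditary order; neither half of the hypothesis alone would suffice, and this is the technical heart of the argument.

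For part (ii) I would handle the two algebras separately. For the quotient $A/(A\varepsilon_{i+1}A)$, the simples are $L(1), \ldots, L(i)$, and the standard and costandard modules for the quotient coincide with $\Delta_A(j), \nabla_A(j)$ for $j \leq i$, viewed through the quotient map. Since the ideal $A\varepsilon_{i+1}A$ annihilates these modules, and since a $\Delta$-filtered projective resolution of $\Delta_A(j)$ with $j \leq i$ involves only $\Delta_A(k)$ with $k \leq i$, one identifies the graded Ext groups computed in $A$-gmod and in $(A/A\varepsilon_{i+1}A)$-gmod, so the linearity of Ext is preserved. For $\varepsilon_i A \varepsilon_i$, I would use the idempotent truncation $M \mapsto \varepsilon_i M$, which sends $\Delta_A(j), \nabla_A(j)$ for $j \geq i$ to the standard and costandard modules of $\varepsilon_i A \varepsilon_i$ and induces isomorphisms on graded Ext groups between modules admitting $\Delta$-flags supported in $\{j \geq i\}$, since the corresponding projective resolutions remain exact after multiplication by $\varepsilon_i$. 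The standard Koszul condition therefore transfers to both the quotient and the subalgebra.
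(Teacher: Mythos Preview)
The paper does not prove this theorem; it is quoted verbatim from \cite{ADL}, so there is no argument in the paper to compare against. I comment on your sketch on its own merits.

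For part~(i) there is a genuine gap. Your induction via $0\to K(i)\to\Delta(i)\to L(i)\to 0$ and the hypothesis $\Delta(i)\in\PL_A$ reduces the vanishing of $\ext^{q}_A(L(i),L(k)\langle m\rangle)$ for $q\neq m$ to that of $\ext^{q-1}_A(K(i),L(k)\langle m\rangle)$. Filtering $K(i)$ by $L(j)\langle d\rangle$ with $j<i$ and $d\ge 1$ and invoking the inductive hypothesis $L(j)\in\PL_A$ forces $q-1=m-d$, hence only $q\le m$; so you recover $\ext^q=0$ for $q>m$. But for \emph{any} positively graded algebra with semisimple degree-zero part this direction is automatic, since the $q$-th syzygy of a degree-$0$ simple always lives in degrees $\ge q$. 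The content of Koszulity is the range $q<m$, and your dual argument through $0\to L(i)\to\nabla(i)\to C(i)\to 0$ and $\nabla(i)\in\IL_A$ has the identical defect: $C(i)$ sits in degrees $\le -1$, so the same computation again yields only $q>m$. Running the two inductions in parallel therefore does not close the gap. The argument in \cite{ADL} proceeds differently: it first extracts from the two hypotheses a finer statement about the graded structure of $\Delta(i)$ and $\nabla(i)$ themselves (that their radical and socle filtrations coincide with their grading filtrations), and Koszulity is deduced from that.

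For part~(ii), your treatment of $\varepsilon_iA\varepsilon_i$ is correct: the linear projective resolution of $\Delta_A(j)$ for $j\ge i$ involves only projectives $P_A(k)$ with $k\ge j\ge i$, and these remain projective and the complex remains exact after applying $\varepsilon_i(-)$. Your treatment of $A/A\varepsilon_{i+1}A$ reaches the right conclusion but the stated reason is wrong: the projective resolution of $\Delta_A(j)$ for $j\le i$ uses $P_A(k)$ with $k\ge j$, typically $k>i$, so it does \emph{not} live over the quotient. The correct justification is that $A\varepsilon_{i+1}A$ lies in the heredity chain and is therefore projective as a left $A$-module, which makes inflation from $(A/A\varepsilon_{i+1}A)$-mod to $A$-mod fully faithful on all $\Ext$-groups; the linearity of $\ext^\bullet_A(\Delta(j),L(k)\langle\cdot\rangle)$ for $j,k\le i$ then transfers directly.
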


\subsection{Category $\cO$}

\begin{prop}\cite{ADL}\label{PropOSK}
Let $\fg$ be a (finite dimensional) reductive Lie algebra, with $\fh\subset\fb\subset\fg$ a Cartan subalgebra $\fh$, Borel subalgebra $\fb$ and a weight $\lambda\in\fh^\ast$. 
\begin{enumerate}[(i)]
\item The basic algebra $A$ for which
$A\mbox{{\rm -mod}}\,\cong\,\cO_{\lbra\lambda\rbra},$
{is standard Koszul for an appropriate positive grading}.
\item For $\KKK$ an ideal in $(\fh^\ast,\le)$, the basic algebra $A^{\KKK}$ for which
$A^{\KKK}\mbox{{\rm -mod}}\,\cong\,{}^{\KKK}\cO_{\lbra\lambda\rbra},$  is of the form $eAe$ for some idempotent $e\in A_0$, {and moreover $A^K$} is standard Koszul for the grading inherited from~$A$.
\end{enumerate}
\end{prop}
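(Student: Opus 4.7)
For part (i), the algebra $A$ such that $A\mbox{-mod}\simeq \cO_{\lbra\lambda\rbra}$ is finite dimensional, basic and quasi-hereditary. I would invoke the positive $\mZ$-grading constructed by Soergel and by \cite{BGS} (extended by Backelin to arbitrary, not necessarily integral regular, blocks), with respect to which $A$ is Koszul. The standard Koszulity refinement -- that each Verma module $\Delta(\mu)$ admits a linear projective resolution in $A$-gmod, and dually for $\nabla(\mu)$ -- is then the main theorem of \cite{ADL}. Its essential input is the interpretation of $\dim\ext^i_A(\Delta(\mu),L(\nu)\langle j\rangle)$ as a coefficient of a Kazhdan--Lusztig--Vogan polynomial and the vanishing of these coefficients off the diagonal $j=i$.

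For part (ii) I would proceed in three steps. First, a poset translation: the convention of Section~\ref{QHAlg} is that $\Delta(n)=P(n)$, which in $\cO_{\lbra\lambda\rbra}$ corresponds to the antidominant (hence $\le$-minimal) weight. Hence the quasi-hereditary order on $\lbra\lambda\rbra$ is opposite to the one induced from $(\fh^\ast,\le)$, so that an ideal $\KKK\subset\fh^\ast$ cuts out a QH-coideal $S:=\KKK\cap\lbra\lambda\rbra$. Setting $e:=\sum_{\mu\in S}e_\mu$, we have $e\in A_0$ because the primitive idempotents of a basic positively graded algebra can be chosen in degree zero. Second, identify $A^{\KKK}\cong eAe$: since $S$ is a QH-coideal, $Ae$ is a projective generator of the Serre subcategory of $A$-mod whose composition factors are labelled only by $S$, a standard quasi-hereditary fact. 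By~\eqref{eqDtrunc} this Serre subcategory equals ${}^{\KKK}\cO_{\lbra\lambda\rbra}$, and idempotent truncation $M\mapsto eM$ realises the equivalence with $eAe$-mod. Third, the standard Koszulity of $eAe$ for the inherited grading follows from Theorem~\ref{ThmADL}(ii): that theorem handles the ``top interval'' idempotents $\varepsilon_i=e_i+\cdots+e_n$, and one reaches an arbitrary coideal idempotent $e$ by a finite chain of such end-truncations (removing minimal elements of $S$ one at a time relative to a linear extension of the QH-order). At each stage, the identity $e'(eAe)e'=e'Ae'$ and uniqueness of the Koszul grading \cite[Corollary~2.5.2]{BGS} guarantee that the inductively obtained grading matches the one inherited from $A$.

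The main obstacle is precisely this last step of part (ii): Theorem~\ref{ThmADL}(ii) as stated covers only the ``top interval'' idempotents $\varepsilon_i$, and one has to verify that the iterative reduction to these remains valid for a general QH-coideal, checking at each stage that the minimal element of the current coideal really does satisfy the end-interval hypothesis with respect to the restricted QH-order on the algebra produced at the previous step. Everything else reduces, after the poset translation, to standard quasi-hereditary and Koszul-algebra machinery (Lemma~\ref{LemBC}, Theorem~\ref{ThmADL}, equation~\eqref{eqDtrunc}, and the classical construction of the Koszul grading on a block of $\cO$).
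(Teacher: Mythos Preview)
Your argument for part~(ii) contains a genuine orientation error. You assert that $\Delta(n)=P(n)$ corresponds to the antidominant weight, but it is the \emph{dominant} weight $\lambda_0$ for which the Verma module is projective: BGG reciprocity gives $(P(\lambda_0):\Delta(\nu))=[\Delta(\nu):L(\lambda_0)]$, and this vanishes unless $\nu=\lambda_0$ since $\lambda_0$ is $\le$-maximal in the block. Hence the quasi-hereditary order of Section~\ref{QHAlg} \emph{agrees} with $\le$, and an ideal $\KKK\subset(\fh^\ast,\le)$ restricts to a QH-\emph{ideal} on $\lbra\lambda\rbra$, not a coideal. The Serre subcategory ${}^{\KKK}\cO_{\lbra\lambda\rbra}$ is therefore described by the quotient $A/(Ae_{\KKK^c}A)$ rather than by $e_{\KKK}Ae_{\KKK}$; for the principal block of $\mathfrak{sl}_2$ with $\KKK$ the ideal generated by the antidominant weight one has $A^{\KKK}\cong\mk$ while $e_{\KKK}Ae_{\KKK}$ is two-dimensional. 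Your ``standard quasi-hereditary fact'' in step~2 is likewise misstated: even for a genuine coideal $S$, the projectives $P(i)$ with $i\in S$ have composition factors outside $S$ (their Verma layers $\Delta(j)$, $j\ge i$, contain simples $L(k)$ for all $k\le j$), so $Ae_S$ does not lie in the Serre subcategory with simples in $S$; rather $e_SAe_S$-mod realises the Serre \emph{quotient}.

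Once the orientation is corrected, the paper's route is shorter than the iterative reduction you propose. One chooses any linear extension of $\le$ on $\lbra\lambda\rbra$ for which $\KKK\cap\lbra\lambda\rbra$ remains an ideal; in the resulting indexing $\KKK\cap\lbra\lambda\rbra=\{1,\ldots,i\}$ and $A^{\KKK}\cong A/(A\varepsilon_{i+1}A)$ on the nose, so Theorem~\ref{ThmADL}(ii) applies in a single step and your ``main obstacle'' evaporates. (The phrase ``of the form $eAe$'' in the statement of the proposition appears to be an imprecision for the quotient form; the proof given and its use in Theorem~\ref{ThmKosz} are consistent with the quotient description.)
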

\begin{proof}
Part (i) is \cite[Corollary~3.8]{ADL}, which is based on results in \cite{BGS}. Part (ii) is an application of Theorem~\ref{ThmADL}(ii). Note that for this we should complete $\le$ on $W\cdot\lambda$ to an arbitrary total order such that $\KKK\cap W\cdot\lambda$ is still an ideal.
\end{proof}

\begin{flushleft}
	K. Coulembier\qquad \url{kevin.coulembier@sydney.edu.au}
	
	School of Mathematics and Statistics, University of Sydney, NSW 2006, Australia
	
	\vspace{2mm}
	
	I. Penkov\qquad \url{i.penkov@jacobs-university.de}

Jacobs University Bremen, 28759 Bremen, Germany
\end{flushleft}


\begin{thebibliography}
	{CWZ}
	
	\bibitem[ADL]{ADL}I.~\'Agoston, V.~Dlab, E.~Luk\'acs:
Quasi-hereditary extension algebras.
Algebr. Represent. Theory 6 (2003),  97--117.
	
\bibitem[Ar]{Arkhipov}	S.M.~Arkhipov:
Semi-infinite cohomology of associative algebras and bar duality. 
Internat. Math. Res. Notices 1997, 833--863.



\bibitem[BGS]{BGS}
A.~Beilinson, V.~Ginzburg, W.~Soergel:
Koszul duality patterns in representation theory.
J. Amer. Math. Soc. 9 (1996),  473--527.


\bibitem[BGG]{BGG}
I.~Bern\v{s}tein, I.~Gel'fand, S.~Gel'fand:
A certain category of~$\mathfrak{g}$-modules.
Funkcional. Anal. i Prilo\v{z}en. { 10} (1976), no. 2, 1--8. 


\bibitem[CL]{CL}
C.-W.~Chen, N.~Lam: 
Projective modules over classical Lie algebras of infinite rank in the parabolic category.
arXiv:1802.02112.

\bibitem[CLW]{CLW}
S.-J.~Cheng, N.~Lam, W.~Wang:
Super duality and irreducible characters of ortho-symplectic Lie superalgebras.
Invent. Math. 183 (2011), 189--224.

\bibitem[CWZ]{CWZ}
S.-J.~Cheng, W.~Wang, R.B.~Zhang:
Super duality and Kazhdan-Lusztig polynomials.
Trans. Amer. Math. Soc. 360 (2008),  5883--5924. 

\bibitem[CPS]{CPS}
E.~Cline, B.~Parshall, L.~ Scott:
Finite dimensional algebras and highest weight categories. 
J. Reine Angew. Math. 391 (1988), 85--99. 




\bibitem[CM1]{CM} 
K.~Coulembier, V.~Mazorchuk: 
Some homological properties of category~$\cO$. III. Adv. Math. 283 (2015), 204--231.

\bibitem[CM2]{Sigma}K.~Coulembier, V.~Mazorchuk:
Extension fullness of the categories of Gelfand-Zeitlin and Whittaker modules.
SIGMA 11 (2015), Paper 016, 17 pp.

\bibitem[CMZ]{CMZ}
K.~Coulembier, V.~Mazorchuk, X.~Zhang: 
Indecomposable manipulations with simple modules in category O.
To appear in Mathematical Research Letters




\bibitem[DPS]{DPS}
E.~Dan-Cohen, I.~Penkov, N.~Snyder: Cartan subalgebras of root-reductive Lie algebras. J. Algebra 308 (2007),  583--611.




\bibitem[DP]{DP}
I.~Dimitrov, I.~Penkov: Weight modules of direct limit Lie algebras. Internat. Math. Res. Notices 1999, no. 5, 223--249.

\bibitem[DR]{DR}
V.~Dlab, C.M.~Ringel:
The module theoretical approach to quasi-hereditary algebras. Representations of algebras and related topics (Kyoto, 1990), 200--224, 
London Math. Soc. Lecture Note Ser., { 168}, Cambridge Univ. Press, Cambridge, 1992.


\bibitem[Ga]{Gabriel}
P.~Gabriel:
Des cat\'egories ab\'eliennes. 
Bull. Soc. Math. France 90 (1962), 323--448.

\bibitem[Hu]{Humphreys}
J.~Humphreys.
\newblock{Representations of semisimple Lie algebras in the BGG category O.}
\newblock{Graduate Studies in Mathematics, { 94}. American Mathematical Society, Providence, RI (2008).}


\bibitem[KS]{KS} M.~Kashiwara, P.~Schapira:
Categories and sheaves. 
Grundlehren der Mathematischen Wissenschaften, 332. Springer-Verlag, Berlin, 2006.


\bibitem[Na1]{NamT}
	T.~Nampaisarn: Categories O for Dynkin Borel subalgebras of root-reductive Lie algebras. arXiv:1706.05950.
	
	\bibitem[Na2]{Nam}
	T.~Nampaisarn: On categories O for root-reductive Lie algebras.  arXiv:1711.11234.


	


\bibitem[PP1]{PP1}
	A.~Petukhov, I.~Penkov: On ideals in $U(\mathfrak{sl}(\infty)), U(\mathfrak{o}(\infty)), U(\mathfrak{sp}(\infty)),$ Representation theory - current trends and perspectives, EMS Series of Congress Reports, European Mathematical Society (EMS), 2016, 565--602.


\bibitem[PP2]{PP2}
	A.~Petukhov, I.~Penkov: Primitive ideals of $U(\mathfrak{sl}(\infty))$, Bulletin LMS 50 (2018), 443-448.
	
\bibitem[PS]{PS} I.~Penkov, V.~Serganova: Large annihilator category O for $sl_{\infty}$, $o_{\infty}$, $sp_{\infty}$, {Journal of Algebra, to appear;}  arXiv:1809.09394.

	\bibitem[Ri]{Ringel}
C.M.~Ringel:
The category of modules with good filtrations over a quasi-hereditary algebra has almost split sequences. 
Math. Z. {208} (1991), 209--223. 




\bibitem[So]{Soergel}	
	W.~Soergel:
Charakterformeln f\"ur Kipp-Moduln \"uber Kac-Moody-Algebren. Represent. Theory 1 (1997), 115--132. 


\bibitem[Ve]{Ve}
J.L.~Verdier:
Des cat\'egories d\'eriv\'ees des cat\'egories ab\'eliennes.
Ast\'erisque No. {239}, 1996. 



\end{thebibliography}
\end{document}